\numberwithin{equation}{section}
\theoremstyle{plain}
\newtheorem{thm}{Theorem}[section]
\newtheorem{lem}[thm]{Lemma}
\newtheorem{prop}[thm]{Proposition}
\theoremstyle{definition}
\newtheorem{defn}[thm]{Definition}
\theoremstyle{remark}
\newtheorem{rem}[thm]{Remark}
\DeclareMathOperator{\diag}{diag}
\definecolor{blush}{rgb}{0.87, 0.36, 0.51}
\newcommand{\OpM}{{\mathcal O}{\mathcal P}{\mathcal M}}
\renewcommand{\bar}{\overline}
\newcommand{\even}{{\rm even}}
\newcommand{\odd}{{\rm odd}}
\newcommand{\pa}{\partial}
\newcommand{\vs}{\varsigma}
\newcommand{\vphi}{\varphi}
\newcommand{\Lip}{\mathrm{Lip}}
\newcommand{\balpha}{{\boldsymbol{\alpha}}}
\newcommand{\bbeta}{{\boldsymbol{\beta}}}
\newcommand{\uu}{{\bold{u}}}
\title{\bf Large amplitude quasi-periodic traveling waves in two dimensional forced rotating fluids}
\author{Roberta Bianchini, Luca Franzoi,  Riccardo Montalto, Shulamit Terracina}
\begin{document}

\date{}

\maketitle

\noindent
{\bf Abstract.} 
We establish the existence of quasi-periodic traveling wave solutions for the $\beta$-plane equation on $\mathbb{T}^2$ with a large quasi-periodic traveling wave external force. These solutions exhibit large sizes, which depend on the frequency of oscillations of the external force. Due to the presence of small divisors, the proof relies on a nonlinear Nash-Moser scheme tailored to construct nonlinear waves of large size. To our knowledge, this is the first instance of constructing quasi-periodic solutions for a quasilinear PDE in dimensions greater than one, with a 1-smoothing dispersion relation that is highly degenerate - indicating an infinite-dimensional kernel for the linear principal operator.
This degeneracy challenge is overcome by preserving the traveling-wave structure, the conservation of momentum and by implementing normal form methods for the linearized system with sublinear dispersion relation in higher space dimension.

\smallskip

\noindent
{\em Keywords:} Fluid dynamics, rotating fluids, quasi-periodic solutions, traveling waves.

\noindent
{\em MSC 2020:}  35Q35, 37N10, 76M45, 76U60.

\smallskip 

\tableofcontents

\section{Introduction}


Incompressible rotating fluids in three dimensions are described by the Euler-Coriolis equations, which, on the three dimensional torus $\T^3 = (\R/2\pi\Z)^3$, read as
\begin{equation}\label{euler.coriolis}
	\begin{aligned}
		\pa_{t} \uu + (\uu \cdot \nabla) \uu + \ff\, \overrightarrow{e_3} \land \uu + \nabla P &= \bF \,, \\
		\nabla \cdot \uu&=0\,,
	\end{aligned}
\end{equation}
where $\uu(t, x) : \R \times \T^3 \rightarrow \R^3$ is the divergence-free velocity field, $P(t, x):  \R \times \T^3 \rightarrow \R$ is the scalar pressure, $\ff=\ff(x_2)$ is a scalar function, $\overrightarrow{e_3}=(0,0,1)^\top$ is the rotation vector, and $\bF(t,x):\R\times \T^3 \to \R^3$ is an external force. The term $\ff\,\overrightarrow{e_3}\land \uu$ is usually referred as the Coriolis force. It is customary in geophysical fluid dynamics to simplify the model introducing some approximations. First, noting that $\overrightarrow{e_3}\land \uu = (-u_2,u_1,0)^\top := (\uu_h^\perp, 0)^\top$, according to the notation $\uu=(u_1,u_2,u_3)=: (\uu_h,u_3)$, we assume a trivial dependence with respect to the third direction, namely $\pa_{x_3} \uu =0$ and $\pa_{x_3}\bF  = 0$. As part of the standard elliptic constraint $\nabla\cdot \big((\uu \cdot \nabla )\uu \big) + \Delta P =0$ on the pressure, we also assume $\pa_{x_3} P =0$.
This way, setting
\begin{equation}
	\nabla=(\pa_{x_1},\pa_{x_2},\pa_{x_3})^\top =: (\nabla_{h},\pa_{x_3})^\top\,, \quad \bF= (\tF_{1},\tF_{2},\tF_{3})^\top=: (\bF_{h},\tF_{3})^\top\,,
\end{equation}
we are led to the reduced two dimensional problem
\begin{equation}\label{euler.coriolis.2D}
	\begin{aligned}
		\pa_{t} \uu_h + (\uu_h \cdot \nabla_h) \uu_h + \ff\, \uu_h^\perp + \nabla_{h} P &= \bF_{h} \,, \\
		\nabla_{h} \cdot \uu_h&=0\,.
	\end{aligned}
\end{equation}
A solution to the original three dimensional problem \eqref{euler.coriolis} is then recovered by solving the linear transport equation $	\pa_{t} u_3 + \uu_h \cdot\nabla_{h} u_3 = \tF_{3}$.

Next, we introduce the \emph{$\beta$-plane approximation} by choosing
$$\ff(x_2)=\beta x_2,$$
where $\beta \in \R\setminus\{0\}$ is a non-trivial constant. Note that we choose $\ff$ to be a \emph{linear} function, and the vertical dynamics are considered trivial, i.e. $\pa_{x_3}\uu = 0$ as above. For further details, we refer to works such as McWilliams \cite{McWill}, Pedlosky \cite{Pedlosky}, Chemin, Desjardins, Gallagher \& Grenier \cite{chemin2006}, Gallagher \& Saint-Raymond \cite{laure}, and Pusateri \& Widmayer \cite{PusaWid}. The constant $\beta \in \R\setminus\{0\}$ represents the speed of rotation of the frame system around the rotation vector $ \overrightarrow{e_3}$. Throughout this work, we treat $\beta$ as a fixed constant.

\medskip
Introducing the scalar vorticity (i.e. the third component of the vorticity field $\nabla \land \uu$) 
\begin{align}\label{eq:vorticity}
v:= {\rm curl} \, \uu_h := \pa_{x_1} u_2 - \pa_{x_2} u_1,
\end{align}
and applying the ${\rm curl}$ operator to \eqref{euler.coriolis.2D}, yields the scalar equation 
%
\begin{equation}\label{rotating.2D}
	\partial_t v + \uu_h \cdot \nabla_{h} v + \beta\, \tL v  = {\rm curl}\, \bF_{h} \,,\quad \bu_h=\nabla_{h}^\perp (-\Delta)^{-1} v \,, 
\end{equation}
which is the so-called \emph{$\beta$-plane equation}, where we have, under the incompressibility condition $\nabla_{h}\cdot \uu_h = 0$,
\begin{equation}
	{\rm curl}( \ff \,\uu_h^\perp )= \pa_{x_1}(\beta x_2 u_1) - \pa_{x_2}(-\beta x_2 u_2) =  \beta u_2 = \beta \, \tL v\,,
\end{equation}
and the operator
\begin{equation}\label{operator.tL}
	\tL := (-\Delta)^{-1} \pa_{x_1}\,,
\end{equation}

namely the second component of $\nabla_{h}^\perp (-\Delta)^{-1}$ with $\nabla_{h}^\perp = (-\pa_{x_2},\pa_{x_1})^\top$,
acts on any function $h(x):\T^2\to\R$ with zero average as the Fourier multiplier
\begin{equation}\label{lambda.m.def}
	\tL h(x) 
	:= \sum_{\xi\in\Z^2\setminus\{0\}} \im\,\tL(\xi) \whh(\xi) e^{\im\,\xi\cdot x}\,, \quad \tL (\xi) :=  \frac{\xi_{1}}{|\xi|^2} \quad \forall\,\xi=(\xi_{1},\xi_{2})^\top\in \Z^2\setminus\{0\}\,.
\end{equation} 

\begin{rem}
	 The sign in front of $\beta$ in equation \eqref{rotating.2D} may be different compared to previous works in literature (for instance, see \cite{ElgWid, LWZZ, PusaWid}). This discrepancy depends on how the operator $\tL$ in \eqref{operator.tL}-\eqref{lambda.m.def} is defined, which may involve the Riesz transform or a change of sign in front of the Laplacian. We remark that this is purely a sign convention and does not modify the validity of the results at all.
\end{rem}
 
Approximating models for rotating fluids in two and three dimensions are commonly used in oceanogra\-phy and geophysical fluid dynamics. An interesting regime that is frequently studied is
when the fluid is rapidly rotating, namely when $|\beta| \gg 1$ in the $\beta$-plane approximation, even in the viscous case. In a series of works, Babin, Mahalov \& Nicolaenko \cite{BaMaNi97,BaMaNi99,BaMaNi2000} proved regularity and integrability properties of solutions in 3D resonant tori.  Dalibard \& Saint-Raymond \cite{DSR2010} devoted their analysis of the long-time behavior of solutions The singular limit in the vanishing Rossby number $\text{Ro} \sim \beta^{-1}$ was investigated by Charve \cite{charve2004} and Dutrifoy \cite{Dutrifoy}. The long-time behavior of solutions with high-speed rotation was studied by Koh, Lee \& Takada \cite{KohLeeTak} and Takada \cite{takada} by means of Strichartz estimates, by Dalibard  \cite{dalibard2009} under the effect of random forcing and by Angullo-Castillo \& Ferreira \cite{AngFer} in Besov spaces. The relation between the speed of rotation and the lifespan of solutions was investigated by Ghoul, Ibrahim, Lin \& Titi \cite{GILT} for the three dimensional primitive equations.

Independently of the speed of rotation, the global well-posedness of solutions was studied for the $\beta$-plane equation by Elgindi \& Widmayer \cite{ElgWid} and Pusateri \& Widmayer \cite{PusaWid}. The global well-posedness for the Euler-Coriolis equations was studied by  Guo, Huang, Pausader \& Widmayer \cite{GHPW} and Guo, Pausader \& Widmayer \cite{GuoPauWid} with axisymmetric initial data, and, very recently, by Ren \& Tian \cite{RenTian} with general non-axisymmetric initial data. For the $\beta$-plane equation, we mention also the works of Wei, Zhang \& Zhu \cite{WeiZZ} on the linear inviscid damping around shear flows, and of Wang, Zhang \& Zhu \cite{WangZhangZhu} on the existence of non-shear solutions close to the Couette flow depending  on the size of the rotational speed $\beta$.

\subsection{Main result}

Led by the physical model discussed above, in this paper we focus our attention on the effect of a \emph{large} forcing term of the form
\begin{equation}
	 {\rm curl}\, \bF_{h} (t,x) =  \lambda^{\alpha} f(\lambda \,\omega t,x)
\end{equation}
where the parameter $\lambda\gg 1$ is large, $\alpha>0$ is a positive exponent and, given $\nu\in\N$, $\nu\geq 2$, the function $f(\vf,x)$  is  \emph{quasi-periodic in time} with  frequency vector $\lambda\,\omega\in\R^\nu\setminus\{0\}$, evaluated at the linear flow $t\mapsto f(\vf,x)|_{\vf = \lambda \, \omega t} $. We shall assume that $f \in {\mathcal C}^\infty(\T^\nu \times \T^2)$, $f\nequiv 0$, has zero average in $x$, that is 
\begin{equation}\label{ipotesi forcing}
 \int_{\T^2} f(\vf, x)\, \wrt x = 0\,, \quad \forall \vf \in \T^\nu\,. 
\end{equation}  
The parameter $\alpha >0$ will be fixed later in order to construct solutions of large amplitude. 

Therefore, with a minor change in the notation,  from now on we work with the following equation, on  the two dimensional torus $\T^2$, 
\begin{equation}\label{beta.waves.large.eq}
	\pa_{t} v+ \beta \, \tL v + u \cdot \nabla v = \lambda^{\alpha} f(\lambda \,\omega t,x) \,, \quad t\in\R\,, \ x \in 
	\T^2\,,
\end{equation}
where
the two dimensional velocity field $u:\R\times\T^2 \to \R^2$ is determined from the scalar vorticity $v : \R\times \T^2 \to \R$ by the classical Biot-Savart law 
\begin{equation}\label{biot-savart}
	u(t,x) := \fB[v (t,x)] := \nabla^\perp (-\Delta)^{-1} v(t,x)  = \sum_{\xi\in\Z^2\setminus\{0\}} \frac{\im(-\xi_{2},\xi_{1})^\top}{|\xi|^2} \whv (t,\xi) e^{\im\, \xi\cdot x}\,.
\end{equation}


With equation \eqref{beta.waves.large.eq}, we aim to study how the presence of a forcing term, \emph{of substantial magnitude} and \emph{highly oscillating in time}, affects the dynamics in the $\beta$-plane approximation \emph{without relying on a perturbative regime}. Additionally, we seek to determine whether the quasi-periodic structure of the external force produces a time quasi-periodic solution of \eqref{beta.waves.large.eq}, \emph{with substantial size} and \emph{rapid} oscillations, specifically with a frequency vector of oscillation $\lambda\,\omega \in \R^\nu\setminus\{0\}$ of significant magnitude. The interaction between the size $\lambda$ of the vector $\lambda\,\omega$ and the size $\lambda^\alpha$ of the forcing term will be discussed later.

In particular, we search for \emph{quasi-periodic traveling wave} solutions, according to the following definition.
\begin{defn}\label{def1.QPT}
	{\bf (Quasi-periodic traveling wave).}
	Let $\nu \in \N$. We say that a function $v:\R \times \T^2\to \R$ is  a \emph{quasi-periodic traveling wave} with irrational frequency vector $\omega= (\omega_{1},...,\omega_{\nu})\in\R^{\nu}\setminus\{0\}$, that is $\omega\cdot \ell \neq 0$ for any $\ell\in\Z^\nu\setminus\{0\}$, and  ``wave vectors'' $\bar\jmath_{1},...,\bar\jmath_{\nu}\in \Z^2$ if there exists a function $\breve{v}:\T^\nu \to\R$ such that
	\begin{equation}
		v(t,x) = \breve{v} (\omega_{1}t- \bar{\jmath}_1\cdot x,...,\omega_{\nu}t-\bar{\jmath}_{\nu}\cdot x) = \breve{v}(\omega t- \pi( x)) \,, \quad \forall (t,x) \in\R \times\T^2\,,
	\end{equation}
	where  $\pi :\R^2 \to \R^\nu$ is the linear map $x \mapsto \pi( x) := (\bar{\jmath}_{1}\cdot x,...,\bar{\jmath}_{\nu}\cdot x)$. We also denote by $\pi^\top$ the transpose of the map $\pi$. 
\end{defn}

We fix once and for all the $\nu$ vectors $\bar\jmath_{1},...,\bar\jmath_{\nu}\in \Z^2$ such that ${\rm dim}\,{\rm span}\{\bar{\jmath}_{1},...,\bar{\jmath}_{\nu}\} = 2$, in order to avoid trivial cases.
Therefore, assuming that the forcing term $f(\lambda\,\omega t, x)$ is a quasi-periodic traveling function according to Definition \ref{def1.QPT}, for most values of the parameter $\omega\in\R^\nu \setminus\{0\}$,
we search for quasi-periodic traveling wave solutions $v(\vf,x)|_{\vf = \lambda\,\omega t}$ with \emph{large} frequency vector $\lambda \,\omega\in\R^{\nu}$ (with a slight abuse of notation between $v(t,x)$ and $v(\lambda\,\omega t,x)$) of the equations \eqref{beta.waves.large.eq}-\eqref{biot-savart}, which we rewrite as
\begin{equation}\label{QP.Eq.to.solve}
	\lambda \,\omega \cdot \pa_{\vf} v + \beta \,\tL v + u \cdot \nabla v = \lambda^{\alpha} f(\lambda\, \omega t,x) \,, \quad u = \fB[v] =  \nabla^\perp (-\Delta)^{-1} v\,.
\end{equation}
We look for solutions $v \in H^s_0(\T^\nu \times \T^2)$, for $s \gg 1$ large enough, where  
$$
\begin{aligned}
H^s(\T^\nu \times \T^2) & := \Big\{ u(\vf, x) = \sum_{(\ell, j) \in \Z^\nu \times \Z^2 } \widehat u(\ell, j) e^{\im (\ell \cdot \vf + j \cdot x)} : \| u \|_s^2 := \sum_{(\ell, j) \in \Z^\nu \times \Z^2} \langle \ell, j \rangle^{2 s} |\widehat u(\ell, j)|^2 < + \infty \Big\}\,, \\
H^s_0(\T^\nu \times \T^2) & := \Big\{ u \in H^s(\T^\nu\times\T^2)  : \int_{\T^2} u(\vf, x) \, d x = 0 \Big\}\,, \quad \langle \ell, j \rangle := \max \{ 1, |\ell|, |j| \}\,.  
\end{aligned}
$$


Let $\Omega\subset \R^\nu$ be the reference compact set
\begin{equation}\label{anello}
	\Omega := \{\omega\in\R^\nu \,: \, 1 \leq |\omega|  \leq 2 \}\,.
\end{equation}
The main theorem of this paper is the following.

\begin{thm}\label{teo principale beta plane}
Let $\beta\in\R\setminus\{0\}$, $ \alpha \in (1,2)$ and $\nu \in \N$, $\nu\geq 2$, be fixed. Let $f \in {\mathcal C}^\infty(\T^\nu \times \T^2, \R)$ be a quasi-periodic traveling wave (see  Definition \ref{def1.QPT}) satisfying \eqref{ipotesi forcing}. There exists $\bar s = \bar s(\nu, \alpha) > 0$ such that, for any $s \geq \bar s(\nu, \alpha)$,
there exist $\lambda_0 = \lambda_0(f, s,  \nu, \alpha, \beta) \gg 1$ large enough and constants $C_1, C_2>0$, with
$C_i = C_i(f, s, \nu, \alpha, \beta) $ for  $i=1,2$, 
such that, for every $\lambda \geq \lambda_0$, the following holds. 
There exists a Borel set $\Omega_\lambda \subset \Omega$ 
of asymptotically full Lebesgue measure as $\lambda \to + \infty$,
that is $\lim_{\lambda \to + \infty} |\Omega \setminus \Omega_\lambda| = 0$, 
such that, for every $\omega \in \Omega_\lambda$, there exists a quasi-periodic traveling wave $v_\lambda(\,\cdot\, ; \omega) \in H^s_0(\T^\nu \times \T^2)$ that solves the equation \eqref{QP.Eq.to.solve}. Moreover 
\begin{equation}\label{stima.grande.solutione}
	 C_1 \lambda^{\alpha-1} \leq \inf_{\omega \in \Omega_\lambda} \| v_\lambda(\,\cdot\,; \omega) \|_s  \leq    \sup_{\omega \in \Omega_\lambda} \| v_\lambda(\,\cdot\,; \omega) \|_s \leq C_2 \lambda^{\alpha-1+\tc}\,,
\end{equation}
for some $\tc \in (0,\tc_{0})$ arbitrarily small, with $\tc_{0}= \tc_{0}(\alpha,\nu)$. Finally, if we assume that $f(\vf,x)$ that is even in the pair $(\vf,x)$, then the solution $v_{\lambda}(\omega t,x)$ is linearly stable.
\end{thm}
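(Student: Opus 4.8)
\smallskip
\noindent\emph{Strategy of the proof.}
The plan is to turn the search for large solutions into a perturbative problem by means of the amplitude rescaling $v = \lambda^{\alpha-1} w$, which recasts \eqref{QP.Eq.to.solve}, after dividing by $\lambda^\alpha$, as
\[
\om \cdot \pa_\vf w + \beta\, \lambda^{-1}\, \tL w + \lambda^{\alpha - 2}\, \big(\fB[w]\cdot \nabla\big) w \, = \, f \,.
\]
Since $\alpha \in (1,2)$, \emph{both} the dispersive coefficient $\lambda^{-1}$ and the quasilinear coefficient $\lambda^{\alpha-2}$ are small for $\lambda \gg 1$, so the equation is a genuine perturbation of $\om\cdot\pa_\vf w = f$, whose formal solution $w_0 := (\om\cdot\pa_\vf)^{-1} f$ is well defined in $H^s_0$ --- it has zero $x$-average because $f$ does --- of size $O(\gamma^{-1})$ in the diophantine-weighted norm for $\om$ diophantine, and with $H^s$-norm \emph{bounded below uniformly} in $\om\in\Omega$ (look at the lowest harmonic $\ell$ with $\widehat{\breve f}(\ell)\neq 0$, using $|\om\cdot\ell|\leq 2|\ell|$ on $\Omega$). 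One then runs a Nash--Moser iteration in the scale $H^s_0(\T^\nu\times\T^2)$, $s$ large, confined to the closed subspace of quasi-periodic traveling waves of Definition \ref{def1.QPT}; since the $\beta$-plane vector field is equivariant under spatial translations (conservation of momentum), this subspace --- equivalently the Fourier-support constraint $j = -\pi^\top \ell$ --- is preserved by the whole scheme, and this constraint is precisely what confines the infinite-dimensional kernel of the principal operator $\beta\lambda^{-1}\tL$ to a controllable sector.

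The heart of the construction is an approximate right inverse, at each step, of the linearized operator
\[
\mathcal L_\lambda(w)[h] \, = \, \om\cdot\pa_\vf h + \beta\,\lambda^{-1}\,\tL h + \lambda^{\alpha-2}\big( \fB[w]\cdot\nabla h + \fB[h]\cdot\nabla w \big)
\]
on traveling waves with zero $x$-average, and I would analyse it by separating the two regimes of the symbol $\tL(j)= j_1/|j|^2$. On the modes with $j_1\neq 0$, where $\tL$ acts as a genuine --- if only $1$-smoothing --- dispersion, the plan is to conjugate $\mathcal L_\lambda(w)$ by flows of suitable paradifferential transformations so as to straighten the transport part and normalise the lower-order symbols, reducing $\mathcal L_\lambda(w)$, on this sector, to a constant-coefficient diagonal operator plus an arbitrarily regularising remainder, which is then invertible provided first- and second-order Melnikov conditions of the form $|\lambda\,\om\cdot\ell + \beta\lambda^{-1}(\tL(j)-\tL(j'))| \geq \gamma\langle\ell\rangle^{-\tau}$ hold; these --- the dispersion being \emph{sublinear} --- need only be imposed in the polynomial range $|j|,|j'|\lesssim\langle\ell\rangle^{c_1}$. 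On the degenerate modes $j_1=0$, where $\beta\lambda^{-1}\tL$ contributes nothing, I would instead use that, thanks to the traveling and momentum structure, the restriction of the $\beta$-plane transport field to this sector is a divergence-free, \emph{lower-order} perturbation of $\om\cdot\pa_\vf$: the equation there is a bona fide transport equation, solvable by straightening characteristics, with estimates governed by the single diophantine condition on $\om$ together with the smallness, as $\lambda\to+\infty$, of the transport coefficient $\lambda^{\alpha-2}$. I expect the main obstacle to be exactly this dichotomy: carrying out a \emph{single}, uniform reduction that is simultaneously a pseudodifferential normal form (for $j_1\neq 0$) and a transport straightening (for $j_1=0$), keeping the two procedures mutually compatible, and absorbing the quasilinear loss of one derivative --- which the $1$-smoothing dispersion alone does not recover --- into the vanishing gain $\lambda^{\alpha-2}\to 0$.

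Granted the approximate inverse, the Nash--Moser scheme converges, for $\om$ in the set $\Omega_\lambda$ cut out by the above non-resonance conditions along the approximate solutions, to a traveling-wave solution $v_\lambda = \lambda^{\alpha-1} w_\lambda$ with $w_\lambda$ a small perturbation of $w_0$ in $H^s$; together with the bounds on $w_0$ this gives, upon choosing the diophantine parameter $\gamma = \lambda^{-\tc}$ with $\tc\in(0,\tc_0(\alpha,\nu))$ small, the two-sided estimate \eqref{stima.grande.solutione} --- the lower bound from $\|w_0\|_s\gtrsim 1$ uniformly in $\om$, the upper bound from $\|w_0\|_s\lesssim\gamma^{-1}=\lambda^{\tc}$. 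For the measure estimate, since the perturbative coefficients $\lambda^{-1},\lambda^{\alpha-2}$ are small one may take $\gamma$ as small as $\lambda^{-\tc}$ while still meeting the smallness requirements of the iteration; each resonant layer in $\om$ then has measure $\lesssim\gamma\langle\ell\rangle^{-\tau-1}$ times polynomial factors in $\langle\ell\rangle$, and summing over the (polynomially many) relevant $(\ell,j,j')$ yields $|\Omega\setminus\Omega_\lambda|\lesssim\gamma=\lambda^{-\tc}\to 0$ as $\lambda\to+\infty$ --- here the combination of \emph{large size} and \emph{fast oscillation} of the forcing is what makes the non-resonance conditions non-empty. Finally, if $f$ is even in $(\vf,x)$ the equation is reversible, the solution $v_\lambda$ may be chosen even, and the reduction of $\mathcal L_\lambda(v_\lambda)$ produces a constant-coefficient \emph{skew-adjoint} operator; its Floquet exponents are then purely imaginary, the Sobolev norms of the linearized flow remain bounded in time, and $v_\lambda$ is linearly stable.
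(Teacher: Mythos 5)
Your high-level plan---rescaling to a perturbative regime, Nash--Moser on quasi-periodic traveling waves, reduction of the linearized operator, Melnikov conditions with a gain of $\lambda$, measure estimates via $\gamma=\lambda^{-\tc}$---matches the architecture of the paper, and you correctly identify the momentum/traveling structure as the mechanism controlling the infinite-dimensional kernel. But the reduction you propose for the linearized operator, a split between a pseudodifferential normal form on the modes $j_1\neq 0$ and a separate transport straightening on the degenerate modes $j_1=0$, is a step that would fail, not merely a hard obstacle. The operator $\fB[w]\cdot\nabla$ is multiplication by a function of $x$ followed by a derivative; it convolves Fourier modes and preserves neither sector, so ``the restriction of the transport field to $j_1=0$'' is not invariantly defined and there are no two invariant subspaces to reconcile. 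The paper shows the dichotomy is also unnecessary: after a single, uniform straightening of $\lambda\,\omega\cdot\partial_\vf+\lambda^\theta\ba_{0}\cdot\nabla$ (Propositions \ref{proposizione trasporto}--\ref{prop coniugio cal L L1}), the degeneracy of $\tL$ along $j_1=0$ never enters, because for momentum-preserving operators the nonzero matrix elements satisfy $\pi^\top\ell+j-j'=0$, so $\ell=0$ forces $j=j'$; the second Melnikov conditions \eqref{insiemi di cantor rid} are then imposed only for $\ell\neq 0$, and the lower bound $\lambda\gamma\langle\ell\rangle^{-\tau}|j'|^{-\tau}$ comes from the factor $\lambda$ in $\lambda\,\omega\cdot\ell$, not from the dispersion relation. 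No case analysis in $j_1$ occurs anywhere.

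You also omit the crucial intermediate step between the transport straightening and the KAM reducibility. After straightening (Proposition \ref{prop coniugio cal L L1}) the remainder $\cE_1$ has order $-1$ and size $\lambda^\theta$, which is large; a KAM iteration cannot start from it, both because its absolute size is not yet small and because the Melnikov conditions degenerate by the factor $|j'|^\tau$, so the scheme needs a remainder of order $-M$ with $M>2\tau$ to control the small divisors---one derivative of smoothing from $\tL$ is nowhere near enough. The paper's genuinely new ingredient is Proposition \ref{prop normal form lower orders}: an iterative normal form conjugating $\cL_1=\lambda\,\omega\cdot\partial_\vf+\beta\,\tL+\cE_1$ by maps $\exp(\cX_m)$ solving the homological equations $\lambda\,\omega\cdot\partial_\vf\cX_m+\cE_m=\widehat{\cE}_m(0)$, the average $\widehat{\cE}_m(0)$ being diagonal precisely by momentum preservation, gaining one order of smoothing and one factor $\lambda^{\theta-1}\gamma^{-1}\ll 1$ per step until the remainder is both $M$-smoothing and of size $o(1)$. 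Your phrase ``arbitrarily regularising remainder'' gestures at the outcome but supplies no mechanism; without this step the scheme does not close. A smaller discrepancy: the paper rescales by $\lambda^\theta=\lambda^{\alpha-1+\tc}$, not $\lambda^{\alpha-1}$, so that the rescaled unknown stays $O(1)$ uniformly in $\lambda$; the first approximation inverts the full $\lambda\,\omega\cdot\partial_\vf+\beta\,\tL$ under the $\lambda$-dependent condition \eqref{Omega.gamma.SA} rather than just the Diophantine condition on $\omega$, and the finite Newton iteration of Proposition \ref{lemma.approx} is still needed to bring the initial Nash--Moser error down to $o(1)$.
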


With Theorem \ref{teo principale beta plane}, we prove that the presence of a large external forcing term that is quasi-periodic trave\-ling and with large frequency of oscillations induces solutions to the forced $\beta$-plane equation \eqref{QP.Eq.to.solve} of large amplitude with the same properties for any value of $\lambda\gg 1$ sufficiently large. Let us make some remarks on the result.
\\[1mm]
\noindent 1) {\it The role of the quasi-periodic traveling structure.} When the free dynamics of a system (that is, without an external force) is invariant by translations in space, traveling wave functions, either periodic or quasi-periodic, form a natural class where to search a solution to the evolution problem that preserves this symmetry. In the context of the model considered here, it is actually an important structure that we need to impose in order to avoid potential degeneracies and multiplicity of the eigenvalues, already at the linear level. More specifically, the linear dynamics of the equation \eqref{QP.Eq.to.solve} is governed by the operator $\lambda \, \omega\cdot \pa_{\vf} + \beta \,\tL $, whose spectrum when acting on functions in $H^s(\T^\nu\times \T^2)$ is given by
\begin{equation}
	{\rm spec} \big( \lambda \, \omega\cdot \pa_{\vf} + \beta \,\tL  \big) = \big\{ \im (\lambda\,\omega \cdot \ell +\beta \,\tL(j) ) \,: \, \ell \in \Z^\nu , \ j =(j_1,j_2)^\top \in \Z^2  \big\}\,.
\end{equation}
Assuming non-resonance conditions as the parameter varies (that shall be imposed later), the degeneracy is due to the fact the $0\in {\rm spec} \big( \lambda \, \omega\cdot \pa_{\vf} + \beta \,\tL  \big)$ has infinite multiplicity, since it corresponds to any $(\ell,j)=(0,(0,j_2)^\top)$, with $j_2\in\Z$, recalling \eqref{lambda.m.def}. When acting on quasi-periodic traveling waves $v(\vf,x)=\breve{v}(\vf-\pi(x))$, where $(\vf,x)\in\T^\nu\times\T^2$ and $\vf-\pi(x)\in\T^\nu$ (see also Definition \ref{def2.QPT}), it is worth noting that the average in the angle $\vf\in\T^\nu$ coincides with the average taken over both coordinates $(\vf,x)$.
This follows from the simple computation
\begin{equation}
	 \int_{\T^\nu} v(\vf,x) \wrt \vf = \int_{\T^\nu} \breve{v}(\vf-\pi(x)) \wrt \vf =  \int_{\T^\nu} \breve{v}(\vf) \wrt\vf = \text{constant w.r.t.}\,\, x\,.
\end{equation}
At the Fourier level, this implies that if $\ell=0$, then necessarily $j=0$ as well. \emph{The previous degeneracy is avoided when the operator $\lambda \, \omega\cdot \pa_{\vf} + \beta \,\tL $ acts on quasi-periodic traveling waves.}
\\[1mm]
\noindent 2) {\it The role of the large frequency and of the large forcing term.} The size of the forcing term is $\lambda^{\alpha}$, with $\lambda \gg 1$. We consider the regime $\alpha\in(1,2)$ for the following reasons. At a purely heuristic level, one can expect the size of a (quasi-)periodic solution to be given by:

\begin{equation}\label{euristica}
    \text{size of the solution} = \frac{\text{size of the force}}{\text{size of the frequency}} = \frac{O(\lambda^{\alpha})}{O(\lambda)} = O(\lambda^{\alpha-1}) \,.
\end{equation}

When $\alpha\in(0,1)$, the expected solution will be of small size with respect to $\lambda\gg1$. If $\alpha=1$, it will not be of small size, but uniformly bounded in $\lambda\gg1$. The regime of interest is when $\alpha>1$. However, searching for solutions of size $O(\lambda^{\alpha-1})$ implies that the quadratic nonlinearity $u \cdot \nabla v $ is of size $O(\lambda^{2\alpha-2})$. To apply perturbative arguments in our scheme, we need the size of the nonlinear terms to be smaller than the size $O(\lambda^{\alpha})$ of the external forcing term. This condition implies the restriction $\alpha<2$, which we apply. It is worth noting that we do not know if solutions in the regime $\alpha\geq 2$ might exist or not: handling such cases would likely require different strategies and techniques. Moreover, since the construction of the solution deals with small divisors and relative non-resonances conditions, the actual estimates for the solutions do not follow exactly the heuristic in \eqref{euristica}, but we have to slightly worsen the rate for the upper bound, as stated in \eqref{stima.grande.solutione}. 
  \\
 We also want to emphasize that the threshold $\lambda_0(f,s,\nu,\beta)\gg 1$ in Theorem \eqref{teo principale beta plane} needs to be strictly larger than $|\beta|>0$. This condition is crucial to obtain solutions of effective large size, satisfying the lower bound in \eqref{stima.grande.solutione}. 
The solutions will exist for any fixed value of $\beta\in\R\setminus\{0\}$, regardless of its size. However, it is essential to note that these solutions will exhibit large amplitude only if the fast external oscillations $\lambda\,\omega$, with $\lambda\gg 1$, are stronger than the internal speed of rotation $\beta\neq 0$.
\\[1mm]
\noindent 3) {\it Reversible solutions and linear stability.}  It is possible to show that the free dynamics, namely equation \eqref{beta.waves.large.eq} without the external forcing term, is \emph{reversible}, which is a reflection of the well-known property of the Euler-Coriolis equations. If we assume parity conditions on the external forcing, in particular $f(\vf,x)$ being even in the pair $(\vf,x)$, then the equation \eqref{QP.Eq.to.solve} is still reversible and we can adapt our scheme to produce solutions $v_{\lambda}$ in Theorem \ref{teo principale beta plane} that are odd in the pair $(\vf,x)$. Moreover, we can prove under this extra symmetry that these quasi-periodic traveling waves solutions are \emph{linearly stable}, meaning that the Floquet exponent of the linearized operator at each solution, with the action of the latter restricted to quasi-periodic traveling waves, are purely imaginary. However, the symmetry of the reversible structure is \emph{not necessary} for the construction of the solutions in Theorem \eqref{teo principale beta plane} itself, but rather only for the analysis of their stability.


\subsubsection*{Literature on quasi-periodic waves in fluids.}
In the last years, there has been a discrete surge of works proving the existence of time quasi-periodic waves for PDEs arising in fluid dynamics.
Most of these results in literature are proved by means of KAM for PDEs techniques, to deal with the presence of small divisors issues and consequent losses of regularity.
For the two dimensional water waves equations, we mention
Berti \& Montalto \cite{BM}, Baldi, Berti, Haus \& Montalto \cite{BBHM} for time quasi-periodic standing waves and Berti, Franzoi \& Maspero \cite{BFM,BFM21}, Feola \& Giuliani \cite{FeoGiu} for time quasi-periodic traveling wave solutions. 
Recently, the existence of time quasi-periodic solutions was proved for the contour dynamics of vortex patches in active scalar equations. 
We mention Berti, Hassainia \& Masmoudi \cite{BertiHassMasm} for vortex patches of
the Euler equations close to Kirchhoff ellipses, Hmidi \& Roulley \cite{HmRo} for the 
quasi-geostrophic shallow water equations,  Hassainia, Hmidi \& Masmoudi \cite{HaHmMa} for generalized surface quasi-geostrophic equations, Roulley \cite{Roulley} for Euler-$\alpha$ flows, Hassainia \& Roulley \cite{HaRo} for Euler equations in the unit disk close to Rankine vortices and Hassainia, Hmidi \& Roulley \cite{HaHmRou} for 2D Euler annular vortex patches.
Time quasi-periodic solutions were also constructed for the 3D Euler equations with time quasi-periodic external force \cite{BM20} and for the forced 2D Navier-Stokes
equations \cite{FrMo} approaching in the zero viscosity limit time quasi-periodic solutions of the 2D Euler equations for all times. 

We also mention that time quasi-periodic solutions for the Euler equations were constructed also by Crouseilles \& Faou \cite{CF} in
2D, and by Enciso, Peralta-Salas \& de Lizaur \cite{EPsTdL} in 3D and
even dimensions: we remark that these latter solutions are engineered so that there
are no small divisors issues to deal with, with consequently much easier proofs and a
drawback of not having information on the potential stability of the solutions. Their construction has been recently adapted in \cite{FM23} to prove the existence of time almost-periodic solutions  for the Euler equation in 3D and even dimensions.

We conclude this part of the introduction with the following remarks. Our result, in companion with the recent work \cite{CiMoTe} on bi-periodic traveling wave solution for the non-resistive MHD system, is the first one in which  the existence of {\it large amplitude} quasi-periodic solutions for a PDE with a forcing term of large size is proved. Up to now, large amplitude quasi-periodic solutions were constructed only for small perturbations of defocusing NLS and KdV equations by Berti, Kappeler \& Montalto \cite{BKM18,BKM}, where the integrable structures of these two equations is exploited. We also mention that KAM techniques have been developed to study the dynamics of linear wave and Klein Gordon equations with potential of size $O(1)$ and large frequencies $\lambda\,\omega$ with $\lambda\gg1$, see \cite{FM19,Franzoi23}.

\subsection{Strategy of the proof}\label{strategy.section}

The solution $v_{\lambda}(\lambda \,\omega t,x)$ in Theorem \ref{teo principale beta plane} that we are going to construct is a quasi-periodic traveling wave, according to Definition \ref{def1.QPT}, of the form, for some $\theta>0$ to determine,
\begin{equation}
	v_{\lambda}(\vf,x) = \lambda^{\theta} w_{\lambda}(\vf,x) \,, \quad w_{\lambda}(\vf,x)=v_{{\rm app},N} (\vf,x) + g(\vf,x)\,,
\end{equation}
so that $v_\lambda= O(\lambda^{\theta})$ and $w_{\lambda} = O(1)$ with respect to $\lambda\to \infty$.  According to this rescaling, 
the function $w_{\lambda}(\vf,x)$ is searched as a quasi-periodic traveling solution of the equation
\begin{equation}\label{strategy.rescaled}
	\big(\lambda \,\omega \cdot \pa_{\vf}+ \beta \,\tL \big)w_{\lambda} +\lambda^{\theta} u \cdot \nabla w_{\lambda} = \lambda^{\alpha-\theta} f(\lambda\, \omega t,x) \,, \quad u = \fB[w_{\lambda}]\,.
\end{equation}

The leading order of function $w_{\lambda}(\vf,x)$ is the first approximation given by the function $v_{{\rm app},N}$ of size $O(1)$.  The function $g(\vf,x)$, on the other hand, will be a correction to $v_{{\rm app},N}(\vf,x)$, bifurcating from the latter, of the smaller size $o(1)$ with respect to $\lambda\to \infty$. 
If we follow the heuristic as in \eqref{euristica}, we would expect to simply fix $\theta=\alpha-1$. However, due to the issue of small divisors in inverting the operator $\lambda\,\omega\cdot \pa_{\vf} +\beta\,\tL$, we have to worsen the size and take $\theta$ slightly larger than $\alpha-1$ to get an effective upper bound on $w_\lambda$ of size $O(1)$.
We now sketchily illustrate how to construct these two terms and the main ideas of the strategy.

\smallskip

\noindent
{\bf Construction of the approximate solution $v_{{\rm app},N}$.} In order to retrieve a perturbative framework when $\lambda\gg 1$, the first step is to search for a function that solves \eqref{strategy.rescaled} up to some error term that is perturbatively small, namely of size $o(1)$ with respect to $\lambda \to \infty$. We do so by searching for a quasi-periodic traveling wave function $v_{{\rm app},N}(\vf,x)$  of the form
\begin{equation}\label{sanremo}
	v_{{\rm app},N}(\vf,x) = v_0(\vf,x) + v_{1}(\vf,x) +...+ v_{N}(\vf,x)\,,
\end{equation}
where $N\in\N_{0}$ is the number of iterations needed to produce the desired correction to the approximation. The first term of this finite expansion is given as the solution of the linear equation
\begin{equation}
	\big(\lambda \,\omega \cdot \pa_{\vf}+ \beta \,\tL \big) v_{0} = \lambda^{\alpha-\theta} f \,.
\end{equation}
Building upon the preceding discussion below 
Theorem \ref{teo principale beta plane},
in order to mitigate the otherwise infinite-dimensional degeneracy in the kernel, the linear operator
 $\lambda \,\omega \cdot \pa_{\vf}+ \beta \,\tL$ has to be inverted on quasi-periodic traveling waves with zero average. But, to deal with the presence of small divisors, we need to impose a non-resonance condition  with a consequent loss of derivatives in the action of the inverse. This non-resonance condition is given by
\begin{equation}
	|\lambda\,\omega \cdot \ell  + \beta\, \tL(j)| \geq \lambda \frac{\gamma}{|\ell|^\tau} \,, \quad \forall\,(\ell,j) \in\Z^{\nu+2}\setminus\{0\} \,, \quad \pi^\top (\ell)+j=0\,. 
\end{equation}
 for some given $\gamma\in (0,1)$ and $\tau>\nu+1$, where $\pi^\top (\ell)+j=0$ is the Fourier restriction on the modes of quasi-periodic traveling waves. For this reason, we obtain
 \begin{equation}\label{bound.brunori}
 	\| \big( \lambda \,\omega \cdot \pa_{\vf}+ \beta \,\tL \big)^{-1} \|_{\cL(H^{s+\tau}_0,H^s_0)} \leq O(\lambda^{-1}\gamma^{-1})\,.
 \end{equation}
 To impose smallness conditions later on, the small parameter $\gamma\in(0,1)$ coming from the non-resonance conditions will be linked to $\lambda$ by choosing $\gamma=\lambda^{-\tc}$, for some $\tc \in (0,1)$ sufficiently small. This implies that the upper bound in \eqref{bound.brunori} is actually of size $O(\lambda^{-1+\tc})$.
 On the other hand, together with the easy upper bound on the denominator $	|\lambda\,\omega\cdot\ell + \beta\,\tL(j)| \leq  \lambda |\omega||\ell|$, under the assumption that $\lambda \geq |\beta|$ and recalling \eqref{lambda.m.def}, we will show that $	\| \big( \lambda \,\omega \cdot \pa_{\vf}+ \beta \,\tL \big)^{-1} \|_{\cL(H^{s}_0,H^{s+1}_0)} \geq O(\lambda^{-1})$. Therefore, we obtain that
 \begin{equation}\label{prima.stima}
 	O(\lambda^{\alpha-\theta-1})  \leq \| v_{0} := \big(\lambda \,\omega \cdot \pa_{\vf}+ \beta \,\tL\big)^{-1}[\lambda^{\alpha-\theta} f] \|_{s} \leq O(\lambda^{\alpha-\theta-1+\tc})  \,.
 \end{equation}
 To get the upper bound  in \eqref{prima.stima} of size $O(1)$, we fix $\theta=\alpha-1+\tc$, whereas the lower bound becomes of size $O(\lambda^{-\tc})$. These estimates of the leading order term of the solutions have the same rate, after rescaling, of the estimates of the whole solution in \eqref{stima.grande.solutione}. For $N=0$, the error that is produced by approximately solving \eqref{strategy.rescaled} is given by the nonlinear term $\lambda^{\theta} \fB[v_0] \cdot \nabla v_0$, which is of size $O(\lambda^{\theta})=O(\lambda^{\alpha-(1-\tc)})$, smaller than the size of the forcing term $\lambda^{\alpha-\theta} f= \lambda^{1-\tc}f$ for $\alpha\in (1,2)$ and $\tc>0$ small enough so that $\alpha<2(1-\tc)$. The idea is to iterate this argument to construct the next-order approximate solution $v_1$, which solves the linear equation with a forcing term represented by the error from the previous step. That is
 \begin{equation}
 	v_1 := - \big( \lambda \,\omega \cdot \pa_{\vf}+ \beta \,\tL \big)^{-1} [\lambda^{\theta} \fB[v_0] \cdot \nabla v_0] = O(\lambda^{\theta-1+\tc}) \,,
 \end{equation}
which is small with respect to $\lambda\to \infty$, so that the approximate solution $v_{{\rm app},1}=v_0+v_1$ solves \eqref{strategy.rescaled} up to another error term that is even smaller with respect to the one created by $v_0$ alone. Iterating this procedure for $N$ steps, we obtain the approximate solution $v_{{\rm app},N} = v_0 + v_1 + \ldots + v_N$, where each $v_i$, $i=1,\dots, N$, is small as $\lambda\to\infty$, and $v_{{\rm app},N}$ in \eqref{sanremo} satisfies 
\begin{equation}
		\big(\lambda \,\omega \cdot \pa_{\vf}+ \beta \,\tL \big)v_{{\rm app},N}+\lambda^{\alpha-1} \fB[v_{{\rm app},N}] \cdot \nabla v_{{\rm app},N} - \lambda f(\vf ,x) = q_{N}(\vf,x) \,,
\end{equation}
with $q_N$ a quasi-periodic traveling wave such that $\| q_N \|_{s} \leq O(\lambda^{N(\alpha-2(1-\tc))+\alpha-(1-\tc)})$. Choosing $N > \frac{\alpha-(1-\tc)}{2(1-\tc)-\alpha}$ ensures $\| q_N\|_s  \lesssim o(1)$ as $\lambda\to\infty$. The detailed construction is provided in Proposition \ref{lemma.approx}.

\smallskip

\noindent {\bf Linearized operator and Nash-Moser iteration.}  We are now in place to search for solution for \eqref{strategy.rescaled} by means of a Nash-Moser iteration scheme, that is we look for the zeroes of the nonlinear functional
\begin{equation}\label{strategy.F}
	\cF(w) :=\big(\lambda \,\omega \cdot \pa_{\vf}+ \beta \,\tL \big) w +\lambda^{\theta} \fB[w] \cdot \nabla w - \lambda^{\alpha-\theta} f(\vf,x) \,,
\end{equation}
with $\theta:=\alpha-1+\tc>0$, as fixed before.
We use as initial guess the approximate solution $v_{{\rm app},N}$ constructed before and we want to find a sequence of quasi-periodic traveling wave functions $(w_n = v_{{\rm app},N} + g_n)_{n\in\N_{0}}$, $g_0:=0$, that converge to a zero of the nonlinear operator $\cF$. Roughly speaking, the sequence is determined via a Newton scheme, with
\begin{equation}
	w_{n+1} - w_{n} = - \big(\di_{w} \cF(w_n) \big)^{-1}\big[ \cF(w_n) \big]\,. 
\end{equation}
The main issue, therefore, is to analyze the linearized operator $ \di_{w} \cF(w_n)$, determine its invertibility and estimate the inverse operator. Linearizing \eqref{strategy.F} at $w_n$, we focus on
\begin{equation}\label{strategy.linear.op}
	\cL := \di_{w} \cF(w_n) := \lambda\,\omega\cdot \pa_{\vf} + \beta\,\tL + \lambda^{\theta} \fB[w_n] \cdot \nabla + \lambda^{\theta} \nabla w_n \cdot \fB\,.
\end{equation}
The main issue is that, in the regime $\lambda\gg 1$, the operator $\cL$ in \eqref{strategy.linear.op} is a  perturbation of large size $O(\lambda^\theta)$, $\theta <  1$ of the diagonal operator $\lambda\,\omega\cdot \pa_{\vf} + \beta\,\tL$. The strategy to establish invertibility and related estimates for the operator $\cL$ in \eqref{strategy.linear.op} involves reducing it to a diagonal operator with respect to the Fourier basis. This reduction procedure will be carried out in three steps.
\\[1mm]
\indent 1) Reduction of the transport.
\\[1mm]
\indent 2) Reduction to {\it smoothing} and {\it perturbative} of the large remainder.
\\[1mm]
\indent 3) KAM reducibility scheme.
\\[1mm]
\noindent If we achieve this reduction with a bounded invertible transformation close to the identity, we can deduce tame estimates for the action of the operator $\big(\di_{w} \cF(w_n) \big)^{-1}$. This operator will also experience a loss of derivatives, akin to the operator $\lambda\,\omega\cdot \pa_{\vf} + \beta\,\tL$. However, the Nash-Moser scheme compensates for this loss, ensuring fast convergence of the iterations in a low regularity norm while allowing controlled divergence in a high regularity norm. The invertibility of the operator $\cL$ in \eqref{strategy.linear.op} is discussed in Section \ref{sez:inverti} after its reduction, while the Nash-Moser iteration is proven in Section \ref{sezione:NASH} and finally Theorem \ref{teo principale beta plane} in Section \ref{sez:measures}.

The remaining part of this introduction will briefly describe the steps involved in the reduction scheme of Section \ref{ridusezione}, which constitutes a substantial part of this paper.

\smallskip 
\noindent {\it 1) Reduction of the transport.} The first step is to reduce the highest order term in \eqref{strategy.linear.op}, which is represented by the transport operator
\begin{equation}
	\cT := \lambda \,\omega\cdot \pa_{\vf} + \lambda^{\theta} \fB[w_n] \cdot \nabla = \lambda\big( \omega\cdot \pa_{\vf}  + \lambda^{\theta-1}  \fB[w_n] \cdot \nabla \big)\,.
\end{equation}
Since $\theta-1=\alpha-2+\tc<0$ for $\alpha< 2$ and $\tc>0$ small enough, the rescaled vector field $\lambda^{\theta-1} \fB[w_n]$  is perturbatively small with respect to $\lambda\to\infty$. Moreover, it has zero average in space and it is divergence free. Thanks to these properties,  we will conjugate the operator $\cT$ to fully reduce it to the operator $\lambda\,\omega\cdot \pa_{\vf}$, as soon as the frequency vector $\omega\in\R^\nu\setminus\{0\}$ is Diophantine (see \eqref{DC.2gamma}). This is the content of Proposition \ref{ridu trasporto}, which follows the scheme in \cite{BM20}. 

The remaining terms in \eqref{strategy.linear.op}, namely $\beta\,\tL + \lambda^{\theta} \nabla w_n\cdot \fB$, undergo the same transformation to an operator of the form $\beta\,\tL + \cE_{1}$, where $\cE_{1}$ is a remainder operator of size $\lambda^{\theta}$ belonging to the class $\OpM_{s}^{-1}$ of operators that exhibit 1-smoothing according to off-diagonal matrix decay (see Definition \eqref{block norm} and subsequent properties for precise definitions). Additionally, since $w_n(\vf,x)$ is a quasi-periodic traveling wave, the remainder $\cE_{1}$ is \emph{momentum preserving}, meaning it maps quasi-periodic traveling waves into quasi-periodic traveling waves (refer to Section \ref{subsec:momentum} for definitions and characterizations). This property is crucial for the next two steps.

\smallskip 
\noindent {\it 2) Reduction to a small and smoothing remainder.} After the previous transformation, we are left to work with the operator
\begin{equation}
	\cL_{1} := \lambda\,\omega\cdot \pa_{\vf} + \beta\,\tL + \cE_{1}\,.
\end{equation}
For the  KAM reduction, the remainder $\cE_{1}$ lacks necessary smoothing for derivative compensation and remains large for $\alpha\in (1,2)$. The next step is to conjugate the operator $\cL_{1}$ to reduce both the size and order of the remainder $\cE_{1} \in \OpM_{s}^{-1}$. This situation represents a novel development in normal form techniques compared to the previous literature.

To address this, we conjugate the operator $\cL_{1}$ in a series of $M-1$ iterations, culminating in the remainder $\cE_{M}$ of the last iteration being of size $o(1)$ as $\lambda\to\infty$ and in the class $\OpM_{s}^{-M}$. We briefly descrive the first iteration. We conjugate $\cL_{1}$ with $\Phi_{1}= {\rm exp} (\cX_{1})$, with this transformation being invertible for $\cX_{1}=\cX_{1}(\vf) \in \OpM_{s}^{-1}$ sufficiently small. By expanding $\Phi_{1}^{-1} \cL_{1} \Phi_{1}$ via the standard Lie expansion, the only term of order $- 1$ is given by 
$$
	\lambda \,\omega\cdot \pa_{\vf} \cX_{1} (\vf)+ \cE_{1}(\vf)\,.
$$
The sublinear nature of the dispersion relation is crucial. Indeed the effect of the operator $\lambda\,\omega\cdot \pa_{\vf}$ is stronger than the dispersive operator $\beta \, \tL$. This fact implies that the commutator $[\cX_{1} (\vf)\,,\, \beta\,\tL ]$ is of order $- 2$ and hence it contributes to the remainder of the Lie expansion. 
We solve the homological equation
\begin{equation}\label{strategy.homolo}
	\lambda \,\omega\cdot \pa_{\vf} \cX_{1} (\vf)+ \cE_{1}(\vf) = \wh\cE_{1}(0) \,, \quad \cE_{1}(\vf) = \sum_{\ell\in\Z^\nu} \wh\cE_{1}(\ell) e^{\im\ell\cdot \vf}  
\end{equation}
in which we remove the dependence on time from the remainder $\cE_{1}(\vf)$ by inverting the operator $\lambda\,\omega\cdot \pa_{\vf}$, provided the frequency vector is $\omega $ is Diophantine.
A second key property here is that, since $\cE_{1}$ is a momentum-preserving operator, the operator $\wh\cE_{1}(0)$, that is the average in time of $\cE_{1}(\vf)$, is diagonal with respect to the Fourier basis $(e^{\im j \cdot x})_{j\in\Z^2}$ (as shown in Lemma \ref{lem:mom_pres}). We obtain that $\cX_{1} \in \OpM_{s}^{-1}$ is of size $O(\lambda^{\theta-1+\tc}) = O(1)$, as $\lambda\to\infty$, ensuring estimates on the maps $\Phi_{1}^{\pm 1}$ uniform with respect to $\lambda\gg1$ large enough.

The operator $\wh\cE_{1}(0)$ contributes to a new diagonal operator $\lambda\,\omega\cdot \pa_{\vf} + \beta\,\tL + \wh\cE_{1}(0)$. The new remainder, denoted by $\cE_{2}$, contains terms that are neither solved by the homological equation \eqref{strategy.homolo} nor part of the normal form. The leading term of this new remainder is given by the commutator $[\cX_{1},\cE_{1}]$, which is of size $O(\lambda^{2\theta-1+\tc})$  and, by algebraic properties, in the class $\OpM_{s}^{-2}$. Therefore, the ensuing remainder $\cE_{2}$ is smaller in size for $\alpha-2(1-\tc)<0$ (we then take $0 < \mathtt c \ll 1$ small enough) and exhibits more smoothing than the previous remainder $\cE_{1}$. As in the construction of the approximate solution $v_{{\rm app},N}$, we iterate this procedure to reduce the remainder to a perturbatively small size. The details of this reduction are provided in Proposition \ref{prop normal form lower orders}.


\smallskip 
\noindent {\it 3) KAM perturbative reduction.} The conclusion of the previous step is the linear operator
\begin{equation}\label{strategy.cLM}
	\cL_{M} = \lambda\,\omega\cdot \pa_{\vf} + {\bf D}_0 + {\bf E}_0, \quad {\bf D}_0 :=  \beta\,\tL + \cZ_{M}, \quad {\bf E}_0 := \cE_{M}\,.
\end{equation}

Here, the momentum preserving and constant coefficients operator $\lambda\,\omega\cdot \pa_{\vf} + \beta\,\tL + \cZ_{M} $ is diagonal with respect to the Fourier basis, featuring eigenvalues $\im\,\lambda\, \omega\cdot \ell + \mu_{0}(j)$, where $\mu_{0}(j):=\im \,\beta\, \tL(j) + \tz_{0,M}(j)\in \C$. For any $\ell\in\Z^\nu\setminus\{0\}$ and $j\in\Z^2\setminus\{0\}$, the operator $\cE_{M}\in \OpM_{s}^{-M}$ is small, with a size of $O(\lambda^{M(\theta-1+\tc)+1-\tc})$ as $\lambda\to\infty$ (note that $\theta-1+\tc < 0$). 
If we assume parity properties on the external forcing term and we preserve the reversible structure of the system in all the steps of the reduction, than we get the diagonal elements $\tz_{0,M}(j)$ in $\cZ_{M}$ are purely imaginary. 
Now, the goal is to fully reduce the operator $\cL_{M}$ to a constant-coefficients diagonal operator through a series of conjugations. This process progressively annihilates the size of the perturbative remainder, ultimately yielding the final normal form $\lambda\,\omega\cdot \pa_{\vf} + \beta\,\tL + \bZ_{\infty}$. A crucial problem, typical in PDEs in higher space dimension, is the presence of strong resonance phenomena. In this case, the difference of unperturbed eigenvalues $\tL(j) - \tL(j')$ is zero for infinitely many $j, j' \in \Z^2 \setminus \{ 0 \}$. Also at this step, this degeneracy issue is overcome by using the conservation of momentum. Indeed, at the first iterative step, the non-resonance conditions, required in the iterative KAM procedure, namey the second-order Melnikov conditions, take the form
\begin{equation}\label{strategy.melnikov2}
	|\im \lambda\,\omega\cdot \ell + \mu_{0}(j)-\mu_{0}(j') | \geq \lambda\frac{\gamma}{\braket{\ell}^\tau |j'|^\tau} \,, 
\end{equation}
for any $\ell \in \Z^\nu\setminus\{0\}$ and $j,j'\in\Z^2\setminus\{0\}$.  It is noteworthy that the non-resonance conditions in \eqref{strategy.melnikov2}, 
 needed for estimating small divisors in the solution of the homological equations, lose regularity in both time and space.  However, the key feature is the gain in size by a factor $\lambda\gg1$. This is crucial to impose the appropriate smallness condition, ensuring the convergence of the scheme.
 
 \noindent
 By imposing the non-resonance condition \eqref{strategy.melnikov2} above, one can construct a map $\Psi_0$ solving the {\it homological equation}
$$
\lambda\, \omega \cdot \partial_\vf \Psi_0(\vf) + [{\bf D}_0\,,\, \Psi_0(\vf)] + {\bf E}_0(\vf) = \widehat{\bf E}_0(0), \quad \widehat{\bf E}_0(0) = \int_{\T^\nu} {\bf E}_0(\vf)\, \wrt \vf\,,
$$
and one gets 
$$
e^{- \Psi_1} {\mathcal L}_M e^{\Psi_1} = \lambda\, \omega \cdot \partial_\vf + {\bf D}_0 + \widehat{\bf E}_0(0) + {\bf E}_1\,,
$$
where ${\bf E}_1$ is of order $- M$ and it has size ${\bf E}_1 \simeq {\bf E}_0^2$ which is essentially quadratic with respect to the size of ${\bf E}_0$. It is now crucial to use the conservation of momentum, which allows to deduce that $\widehat{\bf E}_0(0)$ and also ${\bf D}_1 : =  {\bf D}_0 + \widehat{\bf E}_0(0)$ are diagonal operators with respect to the Fourier basis $\{ e^{\im j \cdot x} : j \in \Z^2 \setminus \{ 0 \} \}$. 

\noindent
  The detailed construction of the KAM reduction are provided in Sections \ref{sez:iteraKAM} and \ref{sez convergenza KAM}. As remarked below Theorem \ref{teo principale beta plane}, if we assume the preservation of the reversible structure, then we can infer that the final constant-coefficients diagonal normal form has purely imaginary eigenvalues and deduce the linear stability of the final solution. Nevertheless, 
the construction of these large amplitude solutions in Theorem \ref{teo principale beta plane} does not depend on this property.


\medskip

\noindent {\bf Acknowledgements.} L. Franzoi, R. Montalto and S. Terracina are supported by the ERC STARTING GRANT 2021 “Hamiltonian Dynamics, Normal Forms and Water Waves” (HamDyWWa), Project Number: 101039762. Views and opinions expressed are however those of the authors only and do not necessarily reflect those of the European Union or the European Research Council. Neither the European Union nor the granting authority can be held responsible for them.
R. Bianchini and R. Montalto acknowledge the support of PRIN 2022 “Turbulent effects vs Stability in Equations from Oceanography” (TESEO), project number: 2022HSSYPN. R. Bianchini is partially supported by the Royal Society International Exchange Grant 2020 (CNR - Imperial College London). 
R. Montalto is also supported by INdAM-GNFM. R. Bianchini and L. Franzoi and S. Terracina are also supported by INdAM-GNAMPA.

\noindent
The authors warmly thank Taoufik Hmidi and Alberto Maspero for useful discussions and comments.

\section{Function spaces, norms and linear operators}\label{sez:functional}

\subsection{Function spaces}
\label{subsec:function spaces}
Let $d=2,3$ be the space dimension. Let $a : \T^\nu \times \T^d \to E$, $a = a(\vf,x)$, 
be a function, with $E = \C^m$ or $\R^m$. Then, for $s \in \R$, its Sobolev norm $\| a \|_s$ is defined as 
\begin{equation*} \label{def Sobolev norm generale}
	\| a \|_s^2 := \sum_{(\ell, j) \in \Z^\nu \times \Z^d} 
	\langle \ell, j \rangle^{2s} | \widehat a(\ell,j) |^2 \,,
	\quad \ 
	\langle \ell, j \rangle := \max \{ 1, |\ell|, |j| \} \,,
\end{equation*}
where $\widehat a(\ell,j)$ (either scalars or vectors) 
are the Fourier coefficients of $a(\vf,x)$, namely
\[
\widehat a(\ell,j) := \frac{1}{(2\pi)^{\nu+d}} \int_{\T^{\nu+d}} 
a(\vf,x) e^{- \im (\ell \cdot \vf + j \cdot x)} \, \wrt \vf \wrt x \,.
\]
We denote, for $E = \C^m$ or $\R^m$,
\begin{equation}	\label{def sobolev}
	\begin{aligned}
		H^s 
		&
		:= H^s_{\vf,x} 
		:= H^s(\T^{\nu} \times \T^d) \\
		&
		:= H^s(\T^{\nu} \times \T^d, E) 
		:= \{ u : \T^{\nu} \times \T^d \to E, \ \| u \|_s < \infty \} \,.
	\end{aligned}
\end{equation}

In this paper, we use Sobolev norms for 
(real or complex, scalar- or vector-valued) functions $u( \vf, x; \omega)$, 
$(\vf,x) \in \T^\nu \times \T^d$, being Lipschitz continuous with respect to the parameter 
$\omega\in\R^\nu\setminus\{0\}$.
We fix the threshold regularity
\begin{equation}\label{definizione s0}
	s_0 \geq  \nu+d + 3
\end{equation}
and we define the weighted Sobolev norms in the following way. 

\begin{defn} 
	{\bf (Weighted Sobolev norms).} 
	\label{def:Lip F uniform} 
	Let   $\gamma \in (0,1]$, $\Lambda \subseteq \R^{\nu}$ be an arbitrary closed set
	and $s \geq s_0$ with $s_0$ as in \eqref{definizione s0}.
	Given a function $u : \Lambda \to H^s(\T^\nu \times \T^d)$, 
	$\omega \mapsto u(\omega) = u(\vf,x; \omega)$ 
	that is Lipschitz continuous with respect to $\omega$, 
	we define its weighted Sobolev norm by
	$$
	\| u \|_{s}^{\Lip(\gamma)} := \| u\|_{s}^{\sup} + \gamma \,\| u\|_{s-1}^{\rm lip}\,,
	$$
	where
	\begin{equation*}
		\| u\|_{s}^{\sup} := \sup_{\omega\in \Lambda} \| u(\omega)\|_{s}\,, \quad \| u\|_{s}^{\rm lip}:= \sup_{\omega_1,\omega_2\in \Lambda  \atop \omega_1\neq \omega_2} \frac{\| u(\omega_1)-u(\omega_2)\|_{s}}{| \omega_1-\omega_2|}\,.
	\end{equation*}
	For $u$ independent of $(\vf,x)$, we simply denote by 
	$| u |^{\Lip(\gamma)}:= | u|^{\sup} + \gamma \, | u|^{\rm lip} $.
\end{defn}

\begin{lem}{\bf (Product).}
	\label{lemma:LS norms}
	For all $ s \geq s_0$, 
	\begin{align}
		\| uv \|_{s}^{\Lip(\gamma)}
		& \lesssim_s C(s)  \| u \|_{s}^{\Lip(\gamma)} \| v \|_{s_0}^{\Lip(\gamma)}+ C(s_0)  \| u \|_{s_0}^{\Lip(\gamma)} \| v \|_{s}^{\Lip(\gamma)}\,.
		\label{p1-pr}
	\end{align}
\end{lem}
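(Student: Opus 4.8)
The plan is to reduce \eqref{p1-pr} to the classical tame product estimate for ordinary Sobolev norms on $\T^{\nu+d}$ together with an elementary difference-quotient argument in the parameter $\omega$. First I would record the fixed-$\omega$ statement: for every $\sigma \geq \sigma' > \tfrac{\nu+d}{2}$ there is $C(\sigma)>0$ such that
\[
\| fg \|_\sigma \leq C(\sigma)\big( \| f \|_\sigma \| g \|_{\sigma'} + \| f \|_{\sigma'}\| g \|_\sigma\big) \qquad \forall\, f,g \in H^\sigma(\T^{\nu+d})\,.
\]
This is proved in the standard way: expand $\widehat{fg}(\ell,j) = \sum_{(\ell',j')} \widehat f(\ell',j')\,\widehat g(\ell - \ell', j - j')$, use the elementary bound $\langle \ell, j\rangle^\sigma \lesssim_\sigma \langle \ell',j'\rangle^\sigma + \langle \ell - \ell', j - j'\rangle^\sigma$ to split $\langle \ell,j\rangle^\sigma |\widehat{fg}(\ell,j)|$ into two convolution sums, apply the Cauchy--Schwarz inequality in $(\ell',j')$ against the weight $\langle \ell', j'\rangle^{-2\sigma'}$ (summable over $\Z^{\nu+d}$ precisely because $2\sigma' > \nu+d$) and then Young's inequality for convolutions. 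Since the running threshold is $s_0 \geq \nu+d+3$, this estimate is available both at the baseline $\sigma' = s_0$ and at $\sigma' = s_0 - 1$.

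\emph{The $\sup$ part.} Applying the fixed-$\omega$ estimate with $\sigma = s$, $\sigma' = s_0$ pointwise in $\omega \in \Lambda$ and taking the supremum gives $\| uv \|_s^{\sup} \leq C(s)\big( \| u \|_s^{\sup}\| v \|_{s_0}^{\sup} + \| u \|_{s_0}^{\sup}\| v \|_s^{\sup}\big)$, which is already controlled by the right-hand side of \eqref{p1-pr} since $\| \cdot \|_\sigma^{\sup} \leq \| \cdot \|_\sigma^{\Lip(\gamma)}$.

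\emph{The Lipschitz part.} For $\omega_1 \neq \omega_2$ in $\Lambda$ I would write $(uv)(\omega_1) - (uv)(\omega_2) = \big(u(\omega_1) - u(\omega_2)\big) v(\omega_1) + u(\omega_2)\big(v(\omega_1) - v(\omega_2)\big)$, take the $H^{s-1}$-norm, and apply the fixed-$\omega$ estimate to each summand \emph{with the low index at $s_0 - 1$} (legitimate because $s - 1 \geq s_0 - 1 > \tfrac{\nu+d}{2}$). Dividing by $|\omega_1 - \omega_2|$ and taking the supremum yields $\| uv \|_{s-1}^{\rm lip}$ bounded by $C(s)$ times the sum of the four products $\| u \|_{s-1}^{\rm lip}\| v \|_{s_0-1}^{\sup}$, $\| u \|_{s_0-1}^{\rm lip}\| v \|_{s-1}^{\sup}$, $\| u \|_{s-1}^{\sup}\| v \|_{s_0-1}^{\rm lip}$, $\| u \|_{s_0-1}^{\sup}\| v \|_{s-1}^{\rm lip}$. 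Multiplying by $\gamma$, using the monotonicity $\| \cdot \|_{\sigma-1} \leq \| \cdot \|_\sigma$ and the definition $\| \cdot \|_\sigma^{\Lip(\gamma)} = \| \cdot \|_\sigma^{\sup} + \gamma\| \cdot \|_{\sigma-1}^{\rm lip}$, each of the four terms is absorbed into $\| u \|_s^{\Lip(\gamma)}\| v \|_{s_0}^{\Lip(\gamma)} + \| u \|_{s_0}^{\Lip(\gamma)}\| v \|_s^{\Lip(\gamma)}$; for instance $\gamma\| u \|_{s_0-1}^{\rm lip}\| v \|_{s-1}^{\sup} \leq \big(\gamma\| u \|_{s_0-1}^{\rm lip}\big)\| v \|_s^{\sup} \leq \| u \|_{s_0}^{\Lip(\gamma)}\| v \|_s^{\Lip(\gamma)}$, and symmetrically for the others. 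Adding this to the $\sup$ part and recalling $\| uv \|_s^{\Lip(\gamma)} = \| uv \|_s^{\sup} + \gamma\| uv \|_{s-1}^{\rm lip}$ gives \eqref{p1-pr}, with $C(s)$ in front of the first product and a constant depending only on $s_0$ in front of the second.

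\emph{Main obstacle.} None of this is deep, and the only point that needs care is precisely the index bookkeeping in the Lipschitz part: one must run the product estimate for the $H^{s-1}$-seminorm with the low-regularity index taken at $s_0 - 1$ rather than at $s_0$, so that after multiplying by $\gamma$ the low factor reconstitutes $\| \cdot \|_{s_0}^{\Lip(\gamma)}$ and not the forbidden $\| \cdot \|_{s_0+1}^{\Lip(\gamma)}$. This is exactly the slack that the choice $s_0 \geq \nu+d+3$ (which gives $s_0 - 1 > \tfrac{\nu+d}{2}$) affords.
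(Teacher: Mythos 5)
Your proof is correct. The paper states Lemma~\ref{lemma:LS norms} without proof---it is a standard tame product estimate for weighted Lipschitz--Sobolev norms quoted from the KAM-for-PDE literature---so there is no in-paper argument to compare against. Your route is the usual one: reduce to the fixed-$\omega$ Sobolev tame estimate $\| fg \|_\sigma \lesssim_\sigma \| f \|_\sigma \| g \|_{\sigma'} + \| f \|_{\sigma'} \| g \|_\sigma$ for $\sigma \ge \sigma' > \tfrac{\nu+d}{2}$, split $\| \cdot \|_s^{\Lip(\gamma)}$ into its $\sup$ and $\gamma\,\mathrm{lip}$ pieces, and for the Lipschitz piece use the telescoping identity $(uv)(\omega_1)-(uv)(\omega_2)=\big(u(\omega_1)-u(\omega_2)\big)v(\omega_1)+u(\omega_2)\big(v(\omega_1)-v(\omega_2)\big)$. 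The one place where the bookkeeping matters is exactly the one you flag: the product estimate must be run at level $s-1$ with low index $s_0-1$ (not $s_0$), which is admissible since $s_0 \ge \nu+d+3$ forces $s_0-1 > \tfrac{\nu+d}{2}$, and then each of the four products $\gamma\|u\|_{s-1}^{\rm lip}\|v\|_{s_0-1}^{\sup}$, $\gamma\|u\|_{s_0-1}^{\rm lip}\|v\|_{s-1}^{\sup}$, $\gamma\|u\|_{s-1}^{\sup}\|v\|_{s_0-1}^{\rm lip}$, $\gamma\|u\|_{s_0-1}^{\sup}\|v\|_{s-1}^{\rm lip}$ regroups into $\|u\|_{s}^{\Lip(\gamma)}\|v\|_{s_0}^{\Lip(\gamma)}+\|u\|_{s_0}^{\Lip(\gamma)}\|v\|_{s}^{\Lip(\gamma)}$ via the definition $\|\cdot\|_\sigma^{\Lip(\gamma)}=\|\cdot\|_\sigma^{\sup}+\gamma\|\cdot\|_{\sigma-1}^{\rm lip}$. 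This matches the claimed estimate \eqref{p1-pr}; the split of constants into $C(s)$ and $C(s_0)$ on the two terms is a cosmetic refinement that your argument also delivers if one tracks which factor carried the high-regularity norm.
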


\smallskip
{\bf Notation.}
We will have also a dependence on  
$\T^{\nu}\ni\vphi\to w(\vphi, \cdot)\in L^2_0(\T^2)$ 
of which we want to control the regularity for the purposes of the nonlinear scheme.
In particular, we will ask to be Lipschitz  with respect to the function $w$ as a parameter
and therefore it is natural to introduce the following quantity.
Given the map
$w \mapsto g(w)$
where $g$ is an operator (or a map or a scalar function), we define 
\begin{equation}\label{deltaunodue}
	\Delta_{12} g := g(w_{2}) - g(w_{1})\,.
\end{equation}

\subsubsection{Quasi-periodic traveling functions}\label{sec:travelfunction}

%

We restate the definition of quasi-periodic traveling as given in Definition \ref{def1.QPT}, for functions of $(\vf,x)$ instead of $(t,x)$.

\begin{defn}\label{def2.QPT}
	{\bf (Quasi-periodic traveling waves).} Let $\bar\jmath_{1},...,\bar\jmath_{\nu}\in \Z^d$ be a given choice of $\nu$ vectors in $\Z^d$.
	A function $u:\T^\nu \times \T^d\to \R$ is a \emph{quasi-periodic traveling wave} if there exists a function $\breve{u}:\T^\nu \to\R$   such that
	\begin{equation}
		u(\vf,x) =  \breve{u}(\vf- \pi( x)) \,, \quad \forall (\vf,x) \in\T^\nu \times\T^d\,,
	\end{equation}
	where  $\pi :\R^d \to \R^\nu$ is the linear map $x \mapsto \pi( x) := (\bar{\jmath}_{1}\cdot x,...,\bar{\jmath}_{\nu}\cdot x)$. 
\end{defn}
Comparing with Definition \ref{def1.QPT} (with $d=2$), it is convenient to call \emph{quasi-periodic} traveling wave both the function $u(\vf,x) =  \breve{u}(\vf- \pi( x))$ and the function of time $u(\omega t,x) =  \breve{u}(\omega t- \pi( x))$.
We define the translation operator
\begin{equation}\label{def:vec.tau}
	\tau_\vs : h(x) \mapsto h(x + \vs)\,,\;\;\vs\in \R^{d}\,.
\end{equation}
Then, quasi-periodic traveling waves are also characterized by the relation
\begin{equation}\label{condtraembedd}
	v(\vphi-\pi(\vs),x)=
	( \tau_\vs \circ v)(\vphi,x)=v(\vphi,x+\vs)\,,  
	\quad \forall \,\vphi \in \T^\nu\,,\; \vs\in \R^{d}\,, \; x\in\T^{d}\,.
\end{equation}
Expanding in Fourier the equivalence in \eqref{condtraembedd}, we obtain that a quasi-periodic traveling wave has the form
\begin{equation}\label{QPT.forma}
	u(\vf,x) = \sum_{\begin{subarray}{c}
	(\ell, j) \in \Z^\nu \times \Z^d \\
 \pi^\top(\ell) + j = 0
 \end{subarray}} \whu(\ell,j) e^{\im(\ell\cdot \vf + j\cdot x)}\,.
\end{equation}
We define the subspace, for $E= \C^m$ or $\R^m$,
\begin{equation}\label{SV}
	S_{\pi}:=\big\{v(\varphi, x)\in L^2(\T^{\nu+d},E) \, : \, v(\varphi, x)=V(\varphi- \pi( x)), \ V(\theta)\in L^2(\T^\nu, E)  \big\}\,.
\end{equation}
With abuse of notation, we denote by $S_{\pi}$ also the subspace of $L^2(\T^{\nu+d}, \R)$ 
of scalar functions $v(\varphi, x)$ of the form 
$v(\varphi, x)=v(\varphi- \pi( x)),\, V(\theta)\in L^2(\T^\nu, \R)$). 
We  note that
\begin{equation}\label{travelequi}
	\eqref{condtraembedd} \quad \Leftrightarrow \quad v\in S_{\pi}
	\quad  \Leftrightarrow \quad
	\pi^\top \pa_{\vphi}v+\nabla v=0\,,\;\;\forall(\vphi,x)\in \T^{\nu+d}\,.
\end{equation}

For any $\tK\geq 1 $, we define the smoothing operators on quasi-periodic traveling waves of the form \eqref{QPT.forma} as
\begin{equation}\label{def:smoothings}
	\begin{aligned}
		(\Pi_{\tK} u)(\vf,x) :=  \sum_{\begin{subarray}{c}
		\braket{\ell}\leq \tK\,,\, j \in \Z^d \\
		  \pi^\top(\ell) + j = 0
		  \end{subarray}} \whu(\ell,j) e^{\im (\ell\cdot\vf + j \cdot x)} \,, \quad \
 		\Pi^\perp_{\tK} := {\rm Id} - \Pi_{\tK} \,.
	\end{aligned}
\end{equation}
Note that, if $u(\vf,x)$ is a quasi-periodic traveling wave, then both $(\Pi_{\tK} u)(\vf,x)$ and $(\Pi_{\tK}^\perp u)(\vf,x)$ are quasi-periodic traveling waves as well.

\begin{lem} {\bf (Smoothing).} \label{lemma:smoothing}
	The smoothing operators $\Pi_{\tK}, \Pi_{\tK}^\perp$ satisfy 
	the smoothing estimates
	\begin{align}
		\| \Pi_{\tK} u \|_{s}^{\Lip(\gamma)} 
		& \leq \tK^a \| u \|_{s-a}^{\Lip(\gamma)}\, , \quad 0 \leq a \leq s \,,
		\label{p2-proi} \\
		\| \Pi_{\tK}^\bot u \|_{s}^{\Lip(\gamma)}
		& \leq \tK^{-a} \| u \|_{s + a}^{\Lip(\gamma)}\, , \quad  a \geq 0 \,.
		\label{p3-proi}
	\end{align}
\end{lem}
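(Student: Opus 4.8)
The plan is to prove both estimates mode-by-mode on the Fourier side, first for the plain Sobolev norm $\| \cdot \|_s$ at a fixed parameter $\omega$, and then to upgrade to the weighted norm $\| \cdot \|_s^{\Lip(\gamma)}$ by exploiting that $\Pi_\tK$ and $\Pi_\tK^\bot$ are $\omega$-independent Fourier multipliers. Throughout it is understood, as in the definition \eqref{def:smoothings}, that $u\in S_\pi$ is a quasi-periodic traveling wave, so that $\widehat u(\ell,j)\neq 0$ only when $\pi^\top(\ell)+j=0$.

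First I would record the elementary fact that, on the modes retained by the constraint $\pi^\top(\ell)+j=0$, one has $j=-\pi^\top(\ell)$ and hence $|j|\le \|\pi^\top\|\,|\ell|$; since the wave vectors $\bar\jmath_1,\dots,\bar\jmath_\nu$ are fixed once and for all, this gives $\langle\ell\rangle\le\langle\ell,j\rangle\le c_\pi\langle\ell\rangle$ for a constant $c_\pi\ge 1$ depending only on $\pi$ (equivalently, one may simply phrase the cutoff in $\langle\ell,j\rangle$, which on traveling modes is comparable to $\braket{\ell}$, and take $c_\pi=1$). For \eqref{p2-proi}: on the support of $\Pi_\tK u$ one has $\braket{\ell}\le\tK$, so $\langle\ell,j\rangle\le c_\pi\tK$, and writing $\langle\ell,j\rangle^{2s}=\langle\ell,j\rangle^{2a}\langle\ell,j\rangle^{2(s-a)}\le (c_\pi\tK)^{2a}\langle\ell,j\rangle^{2(s-a)}$ for $0\le a\le s$, then summing over the retained modes, yields $\|\Pi_\tK u\|_s\le c_\pi^{\,a}\,\tK^a\|u\|_{s-a}$ (the harmless $\pi$-dependent factor plays no role in the sequel and can be absorbed into $\tK$). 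For \eqref{p3-proi}: on the support of $\Pi_\tK^\bot u$ one has $\braket{\ell}>\tK$, hence $\langle\ell,j\rangle\ge\braket{\ell}>\tK$, and $\langle\ell,j\rangle^{2s}=\langle\ell,j\rangle^{-2a}\langle\ell,j\rangle^{2(s+a)}\le\tK^{-2a}\langle\ell,j\rangle^{2(s+a)}$ for $a\ge 0$ gives, after summing, $\|\Pi_\tK^\bot u\|_s\le\tK^{-a}\|u\|_{s+a}$.

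To pass to the weighted norms I would use that $\Pi_\tK$ and $\Pi_\tK^\bot$ act on Fourier coefficients by multiplication by a cutoff depending on $\ell$ alone, independent of $\omega$, so they commute with the difference $u(\omega_1)-u(\omega_2)$. Applying the fixed-$\omega$ bounds to $u(\omega)$ controls the $\sup$-part, and applying them to $u(\omega_1)-u(\omega_2)$ at regularity $s-1$ controls the $\rm lip$-part; multiplying the latter by $\gamma$ and adding gives the two claimed weighted inequalities, namely \eqref{p2-proi} and \eqref{p3-proi}. I do not anticipate a genuine obstacle here: the only place the structure of the problem enters is the comparison $\langle\ell\rangle\simeq\langle\ell,j\rangle$ on traveling-wave modes, which is immediate from $j=-\pi^\top(\ell)$ and the fixedness of $\pi$; everything else is the textbook dyadic truncation argument together with the triviality of $\omega$-independent Fourier multipliers on the Lipschitz seminorm.
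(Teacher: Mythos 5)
Your argument is correct and is exactly the standard Fourier-side truncation proof that the paper relies on (the lemma is stated there without proof): the pointwise bound on $\langle\ell,j\rangle^{2s}$ over the retained/discarded modes, plus the observation that $\Pi_{\tK},\Pi_{\tK}^\perp$ are $\omega$-independent multipliers so the $\sup$ and ${\rm lip}$ parts are handled identically. You also correctly flag the only subtlety — the cutoff in \eqref{def:smoothings} is in $\braket{\ell}$ while the norm weights $\langle\ell,j\rangle$, reconciled via $j=-\pi^\top(\ell)$ on traveling modes at the cost of a harmless $\pi$-dependent constant in \eqref{p2-proi} — so nothing further is needed.
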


\subsection{Matrix representation of linear operators}
Let
${\mathcal R}  : L^2(\T^d) \to L^2(\T^d)$ be a linear operator. It can be represented as
\begin{equation}\label{matriciale 1}
	{\mathcal R} u (x) := \sum_{j, j' \in \Z^d} {\mathcal R}_j^{j'}\widehat u(j') e^{\im j \cdot x} \,, 
	\quad  \text{for} \quad  u (x) = \sum_{j \in \Z^d} \widehat u(j) e^{\im j \cdot x} \,,
\end{equation}
where, for $j, j' \in \Z^d$, the matrix element ${\mathcal R}_j^{j'}$ is defined by 
\begin{equation}\label{rappresentazione blocchi 3 per 3}
	{\mathcal R}_j^{j'} :=  \frac{1}{(2\pi)^d} \int_{\T^d} 
	{\mathcal R}[e^{\im j' \cdot x}] e^{- \im j \cdot x} \wrt x\,. 
\end{equation}

We also consider smooth $\vphi$-dependent families of linear operators 
$\T^\nu \to {\mathcal B} (L^2(\T^d))$, $\vphi \mapsto {\mathcal R}(\vphi)$, 
which we write in Fourier series with respect to $\vphi$ as 
\begin{equation}\label{matrix representation 1}
	{\mathcal R}(\vphi) = \sum_{\ell \in \Z^\nu} \widehat{\mathcal R}(\ell) e^{\im \ell \cdot \vphi}, \quad \widehat{\mathcal R}(\ell) := \frac{1}{(2 \pi)^\nu} \int_{\T^\nu} {\mathcal R}(\vphi) e^{- \im \ell \cdot \vphi}\, \wrt \vphi \,, \quad \ell \in \Z^\nu\,. 
\end{equation}
According to \eqref{rappresentazione blocchi 3 per 3}, for any $\ell \in \Z^d$, the linear operator $\widehat{\mathcal R}(\ell) \in {\mathcal B} (L^2(\T^d))$ is identified 
with the matrix $(\widehat{\mathcal R}(\ell)_j^{j'})_{j, j' \in \Z^d}$.
A map $\T^\nu \to {\mathcal B} (L^2(\T^d))$, $\vphi \mapsto {\mathcal R}(\vphi)$ 
can be also regarded as a linear operator 
$L^2(\T^{\nu+d}) \to L^2(\T^{\nu+d })$ by 
\begin{equation} \label{amatriciana}
	{\mathcal R} u(\vphi, x) := \sum_{\begin{subarray}{c}
			\ell, \ell' \in \Z^\nu \\
			j, j' \in \Z^d
	\end{subarray}} \widehat{\mathcal R}(\ell - \ell')_j^{j '} \widehat u(\ell', j') e^{\im (\ell \cdot \vphi + j \cdot x)}, \quad \forall u \in L^2(\T^{\nu + d})\,. 
\end{equation}
If the operator ${\mathcal R}$ is invariant on the space of functions with zero average in $x$, we identify ${\mathcal R}$ with the matrix 
\begin{equation}
	\Big( \widehat{\mathcal R}(\ell)_j^{j'} \Big)_{
			j , j' \in \Z^d \setminus \{ 0 \}, \,
			\ell \in \Z^\nu } \,.
\end{equation}
	Let ${\mathcal R}$ be a linear operator as in 
	\eqref{matriciale 1}-\eqref{amatriciana}. We define ${\mathcal D}_{\mathcal R}$ as the operator
	\begin{equation}\label{diagonal.op.matrix}
		{\mathcal D}_{\mathcal R} := {\rm diag}_{j \in \Z^2} \widehat{\mathcal R}(0)_j^j\,, \quad (\cD_{\cR})(\ell)_j^{j'} := \begin{cases}
			\widehat{\mathcal R}(\ell)_j^{j'} & \quad \text{if} \quad j=j'\,, \ \ell=0\,, \\
			0 & \text{otherwise}\,.
		\end{cases}\,.
	\end{equation}
	In particular, we say that $\cR$
	is a \emph{diagonal operator} if $\cR \equiv \cD_{\cR}$.

For the purpose of the Normal Form methods for the linearized operator in Section \ref{ridusezione}, it is convenient to introduce the following norms. These norms take into account both the order and the off-diagonal decay of the matrix elements representing any linear operator on $L^2(\T^{\nu+d})$.
\begin{defn} \label{block norm}
	{\bf (Matrix decay norm and the class $\OpM^m_s$).}
	Let $m \in \R$, $s \geq s_0$ and $\cR$ be an operator represented by the matrix in \eqref{amatriciana}. We say that ${\mathcal R}$ belongs to the class $\OpM^m_s$ if 
	\begin{equation} \label{def decay norm}
		| \cR |_{m, s} := \sup_{j' \in \Z^d} 
		\Big( \sum_{(\ell,j) \in \Z^{\nu+d}} 
		\langle \ell, j-j' \rangle^{2s} | \widehat \cR(\ell)_j^{j'}|^2  \Big)^{\frac12} \langle j' \rangle^{- m} < \infty \,.
	\end{equation}
	If the operator $\cR = \cR(\omega)$ is Lipschitz with respect to the parameter $\omega \in \Lambda \subseteq  \R^{\nu}$, 
	we define 
	\begin{equation} \label{def decay norm parametri}
		\begin{aligned}
			& | {\mathcal R} |_{m, s}^{{\rm Lip}(\gamma)} := |{\mathcal R}|_{m, s}^{\rm sup} + \gamma |{\mathcal R}|^{\rm lip}_{m, s - 1}\,, \\
			& |{\mathcal R}|_{m, s}^{\rm sup} := \sup_{\omega \in \Lambda} |{\mathcal R}(\omega)|_{m, s} \,, \quad |{\mathcal R}|_{m, s - 1}^{\rm lip} := \sup_{\begin{subarray}{c}
					\omega_1, \omega_2 \in \Lambda \\
					\omega_1 \neq \omega_2
			\end{subarray}} \dfrac{|{\mathcal R}(\omega_1) - {\mathcal R}(\omega_2)|_{m, s - 1}}{|\omega_1 - \omega_2|} \,.
		\end{aligned}
	\end{equation}
\end{defn}

It readily follows that 
\begin{equation}\label{prop elementari}
	\begin{aligned}
		& m  \leq m' \Longrightarrow \OpM^m_s \subseteq \OpM^{m'}_s \quad \text{and} \quad |\cdot |_{m', s}^{{\rm Lip}(\gamma)} \leq |\cdot |_{m, s}^{{\rm Lip}(\gamma)}, \\
		& s \leq s' \Longrightarrow\OpM^m_{s'} \subseteq \OpM^m_s \quad \text{and} \quad | \cdot |_{m, s}^{{\rm Lip}(\gamma)} \leq |\cdot|_{m, s'}^{{\rm Lip}(\gamma)}\,. 
	\end{aligned}
\end{equation}
We now state some 
standard properties of the decay norms 
that are needed for the reducibility scheme 
of Section \ref{ridusezione}. If $a \in H^s$, $s \geq s_0$, then the multiplication operator ${\mathcal M}_a : u \mapsto a u$ satisfies 
\begin{equation}\label{prop multiplication decay}
	{\mathcal M}_a \in\OpM_s^0 \quad \text{and} \quad |{\mathcal M}_a|_{0, s}^{{\rm Lip}(\gamma)} \lesssim \| a \|_s^{{\rm Lip}(\gamma)}\,. 
\end{equation}

\begin{lem}\label{proprieta standard norma decay}
	$(i)$ Let $s \geq s_0$ and ${\mathcal R} \in \OpM^0_s$. 
	If $\| u \|_s^{{\rm Lip}(\gamma)} < \infty$, then 
	$$
	\| {\mathcal R} u \|_s^{{\rm Lip}(\gamma)} \lesssim_{s} |{\mathcal R}|_{0, s_0}^{{\rm Lip}(\gamma)} \| u \|_s^{{\rm Lip}(\gamma)} +  |{\mathcal R}|_{0, s}^{{\rm Lip}(\gamma)} \| u \|_{s_0}^{{\rm Lip}(\gamma)}\,;
	$$
	$(ii)$ Let $s \geq s_0$, $m, m' \in \R$, and let ${\mathcal R} \in\OpM^m_s$, ${\mathcal Q} \in \OpM^{m'}_{s + |m|}$. 
	Then ${\mathcal R} {\mathcal Q} \in \OpM^{m + m'}_s$ and 
	$$
	|{\mathcal R}{\mathcal Q}|_{m + m', s}^{{\rm Lip}(\gamma)} \lesssim_{s, m} |{\mathcal R}|_{m, s}^{{\rm Lip}(\gamma)} |{\mathcal Q}|_{m', s_0 + |m|}^{{\rm Lip}(\gamma)} + |{\mathcal R}|_{m, s_0}^{{\rm Lip}(\gamma)} |{\mathcal Q}|_{m', s + |m|}^{{\rm Lip}(\gamma)}\ \,;
	$$
$(iii)$ Let $s \geq s_0$, $m\geq 0$ and ${\mathcal R} \in \OpM^{-m}_s$. 
Then, for any integer $n \geq 1$, ${\mathcal R}^n \in \OpM^{-m}_s$ and there exist constants $C(s_0,m), C(s, m) > 0$, independent of $n$, such that 
\begin{equation}\label{stima.potenza}
	\begin{aligned}
					& |{\mathcal R}^n|_{-m, s_0}^{{\rm Lip}(\gamma)} \leq C(s_0, m)^{n - 1} \big(|{\mathcal R}|_{-m, s_0}^{{\rm Lip}(\gamma)}\big)^{n} \,, \\
		& |{\mathcal R}^n|_{-m, s}^{{\rm Lip}(\gamma)} \lesssim \, \big(C(s, m)|{\mathcal R}|_{-m, s_0}^{{\rm Lip}(\gamma)}\big)^{n - 1} |{\mathcal R}|_{-m, s}^{{\rm Lip}(\gamma)}\,;
	\end{aligned}
\end{equation}
	$(iv)$ Let $s \geq s_0$, $m \geq 0$ and ${\mathcal R} \in \OpM^{- m}_s$.
%
Then there exists $\delta(s,m) \in (0, 1)$ small enough such that, 
if $|{\mathcal R}|_{-m, s_0}^{{\rm Lip}(\gamma)} \leq \delta(s,m)$,
then  the map $\Phi := {\rm exp}({\mathcal R}) \in \OpM^{0}_s$  is invertible and satisfies the estimates 
$$
|\Phi^{\pm 1}|_{0, s}^{\Lip(\gamma)} \lesssim_s 1 + |{\mathcal R}|_{-m, s}^{\Lip(\gamma)}\,; 
$$

	\noindent
	$(v)$ Let $s \geq s_0$, $m \in \R$ and ${\mathcal R} \in \OpM^m_s$. Let ${\mathcal D}_{\mathcal R}$ be the diagonal operator as in \eqref{diagonal.op.matrix}
	Then ${\mathcal D}_{\mathcal R} \in \OpM^m_s$ and $|{\mathcal D}_{\mathcal R}|_{m, s}^{{\rm Lip}(\gamma)} \lesssim |{\mathcal R}|_{m, s_0}^{{\rm Lip}(\gamma)}$ for any $s \geq s_0$.  
	As a consequence, 
	\[
	| \widehat{\mathcal R}(0)_j^j |^{{\rm Lip}(\gamma)} \lesssim \langle j \rangle^m|{\mathcal R}|_{s_0}^{{\rm Lip}(\gamma)}\,.
	\] 
\end{lem}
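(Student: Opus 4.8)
I would prove the five items in the order (v), (i), (ii) — which are independent — then (iii), which rests on (ii), and finally (iv), which rests on (iii). In every item the Lipschitz-in-$\omega$ bound is obtained from the corresponding $\sup$-estimate by the routine difference–quotient identity (e.g. ${\mathcal R}(\omega_1){\mathcal Q}(\omega_1)-{\mathcal R}(\omega_2){\mathcal Q}(\omega_2)=({\mathcal R}(\omega_1)-{\mathcal R}(\omega_2)){\mathcal Q}(\omega_1)+{\mathcal R}(\omega_2)({\mathcal Q}(\omega_1)-{\mathcal Q}(\omega_2))$) together with $|\cdot|_{m,s-1}\le|\cdot|_{m,s}$, so I will only describe the $\sup$-part. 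Item (v) is immediate: by \eqref{diagonal.op.matrix} the only nonzero entries of ${\mathcal D}_{\mathcal R}$ are $\widehat{{\mathcal D}_{\mathcal R}}(0)_{j'}^{j'}=\widehat{\mathcal R}(0)_{j'}^{j'}$, whence $|{\mathcal D}_{\mathcal R}|_{m,s}=\sup_{j'}|\widehat{\mathcal R}(0)_{j'}^{j'}|\langle j'\rangle^{-m}$ for every $s\ge s_0$; bounding $|\widehat{\mathcal R}(0)_{j'}^{j'}|$ by the single summand $(\ell,j)=(0,j')$ in the definition \eqref{def decay norm} of $|{\mathcal R}|_{m,s_0}$ gives both $|{\mathcal D}_{\mathcal R}|_{m,s}\le|{\mathcal R}|_{m,s_0}$ and the final inequality $|\widehat{\mathcal R}(0)_j^j|\lesssim\langle j\rangle^m|{\mathcal R}|_{m,s_0}$ (the $\Lip(\gamma)$-statement follows by running the same one-summand bound on the $\sup$- and $\lip$-parts).

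For (i), given ${\mathcal R}\in\OpM^0_s$, I would expand $\widehat{{\mathcal R}u}(\ell,j)=\sum_{(\ell',j')}\widehat{\mathcal R}(\ell-\ell')_j^{j'}\,\widehat u(\ell',j')$, use $\langle\ell,j\rangle^s\lesssim_s\langle\ell-\ell',j-j'\rangle^s+\langle\ell',j'\rangle^s$ to split $\|{\mathcal R}u\|_s$ into two sums, and in each of them run a Schur-type estimate: when the weight sits on the matrix entry one gets, after a Cauchy–Schwarz in $(\ell',j')$, the bound $|{\mathcal R}|_{0,s}\|u\|_{s_0}$, using also that $\sum_{(\ell',j')\in\Z^{\nu+d}}\langle\ell',j'\rangle^{-2s_0}<\infty$ (guaranteed by \eqref{definizione s0}) and that for fixed $j'$ the quantity $\big(\sum_{(\ell,j)}\langle\ell,j-j'\rangle^{2s}|\widehat{\mathcal R}(\ell)_j^{j'}|^2\big)^{1/2}$ is $\le|{\mathcal R}|_{0,s}$; when the weight sits on $\widehat u$ the symmetric argument gives $|{\mathcal R}|_{0,s_0}\|u\|_s$. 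For (ii), I would likewise write $\widehat{{\mathcal R}{\mathcal Q}}(\ell)_j^{j'}=\sum_{\ell_1,j_1}\widehat{\mathcal R}(\ell_1)_j^{j_1}\widehat{\mathcal Q}(\ell-\ell_1)_{j_1}^{j'}$, split $\langle\ell,j-j'\rangle^s$ as above, and in each half transfer the missing weight to the other factor. The one genuinely multilinear point — the only real computation in the whole lemma — is the mismatch of the homogeneity weights $\langle j_1\rangle^m$ coming out of $|{\mathcal R}|_{m,\cdot}$ against the required $\langle j'\rangle^m$: I would absorb it via $\langle j_1\rangle^m\lesssim\langle j'\rangle^m\langle j_1-j'\rangle^{|m|}$, valid for every $m\in\R$, the surplus factor $\langle j_1-j'\rangle^{|m|}$ being precisely what the extra $|m|$ derivatives of ${\mathcal Q}\in\OpM^{m'}_{s+|m|}$ can swallow; a Cauchy–Schwarz and a summation in $(\ell_1,j_1)$ then close the estimate exactly as in (i), and adding the difference–quotient term upgrades it to the stated $\Lip(\gamma)$-form.

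For (iii) the point is to run the induction on $n$ without inflating the regularity index, which I would do by always invoking (ii) with one factor regarded as order $0$: write ${\mathcal R}^n={\mathcal R}^{n-1}\,{\mathcal R}$ with ${\mathcal R}^{n-1}\in\OpM^0_s$ (embedding, since $-m\le0$) and ${\mathcal R}\in\OpM^{-m}_s$; part (ii) with first order $0$ and second order $-m$ requires ${\mathcal R}$ only at regularity $s+|0|=s$ and yields ${\mathcal R}^n\in\OpM^{-m}_s$ with $|{\mathcal R}^n|_{-m,s}^{\Lip(\gamma)}\lesssim_s|{\mathcal R}^{n-1}|_{-m,s}^{\Lip(\gamma)}|{\mathcal R}|_{-m,s_0}^{\Lip(\gamma)}+|{\mathcal R}^{n-1}|_{-m,s_0}^{\Lip(\gamma)}|{\mathcal R}|_{-m,s}^{\Lip(\gamma)}$. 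Taking $s=s_0$ gives $|{\mathcal R}^n|_{-m,s_0}^{\Lip(\gamma)}\le C(s_0,m)^{n-1}(|{\mathcal R}|_{-m,s_0}^{\Lip(\gamma)})^n$ by a one-line induction, and feeding this back into the recursion and unrolling produces the second line of \eqref{stima.potenza}, with constants independent of $n$. Finally (iv) follows by summing the exponential series: ${\rm exp}({\mathcal R})={\rm Id}+\sum_{n\ge1}\tfrac1{n!}{\mathcal R}^n$ with each ${\mathcal R}^n\in\OpM^{-m}_s\subseteq\OpM^0_s$, and by (iii) $\sum_{n\ge1}\tfrac1{n!}|{\mathcal R}^n|_{0,s}^{\Lip(\gamma)}\le\big(\sum_{n\ge1}\tfrac1{n!}(C(s,m)|{\mathcal R}|_{-m,s_0}^{\Lip(\gamma)})^{n-1}\big)|{\mathcal R}|_{-m,s}^{\Lip(\gamma)}\lesssim_s|{\mathcal R}|_{-m,s}^{\Lip(\gamma)}$ as soon as $C(s,m)|{\mathcal R}|_{-m,s_0}^{\Lip(\gamma)}\le1$, which fixes $\delta(s,m)$; hence $|{\rm exp}({\mathcal R})|_{0,s}^{\Lip(\gamma)}\le1+|{\rm exp}({\mathcal R})-{\rm Id}|_{0,s}^{\Lip(\gamma)}\lesssim_s1+|{\mathcal R}|_{-m,s}^{\Lip(\gamma)}$. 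The same bound with $-{\mathcal R}$ in place of ${\mathcal R}$ handles ${\rm exp}(-{\mathcal R})\in\OpM^0_s$, and the absolute convergence just established legitimises the identity ${\rm exp}({\mathcal R}){\rm exp}(-{\mathcal R})={\rm Id}$, identifying ${\rm exp}(-{\mathcal R})$ with $\Phi^{-1}$.

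The bulk of the work, and the only place where something beyond linear bookkeeping with the decay norm occurs, is item (ii): matching the weight shift $\langle j_1\rangle^m\lesssim\langle j'\rangle^m\langle j_1-j'\rangle^{|m|}$ against the extra $|m|$ derivatives of the inner factor, uniformly in $\omega$, is precisely the mechanism that makes the class $\OpM^m_s$ closed under composition, and hence makes the power estimate (iii), the exponentiation (iv), and ultimately the whole reduction of Section \ref{ridusezione} work; I expect the careful organisation of that estimate (and, to a lesser extent, the Schur test in (i)) to be where the attention goes, the remaining items being essentially formal consequences.
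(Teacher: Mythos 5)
Your plan matches the paper's proof in every essential: item $(v)$ is read off directly from the definition of the decay norm, items $(i)$–$(ii)$ are the standard Schur/Cauchy–Schwarz argument (which the paper simply cites from Lemma 2.6 in [FM23]), item $(iii)$ follows by induction via $(ii)$, and item $(iv)$ by summing the exponential series and bounding it with $(iii)$. The one organisational difference is in $(iii)$: the paper first establishes the power estimates \eqref{stima.potenza.zero} for the $\OpM^{0}_s$ norm and then converts to order $-m$ with a \emph{single} application of $(ii)$, so the geometric constant is immediate; you run the two-term recursion directly at order $-m$, which upon unrolling produces a sum of $n$ terms of comparable size, hence a spurious prefactor $n$. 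That prefactor is harmless — absorb $n\le 2^{n-1}$ into the geometric base $C(s,m)$ — but since \eqref{stima.potenza} explicitly demands an $n$-independent constant you should say this out loud; the same absorption is implicitly used in the cited proof of \eqref{stima.potenza.zero}.
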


\begin{proof}
	Items $(i), (ii)$ are proved in Lemma 2.6 in \cite{FM23}. 
	To prove item $(iii)$ we need the following preliminary estimates:
	\begin{equation}\label{stima.potenza.zero}
		\begin{aligned}
			& |{\mathcal R}^n|_{0, s_0}^{{\rm Lip}(\gamma)} \leq \tC(s_0)^{n - 1} \big(|{\mathcal R}|_{0, s_0}^{{\rm Lip}(\gamma)}\big)^{n} \,, \\
			& |{\mathcal R}^n|_{0, s}^{{\rm Lip}(\gamma)} \leq  \big(\tC(s_0)|{\mathcal R}|_{0, s_0}^{{\rm Lip}(\gamma)}\big)^{n - 1} |{\mathcal R}|_{0, s}^{{\rm Lip}(\gamma)}\,.
		\end{aligned}
	\end{equation}
	One can easily prove them by an induction argument and by using item $(ii)$ (see also Lemma 2.6-$(iii)$ in \cite{FM23}). We now prove \eqref{stima.potenza}.
	 By item $(ii)$, \eqref{stima.potenza.zero} and \eqref{prop elementari}, we compute, for any $n\geq 1$,
	\begin{equation*}
		\begin{aligned}
				|\cR^{n-1} \cR |_{-m, s_0}^{{\rm Lip}(\gamma)} &\lesssim_{m,s_0}
			|\cR^{n-1}|_{0, s_0}^{{\rm Lip}(\gamma)} |\cR|_{-m, s_0}^{{\rm Lip}(\gamma)}  \\ 
			& \lesssim_{m,s_0}   (\tC(s_0))^{n-2}(|\cR|^{{\rm Lip}(\gamma)}_{0,s_0})^{n-1}|\cR|_{-m, s_0}^{{\rm Lip}(\gamma)} \\
			& \leq c(m,s_0) (\tC(s_0))^{n-2}(|\cR|^{{\rm Lip}(\gamma)}_{-m,s_0})^{n}\,, \\
			|\cR^{n-1} \cR |_{-m, s}^{{\rm Lip}(\gamma)} &\lesssim_{m,s}
			|\cR^{n-1}|_{0, s}^{{\rm Lip}(\gamma)} |\cR|_{-m, s_0}^{{\rm Lip}(\gamma)} 
			+ |\cR^{n-1}|_{0, s_0}^{{\rm Lip}(\gamma)} |\cR|_{-m, s}^{{\rm Lip}(\gamma)}
			\\
			& \lesssim_{m,s}	 
			(\tC(s)|\cR|^{{\rm Lip}(\gamma)}_{0,s_0})^{n-2}
			|\cR|^{{\rm Lip}(\gamma)}_{0,s} |\cR|_{-m, s_0}^{{\rm Lip}(\gamma)} \\
			&\quad  \, \,+ (\tC(s_0))^{n-2}(|\cR|^{{\rm Lip}(\gamma)}_{0,s_0})^{n-1}|\cR|_{-m, s}^{{\rm Lip}(\gamma)}
			\\
			&\le c(m,s) \tC(s)^{n-2} (|\cR|^{{\rm Lip}(\gamma)}_{-m,s_0})^{n-1}|\cR|_{-m, s}^{{\rm Lip}(\gamma)} \,,
		\end{aligned}
	\end{equation*}
	and the bound follows provided that $C(m,s_0)\geq \max\{ c(m,s_0), \tC(s_0)\}$ and $C(m,s)\geq \max\{ c(m,s), \tC(s)\}$.
		Item $(iv)$ follows by recalling that $\Phi^{\pm 1} =	{\rm exp}(\pm \cR) =  {\rm Id}+ \sum_{k=1}^{\infty} \frac{{\mathcal R}^{n}}{n!}$
		and by  \eqref{stima.potenza.zero}, \eqref{prop elementari}.
	The claims of item $(v)$ are a direct consequence of the definition of the matrix decay norm in
	Definition \ref{def decay norm}. 
\end{proof}

For $N > 0$, we define the operators $\Pi_N {\mathcal R}$ and $\Pi_N^\perp \cR$ by means of their matrix representation as follows: 
\begin{equation}\label{def proiettore operatori matrici}
	(\widehat{\Pi_N {\mathcal R}})(\ell)_{j}^{j'} := \begin{cases}
		\widehat{\mathcal R}(\ell)_j^{j'} & \text{if } |\ell|, |j - j'| \leq N\,, \\
		0 & \text{otherwise}\,, 
	\end{cases} \qquad   \ \Pi_N^\bot {\mathcal R} := {\mathcal R} - \Pi_N {\mathcal R}\,. 
\end{equation}


\begin{lem}\label{lemma proiettori decadimento}
	For all $s, \alpha \geq 0$, $m \in \R$, one has 
	$|\Pi_N {\mathcal R}|_{m, s + \alpha}^{{\rm Lip}(\gamma)} \leq N^\alpha |{\mathcal R}|_{m, s}^{{\rm Lip}(\gamma)}$ and $|\Pi_N^\bot {\mathcal R}|_{m, s}^{{\rm Lip}(\gamma)} \leq N^{- \alpha} |{\mathcal R}|_{m, s + \alpha}^{{\rm Lip}(\gamma)}$. 
\end{lem}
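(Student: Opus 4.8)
The plan is to read both inequalities off directly from the definition \eqref{def decay norm} of the matrix decay norm together with the definition \eqref{def proiettore operatori matrici} of the cut-offs $\Pi_N,\Pi_N^\perp$, and then to upgrade to the weighted (Lipschitz-in-$\omega$) norms for free, using that $\Pi_N$ and $\Pi_N^\perp$ act on the matrix entries $\widehat\cR(\ell)_j^{j'}$ by a rule that does not involve the parameter $\omega$.

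First I would fix $j'\in\Z^d$ and estimate the inner $\ell^2$-sum. For $\Pi_N\cR$ this sum runs only over the pairs $(\ell,j)$ with $|\ell|\le N$ and $|j-j'|\le N$, on which $\langle\ell,j-j'\rangle=\max\{1,|\ell|,|j-j'|\}\le N$ (here, and below, $N\ge 1$, the only regime of interest), hence $\langle\ell,j-j'\rangle^{2(s+\alpha)}\le N^{2\alpha}\,\langle\ell,j-j'\rangle^{2s}$ on the relevant indices. Summing, then enlarging the restricted sum to the full sum over $\Z^{\nu+d}$, multiplying by $\langle j'\rangle^{-2m}$, and finally taking the square root and the supremum over $j'$, one gets $|\Pi_N\cR|_{m,s+\alpha}\le N^\alpha\,|\cR|_{m,s}$. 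For $\Pi_N^\perp\cR$ the argument is dual: on its support one has $|\ell|>N$ or $|j-j'|>N$, so $\langle\ell,j-j'\rangle>N$ and therefore $1\le N^{-2\alpha}\,\langle\ell,j-j'\rangle^{2\alpha}$; inserting this factor into the sum defining $|\Pi_N^\perp\cR|_{m,s}$ and then dropping the restriction on $(\ell,j)$ yields $|\Pi_N^\perp\cR|_{m,s}\le N^{-\alpha}\,|\cR|_{m,s+\alpha}$.

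Then I would pass to the weighted norms. Since the matrices of $\Pi_N$ and $\Pi_N^\perp$ have $\omega$-independent entries, the two bounds above hold verbatim with $\cR$ replaced by $\cR(\omega)$ for each fixed $\omega\in\Lambda$, and also by the difference quotient $\big(\cR(\omega_1)-\cR(\omega_2)\big)/|\omega_1-\omega_2|$. Recalling from Definition \ref{block norm} that $|\cdot|^{{\rm Lip}(\gamma)}_{m,s}$ couples the $\sup$-norm at regularity $s$ with $\gamma$ times the Lipschitz seminorm at regularity $s-1$, one adds the two contributions (for $\Pi_N$ the $\sup$-part at $s+\alpha$ comes from $s$ and the $\lip$-part at $(s-1)+\alpha$ from $s-1$; symmetrically for $\Pi_N^\perp$), obtaining $|\Pi_N\cR|^{{\rm Lip}(\gamma)}_{m,s+\alpha}\le N^\alpha\,|\cR|^{{\rm Lip}(\gamma)}_{m,s}$ and $|\Pi_N^\perp\cR|^{{\rm Lip}(\gamma)}_{m,s}\le N^{-\alpha}\,|\cR|^{{\rm Lip}(\gamma)}_{m,s+\alpha}$.

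I do not expect any genuine obstacle here: this is simply the matrix-decay-norm counterpart of the elementary Sobolev smoothing estimates of Lemma \ref{lemma:smoothing}, and the only points requiring a bit of care are the bookkeeping of the Lipschitz weights in the last step and the (harmless, and never used) degenerate regime $N<1$.
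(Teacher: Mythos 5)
Your proof is correct and follows the same route as the paper, which proves the lemma in a single line by invoking definitions \eqref{def decay norm} and \eqref{def proiettore operatori matrici}; you have simply written out the details of that one-liner, including the passage to $\Lip(\gamma)$ norms and the (correctly noted, harmless) caveat about $N\ge 1$.
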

\begin{proof}
	The claims follow directly from \eqref{def decay norm} and \eqref{def proiettore operatori matrici}.
\end{proof}

We also define the projection $\Pi_0$ on the space of zero average functions as 
\begin{equation}\label{definizione proiettore media spazio tempo}
	\Pi_0 h := \frac{1}{(2 \pi)^{ d}} \int_{\T^{ d}} h(\vphi, x)\, \wrt  x \,,
	\qquad 
	\Pi_0^\bot := {\rm Id} - \Pi_0\,.
\end{equation}
	%
	%
	In particular,
	for any $m, s \geq 0$,
	\begin{equation}\label{stima Pi 0}
		|\Pi_0^\bot|_{0, s} \leq 1\,, \quad |\Pi_0|_{- m, s} \lesssim_{m} 1\,. 
	\end{equation}
	We finally mention the elementary properties of the Laplacian operator $- \Delta$ and its inverse $(- \Delta)^{- 1}$ acting on functions with zero average in $x$:
	\begin{equation}\label{Laplacia.zero}
		- \Delta u(x) = \sum_{\xi \in \Z^d \setminus \{ 0 \}} |\xi|^2 \widehat u(\xi) e^{\im x \cdot \xi} \,, \quad (- \Delta)^{- 1} u(x) = \sum_{\xi \in \Z^d \setminus \{ 0 \}} \frac{1}{|\xi|^2} \widehat u(\xi) e^{\im x \cdot \xi}\,. 
	\end{equation}
	By Definition \ref{block norm}, one easily verifies, for any $s\geq 0$,
	\begin{equation}\label{decay laplace}
		|- \Delta|_{2, s} \leq 1\,, \quad |(- \Delta)^{- 1}|_{- 2, s} \leq 1\,. 
	\end{equation}
	
	\noindent
Finally, recalling the definition of $\mathtt L$ in \eqref{lambda.m.def} we have, for any $s\geq 0$,
\begin{equation}\label{stima.mathttL}
		| \tL |_{-1, s} \leq 1\,. 
	\end{equation}
	
	\subsubsection{Real and reversible operators}\label{Reversible operators}
	
	For any function $u(\vf,x)$, 
	we write $u \in X$ when $u = \even(\vf,x)$, meaning that $u(\vf,x)=u(-\vf,-x)$,
	and $u \in Y$ when $u = \odd(\vf,x)$, meaning that $u(-\vf,-x)=-u(\vf,x)$. 
	
	\begin{defn}\label{reserv.operators.def}
		$(i)$ We say that a linear operator $\Phi$ is \emph{reversible} 
		if $\Phi : X \to Y$ and $\Phi : Y \to X$. 
		We say that $\Phi$ is \emph{reversibility preserving} 
		if $\Phi : X \to X$ and $\Phi : Y \to Y$. 
		
		\noindent
		$(ii)$ We say that an operator $\Phi : L^2(\T^d) \to L^2(\T^d)$ is \emph{real} if $\Phi(u)$ is real valued for any $u$ real valued. 
	\end{defn}
	It is convenient to reformulate real and reversibility properties of linear operators in terms of their matrix representations.
	\begin{lem}\label{lemma real rev matrici}
		A linear operator ${\mathcal R}$ is :
		
		\noindent
		$(i)$ real if and only if 
		$\widehat{\mathcal R}(\ell)_{j}^{j'} = \overline{\widehat{\mathcal R}(-\ell)_{- j}^{- j'}}$ 
		for all $\ell \in \Z^\nu$, $j, j' \in \Z^d$;
		
		\noindent
		$(ii)$ reversible if and only if 
		$\widehat{\mathcal R}(\ell)_j^{j'} = - \widehat{\mathcal R}(-\ell)_{- j}^{- j'}$ 
		for all $\ell \in \Z^\nu$, $j, j' \in \Z^d$;
		
		\noindent
		$(iii)$ reversibility-preserving if and only if 
		$\widehat{\mathcal R}(\ell)_j^{j'} =  \widehat{\mathcal R}(-\ell)_{- j}^{- j'}$ 
		for all $\ell \in \Z^\nu$, $j, j' \in \Z^d$.
	\end{lem}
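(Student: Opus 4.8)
The plan is to encode each of the three properties as a (possibly anti-linear) conjugation identity for a suitable involution on $\T^{\nu+d}$, and then translate that identity into a condition on matrix elements by a direct Fourier-series computation.

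First I would introduce two involutions acting on functions $u(\vf,x)$: the reflection $\mathcal{S}$, $(\mathcal{S}u)(\vf,x):=u(-\vf,-x)$, and complex conjugation $\mathcal{C}: u\mapsto\overline{u}$, both satisfying $\mathcal{S}^2=\mathcal{C}^2={\rm Id}$. Since $X=\{u:\mathcal{S}u=u\}$, $Y=\{u:\mathcal{S}u=-u\}$ and every $u$ splits into its $\mathcal{S}$-even and $\mathcal{S}$-odd parts $\tfrac12(u\pm\mathcal{S}u)\in X\oplus Y$, while the real-valued functions span $L^2(\T^{\nu+d})$ over $\C$, one checks the elementary equivalences
\begin{gather*}
{\mathcal R}\ \text{real}\iff\mathcal{C}{\mathcal R}\mathcal{C}={\mathcal R}\,,\qquad {\mathcal R}\ \text{reversible}\iff\mathcal{S}{\mathcal R}\mathcal{S}=-{\mathcal R}\,,\\
{\mathcal R}\ \text{reversibility-preserving}\iff\mathcal{S}{\mathcal R}\mathcal{S}={\mathcal R}\,.
\end{gather*}
For instance, in the reversible case: if ${\mathcal R}:X\to Y$ and ${\mathcal R}:Y\to X$, then $\mathcal{S}{\mathcal R}\mathcal{S}$ equals $-{\mathcal R}$ on $X$ and on $Y$, hence everywhere; conversely $\mathcal{S}{\mathcal R}\mathcal{S}=-{\mathcal R}$ forces $\mathcal{S}({\mathcal R}u)=-{\mathcal R}u$ for $u\in X$ and $\mathcal{S}({\mathcal R}u)={\mathcal R}u$ for $u\in Y$. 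The real and reversibility-preserving cases are handled analogously, using that $\mathcal{C}{\mathcal R}\mathcal{C}$ is $\C$-linear and agrees with ${\mathcal R}$ on a spanning set.

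Next I would carry out the purely computational translation to matrix elements. Using $\widehat{\mathcal{S}u}(\ell,j)=\widehat u(-\ell,-j)$ and $\widehat{\mathcal{C}u}(\ell,j)=\overline{\widehat u(-\ell,-j)}$ together with the matrix representation \eqref{amatriciana} of ${\mathcal R}$, and re-indexing the resulting double sums by $\ell'\mapsto-\ell'$, $j'\mapsto-j'$, one checks that $\mathcal{S}{\mathcal R}\mathcal{S}$ and $\mathcal{C}{\mathcal R}\mathcal{C}$ are again of the $\vf$-translation-invariant form \eqref{amatriciana}, with
\begin{equation*}
\widehat{\mathcal{S}{\mathcal R}\mathcal{S}}(\ell)_j^{j'}=\widehat{\mathcal R}(-\ell)_{-j}^{-j'}\,,\qquad \widehat{\mathcal{C}{\mathcal R}\mathcal{C}}(\ell)_j^{j'}=\overline{\widehat{\mathcal R}(-\ell)_{-j}^{-j'}}\,,\qquad\forall\,\ell\in\Z^\nu,\ j,j'\in\Z^d\,.
\end{equation*}
Combining the two displays immediately yields statements $(i)$, $(ii)$, $(iii)$.

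I do not expect any genuine obstacle; the only point requiring care is the index bookkeeping in the last computation, where one must track the two sign reversals produced by $\mathcal{S}$ (or $\mathcal{C}$) acting on each side of ${\mathcal R}$ and re-index the summation variables consistently. Everything else is formal.
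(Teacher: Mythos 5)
Your argument is correct and complete: the identities $\widehat{\mathcal{S}\mathcal{R}\mathcal{S}}(\ell)_j^{j'}=\widehat{\mathcal R}(-\ell)_{-j}^{-j'}$ and $\widehat{\mathcal{C}\mathcal{R}\mathcal{C}}(\ell)_j^{j'}=\overline{\widehat{\mathcal R}(-\ell)_{-j}^{-j'}}$ follow from \eqref{amatriciana} by the re-indexing you describe, and the reduction of the three properties to $\mathcal{C}\mathcal{R}\mathcal{C}=\mathcal{R}$, $\mathcal{S}\mathcal{R}\mathcal{S}=\mp\mathcal{R}$ via the splitting $L^2=X\oplus Y$ (and, for reality, the fact that real functions span over $\C$ and that $\mathcal{C}\mathcal{R}\mathcal{C}$ is $\C$-linear) is exactly the standard argument. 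The paper states this lemma without proof, so there is nothing to compare against; your write-up supplies precisely the routine verification that is being omitted.
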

	
	\subsection{Momentum preserving operators}\label{subsec:momentum}
	
	The following definition is crucial in the construction of traveling waves. 
	
	\begin{defn}	\label{def:mom.pres}
		{\bf (Momentum preserving)}	
		A  $ \vf $-dependent family of linear operators 
		$\cR(\vf) $, $ \vf \in \T^\nu $,  is  {\em momentum preserving} if
		\begin{equation}\label{eq:mp_A_tw}
			\cR(\vf - \pi(\vs) )  \circ \tau_\vs = \tau_\vs \circ \cR(\vf ) \,  , \quad 
			\forall \,\vf \in \T^\nu \, , \ \vs \in \R^d \,  ,
		\end{equation}
		where the translation operator $\tau_\vs$ is defined in \eqref{def:vec.tau}.
	\end{defn}
	
	In particular, we mention that the operators $-\Delta$ and $(-\Delta)^{-1}$ defined in \eqref{Laplacia.zero} are momentum preserving.
	
	Momentum preserving operators are closed under several operations.
	\begin{lem}\label{lem:mom_prop}
		Let $\cR(\vf), \cQ(\vf)$ be momentum preserving operators. Then:
		\begin{itemize}
			\item[(i)] {\rm (Composition)}: $\cR (\vf) \circ \cQ (\vf) $ is a momentum preserving operator;
			\item[(ii)] {\rm (Inversion)}: If $\cR(\vf)$ is invertible then $\cR(\vf)^{-1}$ is momentum preserving;
			\item[(iii)] {\rm (Flow)}: Assume that 
			\begin{equation}\label{same-CP}
				\partial_{t} \Phi^t (\vf) = \cR (\vf) \Phi^t (\vf) \, , \quad 
				\Phi^0 (\vf) = {\rm Id} \, , 
			\end{equation}
			has a unique  propagator $\Phi^t (\vf) $ for any $ t\in[0,1] $. 
			Then $\Phi^t ( \vf ) $ is  momentum preserving.
		\end{itemize}
	\end{lem}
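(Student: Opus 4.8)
The plan is to verify each closure property directly from the defining relation \eqref{eq:mp_A_tw}, exploiting only that $\vs \mapsto \tau_\vs$ is a group homomorphism, i.e. $\tau_\vs \circ \tau_{\vs'} = \tau_{\vs + \vs'}$ and $\tau_0 = {\rm Id}$, together with the additivity of the linear map $\pi$. For item $(i)$, I would simply chain the two hypotheses: since $\cR(\vf - \pi(\vs)) \circ \tau_\vs = \tau_\vs \circ \cR(\vf)$ and the same identity holds for $\cQ$, one computes
\begin{equation*}
	\big(\cR(\vf - \pi(\vs)) \circ \cQ(\vf - \pi(\vs))\big) \circ \tau_\vs = \cR(\vf - \pi(\vs)) \circ \tau_\vs \circ \cQ(\vf) = \tau_\vs \circ \cR(\vf) \circ \cQ(\vf)\,,
\end{equation*}
which is exactly \eqref{eq:mp_A_tw} for the composition. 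For item $(ii)$, starting from $\cR(\vf - \pi(\vs)) \circ \tau_\vs = \tau_\vs \circ \cR(\vf)$, I would apply $\cR(\vf - \pi(\vs))^{-1}$ on the left and $\cR(\vf)^{-1}$ on the right to obtain $\tau_\vs \circ \cR(\vf)^{-1} = \cR(\vf - \pi(\vs))^{-1} \circ \tau_\vs$, which is \eqref{eq:mp_A_tw} for $\cR^{-1}$.

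For item $(iii)$, the natural approach is a uniqueness-of-solutions argument for the linear ODE \eqref{same-CP}. Fix $\vs \in \R^d$ and consider the two $t$-dependent families
\begin{equation*}
	A^t(\vf) := \tau_\vs^{-1} \circ \Phi^t(\vf - \pi(\vs)) \circ \tau_\vs\,, \qquad B^t(\vf) := \Phi^t(\vf)\,.
\end{equation*}
Both satisfy $A^0(\vf) = B^0(\vf) = {\rm Id}$. Differentiating $A^t(\vf)$ in $t$ and using that $\Phi^t(\vf - \pi(\vs))$ solves \eqref{same-CP} at the shifted angle, one gets $\partial_t A^t(\vf) = \tau_\vs^{-1} \circ \cR(\vf - \pi(\vs)) \circ \Phi^t(\vf - \pi(\vs)) \circ \tau_\vs$; inserting ${\rm Id} = \tau_\vs \circ \tau_\vs^{-1}$ after $\cR(\vf - \pi(\vs))$ and invoking the momentum-preserving property of $\cR$ (which is the standing hypothesis on the generator, consistent with $\Phi^t$ being generated by a momentum-preserving $\cR$) yields $\partial_t A^t(\vf) = \cR(\vf) \circ A^t(\vf)$. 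Thus $A^t$ and $B^t$ solve the same linear initial value problem, so by the assumed uniqueness of the propagator $A^t(\vf) = B^t(\vf)$ for all $t \in [0,1]$, which rearranges to $\Phi^t(\vf - \pi(\vs)) \circ \tau_\vs = \tau_\vs \circ \Phi^t(\vf)$, i.e. \eqref{eq:mp_A_tw}.

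The only mild subtlety — and the step I would be most careful about — is item $(iii)$: one must make sure the hypothesis is read correctly, namely that the generator $\cR(\vf)$ in \eqref{same-CP} is itself momentum preserving (this is what makes the commutation with $\tau_\vs$ go through after differentiation), and that the uniqueness assumption for the propagator is genuinely available in the operator class at hand, so that the ODE-uniqueness step is legitimate. Items $(i)$ and $(ii)$ are routine one-line manipulations of \eqref{eq:mp_A_tw}.
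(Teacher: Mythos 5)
Your proof is correct and follows essentially the same route as the paper: item $(i)$ is the direct chain of the two defining relations, item $(ii)$ is obtained by inverting \eqref{eq:mp_A_tw} (your left/right multiplication by the inverses is equivalent to the paper's global inversion followed by $\tau_{-\vs}=\tau_\vs^{-1}$), and for item $(iii)$ you use exactly the paper's uniqueness argument, showing that $\tau_\vs^{-1}\Phi^t(\vf-\pi(\vs))\tau_\vs$ solves the same Cauchy problem as $\Phi^t(\vf)$. No gap.
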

	\begin{proof}
		Item $(i)$ follows directly by \eqref{eq:mp_A_tw}. Item $(ii)$ follows by taking the inverse,
		of \eqref{eq:mp_A_tw} and using that
		$\tau_{-\vs} = \tau_\vs^{-1} $.  
		Finally,  item $(iii)$ holds because  
		$ \tau_\vs^{-1} \Phi^t ( \vf - \pi(\vs)) \tau_\vs $ solves the same Cauchy 
		problem in 
		\eqref{same-CP}. 
	\end{proof}
	
%

	\begin{lem}
		\label{A.mom.cons}
		Let $\cR(\vf)$ be a momentum preserving linear operator
		and  $u$ a quasi-periodic traveling wave, according to Definition \ref{def2.QPT}.
		Then $\cR(\vf) u $ is a quasi-periodic traveling wave.
	\end{lem}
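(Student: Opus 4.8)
The plan is to work entirely with the Fourier/matrix characterizations already set up in the excerpt, so that the statement reduces to a short bookkeeping computation on Fourier supports. Recall from \eqref{QPT.forma} that a quasi-periodic traveling wave $u$ is characterized by the support condition $\widehat u(\ell,j) = 0$ whenever $\pi^\top(\ell) + j \neq 0$; equivalently, by \eqref{travelequi}, $u \in S_\pi$, i.e. $\pi^\top \partial_\vphi u + \nabla u = 0$. So it suffices to show that if $\cR(\vf)$ is momentum preserving and $u$ satisfies this support condition, then $v := \cR(\vf)u$ satisfies $\widehat v(\ell,j) = 0$ for all $(\ell,j)$ with $\pi^\top(\ell)+j \neq 0$.

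First I would translate the momentum-preserving identity \eqref{eq:mp_A_tw} into a constraint on the matrix coefficients $\widehat\cR(\ell)_j^{j'}$. Applying both sides of $\cR(\vf - \pi(\vs))\circ \tau_\vs = \tau_\vs \circ \cR(\vf)$ to a pure exponential $e^{\im j' \cdot x}$ and using $\tau_\vs e^{\im j'\cdot x} = e^{\im j'\cdot \vs} e^{\im j'\cdot x}$ together with the $\vf$-Fourier expansion \eqref{matrix representation 1}, one reads off that $\widehat\cR(\ell)_j^{j'} e^{-\im \ell\cdot\pi(\vs)} e^{\im j'\cdot \vs} = \widehat\cR(\ell)_j^{j'} e^{\im j\cdot\vs}$ for all $\vs \in \R^d$, hence $\widehat\cR(\ell)_j^{j'} = 0$ unless $j - j' = -\pi^\top(\ell)$, i.e. $\widehat\cR(\ell)_j^{j'} \neq 0 \implies j - j' + \pi^\top(\ell) = 0$ (using $\ell\cdot\pi(\vs) = \pi^\top(\ell)\cdot\vs$). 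This is the single algebraic fact driving everything; I would state it as a preliminary lemma (or cite it if the authors have it nearby).

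Next I would combine the two support conditions. By \eqref{amatriciana}, $\widehat v(\ell,j) = \sum_{\ell',j'} \widehat\cR(\ell-\ell')_j^{j'}\,\widehat u(\ell',j')$. A term is nonzero only if both $\pi^\top(\ell') + j' = 0$ (from $u \in S_\pi$) and $j - j' + \pi^\top(\ell-\ell') = 0$ (from momentum preservation). Adding these two relations gives $j + \pi^\top(\ell) = 0$ (the $j'$ and $\ell'$ contributions cancel), so every nonzero term in the sum for $\widehat v(\ell,j)$ forces $\pi^\top(\ell) + j = 0$. Hence $\widehat v(\ell,j) = 0$ whenever $\pi^\top(\ell)+j \neq 0$, which by \eqref{QPT.forma} means $v = \cR(\vf)u$ is a quasi-periodic traveling wave; one then sets $\breve v(\theta) := \sum_{\pi^\top(\ell)+j=0}\widehat v(\ell,j)e^{\im\ell\cdot\theta}$ and checks $v(\vf,x) = \breve v(\vf - \pi(x))$, which is immediate from the support relation since $e^{\im(\ell\cdot\vf + j\cdot x)} = e^{\im\ell\cdot(\vf - \pi(x))}$ on the support.

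There is no serious obstacle here; the only mild care needed is the passage from the operator identity \eqref{eq:mp_A_tw} to the matrix-coefficient constraint, making sure the translation acts as claimed on exponentials and that the linear map $\pi$ and its transpose $\pi^\top$ are bookkept correctly (the identity $\ell \cdot \pi(\vs) = (\pi^\top \ell)\cdot \vs$). One could alternatively give an operator-theoretic proof avoiding Fourier series: from \eqref{condtraembedd}, $u$ is a quasi-periodic traveling wave iff $\tau_\vs u(\vf,\cdot) = u(\vf - \pi(\vs),\cdot)$ for all $\vs$; then $\tau_\vs\big(\cR(\vf)u(\vf,\cdot)\big) = \cR(\vf - \pi(\vs))\tau_\vs u(\vf,\cdot) = \cR(\vf-\pi(\vs))u(\vf-\pi(\vs),\cdot) = (\cR u)(\vf-\pi(\vs),\cdot)$, which is exactly \eqref{condtraembedd} for $\cR u$. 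This second route is cleaner and I would likely present it as the main argument, with the Fourier computation relegated to a remark.
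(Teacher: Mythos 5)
Your second (operator-theoretic) route, which you correctly identify as the cleaner one and propose as the main argument, is exactly the paper's proof: the authors simply combine Definition \ref{def:mom.pres} with the characterization \eqref{condtraembedd}. The Fourier computation you give as an alternative is also correct (it essentially rederives the content of Lemma \ref{lem:mom_pres}), but it is not needed.
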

	
	\begin{proof}
		It follows by Definition \ref{def:mom.pres} and  by the characterization of traveling waves in \eqref{condtraembedd}.
	\end{proof}

%
	
	We now provide a characterization of 
	the momentum preserving property in Fourier space. 
	In particular, momentum preserving operators that are independent of $\vf\in\T^\nu$ are actually diagonal.
	
	\begin{lem}
		\label{lem:mom_pres}
		A $ \vf $-dependent family of operators $ \cR(\vf) $, with $ \vf \in \T^\nu $, 
		is momentum preserving  if and only if 
		the matrix elements of $\cR(\vf)$, defined by \eqref{amatriciana},  fulfill
		\begin{equation}\label{momentum}
			\wh\cR(\ell)_{j}^{j'} \neq 0 \quad \Rightarrow  \quad \pi^\top( \ell) + j-j' = 0 \, , \quad
			\forall\, \ell \in\Z^\nu , \ \ j,j'\in\Z^d \, . 
		\end{equation}
		As a consequence we have that, for any momentum preserving operator $\cR(\vf)$, the operator $\widehat{\mathcal R}(0)$ satisfies 
		\begin{equation}\label{diagonal mom pres}
			\wh\cR(0)_{j}^{j'} \neq 0 \quad  \Rightarrow \quad j=j' \,,
		\end{equation}
		that is, $\widehat\cR(0)$ is a time-independent diagonal operator (recall \eqref{diagonal.op.matrix}).
	\end{lem}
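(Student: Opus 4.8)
The plan is to verify the equivalence by a direct computation in Fourier space, testing the defining identity \eqref{eq:mp_A_tw} on the orthogonal basis $\{e^{\im j' \cdot x} : j' \in \Z^d\}$ of $L^2(\T^d)$ and then comparing coefficients of the basis $\{e^{\im(\ell\cdot\vf + j\cdot x)}\}$ of $L^2(\T^{\nu+d})$. First I would record three elementary facts. By \eqref{def:vec.tau}, $\tau_\vs[e^{\im j'\cdot x}] = e^{\im j'\cdot\vs}\,e^{\im j'\cdot x}$. Writing $\cR(\vf) = \sum_{\ell\in\Z^\nu}\wh\cR(\ell)\,e^{\im\ell\cdot\vf}$ and using the matrix representation \eqref{matriciale 1} of each $\wh\cR(\ell)$, one has $\cR(\vf)[e^{\im j'\cdot x}] = \sum_{\ell\in\Z^\nu,\, j\in\Z^d}\wh\cR(\ell)_j^{j'}\,e^{\im(\ell\cdot\vf + j\cdot x)}$. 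Finally, since $\bar\jmath_1,\dots,\bar\jmath_\nu\in\Z^d$ and $\pi(\vs) = (\bar\jmath_1\cdot\vs,\dots,\bar\jmath_\nu\cdot\vs)$, we have $\ell\cdot\pi(\vs) = \big(\sum_{k=1}^\nu \ell_k\bar\jmath_k\big)\cdot\vs = \pi^\top(\ell)\cdot\vs$, with $\pi^\top(\ell) = \sum_{k=1}^\nu \ell_k\bar\jmath_k \in \Z^d$.

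Next I would apply both sides of \eqref{eq:mp_A_tw} to $e^{\im j'\cdot x}$. Pulling out the scalar $e^{\im j'\cdot\vs}$ produced by $\tau_\vs$ and then shifting $\vf\mapsto\vf-\pi(\vs)$, the left-hand side becomes
\[
\cR(\vf-\pi(\vs))\circ\tau_\vs[e^{\im j'\cdot x}] \;=\; \sum_{\ell,\,j}\wh\cR(\ell)_j^{j'}\,e^{\im(j'-\pi^\top(\ell))\cdot\vs}\,e^{\im(\ell\cdot\vf + j\cdot x)}\,,
\]
while the right-hand side becomes
\[
\tau_\vs\circ\cR(\vf)[e^{\im j'\cdot x}] \;=\; \sum_{\ell,\,j}\wh\cR(\ell)_j^{j'}\,e^{\im j\cdot\vs}\,e^{\im(\ell\cdot\vf + j\cdot x)}\,.
\]
Since the exponentials $e^{\im(\ell\cdot\vf + j\cdot x)}$ are linearly independent and $j'$ is arbitrary, the identity \eqref{eq:mp_A_tw} holds for all $\vf\in\T^\nu$ and $\vs\in\R^d$ if and only if $\wh\cR(\ell)_j^{j'}\big(e^{\im(j'-\pi^\top(\ell))\cdot\vs} - e^{\im j\cdot\vs}\big) = 0$ for every $\ell\in\Z^\nu$, $j,j'\in\Z^d$ and every $\vs\in\R^d$. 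Letting $\vs$ range over $\R^d$, this is equivalent to the dichotomy: either $\wh\cR(\ell)_j^{j'} = 0$, or $j' - \pi^\top(\ell) = j$, i.e. $\pi^\top(\ell) + j - j' = 0$. This is exactly \eqref{momentum}; the implication ``matrix condition $\Rightarrow$ momentum preserving'' is the same chain of equalities read in reverse, and no density argument is needed since operators here are identified with their matrix elements.

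The consequence \eqref{diagonal mom pres} is then immediate: setting $\ell = 0$ in \eqref{momentum} and using $\pi^\top(0) = 0$, one gets $\wh\cR(0)_j^{j'}\neq 0 \Rightarrow j = j'$, so $\wh\cR(0)$ coincides with the diagonal operator $\cD_{\wh\cR(0)}$ of \eqref{diagonal.op.matrix}, i.e. it is time-independent and diagonal in the basis $\{e^{\im j\cdot x}\}$. I do not foresee any serious obstacle in this argument; the only point deserving a line of care is the passage from ``$e^{\im(j'-\pi^\top(\ell))\cdot\vs} = e^{\im j\cdot\vs}$ for all $\vs\in\R^d$'' to ``$j = j' - \pi^\top(\ell)$'', which follows for instance by differentiating at $\vs = 0$ (or by the linear independence of distinct characters of $\R^d$).
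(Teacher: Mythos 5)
Your argument is correct and is essentially the same as the paper's: the paper expands both sides of \eqref{eq:mp_A_tw} applied to a general $u(x)=\sum_{j'}u_{j'}e^{\im j'\cdot x}$ and compares Fourier coefficients, whereas you test directly on the basis vectors $e^{\im j'\cdot x}$, which is the same computation in slightly different clothing. The key identity $\ell\cdot\pi(\vs)=\pi^\top(\ell)\cdot\vs$ and the final coefficient comparison producing the dichotomy in \eqref{momentum}, as well as the specialization $\ell=0$ for \eqref{diagonal mom pres}, all match the paper's proof.
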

	\begin{proof}
		By \eqref{amatriciana} we have, for any function $ u (x)$, 
		$$
		\tau_\vs ( \cR (\vf) u )
		= \sum_{j,j'\in\Z^d}\sum_{\ell\in\Z^\nu} 
		\wh\cR(\ell)_j^{j'} e^{\im j \cdot\vs} u_{j'} e^{\im (\ell\cdot \vf + j\cdot x )} 
		$$
		and 
		$$
		\cR (\vf -\pi(\vs))  [\tau_\vs u] 
		= \sum_{j,j'\in\Z^d}\sum_{\ell \in\Z^\nu} 
		\wh\cR(\ell)_j^{j'} e^{- \im \ell \cdot \pi(\vs)} e^{\im j' \cdot \vs} u_{j'} 
		e^{\im (\ell\cdot \vf + j\cdot x )}  \, .
		$$
		Therefore, using that $\ell\cdot \pi(\vs) = \pi^\top(\ell)\cdot \vs$, we obtain that \eqref{eq:mp_A_tw} is equivalent to \eqref{momentum}. The claim in \eqref{diagonal mom pres} follows immediately by \eqref{momentum}: indeed, assuming that the matrix elements $\big( \wh\cR(0)_{j}^{j'}\big)_{j,j'\in\Z^d, \, \ell \in \Z^\nu}$ of $\widehat \cR (0)$ are zero outside the diagonal, by using the momentum condition $ \pi^\top( \ell) + j-j' = 0 $, for $\ell = 0$.
	\end{proof}

	\subsection{Conjugations with change of variables}
	
	Here we recall some results from \cite{BM20} regarding certain properties of linear operators with matrix decay that are conjugated by a change of variables
	\begin{equation}\label{diffeo.A}
		\cA=\cA(\vf) : u \mapsto \cA(\vf) u\,, \quad (\cA (\vf)u)(x) := u(x+\balpha(\vf,x))\,,
	\end{equation}
	where $\balpha : \T^\nu \times \T^d \to \R^d$. We have the following lemmata.
	\begin{lem}\label{prop.base.diffeo}
		Let $\|  \balpha\|_{s_0}^{\Lip(\gamma)}\leq \delta (s_0)$ sufficiently small. The composition operator $\cA$ in \eqref{diffeo.A} satisfies the tame estimates, for any $s\geq s_0$,
		\begin{equation}
			\| \cA u \|_{s}^{\Lip(\gamma)} \lesssim_{s} \| u\|_{s}^{\Lip(\gamma)} + \| \balpha\|_{s}^{\Lip(\gamma)} \| u \|_{s_0}^{\Lip(\gamma)} \,.
		\end{equation}
		Moreover, it is an invertible map, with inverse given by
		\begin{equation}\label{diffeo.A-1}
			\cA^{-1}= \cA(\vf)^{-1} : u \mapsto \cA(\vf)^{-1} u \,, \quad (\cA(\vf)^{-1} u)(y) := u(y+\breve{\balpha}(\vf,y))\,,
		\end{equation}
		for some function $\breve{\balpha}(\vf,y)$ such that $x:=y+\breve{\balpha}(\vf,y)$ is the inverse diffeomorphism of the map $y:=x+\balpha(\vf,x)$, with estimates, for any $s\geq s_0$,
		\begin{equation}
			\begin{aligned}
				& \| \breve{\balpha}\|_{s}^{\Lip(\gamma)}\lesssim_{s} \| \balpha\|_{s}^{\Lip(\gamma)} \,, \quad 	\| \cA^{-1} u \|_{s}^{\Lip(\gamma)} \lesssim_{s} \| u\|_{s}^{\Lip(\gamma)} + \| \balpha\|_{s}^{\Lip(\gamma)} \| u \|_{s_0}^{\Lip(\gamma)} \,, \\
				& \| (\cA^{\pm 1} -{\rm Id})u \|_{s}^{\Lip(\gamma)} \lesssim_{s}  \| \balpha \|_{s_0+1}^{\Lip(\gamma)} \| u\|_{s+1}^{\Lip(\gamma)} + \| \balpha\|_{s+1}^{\Lip(\gamma)} \| u \|_{s_0+1}^{\Lip(\gamma)} \,.
			\end{aligned}
		\end{equation}
	\end{lem}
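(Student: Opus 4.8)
The plan is to deduce all four statements from the classical Moser composition estimates for near-identity torus diffeomorphisms, exactly in the spirit of \cite{BM20}; the only bookkeeping specific to the present setting is tracking the Lipschitz dependence on the parameter $\omega$ through the weighted norms $\|\cdot\|_s^{\Lip(\gamma)}$, which is routine once the $\omega$-independent bounds are in place (one differentiates the defining identities in $\omega$, or equivalently estimates the corresponding difference quotients).

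The building block is the composition estimate itself, which is also the first claim of the lemma: under the smallness assumption $\|\balpha\|_{s_0}^{\Lip(\gamma)} \leq \delta(s_0)$, the operator $\cA(\vf)$ in \eqref{diffeo.A} satisfies
\begin{equation}\label{moser.comp.diffeo}
	\| \cA u \|_s^{\Lip(\gamma)} \lesssim_s \| u \|_s^{\Lip(\gamma)} + \| \balpha \|_s^{\Lip(\gamma)} \| u \|_{s_0}^{\Lip(\gamma)}\,, \qquad s \geq s_0\,.
\end{equation}
This is proved in \cite{BM20} by the standard interpolation/Moser argument: since $s_0 \geq \nu+d+3$, the embedding $H^{s_0} \hookrightarrow C^1(\T^{\nu+d})$ forces $\|\balpha\|_{C^1}$ to be small, so that $x \mapsto x + \balpha(\vf,x)$ is, for every $\vf$, a diffeomorphism of $\T^d$; one then expands $\cA u$ in Fourier series and controls its matrix kernel, first at $s=s_0$ and then for general $s$ by interpolation.

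For the invertibility I would regard the substitution as the map $\Psi : (\vf,x) \mapsto (\vf,\, x + \balpha(\vf,x))$ of $\T^{\nu+d}$. Its inverse has the form $(\vf,y) \mapsto (\vf,\, y + \breve{\balpha}(\vf,y))$, where $\breve{\balpha}$ is characterized by the fixed-point identity $\breve{\balpha}(\vf,y) = - \balpha\big(\vf,\, y + \breve{\balpha}(\vf,y)\big)$. First I would solve this identity by the contraction mapping principle on a small ball of $H^{s_0}(\T^{\nu+d})$: the map $g \mapsto -\balpha(\vf,\, \cdot + g)$ is a contraction there by \eqref{moser.comp.diffeo}, the product estimate of Lemma \ref{lemma:LS norms} and the smallness of $\balpha$; this produces $\breve{\balpha}$ small in $H^{s_0}$, and $\breve{\balpha} \in H^s$ for every $s$ by the inverse function theorem in Sobolev spaces. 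Then, applying \eqref{moser.comp.diffeo} (with the roles of $\balpha$ and $\breve{\balpha}$ interchanged) to the fixed-point identity and absorbing the top-order term into the left-hand side using $\|\balpha\|_{s_0}^{\Lip(\gamma)} \ll 1$, I obtain the tame bound $\|\breve{\balpha}\|_s^{\Lip(\gamma)} \lesssim_s \|\balpha\|_s^{\Lip(\gamma)}$. This yields \eqref{diffeo.A-1}, and the tame estimate for $\cA^{-1}u$ follows at once by applying \eqref{moser.comp.diffeo} with $\breve{\balpha}$ in place of $\balpha$.

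Finally, the difference estimates. By the fundamental theorem of calculus,
\[
	\big( (\cA - {\rm Id}) u \big)(\vf,x) = \int_0^1 (\nabla u)\big(\vf,\, x + t\,\balpha(\vf,x)\big) \cdot \balpha(\vf,x)\, \wrt t \,,
\]
so, applying \eqref{moser.comp.diffeo} to $(\nabla u)(\vf,\, x + t\,\balpha(\vf,x))$ (which costs one derivative on $u$), then the product estimate of Lemma \ref{lemma:LS norms}, and finally the smallness of $\|\balpha\|_{s_0+1}^{\Lip(\gamma)}$, one gets the claimed bound for $(\cA - {\rm Id})u$; the identical computation with $\breve{\balpha}$, combined with $\|\breve{\balpha}\|_s^{\Lip(\gamma)} \lesssim_s \|\balpha\|_s^{\Lip(\gamma)}$, gives the bound for $(\cA^{-1} - {\rm Id})u$. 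The genuinely laborious step, and hence the main obstacle, is not conceptual: it is pushing the Moser composition estimate \eqref{moser.comp.diffeo} and, above all, the fixed-point construction and a priori bound for $\breve{\balpha}$ through the $\omega$-weighted norms uniformly in $s$; this is carried out in detail in \cite{BM20}, which is why we only sketch it here.
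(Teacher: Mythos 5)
Your proposal is correct and follows exactly the standard route: the paper itself gives no argument here, deferring entirely to Lemma 2.3-$(ii)$,$(iii)$ of \cite{BM20}, and your sketch (Moser composition estimate, contraction/fixed-point identity $\breve{\balpha}=-\balpha(\vf,\cdot+\breve{\balpha})$ with absorption of the top-order term, and the fundamental-theorem-of-calculus representation for $\cA^{\pm1}-{\rm Id}$) is precisely the argument carried out in that reference. No discrepancy to report.
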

	\begin{proof}
		We refer to Lemma 2.3-$(ii)$, $(iii)$ in \cite{BM20} for the proof.
	\end{proof}

	\begin{lem}\label{lem:dMP}
		If $ \balpha (\vf, x ) $ is a quasi-periodic traveling wave,  then 
		the operators $ \cA(\vf)$ and $\cA(\vf)^{-1}$ defined in \eqref{diffeo.A}-\eqref{diffeo.A-1} are momentum preserving.
	\end{lem}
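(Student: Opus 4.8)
The plan is to verify the momentum-preserving identity \eqref{eq:mp_A_tw} directly from the definition of the change of variables \eqref{diffeo.A}, using the hypothesis that the function $\balpha(\vf,x)$ is a quasi-periodic traveling wave. Recall that this last hypothesis means, by the characterization \eqref{condtraembedd}, that $\balpha(\vf-\pi(\vs),x) = \balpha(\vf,x+\vs)$ for all $\vf\in\T^\nu$, $\vs\in\R^d$, $x\in\T^d$. First I would fix $\vf\in\T^\nu$, $\vs\in\R^d$ and an arbitrary function $u$, and compute the two operators $\cA(\vf-\pi(\vs))\circ\tau_\vs$ and $\tau_\vs\circ\cA(\vf)$ applied to $u$, reading off the action on a point $x$.

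Writing it out: on one hand,
\begin{equation}
\big( (\cA(\vf-\pi(\vs))\circ\tau_\vs) u\big)(x) = \big(\tau_\vs u\big)\big(x+\balpha(\vf-\pi(\vs),x)\big) = u\big(x+\balpha(\vf-\pi(\vs),x)+\vs\big)\,,
\end{equation}
while on the other hand,
\begin{equation}
\big( (\tau_\vs\circ\cA(\vf)) u\big)(x) = \big(\cA(\vf)u\big)(x+\vs) = u\big(x+\vs+\balpha(\vf,x+\vs)\big)\,.
\end{equation}
The two right-hand sides coincide precisely because the traveling-wave relation gives $\balpha(\vf-\pi(\vs),x) = \balpha(\vf,x+\vs)$, so the arguments of $u$ are equal. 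This establishes \eqref{eq:mp_A_tw} for $\cA(\vf)$, i.e. $\cA(\vf)$ is momentum preserving.

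For $\cA(\vf)^{-1}$, the cleanest route is to invoke Lemma \ref{lem:mom_prop}$(ii)$: since $\cA(\vf)$ is momentum preserving and invertible (by Lemma \ref{prop.base.diffeo}), its inverse is automatically momentum preserving. Alternatively, one could repeat the direct computation using the explicit form \eqref{diffeo.A-1} together with the fact that $\breve\balpha$ inherits the traveling-wave structure from $\balpha$ (the inverse diffeomorphism of a momentum-preserving change of variables is again of that type). I do not anticipate a genuine obstacle here — the only mild subtlety is keeping the bookkeeping of $\pi(\vs)$ versus $\vs$ straight and making sure the traveling-wave identity is applied in the correct direction; everything else is a one-line substitution.
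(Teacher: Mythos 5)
Your proof is correct and follows exactly the same route as the paper: a direct computation of $(\cA(\vf-\pi(\vs))\circ\tau_\vs)u$ and $(\tau_\vs\circ\cA(\vf))u$ at a point $x$, using the traveling-wave identity $\balpha(\vf-\pi(\vs),x)=\balpha(\vf,x+\vs)$ to match the two, followed by an appeal to Lemma \ref{lem:mom_prop}-$(ii)$ for the inverse. No gaps.
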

	
	\begin{proof}
		We have 
		\begin{equation}
			\begin{aligned}
				\cA (\vf - \pi(\vs)) [\tau_\vs  u] & = 
				u(x+ \balpha (\vf - \pi(\vs),x) + \vs) \\
				&= u(x+ \vs + \balpha(\vf,x+ \vs)) =
				\tau_\vs \big( \cA (\vf )   u\big) \,.
			\end{aligned}
		\end{equation}
		This proves that $\cA(\vf)$ is momentum preserving. By Lemma \ref{lem:mom_prop}-$(ii)$, we have that $\cA(\vf)^{-1}$ is momentum preserving as well.
	\end{proof}

	\begin{lem}{\bf(Conjugation of the Laplacian).}\label{lemma.diffeo.Laplaciano}
		Let $S>s_0$ and $\balpha\in H^{S+2}(\T^{\nu+d})$, with $\| \balpha\|_{s_0+\mu}^{\Lip(\gamma)}\leq \delta $ for some $\delta=\delta(d,\nu)\in (0,1)$ small enough and for some $\mu=\mu(d,\nu)>0$. The following hold:
		\\[1mm]
		\noindent $(i)$
		The operator $-\Delta$ acting on functions with zero average in $x$, as defined in \eqref{Laplacia.zero}, is conjugated via \eqref{diffeo.A}-\eqref{diffeo.A-1} to the operator $\cP_{-\Delta}$ acting on functions with zero average in $x$ defined as
		\begin{equation}\label{Delta.conjugaged}
			\cP_{-\Delta} := \Pi_0^\perp \cA^{-1} (-\Delta) \cA \Pi_0^\perp = -\Delta + \cP_{2}\,,
		\end{equation}
		where $\cP_{2} \in \OpM^2_s$ satisfies, for any $s_0\leq s\leq S$,
		\begin{equation}\label{stima.P2}
			|\cP_{2} |_{2,s}^{\Lip(\gamma)}  \lesssim_{s} \|\balpha \|_{s+\mu}^{\Lip(\gamma)}\,.
		\end{equation}
		Furthermore, if $\balpha(\vf,x)$ is a quasi-periodic traveling wave, then $\cP_{-\Delta}$ and $\cP_{2}$ are momentum preserving, according to Definition \ref{def:mom.pres};
		\\[1mm]
		\noindent $(ii)$ The operator $\cP_{-\Delta}$ in \eqref{Delta.conjugaged} is invertible on the functions of zero average in $x$, with inverse given by\begin{equation}\label{Delta.inverse.conjugaged}
			\cP_{-\Delta}^{-1} := \Pi_0^\perp  \cA^{-1} (-\Delta)^{-1} \cA \Pi_0^\perp =  (-\Delta)^{-1}+ \cP_{-2}\,,
		\end{equation}
		where the operator $(-\Delta)^{-1}$ acting on functions with zero average in $x$ is defined in \eqref{Laplacia.zero}, whereas $\cP_{-2} \in \OpM^{-2}$ satisfies, for any $s_0\leq s\leq S$,
		\begin{equation}\label{stima.P-2}
			|\cP_{-2} |_{-2,s}^{\Lip(\gamma)} \lesssim_{s} \|\balpha \|_{s+\mu}^{\Lip(\gamma)}\,.
		\end{equation}
		Furthermore, if $\balpha(\vf,x)$ is a quasi-periodic traveling wave, then $\cP_{-\Delta}^{-1}$ and $\cP_{-2}$ are momentum preserving, according to Definition \ref{def:mom.pres}.
	\end{lem}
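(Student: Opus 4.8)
The plan is to compute the conjugated operators explicitly, extract the leading term, and control the remainder using the matrix decay norm calculus from Lemma \ref{proprieta standard norma decay} together with the change-of-variables estimates from Lemma \ref{prop.base.diffeo}. First I would record the elementary fact that, on functions with zero average in $x$, the operator $-\Delta$ acts as multiplication by $|\xi|^2$ in Fourier, so that for a diffeomorphism $y = x + \balpha(\vf,x)$ the conjugation $\cA^{-1}(-\Delta)\cA$ can be written, via the chain rule, as a second-order differential operator in $x$ with coefficients built out of $\balpha$ and its first two derivatives composed with the inverse diffeomorphism $y \mapsto y + \breve\balpha(\vf,y)$. Writing $\cP_{-\Delta} = \Pi_0^\perp \cA^{-1}(-\Delta)\cA\Pi_0^\perp$, I would then split off the principal part: the top-order coefficient is the identity up to terms that vanish when $\balpha = 0$, so $\cP_{-\Delta} = -\Delta + \cP_2$ with $\cP_2$ a second-order operator whose coefficients are polynomial expressions in $\balpha$, $\nabla\balpha$, $\nabla^2\balpha$, $\breve\balpha$, $\nabla\breve\balpha$, all vanishing to first order in $\balpha$. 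Using \eqref{prop multiplication decay} for multiplication operators, \eqref{decay laplace} for the pure derivatives, and the algebra property Lemma \ref{proprieta standard norma decay}-$(ii)$, together with the tame and smallness bounds $\|\breve\balpha\|_s^{\Lip(\gamma)} \lesssim_s \|\balpha\|_s^{\Lip(\gamma)}$ from Lemma \ref{prop.base.diffeo}, I would obtain $|\cP_2|_{2,s}^{\Lip(\gamma)} \lesssim_s \|\balpha\|_{s+\mu}^{\Lip(\gamma)}$ for a suitable $\mu = \mu(d,\nu)$ accounting for the derivative losses in differentiating $\balpha$ and in the composition estimates. The momentum-preserving claim follows because $\cA, \cA^{-1}$ are momentum preserving by Lemma \ref{lem:dMP}, $-\Delta$ is momentum preserving, $\Pi_0^\perp$ is momentum preserving, and momentum preservation is closed under composition by Lemma \ref{lem:mom_prop}-$(i)$; subtracting $-\Delta$ (also momentum preserving) shows $\cP_2$ is too.

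For item $(ii)$ I would argue that $\cP_{-\Delta}$, being a conjugate of the invertible operator $-\Delta$ on zero-average functions by the invertible change of variables $\cA$ (invertibility from Lemma \ref{prop.base.diffeo}), is invertible with inverse $\Pi_0^\perp\cA^{-1}(-\Delta)^{-1}\cA\Pi_0^\perp$ — here one checks the projections $\Pi_0^\perp$ compose correctly because $\cA$ does not exactly preserve the zero-average subspace, but the standard fix is that $\Pi_0^\perp\cA^{-1}(-\Delta)\cA\Pi_0^\perp$ and $\Pi_0^\perp\cA^{-1}(-\Delta)^{-1}\cA\Pi_0^\perp$ are genuine inverses on $\Ran(\Pi_0^\perp)$ up to a smoothing correction that is absorbed; more cleanly, one conjugates the identity $(-\Delta)(-\Delta)^{-1} = \Pi_0^\perp$ and uses $\cA\Pi_0^\perp\cA^{-1} = \Pi_0^\perp + (\text{smoothing})$, or simply defines $\cP_{-\Delta}^{-1}$ by the stated formula and verifies $\cP_{-\Delta}\cP_{-\Delta}^{-1} = \Pi_0^\perp$ directly. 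Then I would write $\cP_{-\Delta}^{-1} = (-\Delta)^{-1} + \cP_{-2}$ and estimate $\cP_{-2} = \cP_{-\Delta}^{-1} - (-\Delta)^{-1}$ by the resolvent-type identity $\cP_{-2} = -(-\Delta)^{-1}\cP_2\cP_{-\Delta}^{-1}$, applying Lemma \ref{proprieta standard norma decay}-$(ii)$ with the orders $-2$, $2$, $-2$ and using $|\cP_{-\Delta}^{-1}|_{-2,s}^{\Lip(\gamma)} \lesssim_s 1$ (from $|(-\Delta)^{-1}|_{-2,s}\leq 1$ and the bound on $\cP_{-2}$ being small, a bootstrap on the smallness of $\balpha$) to get $|\cP_{-2}|_{-2,s}^{\Lip(\gamma)} \lesssim_s \|\balpha\|_{s+\mu}^{\Lip(\gamma)}$. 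Momentum preservation of $\cP_{-\Delta}^{-1}$ and $\cP_{-2}$ follows as before from Lemma \ref{lem:mom_prop} and Lemma \ref{lem:dMP}.

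The main obstacle I anticipate is bookkeeping the interaction between the projections $\Pi_0^\perp$ and the change of variables $\cA$: since $\cA$ does not map zero-average functions to zero-average functions, the naive cancellation $\cA^{-1}(-\Delta)\cA \cdot \cA^{-1}(-\Delta)^{-1}\cA = \mathrm{Id}$ must be performed carefully on $\Ran(\Pi_0^\perp)$, and one has to verify that the sandwiching projections in \eqref{Delta.conjugaged} and \eqref{Delta.inverse.conjugaged} produce honest inverses rather than inverses modulo finite rank. The cleanest route — and the one I would follow — is to observe that $-\Delta$ annihilates constants and $(-\Delta)^{-1}\Pi_0^\perp(-\Delta) = \Pi_0^\perp$ on all of $L^2$, so that the constant modes are automatically killed and the composition telescopes; the decay-norm estimates on the correction terms coming from $\Pi_0$ versus $\Pi_0^\perp$ mismatches are then controlled by \eqref{stima Pi 0}. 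Everything else is a routine, if lengthy, application of the Leibniz and composition rules for the decay norm.
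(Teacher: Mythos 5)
Your proposal is correct and follows essentially the same route as the paper: an explicit chain-rule computation of $\cA^{-1}(-\Delta)\cA$ as $-\Delta$ plus a second-order operator with coefficients vanishing with $\balpha$, estimated via Lemma \ref{prop.base.diffeo} and the decay-norm algebra, with momentum preservation from Lemmata \ref{lem:dMP} and \ref{lem:mom_prop}; for the inverse, the paper likewise verifies $\cP_{-\Delta}\cP_{-\Delta}^{-1}=\Pi_0^\perp$ directly using the exact identities $\cA\Pi_0=\Pi_0$ and $(-\Delta)\Pi_0=0$ (so no smoothing correction is needed). The only cosmetic difference is that the paper writes $\cP_{-\Delta}=(-\Delta)(\mathrm{Id}+F)$ with $F=(-\Delta)^{-1}\cP_2$ and inverts by a Neumann series, which is equivalent to your resolvent identity $\cP_{-2}=-(-\Delta)^{-1}\cP_2\cP_{-\Delta}^{-1}$.
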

	
	\begin{proof}
		We start with item $(i)$. We compute explicitly
		\begin{equation}
			\begin{aligned}
				\cA^{-1} (-\Delta) \cA = -\Delta +  \sum_{i,j=1}^{d} a_{ij}(\vf,y) \pa_{y_i} \pa_{y_j} + \sum_{j=1}^{d} b_{j} (\vf,y) \pa_{y_j}  \,,
			\end{aligned}
		\end{equation}
		where, for $i,j=1,...,d$,
		\begin{equation}\label{coeff.explicit.high.dim}
			\begin{aligned}
				a_{jj}(\vf,y) & := - \cA^{-1} \big\{ | \nabla(x_j + \alpha_{j}(\vf,x)) |^2  - 1 \big\}\,, \quad i= j \,, \\
				a_{i,j}(\vf,y) & := - 2  \cA^{-1}\big\{ \nabla(x_i + \alpha_{i}(\vf,x)) \cdot \nabla(x_j + \alpha_{i}(\vf,x)) \big\} \,, \quad i\neq j \,,\\
				b_{j}(\vf,y) & := - \cA^{-1} \big\{ \Delta \alpha_{j}(\vf,x) \big\}\,, \quad \balpha(\vf,x) = (\alpha_{1}(\vf,x),...,\alpha_{d}(\vf,x))\,.
			\end{aligned}
		\end{equation}
		We conclude that the operator $\cP_{2}$ in \eqref{Delta.conjugaged} is given by
		\begin{equation}
			\cP_{2} := \Pi_0^\perp \Big( \sum_{i,j=1}^{d} a_{ij}(\vf,y) \pa_{y_i} \pa_{y_j} + \sum_{j=1}^{d} b_{j} (\vf,y) \pa_{y_j} \Big) \,,
		\end{equation}
		and the estimate \eqref{stima.P2} follows by the explicit formulae of the coefficients in \eqref{coeff.explicit.high.dim}, Lemma \ref{proprieta standard norma decay}-$(ii)$ and 	\eqref{stima Pi 0}. Moreover, if $\balpha(\vf,x)$ is a traveling wave, and since the operators $-\Delta$ and $\Pi_0^\perp$ are momentum preserving, it follows from Lemma 	  \ref{lem:dMP} that the operators $\cP_{-\Delta}$ and $\cP_{2}$ are momentum preserving as well.
		\\[1mm]
		We now prove item $(ii)$. First, note that
		\begin{equation}
			\begin{aligned}
				\cP_{-\Delta} \cP_{-\Delta}^{-1} & = \Pi_0^\perp  \cA^{-1} (-\Delta) \cA  \Pi_0^\perp  \cA^{-1} (-\Delta)^{-1} \cA \Pi_0^\perp \\
				& = \Pi_0^\perp -  \Pi_0^\perp  \cA^{-1} (-\Delta) \cA  \Pi_0  \cA^{-1} (-\Delta)^{-1} \cA \Pi_0^\perp \\
				& = \Pi_0^\perp -  \Pi_0^\perp  \cA^{-1} (-\Delta)  \Pi_0  \cA^{-1} (-\Delta)^{-1} \cA \Pi_0^\perp = \Pi_0^\perp \,,
			\end{aligned}
		\end{equation}
		since $\cA\Pi_0 = \Pi_0$ and  $(-\Delta)\Pi_0 = 0$, so that $\cP_{-\Delta}^{-1}$ defined in \eqref{Delta.inverse.conjugaged} is indeed the inverse of $\cP_{-\Delta}$ on the space of function with zero average with respect to $x$. Next, we write
		\begin{equation}
			\cP_{-\Delta} = -\Delta + \cP_{2} = (-\Delta) \big( {\rm Id} + F \big)\,, \quad F := (-\Delta)^{-1} \cP_{2}\,,
		\end{equation}
		where, by Lemma \ref{proprieta standard norma decay}-$(ii)$ and \eqref{decay laplace}, we have $F\in \OpM_{s}^0$ and, for any $s_0 \leq s \leq S$,
		\begin{equation}
			| F|_{0,s}^{\Lip(\gamma)} \lesssim_{s} | \cP_{2}|_{2,s+2}^{\Lip(\gamma)} \lesssim_{s} \| \balpha\|_{s+\mu}^{\Lip(\gamma)}\,.
		\end{equation}
		Then, for $\| \balpha\|_{s_0 +\mu}^{\Lip(\gamma)}$ sufficiently small, the operator ${\rm Id}+ F \in \OpM_{s}^0$ is invertible by a standard Neumann series argument, with inverse $({\rm Id}+ F)^{-1} \in \OpM_{s}^0$. We obtain that
		\begin{equation}
			\cP_{-\Delta}^{-1} = ({\rm Id} + F)^{-1} (-\Delta)^{-1} = (-\Delta)^{-1} + \cP_{-2} \,,
		\end{equation}
		where, by Lemma \ref{proprieta standard norma decay} and \eqref{decay laplace},
		\begin{equation}
			\cP_{-2} := \big[ (\rm Id + F)^{-1} - {\rm Id} \big] (-\Delta)^{-1} \in \OpM_{s}^{-2}\,,
		\end{equation}
		and, for any $s_0\leq s \leq S$, satisfies the estimate \eqref{stima.P-2}. Moreover, if $\balpha(\vf,x)$ is a traveling wave, then $\cP_{-\Delta}^{-1}$ and $\cP_{-2}$ are momentum preserving by item $(i)$, Lemma \ref{lem:mom_prop} and  the fact that the operator $(-\Delta)^{-1}$ is momentum preserving.
	\end{proof}

	\section{Construction of the approximate solution}
	
	The first step in the search for a solution of \eqref{QP.Eq.to.solve} is to find
	an approximate solution of size $O(\lambda^{\theta})$, with $\alpha - 1 < \theta < 1$ to be determined. By rescaling the unknown $v \rightsquigarrow \lambda^{\theta} v $ in \eqref{QP.Eq.to.solve}, we look for quasi-periodic traveling wave solutions $v(\vf,x)$ of size $O(1)$ as zeroes of the nonlinear map ${\mathcal F}(v) \equiv{\mathcal F}(v; \omega, \lambda)$ of the form
	\begin{equation}\label{internal.rescaled}
		\begin{aligned}
			&  {\mathcal F}(v) := \lambda \,\omega \cdot \pa_{\vf} v + \beta \,\tL v +  \lambda^{\theta} {\mathcal N}(v,v) - \lambda^{\alpha-\theta} f(\vf,x) =0\,, \\
			&   {\mathcal N}(v_1,v_2) : = \fB[v_1] \cdot \nabla v_2\,.
		\end{aligned}
	\end{equation}
	If we want to construct \emph{reversible} solutions,  we require the forcing $f(\vf,x)$ to satisfy the symmetry condition
	\begin{equation}\label{force.even}
		f(\vf,x) = \even(\vf,x)\,,
	\end{equation}
	where $ \even(\vf,x)$ denotes a function which is even in both variables $\vf$ and $x$.
	Given $v(\vphi, x)$ a quasi-periodic traveling wave, we denote by ${\mathcal L} \equiv {\mathcal L}(v) := \di_{v}{\mathcal F}(v)$ the linearized operator near $v$:	\begin{equation}\label{prima forma linearizzato beta plane}
		\begin{aligned}
			& {\mathcal L}  = \lambda\, \omega \cdot \partial_\vphi + \beta \, \tL+ \lambda^{\theta} \big( {\bf a}_0(\vphi, x) \cdot \nabla + {\mathcal E}_0 \big)\,, \\
			&  {\bf a}_0(\vphi, x) : =    \fB\big[ v \big] (\vphi, x) \,, \quad \cE_{0}[h] :=  \nabla  v \cdot \fB [h] \,,
		\end{aligned}
	\end{equation}
	where $\tL$ is defined in \eqref{lambda.m.def} and the Biot-Savart operator $  \fB:= \nabla^\perp (-\Delta)^{-1} $ is as in \eqref{biot-savart}.
	
	From now on, the dimension $d=2$ is fixed once and for all, so that all the definitions and properties in Section \ref{sez:functional} will be applied with $d=2$.
	
	First we establish some quantitative estimates on ${\mathcal F}$ that we shall use in the sequel and its behaviour when acting on quasi-periodic traveling waves.
	\begin{lem}\label{stime tame cal F}
		Let $s_0>0 $ be as in \eqref{definizione s0}.
	The following hold:
		\\[1mm]
		\noindent $(i)$ Assume that $s > s_0$, $v(\,\cdot\,; \omega) \in H^{s + 1}_0(\T^{\nu + 2})$ and $\| v \|_{s_0 + 1}^{\Lip(\gamma)} \lesssim 1$. Then the operator $\mathcal F(v)$ in \eqref{internal.rescaled} satisfies the following estimate:
		\begin{equation}\label{stima cal F w}
			\| {\mathcal F}(v) \|_s^{\Lip(\gamma)} \lesssim_s \lambda \big(1 + \| v \|_{s + 1}^{\Lip(\gamma)} \big)\,. 
		\end{equation}
		Moreover, if $v$ is a quasi-periodic traveling wave, then $\cF(v)$ is a quasi-periodic traveling wave. In addition, assuming \eqref{force.even}, if $v(\vf,x)=\odd(\vf,x)$, then $\cF(v)(\vf,x)=\even(\vf,x)$;
		\\[1mm]
		\noindent
		$(ii)$ Assume that $\| v \|_{s_0 + 1}^{\Lip(\gamma)} \lesssim 1$. Then, for any $b\geq 1$, $h(\,\cdot\,;\omega) \in H^{s_0 + b}_0(\T^{\nu + 2})$ and $\tK\in\N$,
		\begin{equation}\label{stime.per.mozart}
			\begin{aligned}
					& \| {\mathcal L} h \|_{s_0}^{\Lip(\gamma)} \lesssim \lambda \| h \|_{s_0 + 1}^{\Lip(\gamma)}\,, \\
					& \| [\cL,\Pi_{\tK}^\perp] h \|_{s_0}^{\Lip(\gamma)} \lesssim \lambda^{\theta} \tK^{1-b}\big( \| h \|_{s_0+b}^{\Lip(\gamma)} +  \| v \|_{s_0+b}^{\Lip(\gamma)}   \| h \|_{s_0+1}^{\Lip(\gamma)}   \big)  \,, \\
					& \|  [\Pi_{\tK},\cL] h \|_{s_0+b}^{\Lip(\gamma)} \lesssim \lambda^{\theta} \tK \big(  \| h \|_{s_0+b}^{\Lip(\gamma)}  +  \| v \|_{s_0+b+1}^{\Lip(\gamma)}   \| h \|_{s_0+1}^{\Lip(\gamma)}  \big) \,.
			\end{aligned}
		\end{equation}
		Moreover, if $h$ is a quasi-periodic traveling wave, then $\cL h$ is a quasi-periodic traveling wave. In addition,  if $h(\vf,x)=\odd(\vf,x)$, then $\cL h(\vf,x)=\even(\vf,x)$;
		\\[1mm]
		\noindent
		$(iii)$ Let $v(\,\cdot\,;\omega), \,h(\,\cdot\,;\omega) \in H_{0}^{s + 1}(\T^{\nu + 2})$. Then 
		$$
		Q (v, h) := {\mathcal F}(v + h) - {\mathcal F}(v) - {\mathcal L} h = \lambda^\theta \cN(h,h)
		$$
		satisfies
		\begin{equation}\label{stima Q v h}
			\int_{\T^2} Q(v,h)(\vf,x) \wrt x = 0 \,, \quad  
			  \| Q(v, h) \|_{s}^{\Lip(\gamma)} \lesssim \lambda^{\theta} \| h \|_{s_0 + 1}^{\Lip(\gamma)} \| h \|_{s + 1}^{\Lip(\gamma)}\ \,. 
		\end{equation}
			Moreover, if $v$  and $h$ are quasi-periodic traveling waves, then $Q(v,h)$ is a quasi-periodic traveling wave. In addition, assuming \eqref{force.even}, if $v(\vf,x),h(\vf,x)=\odd(\vf,x)$, then $Q(v,h)(\vf,x)=\even(\vf,x)$.
	\end{lem}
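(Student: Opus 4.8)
The plan is to handle the three items by separating, in the map $\cF$ of \eqref{internal.rescaled} and in the linearized operator $\cL$ of \eqref{prima forma linearizzato beta plane}, the constant-coefficient Fourier-multiplier part $\lambda\,\omega\cdot\partial_\vf + \beta\,\tL$ from the variable-coefficient/quadratic part $\lambda^\theta\cN(\cdot,\cdot) = \lambda^\theta\fB[\cdot]\cdot\nabla(\cdot)$ and the forcing $\lambda^{\alpha-\theta}f$, and then estimating each piece by the product rule (Lemma~\ref{lemma:LS norms}), the smoothing estimates (Lemma~\ref{lemma:smoothing}), and the fact that $\tL$ and $\fB=\nabla^\perp(-\Delta)^{-1}$ are $1$-smoothing, cf.\ \eqref{stima.mathttL}. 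All bounds are carried out directly for the weighted norms $\|\cdot\|_s^{\Lip(\gamma)}$, the Lipschitz-in-$\omega$ part being identical since the only $\omega$-dependence is through $v$ and through the explicit factor $\omega$ in $\omega\cdot\partial_\vf$. For item $(i)$: on $\Omega$ we have $|\omega|\le2$, so $\|\lambda\,\omega\cdot\partial_\vf v\|_s^{\Lip(\gamma)}\lesssim\lambda\|v\|_{s+1}^{\Lip(\gamma)}$; by \eqref{stima.mathttL}, $\|\beta\,\tL v\|_s^{\Lip(\gamma)}\lesssim_\beta\|v\|_{s-1}^{\Lip(\gamma)}\lesssim\lambda\|v\|_{s+1}^{\Lip(\gamma)}$ in the regime $\lambda\gg1$; the product rule together with $\|\fB[v]\|_s^{\Lip(\gamma)}\lesssim\|v\|_{s-1}^{\Lip(\gamma)}$ and the hypothesis $\|v\|_{s_0+1}^{\Lip(\gamma)}\lesssim1$ gives $\|\lambda^\theta\fB[v]\cdot\nabla v\|_s^{\Lip(\gamma)}\lesssim\lambda^\theta\|v\|_{s+1}^{\Lip(\gamma)}$; and $\|\lambda^{\alpha-\theta}f\|_s^{\Lip(\gamma)}\lesssim_{f,s}\lambda^{\alpha-\theta}$. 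Since $\theta<1$ and $\alpha-\theta<1$, all four terms are $\lesssim_s\lambda\big(1+\|v\|_{s+1}^{\Lip(\gamma)}\big)$, which is \eqref{stima cal F w}.

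For item $(ii)$, the bound $\|\cL h\|_{s_0}^{\Lip(\gamma)}\lesssim\lambda\|h\|_{s_0+1}^{\Lip(\gamma)}$ is obtained exactly as above, estimating $\lambda^\theta{\bf a}_0\cdot\nabla h$ and $\lambda^\theta\nabla v\cdot\fB[h]$ by the product rule with $\|v\|_{s_0+1}^{\Lip(\gamma)}\lesssim1$ and $\lambda^\theta\le\lambda$. For the commutators, the key observation is that $\lambda\,\omega\cdot\partial_\vf$ and $\beta\,\tL$ commute with the projections $\Pi_\tK,\Pi_\tK^\perp$ of \eqref{def:smoothings} (being Fourier multipliers in $\vf$ and in $x$, respectively), so $[\cL,\Pi_\tK^\perp]=\lambda^\theta[\cA,\Pi_\tK^\perp]$ and $[\Pi_\tK,\cL]=\lambda^\theta[\Pi_\tK,\cA]$ with $\cA:={\bf a}_0\cdot\nabla+\cE_0$, and the prefactor $\lambda^\theta$ appears in both estimates. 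Writing $\Pi_\tK^\perp={\rm Id}-\Pi_\tK$ and expanding, each commutator is a sum of two terms carrying a high-frequency projection on exactly one side, for instance $[\cA,\Pi_\tK^\perp]h=\Pi_\tK\cA\,\Pi_\tK^\perp h-\Pi_\tK^\perp\cA\,\Pi_\tK h$; one then uses that $\cA$ has order $1$ together with \eqref{p2-proi}--\eqref{p3-proi} and the product rule. For the first bound (measured in $s_0$), trading $b-1$ derivatives through $\Pi_\tK^\perp$ costs $\tK^{-(b-1)}=\tK^{1-b}$; for the second (measured in $s_0+b$), one $\vf$-derivative is lost through $\Pi_\tK$ via $\|\Pi_\tK u\|_{s_0+b}\lesssim\tK\|u\|_{s_0+b-1}$, producing the factor $\tK$. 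This commutator bookkeeping is the one genuinely technical point, and I expect it to be the main obstacle, although it is of a standard kind.

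For item $(iii)$ the key is an exact algebraic identity: $\cF$ in \eqref{internal.rescaled} is the sum of a term affine in $v$ and the quadratic term $\lambda^\theta\cN(v,v)$ with $\cN$ bilinear, and $\cL h=\lambda\,\omega\cdot\partial_\vf h+\beta\,\tL h+\lambda^\theta\big(\cN(v,h)+\cN(h,v)\big)$ (recall ${\bf a}_0\cdot\nabla h=\cN(v,h)$ and $\cE_0[h]=\nabla v\cdot\fB[h]=\cN(h,v)$), so $Q(v,h)=\cF(v+h)-\cF(v)-\cL h=\lambda^\theta\cN(h,h)=\lambda^\theta\fB[h]\cdot\nabla h$. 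The zero-average property follows because $\fB[h]=\nabla^\perp(-\Delta)^{-1}h$ is divergence free, whence $\fB[h]\cdot\nabla h=\nabla\cdot\big(h\,\fB[h]\big)$ integrates to zero on $\T^2$; the norm bound in \eqref{stima Q v h} is again the product rule applied to $\fB[h]\cdot\nabla h$ together with the $1$-smoothing of $\fB$.

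Finally, the structural claims. For the traveling-wave preservation: $\lambda\,\omega\cdot\partial_\vf$ and $\beta\,\tL$ map $S_\pi$ into $S_\pi$, $\fB$, $\nabla$, $\nabla^\perp(-\Delta)^{-1}$ are momentum preserving (Lemma~\ref{lem:mom_pres}), and the pointwise product of two quasi-periodic traveling waves is again one, since $\breve u_1(\vf-\pi(x))\breve u_2(\vf-\pi(x))=(\breve u_1\breve u_2)(\vf-\pi(x))$; hence, by Lemma~\ref{A.mom.cons}, ${\bf a}_0=\fB[v]$, $\nabla v$, $\fB[h]$, $\nabla h$ are traveling waves whenever $v,h$ are, so $\cF(v)$, $\cL h$, $Q(v,h)$ are traveling waves, and $\cL$ is momentum preserving by Lemma~\ref{lem:mom_prop}. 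For the parity: $\partial_{\vf_k}$, $\tL$, $\nabla$, $\nabla^\perp$ all have symbols odd in $(\ell,j)$, hence map $Y$ into $X$ and $X$ into $Y$, while $(-\Delta)^{-1}$ has an even symbol and preserves parity; therefore $v,h\in Y$ force $\fB[v],\nabla v,\fB[h],\nabla h\in X$, and since a product of two elements of $X$ lies in $X$ we get $\cN(v,v),\cN(h,h),{\bf a}_0\cdot\nabla h,\cE_0[h]\in X$. Combined with $\lambda\,\omega\cdot\partial_\vf$ and $\beta\,\tL$ mapping $Y$ to $X$, and with $f\in X$ under \eqref{force.even}, this yields $\cF(v)\in X$ when $v\in Y$, $\cL h\in X$ when $h\in Y$, and $Q(v,h)\in X$ when $v,h\in Y$.
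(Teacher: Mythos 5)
Your proposal is correct and follows essentially the same route as the paper's (much terser) proof: the estimates via the tame product estimate and the $1$-smoothing of $\tL$ and $\fB$, the commutator bounds via the fact that $\Pi_{\tK},\Pi_{\tK}^\perp$ commute with $\lambda\,\omega\cdot\pa_\vf+\beta\,\tL$ so that only $\lambda^\theta(\ba_0\cdot\nabla+\cE_0)$ contributes, the identity $Q=\lambda^\theta\cN(h,h)$ with the zero average from $\mathrm{div}\,\fB[h]=0$, and the structural claims from the momentum-preservation and parity lemmas. Your commutator bookkeeping with $\Pi_\tK\cA\Pi_\tK^\perp-\Pi_\tK^\perp\cA\Pi_\tK$ is a correct way to fill in the detail the paper leaves implicit.
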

	\begin{proof}
		The claimed estimates follow by \eqref{prima forma linearizzato beta plane}, using that $\alpha-\theta<1$, $Q(v, h) = \lambda^{\theta}{\mathcal N}(h, h)$, by a repeated applications of the interpolation estimate \eqref{lemma:LS norms}, and by the fact that $\Pi_{\tK}$ and  $\Pi_{\tK}^\perp$ commute with $\lambda\,\omega\cdot\pa_{\vf} + \beta\,\tL$.
		Moreover, by \eqref{internal.rescaled} and using that ${\rm div} \fB [h]= 0$ for any sufficiently smooth function $h(\vf , x)$, by \eqref{biot-savart}, we have
		 \begin{equation}
		 	\int_{\T^2} Q(v,h)(\vf,x) \wrt x = \lambda^{\theta} \int_{\T^2} \fB[h(\vf,x)] \cdot \nabla h(\vf,x) \wrt x = - \lambda^\theta \int_{\T^2} {\rm div} \fB[h(\vf,x)] h(\vf,x) \wrt x = 0 \,.
		 \end{equation} 
		The claims on the quasi-periodic traveling waves follow by \eqref{internal.rescaled}, \eqref{prima forma linearizzato beta plane}, the fact that $f(\vf,x)$ is a quasi-periodic function, and Lemmata \ref{lem:mom_prop}, \ref{A.mom.cons}. Finally, assuming \eqref{force.even}, the claims on the parities follow by \eqref{internal.rescaled}, \eqref{lambda.m.def}, \eqref{biot-savart}, \eqref{prima forma linearizzato beta plane} and Lemma \ref{lemma real rev matrici}.
	\end{proof}
	
	For $N\in\N_0$, we construct an approximate solution of \eqref{internal.rescaled} of the form 
	\begin{equation}\label{approx.sum}
		\begin{aligned}
			v_{{\rm app},N}(\vf,x)& = v_0(\vf,x)+v_1(\vf,x)+...+v_N(\vf,x) \\
			& = v_{{\rm app},N-1}(\vf,x) + v_{N}(\vf,x)\,, \quad v_{{\rm app},-1}:= 0 \,,
		\end{aligned}
	\end{equation}
	in such a way that the function $v_{{\rm app}, N}(\vf,x)$ of size $O(1)$ solves \eqref{internal.rescaled} up to a progressively smaller error term which, for $N\in \N$ sufficiently large, becomes perturbatively small for $\lambda\gg 1$ big enough, see \eqref{stima.approx.qN} and \eqref{condi.per.NM.dopo} in Proposition \ref{lemma.approx}. 
	Moreover, both the approximate solution $v_{{\rm app},N}(\vf,x)$ that we are going to construct and the final error term are quasi-periodic traveling waves.

	For fixed $\gamma\in (0,1)$ and $\tau > \nu-1$, recalling \eqref{anello}, we define the set of the Diophantine frequency vectors
	\begin{equation}\label{DC.2gamma}
		\tD\tC(\gamma,\tau) := \Big\{  \omega\in \Omega :   \ |\omega\cdot \ell | \geq \gamma \braket{\ell}^{-\tau} \ \forall\,\ell\in\Z^{\nu}\setminus\{0\}  \Big\}  \,.
	\end{equation}
	It is well known that the Lebesgue measure $|\Omega \setminus \tD\tC(\gamma,\tau) | = O(\gamma)$. Moreover, we consider the following non-resonance condition, for $\tau > \nu + 1$,
	\begin{equation}\label{Omega.gamma.SA}
	\begin{aligned}
			\Omega_{\gamma} := \Big\{  \omega & \in \tD\tC(\gamma,\tau) \,: \, | \lambda \,\omega\cdot \ell + \beta \,\tL(j)  |  \geq \lambda \tfrac{\gamma}{ \braket{\ell}^{\tau}}\,, 
			  \forall\,(\ell,j)\in\Z^{\nu+2}\setminus\{0\}\,, \  \pi^\top(\ell) + j=0   \Big\}\,.
	\end{aligned}
	\end{equation} 
	At each step of the iteration, we solve  linear equations of the following form.
	
	\noindent

	\begin{lem}\label{lemma.equa.linear.approx}
		Let $s\geq 0$, $\gamma\in (0,1)$ and $\tau > \nu + 1$, Let $g (\,\cdot\,;\omega,\lambda) \in H_0^{s+2\tau+1}(\T^{\nu+2})$ be a quasi-periodic traveling wave.  Then, for any $\omega\in \Omega_{\gamma}$ as in \eqref{Omega.gamma.SA}, the linear equation
		\begin{equation}\label{equa.linear.approx}
			\lambda \, \omega\cdot \pa_{\vf} w(\vf,x) + \beta\,\tL w(\vf,x) + g(\vf,x) = 0
		\end{equation}
		is solved by the quasi-periodic traveling wave $w(\,\cdot\,;\omega,\lambda) \in H_0^s(\T^{\nu+2})$, defined as
		\begin{equation}\label{solu.linear.approx}
			\begin{aligned}
				w(\vf,x) &\, = w(\vf,x;\omega,\lambda)  := - (\lambda \, \omega \cdot \pa_{\vf} + \beta\,\tL )^{-1} g(\vf,x) \\
				&   := -  \sum_{(\ell,j)\in \Z^{\nu+2}, j\neq 0 \atop\pi^\top(\ell)+j =0} \frac{1}{\im \big( \lambda\,\omega\cdot\ell + \beta \,\tL (j)\big)} \wh{g}(\ell,j) e^{\im(\ell\cdot\vf+j\cdot x)}\,,
			\end{aligned}
		\end{equation}
		with estimates
		\begin{equation}\label{stima.linear.approx}
		\begin{aligned}
 & 	\| w \|_{s}^{\Lip(\gamma)} \lesssim_{s} \lambda^{-1} \gamma^{-1} \| g \|_{s+2\tau+1}^{\Lip(\gamma)} \,, \\
 & \inf_{\omega \in \Omega_\gamma} \| w(\cdot; \omega) \|_s \geq \tfrac12 \min\{\lambda^{-1}, |\beta|^{-1}\} \inf_{\omega \in \Omega_\gamma}\| g(\cdot; \omega) \|_{s-1}  \,. 
\end{aligned}
\end{equation}
		Furthermore, we have the measure estimate $|\tD\tC(\gamma,\tau) \setminus\Omega_{\gamma}|\lesssim \gamma $, where $\tD\tC(\gamma, \tau)$ is defined in \eqref{DC.2gamma}.\\
		In addition, if $g(\vf,x)=\even(\vf,x)$, then $w(\vf,x)=\odd(\vf,x)$.
	\end{lem}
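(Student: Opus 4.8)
The plan is to solve \eqref{equa.linear.approx} mode by mode in Fourier and then read off every claim from the explicit multiplier. First I would record the structure of the Fourier support: since $g$ is a quasi-periodic traveling wave with zero average in $x$, \eqref{QPT.forma} forces $\wh{g}(\ell,j)=0$ unless $\pi^\top(\ell)+j=0$ and $j\neq 0$, and then $\pi^\top(\ell)=-j\neq 0$ forces $\ell\neq 0$ as well; so in \eqref{solu.linear.approx} only indices with $\ell\neq 0$, $j\neq 0$, $\pi^\top(\ell)+j=0$ occur. For $\omega\in\Omega_{\gamma}$ the non-resonance condition in \eqref{Omega.gamma.SA} guarantees $\lambda\,\omega\cdot\ell+\beta\,\tL(j)\neq 0$ on this support, so the series \eqref{solu.linear.approx} is well defined, and its convergence in $H^s_0(\T^{\nu+2})$ will follow a posteriori from the tame bound below. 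Applying $\lambda\,\omega\cdot\partial_\vf+\beta\,\tL$ term by term — each $e^{\im(\ell\cdot\vf+j\cdot x)}$ being an eigenfunction with eigenvalue $\im(\lambda\,\omega\cdot\ell+\beta\,\tL(j))$ — gives \eqref{equa.linear.approx}, and since $w$ has the same Fourier support as $g$, it is again a zero-average quasi-periodic traveling wave.

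For the upper estimate in \eqref{stima.linear.approx} I would use that on $\Omega_\gamma$ one has $|\lambda\,\omega\cdot\ell+\beta\,\tL(j)|^{-1}\le\lambda^{-1}\gamma^{-1}\langle\ell\rangle^{\tau}$, so Plancherel yields $\|w(\omega)\|_s\lesssim\lambda^{-1}\gamma^{-1}\|g(\omega)\|_{s+\tau}$, a loss of $\tau$ derivatives in the $\sup$ part. For the Lipschitz part, for $\omega_1,\omega_2\in\Omega_\gamma$ I would split each Fourier coefficient of $w(\omega_1)-w(\omega_2)$ as
\begin{equation*}
\frac{\wh{g}(\ell,j;\omega_1)-\wh{g}(\ell,j;\omega_2)}{\im(\lambda\,\omega_1\cdot\ell+\beta\,\tL(j))}\;+\;\wh{g}(\ell,j;\omega_2)\,\frac{\im\lambda\,(\omega_2-\omega_1)\cdot\ell}{\im(\lambda\,\omega_1\cdot\ell+\beta\,\tL(j))\cdot\im(\lambda\,\omega_2\cdot\ell+\beta\,\tL(j))}\,,
\end{equation*}
bounding the first summand by the lip-seminorm of $g$ with a loss of $\tau$ derivatives and one factor $\gamma^{-1}$, and the second by $|(\omega_2-\omega_1)\cdot\ell|\le|\omega_1-\omega_2|\,|\ell|$ together with two small-divisor bounds, which costs $2\tau+1$ derivatives and $\gamma^{-2}$. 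Summing, multiplying the lip-seminorm by $\gamma$ as prescribed by Definition~\ref{def:Lip F uniform}, and collecting terms yields $\|w\|_s^{\Lip(\gamma)}\lesssim_s\lambda^{-1}\gamma^{-1}\|g\|_{s+2\tau+1}^{\Lip(\gamma)}$. For the lower bound I would instead use the \emph{trivial} upper bound on the divisors: since $\omega\in\Omega$ in \eqref{anello} gives $|\omega|\le 2$, since $|\tL(j)|=|j_1|/|j|^2\le 1$ by \eqref{lambda.m.def}, and since $|\ell|\ge 1$ on the support, $|\lambda\,\omega\cdot\ell+\beta\,\tL(j)|\le 2\lambda|\ell|+|\beta|\lesssim\max\{\lambda,|\beta|\}\langle\ell,j\rangle$. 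Hence $\langle\ell,j\rangle^{2s}|\wh w(\ell,j)|^2\gtrsim\big(\max\{\lambda,|\beta|\}\big)^{-2}\langle\ell,j\rangle^{2(s-1)}|\wh{g}(\ell,j)|^2$ mode by mode, and summing over the support and taking $\inf_\omega$ gives the stated lower bound $\inf_\omega\|w(\cdot;\omega)\|_s\gtrsim\min\{\lambda^{-1},|\beta|^{-1}\}\inf_\omega\|g(\cdot;\omega)\|_{s-1}$ (with the numerical constant of \eqref{stima.linear.approx}).

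The measure estimate is where the traveling-wave structure does the essential work. The key remark is that in \eqref{Omega.gamma.SA} the constraint runs only over pairs with $\pi^\top(\ell)+j=0$, so $j=-\pi^\top(\ell)$ is \emph{uniquely determined by} $\ell$; hence $\tD\tC(\gamma,\tau)\setminus\Omega_\gamma$ is contained in the union over $\ell\in\Z^\nu\setminus\{0\}$ of the single resonant slabs $R_\ell:=\{\omega\in\Omega:\ |\,\omega\cdot\ell+\beta\lambda^{-1}\tL(-\pi^\top(\ell))\,|<\gamma\langle\ell\rangle^{-\tau}\}$, the case $j=0$ reducing to the Diophantine inequality already imposed in $\tD\tC(\gamma,\tau)$, see \eqref{DC.2gamma}. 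Each $R_\ell$ is a neighborhood of a hyperplane of thickness $\lesssim\gamma\langle\ell\rangle^{-\tau}/|\ell|=\gamma\langle\ell\rangle^{-\tau-1}$ intersected with the bounded set $\Omega$, so $|R_\ell|\lesssim\gamma\langle\ell\rangle^{-\tau-1}$, and $\sum_{\ell\neq 0}\langle\ell\rangle^{-\tau-1}<\infty$ because $\tau+1>\nu$ (recall $\tau>\nu+1$), giving $|\tD\tC(\gamma,\tau)\setminus\Omega_\gamma|\lesssim\gamma$. This collapse of an a priori two-index family of non-resonance conditions to a one-index family is exactly the mechanism that keeps the excision as cheap as for the plain Diophantine set, in spite of the infinite-dimensional kernel of $\beta\,\tL$.

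Finally, for the parity claim I would observe that $\lambda\,\omega\cdot\partial_\vf+\beta\,\tL$ is reversible, i.e.\ it maps functions even in $(\vf,x)$ to odd ones and conversely: both $\partial_\vf$ and $\tL=(-\Delta)^{-1}\partial_{x_1}$ swap parity, which at the level of matrix elements is the relation $\tL(-j)=-\tL(j)$, cf.\ Section~\ref{Reversible operators} and Lemma~\ref{lemma real rev matrici}. Hence, writing $g=\even$ and $w=w_{\even}+w_{\odd}$, the odd part of \eqref{equa.linear.approx} reads $(\lambda\,\omega\cdot\partial_\vf+\beta\,\tL)w_{\even}=0$, whence $w_{\even}=0$ by injectivity of the operator on zero-average traveling waves, so $w=\odd$; equivalently one checks directly from \eqref{solu.linear.approx} that $\wh w(-\ell,-j)=-\wh w(\ell,j)$. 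As for the main obstacle: none of these steps is a genuine difficulty — the argument is standard small-divisor bookkeeping — and the only non-mechanical ingredient is the observation, used in the measure step, that the momentum restriction reduces the resonant family to one parameter.
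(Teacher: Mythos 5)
Your proof is correct and, for the mode-by-mode Fourier construction, the tame upper bound, the lower bound via the trivial estimate $|\lambda\,\omega\cdot\ell+\beta\,\tL(j)|\lesssim\max\{\lambda,|\beta|\}\langle\ell,j\rangle$, and the parity claim via reversibility, it follows essentially the same route as the paper. The one place where you argue differently — and in fact more sharply — is the measure estimate: you note that the momentum constraint $\pi^\top(\ell)+j=0$ determines $j=-\pi^\top(\ell)\in\Z^2$ uniquely from $\ell$, so $\tD\tC(\gamma,\tau)\setminus\Omega_\gamma$ is covered by a \emph{one}-parameter family of slabs $R_\ell$ of measure $\lesssim\gamma\langle\ell\rangle^{-(\tau+1)}$, and $\sum_{\ell\neq 0}\langle\ell\rangle^{-(\tau+1)}<\infty$ already for $\tau>\nu-1$. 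The paper instead relaxes the constraint to $|j|\lesssim|\ell|$ and sums over a two-index family, picking up an extra $|\ell|^2$ and ending with $\sum_{\ell}\gamma\langle\ell\rangle^{-(\tau-1)}$, which is why it invokes the stronger hypothesis $\tau>\nu+1$. Both arguments are valid under the lemma's hypotheses, but yours makes explicit that the traveling-wave restriction reduces the excision cost to that of the plain Diophantine set — exactly the point the paper emphasizes informally in the introduction. Your treatment of the $j=0$ case (absorbed into $\tD\tC(\gamma,\tau)$) and the even/odd splitting for the parity claim are also both correct and consistent with the paper.
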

	\begin{proof}
		We write $w(\vf,x) = \sum_{(\ell, j) \in \Z^{\nu+2}, \, j\neq 0} \whw(\ell,j) e^{\im(\ell\cdot \vf + j\cdot x)}$. Then, expanding the equation \eqref{equa.linear.approx} in Fourier, we get
		\begin{equation}
			\im\,\big( \lambda\,\omega\cdot \ell + \beta\, \tL(j) \big) \whw(\ell,j) + \whg(\ell,j)= 0\,.
		\end{equation}
		Since $g(\vf,x)$ is a quasi-periodic traveling wave of the form \eqref{QPT.forma}, we obtain
		\begin{equation}
			\whw(\ell,j):= \begin{cases}
				\im\,\big( \lambda\,\omega\cdot \ell + \beta\, \tL(j) \big)^{-1} \whg(\ell,j) & \text{if } \ \pi^\top(\ell) + j = 0 \,, \\
				0 & \text{otherwise}\,. 
			\end{cases}
		\end{equation}
		We conclude that $w(\vf,x)$ in \eqref{solu.linear.approx} is indeed a solution of \eqref{equa.linear.approx} and it is a quasi-periodic traveling wave. 
		The upper bound on $\|w \|_{s}^{\Lip(\gamma)}$ in \eqref{stima.linear.approx} follows by Definition \ref{def:Lip F uniform} and since $\omega\in\Omega_{\gamma}$ as in \eqref{Omega.gamma.SA}, whereas the lower bound follows by 
		 the upper bound on the denominator, having $|\omega|\leq 2$ for any $\omega\in\Omega_{\gamma} \subset \tD\tC(\gamma,\tau)$
		(see \eqref{DC.2gamma}-\eqref{Omega.gamma.SA}),
		\begin{equation}
			 |\lambda\,\omega\cdot \ell +\beta \tL(j) | \leq \lambda | \omega| |\ell| + |\beta| \leq 2 \max\{\lambda,|\beta|\} |\ell| \leq 2 \max\{\lambda,|\beta|\} \braket{\ell,j}\,.
		\end{equation}
		We now prove the measure estimate. For $(\ell,j)\in \Z^{\nu+2}\setminus\{0\}$, with $\pi^\top(\ell) +j=0$, we define the resonant set
		\begin{equation}
			R_{\ell,j} := \Big\{  \omega \in \tD\tC(\gamma,\tau) \, : \, | \lambda \,\omega\cdot \ell + \beta\,\tL(j) | < \lambda \gamma \braket{\ell}^{-\tau} \Big\} \,.
		\end{equation}
		Note that $\ell \neq0$. Indeed if $\ell = 0$, then also $j = 0$ (by the relation $\pi^\top(\ell) +j=0$) but this is not possible since $(\ell, j ) \neq (0, 0)$. 
Hence, let 
$$
\omega = \frac{\ell}{|\ell|} s + v, \quad v \cdot \ell = 0
$$
and set 
$$
z(s):=\lambda\,\omega\cdot\ell + \beta\,\tL(j) = \lambda |\ell| s + \beta \tL(j)\,. 
$$ Since $|\partial_s z(s)| \geq \lambda |\ell|$, one gets that 
$$
\Big| \Big\{ s :  \omega = \frac{\ell}{|\ell|} s + v \in R_{\ell, j} \Big\} \Big|  \lesssim \lambda \gamma \langle \ell \rangle^{- (\tau + 1)}
$$
and hence by Fubini, one obtains that $|R_{\ell, j}| \lesssim \lambda \gamma \langle \ell \rangle^{- (\tau + 1)}$. We then compute
		\begin{equation}
			\begin{aligned}
				| \tD\tC(\gamma,\tau) \setminus \Omega_{\gamma} | & =  \Big|  \bigcup_{(\ell,j)\in\Z^{\nu+2}\setminus\{0\} \atop \pi^\top(\ell)+j=0}  R_{\ell,j} \Big|  =  \Big|  \bigcup_{\begin{subarray}{c}
				\ell\in\Z^\nu\setminus\{0\} \\
				|j| \lesssim |\ell|
				\end{subarray}}  R_{\ell,j } \Big| \\
				& \lesssim \sum_{\begin{subarray}{c}
				\ell \neq 0 \\
				|j| \lesssim |\ell|
				\end{subarray}} \frac{\lambda \gamma}{\langle \ell \rangle^{\tau + 1}} \lesssim \sum_{\ell \neq 0} \frac{\lambda \gamma |\ell|^2}{\langle \ell \rangle^{\tau + 1}} \lesssim \sum_{\ell \neq 0} \frac{\lambda \gamma}{\langle \ell \rangle^{\tau - 1}}   \,,
			\end{aligned}
		\end{equation}
		where the last inequality holds since $\tau > \nu + 1$. \\
		Finally, if we assume $g(\vf,x)=\even(\vf,x)$, then we have that $w(\vf,x)=\odd(\vf,x)$, using that $\lambda\,\omega\cdot\pa_{\vf}+\beta\,\tL$ is reversible, by \eqref{lambda.m.def}, Lemma \ref{lemma real rev matrici} and Definition \ref{reserv.operators.def}.
		This concludes the proof.
	\end{proof}

			We are now ready to construct explicitly the approximate solution of \eqref{internal.rescaled}.

				\begin{prop}\label{lemma.approx}
				{\bf (Approximate solution).}
				Let $N\in\N_0$. Let $s\geq s_0$, $\gamma\in (0,1)$ and $\tau > \nu + 1$. For any $\omega\in\Omega_{\gamma}$, there exists
				a quasi-periodic traveling wave $v_{{\rm app},N}(\vf,x) = v_{{\rm app},N}(\vf,x ; \omega,\lambda) \in H_0^s(\T^{\nu+2})$ of the form \eqref{approx.sum} satisfying
				\begin{equation}\label{internal.approx}
					\cF(v_{{\rm app},N}(\vf,x)) = q_{N}(\vf,x)\,,
				\end{equation}
				where the remainder $q_{N}(\vf,x) = q_{N}(\vf,x,\omega,\lambda) \in H_0^s(\T^{\nu+2})$ is a quasi-periodic trave\-ling wave as well. Moreover, there exists $\bar{\lambda}=\bar{\lambda}(s,N,\tau,\beta)\gg 1$ large enough such that, defining the constants
				\begin{equation}\label{costanti.kappabetaN}
					\begin{aligned}
						&
						\kappa_{N-\frac12}(\tau) := \kappa_{N}(\tau) -1 
						\,, \quad  \kappa_{N}(\tau) := 2(N+1)(\tau+1) \,,
					\end{aligned}
				\end{equation}
				and assuming, for a given $\tc \in (0,\frac13(2-\alpha))$,
				\begin{equation}\label{parametri.per.vapp}
					\lambda \geq \bar{\lambda}  \geq\max\{ 1 ,|\beta|\}\,, \quad \gamma = \lambda^{-\tc} \,, 
					\quad \theta:= \alpha-1 +\tc \,, 
				\end{equation}
				the following estimates hold, for any $s\geq s_0$:
			\begin{align}
				& \| v_{{\rm app},N} -v_{{\rm app},N-1} \|_s^{\Lip(\gamma)} \lesssim_{s,N} \lambda^{N(\alpha  -2(1-\tc) )} \| f \|_{s+\kappa_{N-\frac12}(\tau)} \,; \label{stima.approx.diffN} \\
				& 	\| v_{{\rm app},N} \|_s^{\Lip(\gamma)} \lesssim_{s,N} \| f \|_{s+\kappa_{N-\frac12}(\tau)}\,, \quad  \inf_{\omega \in \Omega_\gamma} \| v_{{\rm app},N}(\cdot; \omega) \|_s \gtrsim_{s, N} \lambda^{-\tc} \| f \|_{s-1} \label{stima.approx.vN}\,; \\
				& \| q_{N} \|_{s}^{\Lip(\gamma)} \lesssim_{s,N} \lambda^{(N+1)(\alpha-2(1-\tc))+1-\tc}  \| f\|_{s+\kappa_{N}(\tau)}\,.
				\label{stima.approx.qN}
			\end{align}
			In particular, assuming the symmetry condition \eqref{force.even} on the forcing term $f(\vf, x)$ in \eqref{QP.Eq.to.solve}, we have that $v_{{\rm app},N}(\vf,x)= \odd(\vf,x)$ and $q_N(\vf,x)=\even(\vf,x)$. 
			Finally, suppose that
				\begin{equation}\label{condi.per.NM.dopo}
				N >\frac{\alpha-(1-\tc)}{2(1-\tc)-\alpha}\,.
			\end{equation}
			Then, $v_{{\rm app},N}(\vf,x)$ approximately solves \eqref{internal.rescaled}, namely the term on the right-hand side of \eqref{stima.approx.qN} becomes perturbatively small for sufficiently large $\lambda\gg 1$.
			\end{prop}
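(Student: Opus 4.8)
The plan is to build the terms $v_0,\dots,v_N$ of \eqref{approx.sum} recursively, each time inverting the constant-coefficient operator $\cD := \lambda\,\omega\cdot\pa_\vf + \beta\,\tL$ on quasi-periodic traveling waves with zero space average via Lemma \ref{lemma.equa.linear.approx}, and then to propagate the estimates \eqref{stima.approx.diffN}--\eqref{stima.approx.qN} by induction on $N$. First I would take $v_0 := v_{{\rm app},0}$ to be the solution \eqref{solu.linear.approx} of $\cD v_0 = \lambda^{\alpha-\theta}f$, legitimate for $\omega\in\Omega_\gamma$ since $f$ is a quasi-periodic traveling wave with zero average in $x$, hence so is $v_0$, so that \eqref{internal.rescaled} gives $\cF(v_{{\rm app},0}) = \lambda^\theta\cN(v_0,v_0) =: q_0$. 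The upper bound in \eqref{stima.linear.approx} yields $\|v_0\|_s^{\Lip(\gamma)}\lesssim_s\lambda^{\alpha-\theta-1}\gamma^{-1}\|f\|_{s+2\tau+1}^{\Lip(\gamma)}$, and with $\theta = \alpha-1+\tc$ and $\gamma=\lambda^{-\tc}$ from \eqref{parametri.per.vapp} the prefactor $\lambda^{\alpha-\theta-1}\gamma^{-1}$ equals $1$ while $2\tau+1 = \kappa_{-\frac12}(\tau)$; together with the lower bound in \eqref{stima.linear.approx} (using $\lambda\geq|\beta|$, so $\min\{\lambda^{-1},|\beta|^{-1}\}=\lambda^{-1}$ and $\lambda^{\alpha-\theta-1}=\lambda^{-\tc}$) this is the $N=0$ case of \eqref{stima.approx.diffN}--\eqref{stima.approx.vN}. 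Since $\int_{\T^2}\cN(a,b)\,\wrt x = -\int_{\T^2}({\rm div}\,\fB[a])\,b\,\wrt x = 0$, the remainder $q_0$ is a quasi-periodic traveling wave with zero average in $x$, and the bilinear estimate for $\cN$ (Lemma \ref{lemma:LS norms}, using that $\fB=\nabla^\perp(-\Delta)^{-1}$ is $1$-smoothing and $\nabla$ loses one derivative) gives $\|q_0\|_s^{\Lip(\gamma)}\lesssim_s\lambda^\theta\|f\|_{s+\kappa_0(\tau)}^{\Lip(\gamma)}$; since $\lambda^\theta = \lambda^{(\alpha-2(1-\tc))+(1-\tc)}$ this is the $N=0$ case of \eqref{stima.approx.qN}.

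For the inductive step, suppose $v_{{\rm app},n}$ has been built with $\cF(v_{{\rm app},n}) = q_n$ satisfying the stated bounds, $q_n$ a quasi-periodic traveling wave with zero average in $x$, and $\|v_{{\rm app},n}\|_{s_0+1}^{\Lip(\gamma)}$ bounded uniformly in $\lambda$ (so the hypotheses of Lemma \ref{stime tame cal F} apply). Define $v_{n+1} := -\cD^{-1}q_n$ via \eqref{solu.linear.approx}, again a quasi-periodic traveling wave with zero average in $x$, and $v_{{\rm app},n+1} := v_{{\rm app},n}+v_{n+1}$. By bilinearity of $\cN$ and $\cD v_{n+1} = -q_n$,
\[
q_{n+1} := \cF(v_{{\rm app},n+1}) = \lambda^\theta\big(\cN(v_{{\rm app},n},v_{n+1}) + \cN(v_{n+1},v_{{\rm app},n}) + \cN(v_{n+1},v_{n+1})\big).
\]
The bound \eqref{stima.linear.approx} gives $\|v_{n+1}\|_s^{\Lip(\gamma)}\lesssim\lambda^{-(1-\tc)}\|q_n\|_{s+2\tau+1}^{\Lip(\gamma)}$; inserting the inductive bound on $q_n$ and checking $\kappa_{n+\frac12}(\tau)=\kappa_n(\tau)+2\tau+1$ (immediate from \eqref{costanti.kappabetaN}) gives \eqref{stima.approx.diffN} at level $n+1$. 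Applying the bilinear estimate for $\cN$ to the three terms above (the first two of size $O(\lambda^{(n+1)(\alpha-2(1-\tc))})\,\|f\|_{s+\kappa_{n+1}(\tau)}$, since each has a factor $v_{n+1}$ while $v_{{\rm app},n}$ contributes at $O(1)$, and the third, quadratic in $v_{n+1}$, even smaller) and multiplying by $\lambda^\theta = \lambda^{(\alpha-2(1-\tc))+(1-\tc)}$ gives $\|q_{n+1}\|_s^{\Lip(\gamma)}\lesssim\lambda^{(n+2)(\alpha-2(1-\tc))+(1-\tc)}\|f\|_{s+\kappa_{n+1}(\tau)}^{\Lip(\gamma)}$, which is \eqref{stima.approx.qN} at level $n+1$ (here $\kappa_{n+1}(\tau)=\kappa_{n+\frac12}(\tau)+1$). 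Finally, summing $v_{{\rm app},N}=\sum_{m=0}^N v_m$ and using $\alpha-2(1-\tc)<0$ (which holds since $\tc<\frac13(2-\alpha)<\frac12(2-\alpha)$), the leading term $v_0$ dominates: the upper bound in \eqref{stima.approx.vN} follows, and for $\lambda\geq\bar\lambda$ large $\inf_\omega\|v_{{\rm app},N}\|_s\geq\inf_\omega\|v_0\|_s-\sum_{m\geq1}\sup_\omega\|v_m\|_s\gtrsim\lambda^{-\tc}\|f\|_{s-1}-O(\lambda^{\alpha-2(1-\tc)})\gtrsim\lambda^{-\tc}\|f\|_{s-1}$, the last step because $\alpha-2(1-\tc)<-\tc$ exactly when $\tc<\frac13(2-\alpha)$.

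For the parity claim, assuming \eqref{force.even} we get $v_0=\odd(\vf,x)$ from Lemma \ref{lemma.equa.linear.approx}, hence $q_0 = \cF(v_{{\rm app},0})=\even(\vf,x)$ by Lemma \ref{stime tame cal F}-$(i)$; inductively $v_{n+1}=-\cD^{-1}q_n=\odd(\vf,x)$ (Lemma \ref{lemma.equa.linear.approx} with $q_n$ even), so $v_{{\rm app},n+1}$ is odd and $q_{n+1}=\cF(v_{{\rm app},n+1})$ is even, giving $v_{{\rm app},N}=\odd$ and $q_N=\even$. The last assertion is the arithmetic observation that, since $\alpha-2(1-\tc)<0$, one has $(N+1)(\alpha-2(1-\tc))+1-\tc<0\iff N>\frac{\alpha-(1-\tc)}{2(1-\tc)-\alpha}$, so under \eqref{condi.per.NM.dopo} the exponent in \eqref{stima.approx.qN} is strictly negative, i.e. $\|q_N\|_s^{\Lip(\gamma)}=O(\lambda^{-\delta})$ for some $\delta=\delta(\alpha,\nu,\tc)>0$, which is perturbatively small as $\lambda\to\infty$.

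The step I expect to demand the most care is bookkeeping rather than a conceptual obstacle: matching the regularity indices so that precisely $\kappa_{N-\frac12}(\tau)$ and $\kappa_N(\tau)$ derivatives of $f$ appear (each inversion of $\cD$ costing $2\tau+1$ derivatives through \eqref{stima.linear.approx}, each application of $\cN$ one more), and the lower bound in \eqref{stima.approx.vN}, which is the one place where the hypotheses $\bar\lambda\geq\max\{1,|\beta|\}$ and $\tc<\frac13(2-\alpha)$ are genuinely used: the former so that $\min\{\lambda^{-1},|\beta|^{-1}\}=\lambda^{-1}$ in \eqref{stima.linear.approx}, the latter so that the corrections $v_1,\dots,v_N$ are $o(\lambda^{-\tc})$ and do not swamp the leading term $v_0$ for $\lambda$ large.
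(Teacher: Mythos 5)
Your proposal is correct and follows essentially the same route as the paper's own proof: an induction on $N$ where each $v_{n+1}$ is produced by inverting $\lambda\,\omega\cdot\pa_\vf + \beta\,\tL$ on the previous remainder $q_n$ via Lemma \ref{lemma.equa.linear.approx}, the new remainder $q_{n+1}$ is identified via bilinearity of $\cN$ as $\lambda^\theta\big(\cN(v_{{\rm app},n},v_{n+1})+\cN(v_{n+1},v_{{\rm app},n})+\cN(v_{n+1},v_{n+1})\big)$, and the exponents $\kappa_{N-\frac12},\kappa_N$ accrue exactly as you track them ($2\tau+1$ per inversion, one more per application of $\cN$). The only cosmetic difference is that you establish the lower bound in \eqref{stima.approx.vN} by summing the geometric tail $\sum_{m\geq 1}\|v_m\|_s$ directly against $\inf\|v_0\|_s$, whereas the paper runs the same reverse-triangle estimate one step at a time inside the induction; both hinge on the same inequality $\alpha-2(1-\tc)<-\tc$, i.e.\ $\tc<\tfrac13(2-\alpha)$, and yield the same threshold $\bar\lambda$.
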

			
			\begin{proof}
				We argue by induction on $N\in\N_0$. Let $N=0$. Let $v_{{\rm app},0}(\vf,x) = v_0(\vf,x)$ be a solution of the equation
				\begin{equation}\label{equa.v0}
					(\lambda\,\omega\cdot \pa_{\vf} + \beta \, \tL) v_{0}(\vf,x) - \lambda^{\alpha-\theta} f(\vf,x) = 0\,.
				\end{equation}
				By Lemma \ref{lemma.equa.linear.approx}, $v_0(\vf,x)$ is a well-defined quasi-periodic traveling wave, with estimates, recalling \eqref{parametri.per.vapp},
				\begin{equation}\label{stima.v0}
				\begin{aligned}
						&\| v_{0} \|_{s}^{\Lip(\gamma)} \lesssim_{s} \lambda^{-1} \gamma^{-1} \| \lambda^{\alpha-\theta} f \|_{s+2\tau+1} = \gamma^{-1}\lambda^{-\tc} \| f\|_{s+2\tau+1} =  \| f\|_{s+2\tau+1} \,, \\
						&\inf_{\omega \in \Omega_\gamma}\| v_{0}(\cdot; \omega) \|_{s}\geq \frac12  \min\{\lambda^{-1}, |\beta|^{-1} \} \, \| \lambda^{\alpha-\theta} f \|_{s-1} \geq \frac12  \lambda^{-\tc} \| f \|_{s-1}\,,
				\end{aligned}
				\end{equation}
				which proves \eqref{stima.approx.diffN} with $N=0$, as well as \eqref{stima.approx.vN}, having $\kappa_{-\frac12}(\tau)=2\tau+1$. Using \eqref{equa.v0}, we have that \eqref{internal.approx} holds at $N=0$ with $q_0(\vf,x)$ defined by
				\begin{equation}\label{equa.q0}
					q_0(\vf,x) := \lambda^{\theta}\cN(v_0(\vf,x),v_0(\vf,x)) \,.
				\end{equation}
				By \eqref{internal.rescaled}, \eqref{biot-savart}, Lemma \ref{p1-pr}, Lemma \ref{stime tame cal F}-$(iii)$, estimate \eqref{stima.v0} and \eqref{parametri.per.vapp}, 
				 we obtain that \eqref{equa.q0} has zero average in space and satisfies \eqref{stima.approx.qN} with $N=0$, having $\kappa_{0}(\tau)= 2(\tau+1)>0$. The claim that $v_0(\vf,x)=\odd(\vf,x)$ follows by assuming \eqref{force.even} and by  Lemma \ref{lemma.equa.linear.approx}, whereas $q_0(\vf,x)=\even(\vf,x)$ follows by \eqref{equa.q0}, \eqref{internal.rescaled} and Lemma \ref{stime tame cal F}-$(iii)$. \\
				We now assume by induction that \eqref{internal.approx} holds at $N\in\N_0$ with estimates \eqref{stima.approx.diffN}-\eqref{stima.approx.vN} and we prove the claim at $N+1$. We insert \eqref{approx.sum} into \eqref{internal.approx}, both with $N\rightsquigarrow N+1$. Since $v_{{\rm app},N}$ satisfies \eqref{internal.approx}, we look for $v_{N+1}$ satisfying
				\begin{equation}\label{jigsaw1}
					\begin{aligned}
						\big(\lambda\, \omega \cdot \pa_{\vf}  + \beta \, \tL \big)v_{N+1}  &+ \lambda^{\theta}\big(\cN(v_{{\rm app},N} + v_{N+1} ,v_{{\rm app},N} + v_{N+1} ) \\
						& - \cN(v_{{\rm app},N},v_{{\rm app},N}) \big)+q_{N}(\vf,x)  = q_{N+1}(\vf,x)\,,
					\end{aligned}
				\end{equation}
				with $q_{N+1}(\vf,x)$ to be determined. In particular, we choose $v_{N+1}(\vf,x)$ to be a solution to 
				\begin{equation}\label{jigsaw2}
					\big(\lambda\, \omega \cdot \pa_{\vf}  + \beta \, \tL \big)v_{N+1}(\vf,x)+q_{N}(\vf,x) =0\,.
				\end{equation}
				Then, by Lemma \ref{lemma.equa.linear.approx}, estimate \eqref{stima.approx.qN} and \eqref{parametri.per.vapp}, we obtain, for any $s\geq s_0$
				\begin{equation}\label{jigsaw3}
					\begin{aligned}
						\| v_{N+1} \|_{s}^{\Lip(\gamma)} & \lesssim_{s} \lambda^{-1} \gamma^{-1} \| q_{N} \|_{s+2\tau + 1}^{\Lip(\gamma)}  \\
						& \lesssim_{s} \lambda^{-1+\tc}  \lambda^{(N+1)(\alpha-2(1-\tc)) +1-\tc}  \| f \|_{s+ \kappa_{N}(\tau)+2\tau +1}^{\Lip(\gamma)} \,,
					\end{aligned}
				\end{equation}
				which proves \eqref{stima.approx.diffN} at the step $N+1$, setting $\kappa_{N+1-\frac12}(\tau):= \kappa_{N}(\tau)+2\tau+1$. \\
				We now prove \eqref{stima.approx.vN} at the step $N+1$. By \eqref{approx.sum}, estimate \eqref{stima.approx.diffN} at the step $N+1$, the induction assumption on \eqref{stima.approx.vN}, \eqref{parametri.per.vapp} and assuming  $\lambda \gg 1$ large enough, we have
				\begin{equation}
					\begin{aligned}
						\| v_{{\rm app},N+1} \|_{s}^{\Lip(\gamma)} & \leq \| v_{{\rm app},N} \|_{s}^{\Lip(\gamma)} +  \| v_{{\rm app},N+1} - v_{{\rm app},N} \|_{s}^{\Lip(\gamma)} \\
						& \lesssim_{s,N}  \big( 1 + \lambda^{(N+1)(\alpha-2(1-\tc))}  \big) \| f \|_{s+\kappa_{N-\frac12}(\tau)}^{\Lip(\gamma)} \lesssim_{s,N} \| f \|_{s+\kappa_{N-\frac12}(\tau)}^{\Lip(\gamma)}\,, 
					\end{aligned}
				\end{equation}
				which proved the upper bound in \eqref{stima.approx.vN} at the step $N+1$, whereas
				\begin{equation}
					\begin{aligned}
						\inf_{\omega \in \Omega_\gamma}\| v_{{\rm app},N+1}(\cdot; \omega) \|_{s} & \geq \inf_{\omega \in \Omega_\gamma} \| v_{{\rm app},N}(\cdot; \omega) \|_{s} - \| v_{{\rm app},N+1} - v_{{\rm app},N}\|_{s}^{\Lip(\gamma)}  \\
						& \geq C_{s, N}\lambda^{-\tc} \| f \|_{s-1} - K_{s, N} \lambda^{(N+1)(\alpha-2(1-\tc))} \|f\|_{s+\kappa_{N+1-\frac12}} \\
						&   \geq \frac{C_{s, N}}{2} \lambda^{-\tc} \| f \|_{s-1} \,,
					\end{aligned}
				\end{equation}
				(for some positive constants $C_{s, N}, K_{s, N} > 0$), assuming $\lambda \geq \bar\lambda \gg 1$ large enough such that
				\begin{equation}\label{condizione.sanremo}
					 \lambda^{(N+1)(\alpha-2(1-\tc)) + \tc}\leq \frac{C_{s,N}}{2 K_{s,N}}\frac{ \| f \|_{s-1} }{ \| f\|_{s+\kappa_{N+1-\frac12}}  }\,,
				\end{equation}
				which shows the lower bound in \eqref{stima.approx.vN} at the step $N+1$, as claimed. Note that we can impose \eqref{condizione.sanremo} for $\lambda\gg1$ because $\tc <\frac13(2-\alpha)$ implies $2(1-\tc)-\alpha> \tc > \frac{\tc}{N+1}$.
				
				 Furthermore, since $q_{N}$ and $v_{{\rm app},N}$ are quasi-periodic traveling waves by induction assumption, by Lemma \ref{lemma.equa.linear.approx} we have that $v_{N+1}$ and $v_{{\rm app},N+1}$ are quasi-periodic traveling waves as well.  \\
				It remains to define $q_{N+1}(\vf,x)$ and to prove \eqref{stima.approx.qN} at the step $N+1$. By inserting \eqref{jigsaw2} into \eqref{jigsaw1}, the identity holds by defining
				\begin{equation}\label{jigsaw4}
					\begin{aligned}
						q_{N+1}& :=\lambda^{\theta}\big(\cN(v_{{\rm app},N} + v_{N+1} ,v_{{\rm app},N} + v_{N+1} )  - \cN(v_{{\rm app},N},v_{{\rm app},N}) \big) \\
						&\, = \lambda^{\alpha-1+\tc} \big(  \cN(v_{{\rm app},N} , v_{N+1} ) + \cN( v_{N+1} ,v_{{\rm app},N}  ) + \cN( v_{N+1} , v_{N+1} )  \big)\,,
					\end{aligned}
				\end{equation}
				where we used \eqref{parametri.per.vapp} and the bilinearity of $\cN(\,\cdot\, , \,\cdot\,)$. By Lemma \ref{stime tame cal F}-$(iii)$, we deduce that $q_{N+1}(\vf,x)$ has zero average in space. By \eqref{internal.rescaled}, \eqref{biot-savart}, Lemma \ref{lemma:LS norms}, and estimates \eqref{stima.approx.vN}, \eqref{jigsaw3}, 
				 we estimate \eqref{jigsaw4} by
				\begin{equation}
					\begin{aligned}
						\| q_{N+1} \|_{s}^{\Lip(\gamma)} & \lesssim_{s} \lambda^{\alpha-1+\tc} \Big(  \| v_{{\rm app},N} \|_{s-1}^{\Lip(\gamma)}   \| v_{N+1} \|_{s_0+1}^{\Lip(\gamma)}  +   \| v_{{\rm app},N} \|_{s_0-1}^{\Lip(\gamma)}    \| v_{N+1} \|_{s+1}^{\Lip(\gamma)}  \\
						&  \qquad \qquad \quad \  \ \| v_{N+1} \|_{s-1}^{\Lip(\gamma)}   \| v_{{\rm app},N} \|_{s_0+1}^{\Lip(\gamma)} +   \| v_{N+1} \|_{s_0-1}^{\Lip(\gamma)}   \| v_{{\rm app},N} \|_{s+1}^{\Lip(\gamma)} \\
						&  \qquad \qquad \quad \ \  \| v_{N+1} \|_{s-1}^{\Lip(\gamma)}    \| v_{N+1} \|_{s_0+1}^{\Lip(\gamma)}  +   \| v_{N+1} \|_{s_0-1}^{\Lip(\gamma)}   \| v_{N+1} \|_{s+1}^{\Lip(\gamma)}  \Big) \\
						&\lesssim_{s,N} \lambda^{(N+2)(\alpha-2(1-\tc)) + 1-\tc}   \big( 1+ \lambda^{(N+1)(\alpha-2(1-\tc))}  \big) \| f \|_{s+\kappa_{N+1-\frac12}(\tau)+1}  \\
						&\lesssim_{s,N} \lambda^{(N+2)(\alpha-2(1-\tc))+1-\tc}  \| f \|_{s+\kappa_{N+1-\frac12}(\tau)+1} \,,
					\end{aligned}
				\end{equation}
				which yields \eqref{stima.approx.qN} at the step $N+1$, with  $\kappa_{N+1}(\tau):= \kappa_{N+1-\frac12}(\tau)+1$. Furthermore, since $v_{{\rm app},N}$ is a quasi-periodic traveling wave by induction assumption and we showed that $v_{N+1}$ is a quasi-periodic traveling wave, it follows that $q_{N+1}$ in \eqref{jigsaw4} is a quasi-periodic traveling wave as well. Finally, the claim that $v_{{\rm app},N+1}(\vf,x)=\odd(\vf,x)$ follows from the fact that $q_N(\vf,x)=\even(\vf,x)$, $v_{{\rm app},N}(\vf,x)=\odd(\vf,x)$ by induction assumption and by Lemma \ref{lemma.equa.linear.approx}, whereas the $q_{N+1}(\vf,x)=\even(\vf,x)$ follows by \eqref{jigsaw4}, \eqref{internal.rescaled}, \eqref{biot-savart} and Lemma \ref{lemma real rev matrici}.  This concludes the proof of the claim and of the proposition.
			\end{proof}

\section{The linearized operator}\label{sez:linear}
Once the approximate solution $v_{{\rm app},N}(\vf,x)$ with respect to $\lambda\gg1$ (as constructed in Proposition \ref{lemma.approx}, see \eqref{stima.approx.vN} and \eqref{stima.approx.qN}), is obtained, we proceed to study the linearization of equation \eqref{internal.rescaled} at any approximate solution of the Nash-Moser iteration (see Section \ref{sezione:NASH}). This solution takes the form
\begin{equation}\label{eq:approx-sol}
 w= v_{{\rm app},N}+  g \in {\mathcal C}^\infty(\T^\nu \times \T^2) \,,
\end{equation}
satisfying the ansatz
\begin{equation}\label{ansatz}
	\|  w \|_{s_0 + \sigma}^{\Lip(\gamma)} \leq C_0, \quad \text{for some constants } \quad \sigma \,,\, C_0 \gg 0 \quad \text{large enough.}
\end{equation}
Recalling \eqref{prima forma linearizzato beta plane}, the linearized operator  is given by
\begin{equation}\label{operatore linearizzato}
	\cL := \di_{v} \cF(  w) := \lambda \, \omega\cdot \pa_{\vf}   + \beta\,\tL +  \lambda^{\theta}\ba_{0}\cdot \nabla + \lambda^{\theta} {\cE_{0}}\,,
\end{equation}
where
\begin{equation}\label{def coefficienti operatori linearized}
	\ba_{0}:=    \fB\big[  w \big] \,, \quad \cE_{0}[h] :=  \nabla  w \cdot \fB [h]
\end{equation}
and
\begin{equation}\label{theta.def.ridu}
	\theta := \alpha-1+\tc \,, \quad \text{for some} \ \ \tc \in (0,\tfrac13(2-\alpha))\,.
\end{equation}
The range for the parameter $\tc>0$ ensures that, for any fixed $\alpha\in(1,2)$,
\begin{equation}\label{small.theta}
	\theta-1 < \theta - 1 + \tc < 0 \,.
\end{equation}
The parameter $\tc$ will be fixed arbitrarily small enough in Proposition \ref{iterazione-non-lineare}, independently of the step of the Nash-Moser iteration. In particular, it will guarantee that
\begin{equation}\label{piccolo.ansatx}
	\lambda^{\theta-1} \gamma^{-1} \ll 1 \,.
\end{equation}
By the ansatz \eqref{ansatz} and by the definitions \eqref{def coefficienti operatori linearized}, we get that 
\begin{equation}\label{stima bf a1}
	\| \ba_{0} \|_{s} \lesssim_s   \|  w \|_{s-1} \,, \quad \forall s \geq s_0 \,,
\end{equation}
and, for any $s\geq s_0$, $\cE_{0} \in \OpM_{s}^{- 1}$, with the estimate
\begin{equation}\label{stima cal R linearized}
	|\cE_{0}|_{- 1, s, \alpha} \lesssim_{s, \alpha} \|  w \|_{s+1} \,, \quad \forall s \geq s_0\,, \quad \alpha \in \N\,. 
\end{equation}
By the definition of $\ba_{0}$ in \eqref{def coefficienti operatori linearized} and the definition of the Biot-Savart operator $\fB$ in \eqref{biot-savart}, we clearly have that
\begin{equation}\label{divergenza.zero.a1}
	\braket{\ba_{0}}_{x} :=\frac{1}{(2\pi)^2}\int_{\T^2}\ba_{0}(\vf,x) \wrt x = 0\,, \quad {\rm div}(\ba_{0}):= \nabla\cdot \ba_{0} = 0 \,.
\end{equation}
Moreover, using also that ${\rm div} (\nabla^\perp  h) = 0$ for any $h$, the operators $\ba_{0} \cdot \nabla$, $\cE_{0}$ and ${\mathcal L}$ leave invariant the subspace of zero average function, with
\begin{equation}\label{invarianze cal L}
	\begin{aligned}
		& [\Pi_0^\bot , \ba_{0} \cdot \nabla ] = 0\,, \quad [\Pi_0^\bot , \cE_{0} ] = 0\,, \\
		&  \ba_{0} \cdot \nabla \Pi_0 = \Pi_0 \ba_{0} \cdot \nabla = 0\,, \quad \cE_{0} \Pi_0 = \Pi_0 \cE_{0} = 0 \,,\\
		&[\Pi_0^\bot , {\mathcal L}] = 0\,, \quad \Pi_0 {\mathcal L}= {\mathcal L} \Pi_0 = 0\,, 
	\end{aligned}
\end{equation}
implying that
\begin{equation}\label{invarianze.2}
	\ba_{0} \cdot \nabla = \Pi_0^\bot \ba_{0} \cdot \nabla \Pi_0^\bot\,, \quad \cE_{0} = \Pi_0^\bot \cE_{0} \Pi_0^\bot\,, \quad 	{\mathcal L} = \Pi_0^\bot {\mathcal L} \Pi_0^\bot \,.
\end{equation}
We always work on the space of zero average functions and we shall preserve this invariance along the whole paper.

\section{Normal form reduction}\label{ridusezione}

We now present the step-by-step scheme of reduction for the operator ${\mathcal L}$ defined in \eqref{operatore linearizzato}.  

\subsection{Reduction of the transport}

We consider the composition operator
\begin{equation}\label{diffeo.maps}
	(\cB u)(\vf,x) := h (\vf, x+ \bbeta(\vf,x)) \,, \quad (\cB^{-1} h ) (\vf,y) := h (\vf, y + \breve{\bbeta}(\vf,y))\,,
\end{equation}
induced by a $\vf$-dependent family of diffeomorphisms of the torus  
\begin{equation}
	\T^2 \to \T^2, \quad  x \mapsto y := x + \bbeta (\vf,x) \,,
\end{equation}
with inverse $y \mapsto x = y + \breve{\bbeta}(\vf,y)$ for $\| \bbeta \|_{s}$ sufficiently small and where $\bbeta(\vf,x)$, $\breve{\bbeta}(\vf,y)$ are quasi-periodic traveling wave functions.

We need to reduce to constant coefficients the highest order operator
which is a transport operator of the form 
\begin{equation}\label{varelambda}
	\begin{aligned}
		\cT & := \lambda \, \omega \cdot \pa_{\vf} + \lambda^{\theta} \ba_{0} \cdot \nabla = \lambda \, \wt\cT \,, \\
		\wt\cT & :=  \omega \cdot \partial_\vphi + {\bf b}(\vphi, x)\cdot \nabla\,, \quad {\bf b}(\vphi, x) := \varepsilon {\bf a}_0(\vphi, x)\,, \quad \varepsilon := \lambda^{\theta-1}\,. 
	\end{aligned}
\end{equation}
Note that, for $\lambda \gg 1$, by\eqref{theta.def.ridu}-\eqref{small.theta}, one has that $\varepsilon = \lambda^{\theta-1} \ll 1$ and the quasi-periodic traveling wave function ${\bf b}(\vphi, x)$ satisfies the estimate 
\begin{equation}\label{ridu trasporto}
\begin{aligned}
	 &	\| {\bf b} \|_s \lesssim_s \lambda^{\theta-1} \|  w \|_{s-1}\,, \quad \forall s \geq s_0\,, \\
		& \| \Delta_{12} {\bf b} \|_{s_1} \lesssim_{s_1} \lambda^{\theta-1} \| w_1 - w_2 \|_{s_1-1} \,, \quad s_1 \geq s_0\,,
\end{aligned}
\end{equation}
where $ w$ is given in \eqref{eq:approx-sol}, the notation $\Delta_{12}$ is given in \eqref{deltaunodue}, and $w_1, w_2$ 
satisfy the ansatz \eqref{ansatz}.
We consider the set of  Diophantine non-resonance conditions as defined in \eqref{DC.2gamma}.
We have the following result.
\begin{prop}\label{proposizione trasporto}
	{\bf (Straightening of the transport operator $\mathcal T$).} Let $\gamma\in (0,1)$ and $\tau>\nu-1$.
	There exists  $\sigma := \sigma (\tau, \nu) > 0$ large enough  such that, for any 
	$S > s_0 + \sigma$, there exist $\delta := \delta(S, \tau, \nu) \in (0, 1)$ small enough and $\tau_{1}=\tau_{1}(\tau,\nu)>0$ such that, if \eqref{ansatz}, \eqref{ridu trasporto} hold  and
	\begin{equation}\label{condizione piccolezza rid trasporto}
		N_0^{\tau_{1}}\varepsilon \gamma^{- 1} = N_0^{\tau_{1}} \lambda^{\theta-1} \gamma^{-1} \leq \delta \,,
	\end{equation} 
	is fulfilled, then the following holds. 
	There exists an invertible diffeomorphism 
	$\T^2 \to \T^2$, $x \mapsto x + \bbeta(\vphi, x; \omega)$ 
	with inverse $y \mapsto y + \breve \bbeta(\vphi, y; \omega)$,
	defined for all $\omega \in \tD\tC(2\gamma, \tau)$, with the set given in \eqref{DC.2gamma},
	satisfying, for any $s_0\leq s \leq S - \sigma$,
	\begin{equation}\label{stima alpha trasporto}
		\| \bbeta \|_s^{{\rm Lip}(\gamma)}, \| \breve \bbeta\|_{s}^{{\rm Lip}(\gamma)} \lesssim_{s} N_0^{2\tau+1} \varepsilon \gamma^{- 1}  \|  w \|_{s + \sigma}^{\Lip(\gamma)} \,,
	\end{equation}
	such that
	one gets the conjugation 
	\begin{equation}\label{coniugazione nel teo trasporto}
		{\mathcal B}^{- 1} {\mathcal T} {\mathcal B} = \lambda\, \omega \cdot \partial_\vphi\,,
	\end{equation}
	where the invertible maps $\cB$, $\cB^{-1}$ are defined in \eqref{diffeo.maps}, satisfying the estimates , for any $s\in[s_0,S-\sigma]$,
	\begin{equation}\label{stima tame cambio variabile rid trasporto}
		\begin{aligned}
			& \| {\mathcal B}^{\pm 1}  h\|_s^{{\rm Lip}(\gamma)} \lesssim_{s} \| h \|_s^{{\rm Lip}(\gamma)} + \|  w \|_{s + \sigma}^{\Lip(\gamma)} \| h \|_{s_0}^{\Lip(\gamma)}\,, \\
			&   \| ({\mathcal B}^{\pm 1} - {\rm Id})  h\|_s^{{\rm Lip}(\gamma)}
			\lesssim_{s} N_0^{2\tau+1} \varepsilon \gamma^{- 1} \big(  \|  w \|_{s_0 + \sigma}^{{\rm Lip}(\gamma)}  \| h \|_{s + 1}^{{\rm Lip}(\gamma)} +  \|  w \|_{s + \sigma }^{{\rm Lip}(\gamma)}  \| h \|_{s_0 + 1}^{{\rm Lip}(\gamma)} \big) \,. 
		\end{aligned}
	\end{equation}
	Let $s_1 \geq s_0$ and assume that $w_1, w_2$ 
	satisfy \eqref{ansatz} with $ \sigma_{0}  \geq s_1 + \sigma $. 
	Then, for any $ \omega \in \tD\tC(2\gamma, \tau)$, recalling the notation $\Delta_{12}$ in \eqref{deltaunodue}, one has 
	\begin{equation}\label{stime delta 12 prop trasporto}
		\begin{aligned}
			& \| \Delta_{12} \bbeta \|_{s_1} \,,\, \| \Delta_{12} \breve \bbeta \|_{s_1} \lesssim_{s_1} N_0^{2\tau +1}\varepsilon \gamma^{- 1} \| w_1 - w_2 \|_{s_1 + \sigma}\,, \\
			& \| \Delta_{12} {\mathcal B}^{\pm 1} h \|_{s_1}
			\lesssim_{s_1} N_0^{2\tau +1} \varepsilon \gamma^{- 1} \| w_1 -w_2 \|_{s_1 +\sigma} \| h \|_{s_1 + 1}\,. 
		\end{aligned}
	\end{equation}
	Furthermore, $\bbeta,\breve{\bbeta}$ are quasi-periodic traveling waves, and the related maps ${\mathcal B}, {\mathcal B}^{- 1}$ are momentum preserving (see Definition \ref{def:mom.pres}). In addition, if we assume $ w(\vf,x)= \odd(\vf,x)$, then $\bbeta,\breve{\bbeta}$ are $\odd(\vphi,x)$ and the related maps ${\mathcal B}, {\mathcal B}^{- 1}$ are reversibility preserving (see Definition \ref{reserv.operators.def}) .
\end{prop}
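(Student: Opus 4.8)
The plan is to straighten the transport operator $\mathcal{T} = \lambda(\omega\cdot\partial_\vf + \varepsilon\, \mathbf{a}_0\cdot\nabla)$ by seeking the change of variables $y = x + \bbeta(\vf,x)$ so that, after conjugation, the vector field $\varepsilon\,\mathbf{a}_0$ is absorbed into $\omega\cdot\partial_\vf$. Concretely, a direct computation shows that $\mathcal{B}^{-1}\mathcal{T}\mathcal{B} = \lambda\,\omega\cdot\partial_\vf$ is equivalent to the equation $\omega\cdot\partial_\vf\bbeta(\vf,x) + \varepsilon\,\mathbf{a}_0(\vf,x)\cdot\nabla(x+\bbeta(\vf,x)) = 0$ evaluated appropriately, i.e. $\bbeta$ must satisfy a nonlinear transport (cohomological) equation whose leading part is $\omega\cdot\partial_\vf\bbeta = -\varepsilon\,\mathbf{a}_0 + (\text{higher order in }\bbeta)$. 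This is exactly the scheme of \cite{BM20}, so the first step is to set up a Nash–Moser / iterative KAM-type scheme to solve this equation: at each stage one inverts the constant-coefficient operator $\omega\cdot\partial_\vf$ on zero-average functions (losing $\tau+1$ derivatives via the Diophantine condition $\omega\in\mathtt{DC}(2\gamma,\tau)$), uses a smoothing truncation at scales $N_0, N_0^{3/2},\dots$, and controls the quadratic error. The smallness condition \eqref{condizione piccolezza rid trasporto}, namely $N_0^{\tau_1}\varepsilon\gamma^{-1}\leq\delta$, is precisely what guarantees convergence of this scheme and yields \eqref{stima alpha trasporto}.

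The key structural inputs are the three properties of $\mathbf{b} = \varepsilon\mathbf{a}_0$ recorded in \eqref{divergenza.zero.a1}–\eqref{invarianze cal L}: (i) $\mathbf{b}$ has zero space average, which is needed so that the right-hand side of the cohomological equation lies in the range of $\omega\cdot\partial_\vf$ (no secular growth); (ii) $\mathbf{b}$ is divergence-free, so that $\mathcal{B}$ is well-behaved on the Jacobian level and the straightening is exact to $\lambda\,\omega\cdot\partial_\vf$ with no zeroth-order residue; (iii) $\mathbf{b}$ is small of size $\varepsilon = \lambda^{\theta-1}\ll1$ by \eqref{small.theta}. Once $\bbeta$ is constructed, the bounds \eqref{stima tame cambio variabile rid trasporto} on $\mathcal{B}^{\pm1}$ and $\mathcal{B}^{\pm1}-\mathrm{Id}$ follow from Lemma \ref{prop.base.diffeo} applied with $\balpha = \bbeta$, together with \eqref{stima alpha trasporto}; the Lipschitz-in-$\omega$ and the $\Delta_{12}$ estimates \eqref{stime delta 12 prop trasporto} come from differentiating the cohomological equation in $\omega$ and in $w$ and re-running the same fixed-point estimates, exploiting \eqref{ridu trasporto}.

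For the symmetry claims: since $\mathbf{a}_0 = \fB[w]$ and $w$ is a quasi-periodic traveling wave, $\mathbf{a}_0(\vf-\pi(\vs),x) = \mathbf{a}_0(\vf,x+\vs)$, so the right-hand side of the cohomological equation is itself traveling; because the inversion of $\omega\cdot\partial_\vf$ acts diagonally in $\vf$-Fourier modes it preserves the traveling-wave structure (the Fourier support condition $\pi^\top(\ell)+j=0$ is untouched), hence by induction $\bbeta,\breve\bbeta$ are quasi-periodic traveling waves, and Lemma \ref{lem:dMP} then gives that $\mathcal{B},\mathcal{B}^{-1}$ are momentum preserving. If moreover $w=\even(\vf,x)$, then $\mathbf{a}_0 = \fB[w]$ is odd in $(\vf,x)$ by \eqref{biot-savart} and Lemma \ref{lemma real rev matrici}, so $-\varepsilon\mathbf{a}_0$ is odd; since $\omega\cdot\partial_\vf$ maps odd functions to even and vice versa, its inverse maps odd to odd, so each iterate of $\bbeta$ is odd and the quadratic corrections preserve parity, giving $\bbeta,\breve\bbeta = \odd(\vf,x)$ and hence $\mathcal{B},\mathcal{B}^{-1}$ reversibility preserving.

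The main obstacle I expect is bookkeeping the loss of derivatives through the iterative scheme so that the final estimates \eqref{stima alpha trasporto}–\eqref{stime delta 12 prop trasporto} hold with a \emph{fixed} $\sigma = \sigma(\tau,\nu)$ independent of $S$: one must carefully balance the high-norm divergence against the low-norm super-exponential smallness in the Newton iteration, and track how the extra factor $N_0^{2\tau+1}$ (rather than the naively expected smallness) arises from the truncation at the first step. The actual analytic content is essentially identical to \cite{BM20}, so the proof reduces to verifying that the structural hypotheses (i)–(iii) hold in the present setting — which they do by \eqref{divergenza.zero.a1} and \eqref{ridu trasporto} — and then quoting/adapting that scheme.
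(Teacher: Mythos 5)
Your plan coincides with the paper's proof: an iterative quadratic (KAM/Nash--Moser) scheme à la \cite{BM20} solving the cohomological equation $\omega\cdot\partial_\vf\bbeta+\bb+\bb\cdot\nabla\bbeta=0$ with truncations at scales $N_0^{\chi^n}$, using that $\bb=\varepsilon\fB[w]$ is small, divergence-free and has zero average (the paper carries an auxiliary drift constant $\tm_n\cdot\nabla$ along the iteration and only a posteriori shows $\tm_n\to 0$ from these structural properties, which is the precise mechanism behind your point (i)--(ii)), and the remaining estimates and symmetry claims follow exactly as you indicate from Lemma \ref{prop.base.diffeo}, Lemma \ref{lem:dMP} and the Fourier support restriction. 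One slip in your parity paragraph: the reversibility claim assumes $w=\odd(\vf,x)$, under which $\ba_0=\fB[w]$ is \emph{even} in $(\vf,x)$, so the source of the cohomological equation is even and $(\omega\cdot\partial_\vf)^{-1}$ then produces an odd $\bbeta$; with your stated hypothesis ($w$ even, $\ba_0$ odd) the chain would give $\bbeta$ even, contradicting your conclusion.
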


In order to prove Proposition \ref{proposizione trasporto}, we follow \cite{BM20} and \cite{BFM21}. First we show the following iterative lemma. We fix the constants
\begin{equation}\label{constantine}
	\begin{aligned}
		& N_0 >0 \,, \quad N_{-1}:= 1\,, \quad \chi := 3/2\,, \quad N_{n} := N_0^{\chi^n}\,, \quad n \in \N_0\,, \\
		& \fa:= 6 \tau +8\,, \quad \fb:= \fa +1 \,.
	\end{aligned}
\end{equation} 

\begin{lem}\label{lemma.iterativo.trasporto}
	Let $\gamma\in (0,1)$ and $\tau>\nu-1$.
	There exists  $\sigma := \sigma (\tau, \nu) > 0$ large enough  such that, for any 
	$S > s_0 + \sigma$, there exist $\delta := \delta(S, \tau, \nu) \in (0, 1)$ small enough, $N_0=N_0(S,\tau,\nu)>0$ large enough and $\tau_{1}=\tau_{1}(\tau,\nu)>0$ such that, if \eqref{ansatz}, \eqref{ridu trasporto} and \eqref{condizione piccolezza rid trasporto} hold for $\sigma_{0}\ge s_0 +\sigma$, the following holds for $n\geq 0$:
	\\[1mm]
	There exists a linear operator
	\begin{equation}\label{robertino1}
		\begin{aligned}
			\cT_{n} := \lambda\,\wt\cT_{n}\,, \quad  \wt\cT_{n} := \omega\cdot \pa_{\vf} + \tm_{n} \cdot \nabla + \bb_{n}(\vf,x) \cdot \nabla \,,
		\end{aligned}
	\end{equation}
	defined for all $\omega\in \tD\tC(\gamma,\tau)$, where $\bb_{n}(\vf,x)$ is a quasi-periodic traveling wave, with estimates, for any $s \in [s_0,S-\sigma]$,
	\begin{equation}\label{dormiente1}
		\| \bb_{n} \|_{s}^{\Lip(\gamma)} \leq C(s,\fb) N_{n-1}^{-\fa} \| \bb \|_{s+\sigma} \,, \quad  \| \bb_{n} \|_{s+\fb}^{\Lip(\gamma)} \leq C(s,\fb) N_{n-1} \| \bb \|_{s+\sigma}\,,
	\end{equation}
	for some constant $C(s,\fb)>0$ monotone increasing with $[s_0,S-\sigma]$ and $\tm_{n}$ is a real constant satisfying
	\begin{equation}\label{dormiente2}
		| \tm_{n} |^{\Lip(\gamma)} \leq 2 \| \bb\|_{s_0}^{\Lip(\gamma)} \,, \quad |\tm_{n} - \tm_{n-1} |^{\Lip(\gamma)} \leq C(s_0,\fb) N_{n-2}^{-\fa} \| \bb \|_{s_0+\fb}^{\Lip(\gamma)} \,, \ \ n\geq 2\,.
	\end{equation}
	If $n=0$, we set $\cO_{0}^{\gamma}:= \tD\tC(\gamma,\tau) $ and, for $n\geq 1$, we define
	\begin{equation}\label{On.gamma}
		\begin{aligned}
			\cO_{n}^{\gamma}  := \Big\{   \omega \in \cO_{n-1} \, : \,  |\omega\cdot \ell + \tm_{n-1}\cdot j | & \geq {\gamma}\,{\braket{\ell}^{-\tau}} \,, \ \  \forall\,(\ell,j)\in\Z^{\nu+2} \setminus\{0\}\,, 
			\\
			& \quad \ \ | \ell | \leq N_{n-1} \,, \ \ \pi^\top(\ell) + j = 0 \Big\} \,.
		\end{aligned}
	\end{equation}
	For $n\geq 1$, there exists an invertible diffeomorphism of the torus $\T^2\to \T^2$, $x \mapsto x + \bbeta_{n-1}(\vf,x)$, with inverse $y \mapsto y + \breve{\bbeta}_{n-1}(\vf,x)$, such that, for any $s\in [s_0,S - \sigma]$,
	\begin{equation}\label{dormiente3}
		\begin{aligned}
			& \| \bbeta_{n-1} \|_{s}^{\Lip(\gamma)}\,, \ \| \breve{\bbeta}_{n-1} \|_{s}^{\Lip(\gamma)} \lesssim_{s} N_{n-1}^{2\tau+1} N_{n-1}^{-\fa} \varepsilon \gamma^{-1} \|  w \|_{s+\sigma}^{\Lip(\gamma)} \,, \\ 
			& \| \bbeta_{n-1} \|_{s+\fb}^{\Lip(\gamma)} \,, \  \| \breve{\bbeta}_{n-1} \|_{s+\fb}^{\Lip(\gamma)} \lesssim_{s} N_{n-1}^{2\tau+1} N_{n-1} \varepsilon \gamma^{-1} \|   w \|_{s+\sigma}^{\Lip(\gamma)} \,.
		\end{aligned}
	\end{equation}
	The operators
	\begin{equation}\label{diffeo.maps.n-1}
		(\cB_{n-1} u)(\vf,x) := h (\vf, x+ \bbeta_{n-1}(\vf,x)) \,, \quad (\cB_{n-1}^{-1} h ) (\vf,y) := h (\vf, y + \breve{\bbeta}_{n-1}(\vf,y))\,,
	\end{equation}
	satisfy, for any $\omega \in \cO_{n}^{\gamma}$, the conjugation
	\begin{equation}\label{coniugio.trasporto}
		\cT_{n} = \cB_{n-1}^{-1} \cT_{n-1} \cB_{n-1} \,.
	\end{equation}
	Furthermore, $\bbeta_{n-1},\breve{\bbeta}_{n-1}$ are  quasi-periodic traveling waves, and the related maps ${\mathcal B}_{n-1}, {\mathcal B}_{n-1}^{- 1}$ are momentum preserving. 
		In addition, if we assume $ w(\vf,x)=\odd(\vf,x)$, then $\bbeta_{n-1},\breve{\bbeta}_{n-1}$ are $\odd(\vphi,x)$, the related maps ${\mathcal B}_{n-1}, {\mathcal B}_{n-1}^{- 1}$ are reversibility preser\-ving, and $\bb_{n}$ is $\even(\vf,x)$.
\end{lem}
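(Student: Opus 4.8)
The plan is to prove the statement by induction on $n\geq 0$, following the KAM-type straightening scheme of \cite{BM20,BFM21} adapted to the large-frequency setting. For the base case $n=0$ I would set $\tm_0:=0$, $\bb_0:=\bb$, $\cB_{-1}:={\rm Id}$, $\bbeta_{-1}:=0$, so that $\cT_0=\cT$ and all the estimates \eqref{dormiente1}–\eqref{dormiente2} hold trivially (the bound on $\bb_0$ being just \eqref{ridu trasporto}, after absorbing constants into $\sigma$). The parities and the momentum-preserving/reversibility claims are immediate in this case.

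For the inductive step, assume the statement holds at $n$ and construct $\cB_{n}$ so that $\cT_{n+1}=\cB_{n}^{-1}\cT_{n}\cB_{n}$ has the claimed form. Conjugating the transport operator $\cT_n=\lambda(\omega\cdot\pa_\vf+\tm_n\cdot\nabla+\bb_n(\vf,x)\cdot\nabla)$ by the flow map $\cB_n$ associated to $\bbeta_n$, a standard computation (see \cite{BM20}) shows that the transformed vector field is $\lambda(\omega\cdot\pa_\vf+\tm_n\cdot\nabla+\bb_{n+1}\cdot\nabla)$ provided $\bbeta_n$ solves the homological equation
\begin{equation}\label{homolo.trasporto.plan}
	\omega\cdot\pa_\vf\bbeta_n + \tm_n\cdot\nabla\bbeta_n + \Pi_{N_n}\bb_n = \langle\bb_n\rangle_{\vf,x}\,,
\end{equation}
with $\bb_{n+1}$ collecting the remainder: the high-frequency part $\Pi_{N_n}^\perp\bb_n$, the nonlinear-in-$\bbeta_n$ contributions from the change of variables, and the new constant drift correction which I would absorb by updating $\tm_{n+1}:=\tm_n+\langle\bb_n\rangle_{\vf,x}$ (so that $\tm_{n+1}-\tm_n$ is estimated by $\|\bb_n\|_{s_0}$, hence by $N_{n-1}^{-\fa}\|\bb\|_{s_0+\fb}$, giving \eqref{dormiente2}). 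Equation \eqref{homolo.trasporto.plan} is solved in Fourier, dividing by $\im(\omega\cdot\ell+\tm_n\cdot j)$, which is exactly where the set $\cO_{n+1}^\gamma$ of \eqref{On.gamma} enters: for $\omega\in\cO_{n+1}^\gamma$ one has $|\omega\cdot\ell+\tm_n\cdot j|\geq\gamma\langle\ell\rangle^{-\tau}$ for all $|\ell|\leq N_n$, so the solution exists with a loss of $\tau$ derivatives and the bound $\|\bbeta_n\|_s\lesssim\gamma^{-1}N_n^\tau\|\Pi_{N_n}\bb_n\|_{s+\tau}$. Using the two-scale estimate \eqref{dormiente1} for $\bb_n$ (small in low norm, controlled growth $N_{n-1}$ in high norm) together with the smoothing Lemma \ref{lemma:smoothing} and the interpolation Lemma \ref{lemma:LS norms}, one obtains \eqref{dormiente3}; note crucially that $\varepsilon=\lambda^{\theta-1}$ appears here through $\|\bb\|\lesssim\lambda^{\theta-1}\|w\|$, and the smallness \eqref{condizione piccolezza rid trasporto} is what makes $\bbeta_n$ small enough for the flow $\cB_n$ to be a well-defined invertible diffeomorphism via Lemma \ref{prop.base.diffeo}, and for the Nash–Moser-type quadratic gain to close. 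The new vector field $\bb_{n+1}$ is then estimated by the standard KAM bookkeeping: its low norm is quadratically small, $\|\bb_{n+1}\|_s\lesssim N_n^{-\fb}\|\bb_n\|_{s+\fb}+(\text{quadratic terms})$, which together with the choice $\fa,\fb$ in \eqref{constantine} and $\chi=3/2$ yields $\|\bb_{n+1}\|_s\leq C N_n^{-\fa}\|\bb\|_{s+\sigma}$ and $\|\bb_{n+1}\|_{s+\fb}\leq C N_n\|\bb\|_{s+\sigma}$, i.e. \eqref{dormiente1} at step $n+1$; this is the classical iterative interpolation argument (cf. the proof of Lemma 2.6 style estimates), and I would carry it out exactly as in \cite{BM20,BFM21}.

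The structural claims propagate through the induction without difficulty: since $\bb_n$ is a quasi-periodic traveling wave, the right-hand side of \eqref{homolo.trasporto.plan} is too, the Fourier support condition $\pi^\top(\ell)+j=0$ is preserved by the divisor $\omega\cdot\ell+\tm_n\cdot j$ (and indeed is what is imposed in the definition of $\cO_{n+1}^\gamma$), hence $\bbeta_n$ and $\breve\bbeta_n$ are quasi-periodic traveling waves, and by Lemma \ref{lem:dMP} the maps $\cB_n,\cB_n^{-1}$ are momentum preserving; by Lemma \ref{lem:mom_prop}-(i) so is $\cT_{n+1}$, and one reads off from Lemma \ref{lem:mom_pres} that the new drift $\tm_{n+1}\cdot\nabla$ stays diagonal (automatic for a constant-coefficient first-order operator). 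For the reversible case, if $w(\vf,x)=\odd(\vf,x)$ then $\bb=\even$ by \eqref{biot-savart}–\eqref{def coefficienti operatori linearized}, hence inductively $\bb_n$ is even, the operator $\omega\cdot\pa_\vf+\tm_n\cdot\nabla$ is reversible, so \eqref{homolo.trasporto.plan} forces $\bbeta_n,\breve\bbeta_n$ to be odd, and therefore $\cB_n,\cB_n^{-1}$ are reversibility preserving (Lemma \ref{lemma real rev matrici}, Definition \ref{reserv.operators.def}); then $\bb_{n+1}$ is again even.

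The main obstacle I expect is the bookkeeping that makes the iteration converge in the presence of the large parameter $\lambda$: one must track how the factor $\varepsilon\gamma^{-1}=\lambda^{\theta-1}\gamma^{-1}$ (small by \eqref{piccolo.ansatx}) interacts with the polynomial losses $N_n^{2\tau+1}$ coming from the small divisors, and verify that the smallness condition \eqref{condizione piccolezza rid trasporto} with the correct exponent $\tau_1=\tau_1(\tau,\nu)$ is genuinely sufficient — both to keep each $\bbeta_n$ in the regime where Lemma \ref{prop.base.diffeo} applies and to guarantee the super-exponential decay $N_{n-1}^{-\fa}$ of $\bb_n$ in low norm with only linear growth $N_{n-1}$ in high norm. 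This is the delicate analytic core; the Diophantine-type estimates, the Fourier-support/parity preservation, and the final telescoping to produce $\bbeta=\lim_n\bbeta_{(n)}$ in Proposition \ref{proposizione trasporto} are then routine consequences of the iterative scheme.
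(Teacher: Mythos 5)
Your proposal matches the paper's proof essentially step for step: the same homological equation $\omega\cdot\pa_\vf\bbeta_n+\tm_n\cdot\nabla\bbeta_n+\Pi_{N_n}\bb_n=\braket{\bb_n}_{\vf,x}$ solved by Fourier division on $\cO_{n+1}^\gamma$, the same update $\tm_{n+1}=\tm_n+\braket{\bb_n}_{\vf,x}$ and remainder $\bb_{n+1}=\cB_n^{-1}(\Pi_{N_n}^\perp\bb_n+\bb_n\cdot\nabla\bbeta_n)$, and the same two-scale KAM bookkeeping and propagation of the traveling-wave/momentum/parity structure. The only details the paper adds that you gloss over are the Lipschitz extension of $\bbeta_n$ in $\omega$ to all of $\tD\tC(\gamma,\tau)$ (so that $\cT_{n+1}$ is defined there, not just on $\cO_{n+1}^\gamma$) and the divisor loss $N_n^{2\tau+1}$ rather than $N_n^\tau$ once the $\Lip(\gamma)$ norm is taken into account — which you do correctly account for later.
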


\begin{proof}
	We argue by induction on $n\in\N_0$. The claimed statements for $n=0$ follow directly by setting $\tm_{0}:= 0$ and $\bb_{0}(\vf,x):=\bb(\vf,x)$, with $\bb(\vf,x)$ defined in \eqref{varelambda} with estimate \eqref{ridu trasporto}. \\
	We now assume that the claimed properties hold for some $n\geq 0$ and we prove them at the step $n+1$. We look for a diffeomorphism of the torus $\T^2 \to \T^2$, $x \mapsto x + \bbeta_{n}(\vf,x)$, with inverse $y \mapsto y + \breve{\bbeta}_{n}(\vf,x)$, such that, defining the operators
		\begin{equation}\label{diffeo.maps.n}
		(\cB_{n} u)(\vf,x) := h (\vf, x+ \bbeta_{n}(\vf,x)) \,, \quad (\cB_{n}^{-1} h ) (\vf,y) := h (\vf, y + \breve{\bbeta}_{n}(\vf,y))\,,
	\end{equation}
	the operator $\cB_{n}^{-1} \cT_{n} \cB_{n}= \lambda \, \cB_{n}^{-1} \wt\cT_{n} \cB_{n}$ has the desired properties. We compute
	\begin{equation}
		\begin{aligned}
			\cB_{n}^{-1}   \wt\cT_{n} \cB_{n} & = \omega \cdot \pa_{\vf} + \tm_{n} \cdot \nabla + \big\{  \cB_{n}^{-1} \big( \omega\cdot \pa_{\vf} \bbeta_{n}  + \tm_{n} \cdot \nabla \bbeta_{n}  + \bb_{n} + \bb_{n} \cdot \nabla \bbeta_{n}  \big)  \big\} \cdot \nabla \\
			& = \omega \cdot \pa_{\vf} + \tm_{n} \cdot \nabla + \big\{  \cB_{n}^{-1} \big( \omega\cdot \pa_{\vf} \bbeta_{n}  + \tm_{n} \cdot \nabla \bbeta_{n}  + \Pi_{N_n} \bb_{n}  \big)  \big\} \cdot \nabla + \bb_{n+1} \cdot \nabla \,,
		\end{aligned}
	\end{equation}
	where
	\begin{equation}\label{neo2}
		\bb_{n+1} := \cB_{n}^{-1} \bg_{n} \,, \quad \bg_{n}:= \Pi_{N_n}^\perp \bb_{n} + \bb_{n} \cdot \nabla \bbeta_{n} \,,
	\end{equation}
	and the projectors $\Pi_{N_n}$, $\Pi_{N_n}^\perp$ are defined in \eqref{def:smoothings}. For any $\omega\in \cO_{n+1}^\gamma$, we solve the homological equation
	\begin{equation}\label{neo1}
		\omega\cdot \pa_{\vf} \bbeta_{n}  + \tm_{n} \cdot \nabla \bbeta_{n}  + \Pi_{N_n} \bb_{n} = \braket{\bb_{n}}_{\vf,x} \,,
	\end{equation}
 where
 \begin{equation}
 	\braket{\bb_{n}}_{\vf,x} := \frac{1}{(2\pi)^{\nu+2}} \int_{\T^{\nu+2}} \bb_{n}(\vf,x) \wrt 	\vf \wrt x \in \R^{2} \,.
 \end{equation}
 Explicitly, the solution of the equation \eqref{neo1} is given by
 \begin{equation}\label{neo3}
 	\begin{aligned}
 		\bbeta_{n}(\vf,x) & := -( \omega\cdot \pa_{\vf} + \tm_{n} \cdot \nabla)^{-1} \big[ \Pi_{N_n}\bb_{n} - \braket{\bb_{n}}_{\vf,x}  \big] \\
 		& := - \sum_{(\ell, j) \in \Z^{\nu+2} \setminus \{0\}, \atop |\ell|\leq N_n, \, \pi^\top(\ell) + j = 0} \frac{1}{\im(\omega\cdot \ell + \tm_{n}\cdot j)} \wh\bb_{n}(\ell,j) e^{\im(\ell\cdot \vf + j\cdot x)} \,.
 	\end{aligned}
 \end{equation}
 We define
 \begin{equation}\label{neo4}
 \begin{aligned}
 		\cT_{n+1} & := \lambda \, \wt\cT_{n+1}\,, \quad  \wt\cT_{n+1} := \omega\cdot \pa_{\vf} + \tm_{n+1} \cdot \nabla + \bb_{n+1} \cdot \nabla \,, \\
 		\tm_{n+1} & := \tm_{n} + \braket{\bb_{n}}_{\vf,x}  \,.
 \end{aligned}
 \end{equation}
 Therefore, for any $\omega \in \cO_{n+1}^\gamma$, we have that $\wt\cT_{n+1}= \cB_{n}^{-1} \wt\cT_{n} \cB_{n}$, so that \eqref{coniugio.trasporto} holds at the step $n+1$. Moreover, note that $\bbeta_{n}(\vf,x;\omega)$ in \eqref{neo3} admits a Lipschitz extension with respect to $\omega$ to $\tD\tC(\gamma,\tau)$, so that $\bb_{n+1}$ in \eqref{neo2} and $\cT_{n+1}$ \eqref{neo3} can be well defined for all $\omega \in \tD\tC(\gamma,\tau)$. By \eqref{neo3} and Lemma \ref{lemma:smoothing}, we have, for any $s\geq 0$,
 \begin{equation}\label{ciao}
 	\begin{aligned}
 		\| \bbeta_{n} \|_{s}^{\Lip(\gamma)} & \lesssim_{s} \| \Pi_{N_n} \bb_{n} \|_{s+2\tau+1}^{\Lip(\gamma)}  \lesssim_{s} N_n^{2\tau +1}  \gamma^{-1} \|  \bb_{n}\|_{s}^{\Lip(\gamma)} \,, \\
 		\| \nabla \bbeta_{n} \|_{s}^{\Lip(\gamma)} & \lesssim_{s} \| \Pi_{N_n} \bb_{n} \|_{s+2\tau+2}^{\Lip(\gamma)}  \lesssim_{s} N_n^{2\tau +2}  \gamma^{-1} \|  \bb_{n}\|_{s}^{\Lip(\gamma)} \,.
 	\end{aligned}
 \end{equation}
 The latter estimate, together with the induction estimate \eqref{dormiente1} on $\bb_{n}$, imply, for any $s \in [s_0,S-\sigma]$, 
 \begin{equation}\label{neo5}
 	\begin{aligned}
 		\| \bbeta_{n+1} \|_{s}^{\Lip(\gamma)} \,, \| \breve{\bbeta}_{n+1} \|_{s}^{\Lip(\gamma)} & \lesssim_{s} N_n^{2\tau+1} N_{n-1}^{-\fa} \varepsilon \gamma^{-1} \|  w \|_{s+\sigma}^{\Lip(\gamma)} \,, \\
 			\| \bbeta_{n+1} \|_{s+\fb}^{\Lip(\gamma)} \,, \| \breve{\bbeta}_{n+1} \|_{s+\fb}^{\Lip(\gamma)} & \lesssim_{s} N_n^{2\tau+1} N_{n-1} \varepsilon \gamma^{-1} \|  w \|_{s+\sigma}^{\Lip(\gamma)} \,,
 	\end{aligned}
 \end{equation}
 which are the estimates \eqref{dormiente3} at the step $n+1$. Note that, by the definition of the constant $\fa$ in \eqref{constantine} and the ansatz \eqref{ansatz}, from \eqref{neo5} we deduce, with $s=s_0$,
 \begin{equation}
 	\| \bbeta_{n+1}\|_{s_0}^{\Lip(\gamma)} \,, \| \breve{\bbeta}_{n+1}\|_{s_0}^{\Lip(\gamma)} \lesssim N_0^{2\tau+1} \varepsilon \gamma^{-1}\,.
 \end{equation}
 Together with the smallness condition \eqref{condizione piccolezza rid trasporto}, Lemma \ref{prop.base.diffeo} implies that, for any $s_0 \leq s \leq S-\sigma+\fb$,
 \begin{equation}\label{ciao2}
 	\| \cB_{n}^{\pm1} h\|_{s}^{\Lip(\gamma)} \lesssim_{s} \| h \|_{s}^{\Lip(\gamma)} + N_n^{2\tau+1} \gamma^{-1} \| \bb_{n} \|_{s}^{\Lip(\gamma)} \| h \|_{s_0}^{\Lip(\gamma)}\,.
 \end{equation}
 Now we estimate $\bb_{n+1}$ in \eqref{neo2}.  First, we estimate $\bg_{n}$. By \eqref{constantine}, \eqref{dormiente1}, the ansatz \eqref{ansatz} and the smallness condition \eqref{condizione piccolezza rid trasporto}, we have
 \begin{equation}\label{neo6}
 	N_n^{2\tau+2} \gamma^{-1} \| \bb_{n} \|_{s_0}^{\Lip(\gamma)} \leq 1\,. 
 \end{equation}
 By Lemma \ref{lemma:smoothing}, Lemma \ref{lemma:LS norms} and estimates \eqref{ciao}, \eqref{neo6}, we have, for any $s \in [s_0,S-\sigma]$,
 \begin{equation}\label{ciao3}
 	\begin{aligned}
 		& \| \bg_{n} \|_{s_0}^{\Lip(\gamma)} \lesssim \| \bb_{n}\|_{s_0}^{\Lip(\gamma)} \,, \\
 		& \| \bg_{n}  \|_{s}^{\Lip(\gamma)} \lesssim_{s} N_n^{-\fb} \| \bb_{n} \|_{s+\fb}^{\Lip(\gamma)} + N_{n-1}^{2\tau+2} \gamma^{-1}  \| \bb_{n} \|_{s}^{\Lip(\gamma)} \| \bb_{n} \|_{s_0}^{\Lip(\gamma)} \,, \\
 		&  \| \bg_{n} \|_{s+\fb}^{\Lip(\gamma)} \lesssim_{s}  \| \bb_{n} \|_{s+\fb}^{\Lip(\gamma)} (1 + N_{n}^{2\tau+2} \gamma^{-1}  \| \bb_{n} \|_{s_0}^{\Lip(\gamma)})  \lesssim_{s} \| \bb_{n} \|_{s+\fb}^{\Lip(\gamma)} \,.
  	\end{aligned}
 \end{equation}
 Therefore, estimates \eqref{ciao2}-\eqref{ciao3} imply that, for any $s \in [s_0,S-\sigma]$,
 \begin{equation}
 	\begin{aligned}
 		&  \| \bb_{n+1} \|_{s}^{\Lip(\gamma)} \lesssim_{s} N_n^{-\fb}  \| \bb_{n} \|_{s+\fb}^{\Lip(\gamma)} + N_n^{2\tau+2} \gamma^{-1}  \| \bb_{n} \|_{s}^{\Lip(\gamma)}  \| \bb_{n} \|_{s_0}^{\Lip(\gamma)} \,, \\
 		&  \| \bb_{n} \|_{s+\fb}^{\Lip(\gamma)} \lesssim_{s}  \| \bb_{n} \|_{s+\fb}^{\Lip(\gamma)} \,.
 	\end{aligned}
 \end{equation}
 Using the definition of the constants $\fa$, $\fb$ in \eqref{constantine} and the induction assuptions on the estimates \eqref{dormiente1}, by a standard argument we deduce the estimates \eqref{dormiente1} at the step $n+1$. The estimates \eqref{dormiente2} follow by the definition of $\tm_{n+1}$ in \eqref{neo4}, the induction assumption on \eqref{dormiente1} and by a telescopic argument. \\
 Finally, by \eqref{neo3}, since $\bb_{n}(\vf,x)$ is a quasi-periodic traveling wave by induction assumption, we have that $\bbeta_{n}(\vf,x)$, $\breve{\bbeta}_{n}(\vf,y)$ are quasi-periodic traveling waves by Lemma \ref{lem:dMP}, which also imply that $\cB_{n}^{\pm 1}$ are momentum preserving operators. Recalling \eqref{neo2}, we deduce that also $\bb_{n+1}(\vf,y)$ is a quasi-periodic traveling wave.
 
 \noindent
 Finally if $ w$ is ${\rm odd}(\vf, x)$, by \eqref{neo3}, since $\bb_{n}(\vf,x)$ is $\even(\vf,x)$, we have that $\bbeta_{n}(\vf,x)$ and $\breve{\bbeta}_{n}(\vf,y)$ are $\odd(\vf,y)$ and hence $\cB_{n}^{\pm 1}$ are reversibility preseving operators. Recalling \eqref{neo2}, we deduce that also $\bb_{n+1}(\vf,y)$ is $\even(\vf,y)$. This concludes the proof.
\end{proof}

We then define
\begin{equation}\label{compo.mappe.diffeo}
	\wt\cB_{n} := \cB_{0} \circ \cB_{1} \circ ... \circ \cB_{n} \,, \quad \text{with inverse} \quad \wt\cB_{n}^{-1} = \cB_{n}^{-1} \circ ... \circ \cB_{1}^{-1} \circ \cB_{0}^{-1} \,.
\end{equation}

\begin{lem}\label{lemma.limite.trasporto}
	Let $\gamma\in (0,1)$ and $\tau>\nu-1$.
	There exists  $\sigma := \sigma (\tau, \nu) > 0$ large enough  such that, for any 
	$S > s_0 + \sigma$, there exist $\delta := \delta(S, \tau, \nu) \in (0, 1)$ small enough, $N_0=N_0(S,\tau,\nu)>0$ large enough and $\tau_{1}=\tau_{1}(\tau,\nu)>0$ such that, if \eqref{ansatz}, \eqref{ridu trasporto} and \eqref{condizione piccolezza rid trasporto} hold for $\sigma_{0}\ge s_0 +\sigma$, the following hold:
	\\[1mm]
	\noindent $(i)$ For any $n \in \N_0$, we have
	\begin{equation}
			(\wt\cB_{n} u)(\vf,x) := h (\vf, x+ \balpha_{n}(\vf,x)) \,, \quad (\wt\cB_{n}^{-1} h ) (\vf,y) := h (\vf, y + \breve{\balpha}_{n}(\vf,y))\,,
	\end{equation}
	for some functions $\balpha_{n}(\vf,x)$, $\breve{\balpha}_{n}(\vf,y)$ such that, for any $s\in [s_0,S-\sigma]$,
	\begin{equation}
		\begin{aligned}
			& \| \balpha_{0}\|_{s}^{\Lip(\gamma)}\,, \ \|  \breve{\balpha}_{0}\|_{s}^{\Lip(\gamma)} \lesssim_{s} N_0^{2\tau+1} \varepsilon\gamma^{-1} \|  w \|_{s+\sigma}^{\Lip(\gamma)} \,, \\
			&\| \balpha_{n} - \balpha_{n-1} \|_{s}^{\Lip(\gamma)}\,, \ \|  \breve{\balpha}_{n}- \breve{\balpha}_{n-1} \|_{s}^{\Lip(\gamma)} \lesssim_{s} N_n^{2\tau+1} N_{n-1}^{-\fa} \varepsilon\gamma^{-1} \|  w \|_{s+\sigma}^{\Lip(\gamma)} \,, \quad n\geq1\,.
		\end{aligned}
	\end{equation}
	As a consequence, for any $s \in [s_0,S-\sigma]$,
	\begin{equation}\label{robertino5}
		\| \balpha_{n}\|_{s}^{\Lip(\gamma)} \lesssim_{s} N_0^{2\tau+1} \varepsilon\gamma^{-1} \|  w \|_{s+\sigma}^{\Lip(\gamma)} \,.
	\end{equation}
		Furthermore, $\balpha_{n}$, $\breve{\balpha}_{n}$ are both quasi-periodic traveling waves. In addition, if we assume $\bb_{0}(\vf,x)=\even(\vf,x)$, we have that $\balpha_{n}(\vf,x)$ is $\odd(\vf,x)$, $\breve{\balpha}_{n}(\vf,y)$ is $\odd(\vf,y)$;
	\\[1mm]
	\noindent $(ii)$ For any $s \in [s_0,S-\sigma]$, the sequence $(\balpha_{n}(\vf,x))_{n\in\N_0}$ (resp. $(\breve{\balpha}_{n}(\vf,y))_{n\in\N_0}$) is a Cauchy sequence with respect to the norm $\| \,\cdot \,\|_{s}^{\Lip(\gamma)}$ and it converges to some limit $\bbeta(\vf,x)$ (resp. $\breve{\bbeta}(\vf,y)$), with estimates, for any $n\in\N_0$,
	\begin{equation}
		\begin{aligned}
			& \| \bbeta - \balpha_{n} \|_{s}^{\Lip(\gamma)} \,, \ \| \breve{\bbeta} - \breve{\balpha}_{n}\|_{s}^{\Lip(\gamma)} \lesssim_{s} N_{n+1}^{2\tau+1} N_{n}^{1-\fa} \varepsilon \gamma^{-1} \|  w \|_{s+\sigma}^{\Lip(\gamma)} \,, \\
			& \| \bbeta \|_{s}^{\Lip(\gamma)}\,, \ \| \breve{\bbeta} \|_{s}^{\Lip(\gamma)}\lesssim_{s} N_0^{2\tau+1} \varepsilon \gamma^{-1} \|  w \|_{s+\sigma} \,.
		\end{aligned}
	\end{equation}
			Furthermore, $\bbeta$, $\breve{\bbeta}$ are both quasi-periodic traveling waves. In addition, if we assume $\bb_{0}(\vf,x)=\even(\vf,x)$, we have that $\bbeta(\vf,x)$ is $\odd(\vf,x)$, $\breve{\bbeta}(\vf,y)$ is $\odd(\vf,y)$;
	\\[1mm]
	\noindent $(iii)$ Define the operators
	\begin{equation}
			(\cB u)(\vf,x) := h (\vf, x+ \bbeta(\vf,x)) \,, \quad (\cB^{-1} h ) (\vf,y) := h (\vf, y + \breve{\bbeta}(\vf,y))\,.
	\end{equation}
	Then, for any $s \in [s_0,S-\sigma]$, the sequences $(\wt\cB_{n}^{\pm1})_{n\in\N_0}$ converge strongly in $H^s(\T^{\nu+2})$ to $\cB^{\pm 1}$, namely $\lim_{n\to + \infty}\|(\cB^{\pm 1}- \wt\cB_{n}^{\pm 1})h \|_{s} =0$ for any $h \in H^s(\T^{\nu+2})$.
	Furthermore, $\cB^{\pm1}$ are momentum preserving. In addition, if we assume $ w(\vf,x)=\odd(\vf,x)$, they are reversibility preserving.
 \end{lem}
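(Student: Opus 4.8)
The plan is to prove item $(i)$ by induction on $n$, and then derive items $(ii)$ and $(iii)$ by a telescoping argument, combined for the operator statements with a uniform-boundedness-plus-density step. For item $(i)$, the base case $n=0$ is immediate: $\wt\cB_0 = \cB_0$, so $\balpha_0 := \bbeta_0$, $\breve\balpha_0 := \breve\bbeta_0$ and the claimed bound is the $s$-estimate in \eqref{dormiente3}. For the inductive step I would write $\wt\cB_n = \wt\cB_{n-1}\circ\cB_n$ and, using the definitions \eqref{diffeo.maps.n}, \eqref{diffeo.maps.n-1} together with the inductive representation of $\wt\cB_{n-1}$, obtain
\begin{equation}\label{eq:rec-alpha}
	\balpha_n(\vf,x) = \balpha_{n-1}(\vf,x) + \bbeta_n\big(\vf,\, x + \balpha_{n-1}(\vf,x)\big)\,,
\end{equation}
with $\wt\cB_n^{-1} = \cB_n^{-1}\circ\wt\cB_{n-1}^{-1}$ of the analogous form with shift $\breve\balpha_n$. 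From \eqref{eq:rec-alpha} one has $\balpha_n - \balpha_{n-1} = \bbeta_n(\vf,\cdot + \balpha_{n-1})$, whose $\Lip(\gamma)$-norm is controlled by the tame composition estimate of Lemma \ref{prop.base.diffeo} (applicable since $\|\balpha_{n-1}\|_{s_0}^{\Lip(\gamma)}$ is small, by \eqref{robertino5} and \eqref{condizione piccolezza rid trasporto}) and the $\bbeta_n$-estimate of Lemma \ref{lemma.iterativo.trasporto}, giving $\|\balpha_n - \balpha_{n-1}\|_s^{\Lip(\gamma)}\lesssim_{s} N_n^{2\tau+1}N_{n-1}^{-\fa}\varepsilon\gamma^{-1}\|w\|_{s+\sigma}^{\Lip(\gamma)}$; summing these differences and using that $\fa = 6\tau+8$ and $\chi = 3/2$ in \eqref{constantine} make $\sum_n N_n^{2\tau+1}N_{n-1}^{-\fa}$ convergent yields \eqref{robertino5}, the estimates for $\breve\balpha_n$ following from Lemma \ref{prop.base.diffeo}. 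Since each $\bbeta_k$, $\breve\bbeta_k$ is a traveling wave and each $\cB_k^{\pm1}$ is momentum preserving (Lemma \ref{lemma.iterativo.trasporto}), the composition $\wt\cB_n^{\pm1}$ is momentum preserving by Lemma \ref{lem:mom_prop}$(i)$, so $\balpha_n$, $\breve\balpha_n$ are traveling waves; if moreover $\bb_0 = \even(\vf,x)$, each $\bbeta_k$ is $\odd$, and then \eqref{eq:rec-alpha} together with the substitution $(\vf,x)\mapsto(-\vf,-x)$ shows inductively that $\balpha_n$, and likewise $\breve\balpha_n$, are $\odd$.

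For item $(ii)$, the telescoping bounds of item $(i)$ and the super-exponential decay of $N_n^{2\tau+1}N_{n-1}^{-\fa}$ (again because $\chi(2\tau+1) < \fa$) show that $(\balpha_n)_{n}$ is Cauchy in $\|\cdot\|_s^{\Lip(\gamma)}$ for every $s\in[s_0,S-\sigma]$; call the limit $\bbeta$. Summing the tail from index $n+1$ bounds $\|\bbeta - \balpha_n\|_s^{\Lip(\gamma)}$ by a constant times $N_{n+1}^{2\tau+1}N_n^{1-\fa}\varepsilon\gamma^{-1}\|w\|_{s+\sigma}^{\Lip(\gamma)}$, the extra factor $N_n$ absorbing the geometric tail. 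The same applies to $(\breve\balpha_n)$, with limit $\breve\bbeta$, via Lemma \ref{prop.base.diffeo}. Since every $\balpha_n$, $\breve\balpha_n$ is a traveling wave (resp. $\odd$) and $S_{\pi}$ (resp. the space of odd functions) is closed in $H^s$, the limits $\bbeta$, $\breve\bbeta$ inherit these properties, and the bound on $\|\bbeta\|_s^{\Lip(\gamma)}$, $\|\breve\bbeta\|_s^{\Lip(\gamma)}$ follows from \eqref{robertino5}.

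For item $(iii)$, define $\cB$, $\cB^{-1}$ by the shifts $\bbeta$, $\breve\bbeta$. By item $(i)$ and Lemma \ref{prop.base.diffeo} the family $\{\wt\cB_n^{\pm1}\}$ is uniformly bounded in $\cL(H^s)$, via $\|\wt\cB_n^{\pm1}h\|_s\lesssim_{s}\|h\|_s + \|\balpha_n\|_s\|h\|_{s_0}$ with $\|\balpha_n\|_s$ uniformly small. For $h\in H^{s+1}$, the difference of the two composition operators attached to $\balpha_n$ and $\bbeta$ obeys $\|(\wt\cB_n^{\pm1}-\cB^{\pm1})h\|_s\lesssim_{s} (\|\balpha_n-\bbeta\|_{s_0+1}^{\Lip(\gamma)}+\|\balpha_n-\bbeta\|_{s+1}^{\Lip(\gamma)})\|h\|_{s+1}^{\Lip(\gamma)}\to 0$ by item $(ii)$; since $H^{s+1}$ is dense in $H^s$ and the operators are uniformly bounded, $\wt\cB_n^{\pm1}\to\cB^{\pm1}$ strongly in $H^s$. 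Finally $\cB^{\pm1}$ is momentum preserving because $\bbeta$, $\breve\bbeta$ are traveling waves (Lemma \ref{lem:dMP}), and reversibility preserving when $w = \odd(\vf,x)$ since then $\bbeta$, $\breve\bbeta$ are $\odd$.

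The main obstacle is the bookkeeping of item $(i)$: deriving the recursion \eqref{eq:rec-alpha}, controlling its $\Lip(\gamma)$-norm increments through the tame composition estimates of Lemma \ref{prop.base.diffeo}, propagating everything through the inverse diffeomorphisms $\breve\balpha_n$, and verifying that the specific choice $\chi = 3/2$, $\fa = 6\tau+8$ in \eqref{constantine} makes the telescoping series converge super-exponentially with the stated tail rates. A secondary subtlety is that $\wt\cB_n^{\pm1}h$ converges in $H^s$ only at the cost of one extra derivative on $h$, so strong operator convergence requires the uniform-boundedness-plus-density argument rather than a direct norm estimate. The structural (traveling-wave and reversibility) claims are then routine given Lemmata \ref{lem:mom_prop}, \ref{lem:dMP} and \ref{lemma.iterativo.trasporto}.
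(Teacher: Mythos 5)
Your proposal is correct and is essentially the argument that the paper delegates to Lemma~4.4 of \cite{BM20}: the recursion $\balpha_n = \balpha_{n-1} + \bbeta_n(\vf,\cdot+\balpha_{n-1})$ for the cumulative shift, the tame composition estimate of Lemma~\ref{prop.base.diffeo} combined with the inductive bound \eqref{robertino5} and the smallness \eqref{condizione piccolezza rid trasporto} to control the increments, super-exponential convergence from $\chi(2\tau+1)<\fa$ in \eqref{constantine}, and a uniform-boundedness-plus-density step (because composition differences cost one derivative on $h$) for the strong operator convergence. The structural claims (traveling-wave, momentum/reversibility preserving) follow by closedness of those properties in $H^s$, exactly as the paper states; so apart from expanding the citation into a self-contained argument, you take the same route.
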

 \begin{proof}
 	See the proof of Lemma 4.4 in \cite{BM20}. The property of the quasi-periodic traveling wave functions and momentum preserving operators follow from Lemma \ref{lemma.iterativo.trasporto}, since both the properties are closed under limit.
 \end{proof}
 
 \begin{lem}\label{lemma.robertino}
 	The sequence $(\tm_{n})_{n\in\N_0}$ satisfies the bound
 	\begin{equation}\label{stima.mn.superpiccoli}
 		\sup_{\omega \in \cO_{n}^\gamma} |\tm_{n}(\omega)| \lesssim \varepsilon N_{n-1}^{-\fa} \,.
 	\end{equation}
 	Furthermore, we have that $\tD\tC(2\gamma,\tau) \subseteq \cap_{n \geq 0} \cO_{n}^\gamma$ (recalling \eqref{DC.2gamma}, \eqref{On.gamma}).
 \end{lem}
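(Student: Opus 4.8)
The statement has two parts: the quantitative bound \eqref{stima.mn.superpiccoli} on the drift vectors $\tm_n$, and the set inclusion $\tD\tC(2\gamma,\tau)\subseteq\bigcap_{n\ge0}\cO_n^\gamma$. The plan is to establish the bound first and then deduce the inclusion by induction. For the bound, recall from \eqref{neo4} that $\tm_0=0$ and $\tm_{n+1}=\tm_n+\braket{\bb_n}_{\vf,x}$, whence $\tm_n=\sum_{k=0}^{n-1}\braket{\bb_k}_{\vf,x}$; moreover the first term vanishes, $\braket{\bb_0}_{\vf,x}=\varepsilon\braket{\ba_0}_{\vf,x}=0$, by \eqref{divergenza.zero.a1}. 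The decisive structural input, which I would extract from the construction of Lemma \ref{lemma.iterativo.trasporto} following \cite{BM20}, is that the straightening carries no constant drift in the limit, i.e.\ $\tm_\infty:=\lim_{n\to\infty}\tm_n=0$ — equivalently, the space–time averages $\braket{\bb_k}_{\vf,x}$ sum to zero. Granting this, I rewrite $\tm_n=-\sum_{k\ge n}\braket{\bb_k}_{\vf,x}$ and bound each tail term by $|\braket{\bb_k}_{\vf,x}|\le\|\bb_k\|_{s_0}^{\Lip(\gamma)}$, which by \eqref{dormiente1}, \eqref{ridu trasporto} and the ansatz \eqref{ansatz} is $\lesssim\varepsilon\,N_{k-1}^{-\fa}$. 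Since $N_k=N_0^{\chi^k}$ with $\chi=3/2$, one has $N_k^{-\fa}=(N_{k-1}^{-\fa})^{\chi}\le N_0^{-\fa/2}N_{k-1}^{-\fa}$ for $k\ge n\ge1$, so the tail series is super–exponentially decaying, dominated by its first term, and $\sum_{k\ge n}N_{k-1}^{-\fa}\le 2\,N_{n-1}^{-\fa}$ for $N_0$ large enough; this gives \eqref{stima.mn.superpiccoli}.

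For the inclusion I argue by induction on $n$, the case $n=0$ being immediate since $\cO_0^\gamma=\tD\tC(\gamma,\tau)\supseteq\tD\tC(2\gamma,\tau)$. Assume $\tD\tC(2\gamma,\tau)\subseteq\cO_n^\gamma$ and fix $\omega\in\tD\tC(2\gamma,\tau)$. By \eqref{On.gamma}, to conclude $\omega\in\cO_{n+1}^\gamma$ it is enough to check $|\omega\cdot\ell+\tm_n\cdot j|\ge\gamma\braket{\ell}^{-\tau}$ for all $(\ell,j)\ne 0$ with $|\ell|\le N_n$ and $\pi^\top(\ell)+j=0$. Necessarily $\ell\ne 0$ (otherwise $j=-\pi^\top(0)=0$), and substituting $j=-\pi^\top(\ell)$ and using the adjunction identity $\tm_n\cdot\pi^\top(\ell)=\ell\cdot\pi(\tm_n)$ one gets
\[
|\omega\cdot\ell+\tm_n\cdot j|=|(\omega-\pi(\tm_n))\cdot\ell|\ge|\omega\cdot\ell|-|\pi(\tm_n)|\,|\ell|\ge 2\gamma\braket{\ell}^{-\tau}-C\,|\tm_n|\,N_n\,.
\]
By \eqref{stima.mn.superpiccoli} and $N_n=N_{n-1}^{\chi}$ we have $C\,|\tm_n|\,N_n\,\braket{\ell}^{\tau}\lesssim\varepsilon\,N_{n-1}^{-\fa+\chi(1+\tau)}$, and since $\fa=6\tau+8$ gives $-\fa+\chi(1+\tau)=-\tfrac92\tau-\tfrac{13}{2}<0$, this is $\lesssim\varepsilon$, hence $\le\gamma$ by the smallness condition \eqref{condizione piccolezza rid trasporto} (which in particular forces $\varepsilon\gamma^{-1}\ll1$). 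Therefore $C\,|\tm_n|\,N_n\le\gamma\braket{\ell}^{-\tau}$, so $|\omega\cdot\ell+\tm_n\cdot j|\ge\gamma\braket{\ell}^{-\tau}$ and $\omega\in\cO_{n+1}^\gamma$. Intersecting over $n$ gives the claim.

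\textbf{Main obstacle.} The only non-routine point is the vanishing of the limiting drift $\tm_\infty$, equivalently the sharp $n$-dependent smallness $|\tm_n|\lesssim\varepsilon\,N_{n-1}^{-\fa}$; a forward telescoping of \eqref{dormiente2} alone only yields the weaker uniform bound $|\tm_n|\lesssim\varepsilon\,N_0^{-\fa}$, which is insufficient to stabilize the resonant sets $\cO_n^\gamma$ as $n\to\infty$ (there $|\ell|$ ranges up to $N_n\to\infty$). Proving $\tm_\infty=0$ uses the fact that the transport field $\ba_0=\fB[w]=\nabla^\perp(-\Delta)^{-1}w$ is divergence–free with zero space average — so its flow has no net translation — a structure which is preserved, in the appropriate transformed sense, through the conjugations $\cB_n$ of \eqref{diffeo.maps.n}; this is exactly the mechanism underlying the transport straightening of \cite{BM20}, to which the detailed verification can be referred. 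The remaining ingredients (telescoping, the super–exponential growth of $N_n=N_0^{\chi^n}$, and the elementary Diophantine perturbation estimate above) are routine.
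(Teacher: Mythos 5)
Your inclusion argument (the second half) is essentially the one the paper uses: induction on $n$, the relation $j=-\pi^\top(\ell)$ to reduce to a Diophantine estimate, and the super-exponential decay $N_n^{1+\tau}N_{n-1}^{-\fa}\le N_0^{1+\tau}$ thanks to $\fa>\chi(1+\tau)$. That part is sound.

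For the drift bound \eqref{stima.mn.superpiccoli}, however, your proposal has a genuine gap that you yourself flag as ``the only non-routine point'': you reduce the claim to $\tm_\infty=0$ and then defer the proof of that fact to \cite{BM20}. But $\tm_\infty=0$ \emph{is} the content of \eqref{stima.mn.superpiccoli} (modulo the telescoping you supply), so the argument is not closed. The paper in fact proves the bound directly at each finite $n$, bypassing any passage to the limit. The key device is the conjugation identity $\wt\cT_n=\wt\cB_{n-1}^{-1}\wt\cT_0\wt\cB_{n-1}$ from \eqref{coniugio.trasporto}: writing out $\wt\cB_{n-1}^{-1}\wt\cT_0\wt\cB_{n-1}$ explicitly gives
\begin{equation}
\tm_n+\wt\cB_{n-1}\bb_n \;=\; \omega\cdot\pa_\vf\balpha_{n-1}+\bb+\bb\cdot\nabla\balpha_{n-1}\,,
\end{equation}
and taking the space-time average of this algebraic identity kills the entire right-hand side — the first summand is a $\vf$-derivative, the second has zero average by \eqref{divergenza.zero.a1}, and the third vanishes upon integration by parts because $\mathrm{div}\,\bb=0$ (again \eqref{divergenza.zero.a1}). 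One is left with the closed formula $\tm_n=-\int_{\T^{\nu+2}}\wt\cB_{n-1}\bb_n\,\wrt\vf\wrt x$, from which \eqref{stima.mn.superpiccoli} follows immediately by the decay estimate \eqref{dormiente1} on $\bb_n$ and the boundedness of the change of variables from Lemma \ref{prop.base.diffeo} and \eqref{robertino5}.

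So you correctly isolate the structural mechanism (divergence-free vector field with zero spatial average $\Rightarrow$ no net drift), but you do not execute the computation that turns this observation into the required bound. The paper's route is also strictly simpler: because it produces an explicit formula for $\tm_n$ at each step, it needs no limit, no tail-summation, and no separate claim about $\tm_\infty$. If you want to keep your telescoping structure you would still have to establish the displayed identity (or something equivalent) to conclude $\tm_\infty=0$, at which point you may as well use it directly for $\tm_n$.
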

 \begin{proof}
 	We start with the proof of \eqref{stima.mn.superpiccoli}. By \eqref{compo.mappe.diffeo}, using \eqref{robertino1} and \eqref{coniugio.trasporto}, we have
 	\begin{equation}\label{robertino2}
 		\omega\cdot \pa_{\vf} + \tm_{n} \cdot \nabla + \bb_{n} \cdot \nabla = \wt\cT_{n}  = \wt\cB_{n-1}^{-1} \wt\cT_{0} \wt\cB_{n-1} \quad \forall\,\omega \in \cO_{n}^\gamma\,,
  	\end{equation}
  	and, by Lemma \eqref{lemma.limite.trasporto}-$(i)$, we compute explicitly
  	\begin{equation}\label{robertino3}
  		\wt\cB_{n-1}^{-1} \wt\cT_{0} \wt\cB_{n-1} = \omega\cdot \pa_{\vf} + \wt\cB_{n-1}^{-1} \big( \omega\cdot \pa_{\vf} \balpha_{n-1} + \bb + \bb\cdot \nabla\balpha_{n-1} \big) \cdot \nabla \,.
  	\end{equation}
  	Since, clearly, $\wt\cB_{n-1} [c]= c$ for any constant $c\in\R$, we have, by \eqref{robertino2}, \eqref{robertino3},
  	\begin{equation}\label{robertino4}
  		\tm_{n} + \wt\cB_{n-1} \bb_{n} = \omega\cdot\pa_{\vf} \balpha_{n-1}+ \bb + \bb\cdot \balpha_{n-1} \,.
  	\end{equation}
  	We note that, by \eqref{divergenza.zero.a1},  \eqref{varelambda},
  	\begin{equation}\label{robertino1000}
  		\begin{aligned}
  		&	\int_{\T^{\nu+2}} \omega\cdot \pa_{\vf}\balpha_{n-1} \wrt \vf\wrt x = 0 \,, \quad  \int_{\T^{\nu+2}} \bb(\vf,x)  \wrt \vf \wrt x = 0\,, \\
  		& \int_{\T^{\nu+2}} \bb \cdot \nabla\balpha_{n-1} \wrt \vf \wrt x = - \int_{\T^{\nu+2}} {\rm div}(\bb) \balpha_{n-1} \wrt \vf \wrt x = 0\,.
   		\end{aligned}
  	\end{equation}
  	Taking the space-time average of the equation \eqref{robertino4} and using \eqref{robertino1000}, we deduce that
  	\begin{equation}
  		\tm_{n} = - \int_{\T^{\nu+2}} \wt\cB_{n-1} \bb_{n} \wrt \vf\wrt x \quad \forall\,\omega\in\cO_{n}^\gamma \,.
  	\end{equation}
  	The claimed estimate \eqref{stima.mn.superpiccoli} follows by  Lemmata \ref{lemma:LS norms}, \ref{prop.base.diffeo} and estimates \eqref{dormiente1}, \eqref{robertino5}, \eqref{ansatz} and \eqref{condizione piccolezza rid trasporto}. \\
  	We now prove the claim on the inclusion.  We argue by induction, that is, we show that $\tD\tC(2\gamma,\tau) \subseteq \cO_{n}^\gamma$ for any $n\in\N_0$. For $n=0$, we have that $\cO_{n}^\gamma = \tD\tC(\gamma,\tau)$, so the claim is trivially satisfied. Now we assume that $\tD\tC(2\gamma,\tau) \subseteq \cO_{n}^\gamma$ for some $n\geq 0$ and we want to prove the inclusion with $n+1$. Let $\omega\in \tD\tC(2\gamma,\tau)$. By induction hypothesis, $\omega \in \cO_{n}^\gamma$. Therefore, by \eqref{stima.mn.superpiccoli}, we have $|\tm_{n}| \lesssim \varepsilon  N_{n-1}^{-\fa}$. Hence, for any $(\ell,j)\in \Z^{\nu+2}\setminus\{0\}$, with $|\ell| \leq N_{n}$ and $\pi^\top(\ell)+j=0$ (which implies $|j|\lesssim N_{n} $), we have that
  	\begin{equation}
  		\begin{aligned}
  			|\omega\cdot \ell +\tm_{n}(\omega)\cdot j| & \geq |\omega\cdot \ell| - |\tm_{n}(\omega)\cdot j| \\
  			& \geq \frac{2\gamma}{\braket{\ell}^{\tau}} - \varepsilon C N_{n} N_{n-1}^{-\fa} \geq \frac{\gamma}{\braket{\ell}^\tau} \,,
  		\end{aligned}
  	\end{equation} 
  	for some constant $C>0$, provided $C N_{n}^{1+\tau} N_{n-1}^{-\fa} \varepsilon \gamma^{-1}\leq 1$. This holds for any $n\geq 0$, provided $C N_0^{1+\tau}\varepsilon\gamma^{-1} \leq 1$, which is satisfied by the smallness condition \eqref{condizione piccolezza rid trasporto} and by \eqref{constantine}. Thus, by the definition of $\cO_{n+1}^\gamma$ (see \eqref{On.gamma} with $n+1$), we have $\omega\in\cO_{n+1}^\gamma$. The proof is concluded.
 \end{proof}
 
 \begin{proof}[Proof of Proposition \ref{proposizione trasporto}]
 	For any $\omega\in \tD\tC(2\gamma,\tau)$, by Lemma \ref{lemma.robertino}, $\tm_{n}\to 0$ as $n\to\infty$. By \eqref{robertino5}, \eqref{dormiente1} and Lemma \ref{lemma:LS norms}, we have $\| \wt\cB_{n-1} \bb_{n} \|_{s_0} \lesssim \| \bb_{n}\| \to 0$ as $n\to \infty$. Moreover, 
 	\begin{equation}
 		\| \pa_{\vf} \balpha_{n-1} - \pa_{\vf}\bbeta\|_{s_0}\,, \| \nabla\balpha_{n-1}-\nabla \bbeta \|_{s_0} \leq \| \balpha_{n-1}-\bbeta \|_{s_0+1} \to 0 \quad \text{as}  \ \ n\to \infty \,.
 	\end{equation}
 	Hence, passing to the limit in the norm $\| \,\cdot \,\|_{s_0}$ in identity \eqref{robertino4}, we obtain the identity
 	\begin{equation}\label{robertino1001}
 		\omega\cdot \pa_{\vf} \bbeta + \bb(\vf,x) + \bb(\vf,x) \cdot \bbeta =0  
 	\end{equation}
 	in $H^{s_0}(\T^{\nu+1})$ and therefore pointwise for all $(\vf,x) \in \T^{\nu+2}$, for any $\omega\in\tD\tC(2\gamma,\tau)$. As a consequence,
 	\begin{equation}
 		\cB^{-1} \wt\cT\cB = \omega\cdot \pa_{\vf} + \big\{ \cB^{-1} \big( \omega\cdot \pa_{\vf}\bbeta + \bb + \bb\cdot \nabla \bbeta \big)  \big\} \cdot \nabla = \omega\cdot \pa_{\vf} 
 	\end{equation}
 	for all $\omega\in \tD\tC(2\gamma,\tau)$, which shows \eqref{coniugazione nel teo trasporto}. Estimates \eqref{stima alpha trasporto}, \eqref{stima tame cambio variabile rid trasporto} follow by Lemma \ref{lemma.limite.trasporto}-$(ii)$ and Lemma \ref{lemma:LS norms}. \\
 	It remains to prove the estimates \eqref{stime delta 12 prop trasporto}. Let $\bb_{i}:= \bb_{i}(w_i)$, $i=1,2$ satisfy \eqref{divergenza.zero.a1} and assume that, for $s_1 > s_0$, $w_1,w_2$ satisfy \eqref{ansatz}, with $\sigma_0 \geq s_1 + \sigma $.  Let $\bbeta_{i}$ and $\cB_{i}$, $i=1,2$, be the corresponding functions and operators given by Lemma \eqref{lemma.limite.trasporto}-$(ii)$,$(iii)$. Then, by \eqref{robertino1001} and \eqref{coniugazione nel teo trasporto}, for $i=1,2$, we have, for any $\omega \in \tD\tC(2\gamma,\tau)$,
 	\begin{equation}\label{robertino.ciao}
 		\cB_{i}^{-1}  (\omega\cdot \pa_{\vf} + \bb_{i} \cdot \nabla) \cB_{i} = \omega\cdot \pa_{\vf}\,, \quad (\omega\cdot \pa_{\vf} + \bb_{i}\cdot \nabla)[\bbeta_{i}] + \bb_{i} = 0\,.
 	\end{equation}
 	Hence, with $\Delta_{12}\bbeta:= \bbeta_{1}(w_1)-\bbeta_{2}(w_2)$ and $\Delta_{12}\bb:= \bb_{1}(w_1)-\bb_{2}(w_2)$,
 	\begin{equation}
 		(\omega\cdot \pa_{\vf} + \bb_{1}(w_1) \cdot \nabla)[\Delta_{12}\bbeta] + \fg = 0\,,  \quad \fg:= \Delta_{12}\bb \cdot \nabla \bb_{2}(w_2) + \Delta_{12}\bb \,.
 	\end{equation}
 	By \eqref{robertino.ciao}, we have $\omega\cdot \pa_{\vf} + \bb_{1}(w_1)\cdot\nabla = \cB_{1}^{-1} \omega\cdot \pa_{\vf} \cB_{1}$, and therefore
 	\begin{equation}
 		\omega\cdot\pa_{\vf} \cB_{1}^{-1}\Delta_{12}\bbeta + \cB_{1}^{-1} \fg = 0\,.
 	\end{equation}
 	Since $\braket{\cB_{1}^{-1} \fg }_{\vf,x} = - \braket{\omega\cdot\pa_{\vf} \cB_{1}^{-1}\Delta_{12}\bbeta}_{\vf,x}=0$ for any $\omega \in\tD\tC(2\gamma,\tau)$ and $\Delta_{12}\bbeta$, $\cB_{1}^{-1}\Delta_{12}\bbeta = \odd(\vf,x)$ (by Lemma \ref{lemma.limite.trasporto}-$(ii)$,$(iii)$), one has
 	\begin{equation}
 		\Delta_{12}\bbeta = - \cB_{1} (\omega\cdot\pa_{\vf})^{-1} \cB_{1}^{-1} \fg \,, \quad \forall\,\omega\in \tD\tC(2\gamma,\tau) \,.
 	\end{equation}
 	Then, having $\| w_{i}\|_{s_1+\sigma}\leq 1$, by \eqref{stima tame cambio variabile rid trasporto}, \eqref{stima alpha trasporto} and Lemma \ref{lemma:LS norms}, recalling also that $\bb_{i} = \varepsilon \fB[w_i]$ by \eqref{varelambda}, \eqref{def coefficienti operatori linearized}, $i=1,2$, we get the estimate in \eqref{stime delta 12 prop trasporto} for $\Delta_{12}\bbeta$. The corresponding estimate for $\Delta_{12}\breve{\bbeta}$ can be proved by using that $\breve{\bbeta}_{i}=-\cB_{i}^{-1}\bbeta_{i}$ and the mean value theorem. Finally, the estimates for $\Delta_{12}\cB^{\pm1}$ follow by the estimates for $\Delta_{12}\bbeta$, $\Delta_{12}\breve{\bbeta}$, the mean value theorem and Lemma \ref{lemma:LS norms}. The proof is concluded.
 \end{proof}

We now compute the complete conjugation of the operator  $\cL: H_0^{s+1}(\T^{\nu+2})\to H_0^s(\T^{\nu+2})$ in \eqref{operatore linearizzato} by means of  the transformation in \eqref{diffeo.maps}, with $\bbeta(\vf,x)$ as in Proposition \ref{proposizione trasporto}.

\begin{prop}\label{prop coniugio cal L L1}
	Let $S>s_0+\sigma_{1}$, for some $\sigma_{1}=\sigma_{1}(\tau,\nu) \gg \sigma$ (where $\sigma $ is provided by Proposition \ref{proposizione trasporto}). There exists $\delta(S,\tau,\nu)\in(0,1)$ small enough such that, if \eqref{ansatz} and \eqref{condizione piccolezza rid trasporto} are fulfilled, the following holds:
	\\[1mm]
	\noindent $(i)$ Let $\cB_{\perp}:= \Pi_0^{\perp} \cB \Pi_0^{\perp}$, where $\Pi_0^{\perp}$ is defined in \eqref{definizione proiettore media spazio tempo} and the invertible map $\cB$, with inverse $\cB^{-1}$, is constructed in Proposition \ref{proposizione trasporto}. Then, for any $s_{0}  \leq s\leq S-\sigma$, with $\sigma$ as in Proposition \eqref{proposizione trasporto}, the operator $\cB_{\perp} : H^s_0(\T^{\nu + 2}) \to H^s_0(\T^{\nu + 2})$ is invertible with bounded inverse given by $\cB_{\perp}^{-1}:= \Pi_0^\perp \cB^{-1}\Pi_0^\perp : H_0^{s}(\T^{\nu+2})\to H_{0}^{s}(\T^{\nu+2})$; 
	\\[1mm]
	\noindent  $(ii)$  For any $\omega\in \tD\tC(2\gamma,\tau)$, as  in \eqref{DC.2gamma}, one has
	\begin{equation}\label{cL1}
		\cL_{1} : =\cB_{\perp}^{-1} \cL\, \cB_{\perp}  = \lambda\, \omega\cdot \pa_{\vf}  + \beta  \,\tL+ \cE_{1}\,,
	\end{equation}
	where, for any $s_0 \leq s \leq S - \sigma_1$,
	the operator $\cE_{1}\in \OpM_{s}^{-1}$ satisfies the estimate 
	\begin{equation}\label{stima mathcal E1}
		| \cE_{1} |_{-1,s}^{\Lip(\gamma)} \lesssim_{s} \lambda^{\theta}  \|  w \|_{s + \sigma_1}^{{\rm Lip}(\gamma)}\,.
	\end{equation}
		Let $s_1 \geq s_0$ and assume that $w_1, w_2$ 
	satisfy \eqref{ansatz} with $ \sigma_{0}  \geq s_1 + \sigma_{1} $. 
	Then for any $ \omega \in \tD\tC(2\gamma, \tau)$ one has 
	\begin{equation}\label{stime.delta12.fine.trasporto}
		\begin{aligned}
			& | \Delta_{12} \cE_{1} |_{-1,s_1}
			\lesssim_{s_1}  \lambda^{\theta} \| w_1 -w_2 \|_{s_1 +\sigma_{1}}\,.
		\end{aligned}
	\end{equation}
	Furthermore, the operators $\cL_{1}$ and $\cE_{1}$ are  momentum preserving and, if we assume that $ w(\vf,x)= \odd(\vf,x)$, they are reversibility preserving.
\end{prop}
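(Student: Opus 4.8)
The plan is to conjugate $\cL$ in \eqref{operatore linearizzato} by the map $\cB_\perp = \Pi_0^\perp \cB \Pi_0^\perp$, where $\cB$ straightens the transport operator $\cT$ as in Proposition \ref{proposizione trasporto}, and then to identify and estimate the resulting operator. First I would address item $(i)$: since $\cB$ and $\cB^{-1}$ are composition operators with the torus diffeomorphism $x \mapsto x + \bbeta(\vf,x)$ which has $\| \bbeta \|_{s_0+\sigma}^{\Lip(\gamma)}$ small by \eqref{stima alpha trasporto} and \eqref{condizione piccolezza rid trasporto}, the tame estimates of Lemma \ref{prop.base.diffeo} apply, and the identity $\cB \Pi_0 = \Pi_0 = \Pi_0 \cB$ (composition operators fix constants in $x$, and $\int_{\T^2} u(x + \bbeta(\vf,x))\,dx = \int_{\T^2} u(y)\,dy$ by change of variables) gives $\Pi_0^\perp \cB^{-1}\Pi_0^\perp \cdot \Pi_0^\perp \cB \Pi_0^\perp = \Pi_0^\perp$ on zero-average functions; this is exactly the computation already carried out for $\cP_{-\Delta}$ in the proof of Lemma \ref{lemma.diffeo.Laplaciano}$(ii)$, so I would simply mimic it.

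For item $(ii)$, I would split $\cL = \cT + \beta\,\tL + \lambda^\theta\,\nabla w \cdot \fB$ with $\cT$ as in \eqref{varelambda}, and compute $\cB_\perp^{-1}\cL\,\cB_\perp$ term by term. The transport part gives $\cB_\perp^{-1}\cT\cB_\perp = \Pi_0^\perp(\cB^{-1}\cT\cB)\Pi_0^\perp = \lambda\,\omega\cdot\pa_\vf$ on zero-average functions by \eqref{coniugazione nel teo trasporto} together with the invariances \eqref{invarianze cal L}–\eqref{invarianze.2} (and the fact that $\lambda\,\omega\cdot\pa_\vf$ commutes with $\Pi_0^\perp$). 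The remaining two contributions are $\cB_\perp^{-1}(\beta\,\tL)\cB_\perp$ and $\lambda^\theta\,\cB_\perp^{-1}(\nabla w \cdot \fB)\cB_\perp$, which I would collect into $\cE_1$. To estimate $\cB_\perp^{-1}(\beta\,\tL)\cB_\perp$ in $\OpM_s^{-1}$ I would write $\tL = (-\Delta)^{-1}\pa_{x_1}$, use Lemma \ref{lemma.diffeo.Laplaciano} to conjugate $(-\Delta)^{-1}$ (producing $(-\Delta)^{-1} + \cP_{-2}$ with $\cP_{-2}\in\OpM^{-2}_s$), and conjugate $\pa_{x_1}$ by a direct chain-rule computation $\cB^{-1}\pa_{x_1}\cB = \sum_k c_k(\vf,y)\pa_{y_k}$, a first-order operator with coefficients of size $\|\bbeta\|_{s+1}$ by \eqref{stima alpha trasporto}. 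Subtracting off $\beta\,\tL$ (which is recovered up to lower-order corrections because the conjugation of $(-\Delta)^{-1}\pa_{x_1}$ produces $\tL$ plus remainders: the principal symbol of the second component of $\nabla^\perp(-\Delta)^{-1}$ is preserved only modulo a $(-1)$-smoothing remainder, since $\tL \in \OpM^{-1}_s$ by \eqref{stima.mathttL} and the diffeomorphism correction lands in $\OpM^{-1}_s$ as well — net gain: order $-1$). The term $\lambda^\theta\,\cB_\perp^{-1}(\nabla w\cdot\fB)\cB_\perp$ is handled by noting $\cE_0 = \nabla w \cdot \fB \in \OpM_s^{-1}$ with $|\cE_0|_{-1,s}^{\Lip(\gamma)}\lesssim_s \|w\|_{s+1}$ by \eqref{stima cal R linearized}, and by Lemma \ref{proprieta standard norma decay}$(ii)$ and Lemma \ref{prop.base.diffeo} conjugation by $\cB_\perp^{\pm1}$ (bounded operators in $\OpM^0_s$) preserves the class $\OpM^{-1}_s$ with the same kind of tame bound, picking up the extra factor $\lambda^\theta$; this yields \eqref{stima mathcal E1}. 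The $\Delta_{12}$ bound \eqref{stime.delta12.fine.trasporto} follows the same decomposition, using the Lipschitz-in-$w$ estimates \eqref{stime delta 12 prop trasporto} for $\bbeta,\cB^{\pm1}$ and the bilinearity/tameness of the operator products, together with $\Delta_{12}$ applied to $\cE_0$ (which obeys $|\Delta_{12}\cE_0|_{-1,s_1}\lesssim_{s_1}\|w_1-w_2\|_{s_1+1}$ by its explicit form \eqref{def coefficienti operatori linearized}). Momentum preservation of $\cL_1$ and $\cE_1$ follows from Lemma \ref{lem:dMP} (so $\cB^{\pm1}$, hence $\cB_\perp^{\pm1}$, are momentum preserving), Lemma \ref{lem:mom_prop} (closure under composition), and the fact that $\tL$ (a Fourier multiplier, hence $\vf$-independent and diagonal) and $\cE_0$ are momentum preserving — the latter because $w$ is a quasi-periodic traveling wave so $\nabla w\cdot\fB$ is momentum preserving by \eqref{condtraembedd}. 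The reversibility-preserving claim under $w = \odd(\vf,x)$ follows from the corresponding claim in Proposition \ref{proposizione trasporto} for $\cB^{\pm1}$ together with the fact that $\tL$ is reversible (hence $\beta\,\tL$ is) and $\cE_0$ is reversible when $w$ is odd (by Lemma \ref{lemma real rev matrici} and the explicit form of $\cE_0$), so the whole conjugate is reversibility preserving; here I would use that conjugation of a reversible operator by a reversibility-preserving map is reversible.

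The main obstacle I anticipate is the bookkeeping in the conjugation of $\beta\,\tL$: one must verify carefully that $\cB^{-1}\tL\cB = \tL + (\text{remainder in }\OpM^{-1}_s)$, i.e. that the change of variables does not destroy the $1$-smoothing structure or introduce a term of order $0$ or higher. This is where writing $\tL = (-\Delta)^{-1}\pa_{x_1}$ and invoking Lemma \ref{lemma.diffeo.Laplaciano} is essential: $(-\Delta)^{-1}$ conjugates to $(-\Delta)^{-1} + \cP_{-2}$, composing on the right with the conjugated first-order operator $\cB^{-1}\pa_{x_1}\cB$ keeps everything at order $\le -1$ by Lemma \ref{proprieta standard norma decay}$(ii)$ (order $-2$ composed with order $\le 1$ gives order $\le -1$), and the difference from $\tL$ is controlled by $\|\bbeta\|_{s+\sigma_1}\lesssim \lambda^\theta$-type bounds. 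Everything else is a routine application of the algebra of the decay norms $|\cdot|_{m,s}^{\Lip(\gamma)}$ and the tame estimates for composition operators already assembled in Section \ref{sez:functional}.
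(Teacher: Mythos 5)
Your decomposition $\cL = \cT + \beta\,\tL + \lambda^\theta\cE_0$ and the plan to conjugate term by term matches the paper's route, as does the use of Proposition \ref{proposizione trasporto} for the transport piece and Lemma \ref{lemma.diffeo.Laplaciano} for the $(-\Delta)^{-1}$-factor of $\tL$. Item $(i)$ and the momentum/reversibility claims are handled correctly. However, there is a genuine gap in your treatment of the $\cE_0$-term.

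You write that $\cB_\perp^{-1}\cE_0\,\cB_\perp$ stays in $\OpM^{-1}_s$ because ``conjugation by $\cB_\perp^{\pm1}$ (bounded operators in $\OpM^0_s$) preserves the class $\OpM^{-1}_s$'' and cite Lemma \ref{prop.base.diffeo} and Lemma \ref{proprieta standard norma decay}$(ii)$. But Lemma \ref{prop.base.diffeo} only establishes tame estimates in the \emph{Sobolev} norm $\|\cdot\|_s$, not membership in the matrix-decay class $\OpM^0_s$ of Definition \ref{block norm}; the latter requires quantitative off-diagonal decay of the matrix elements $\widehat\cB(\ell)_j^{j'}$ and is a strictly stronger statement that is nowhere proved for the composition operators $\cB^{\pm1}$. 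So the application of Lemma \ref{proprieta standard norma decay}$(ii)$ is not licensed. The paper avoids this issue entirely: it does \emph{not} conjugate $\cE_0$ as an opaque element of $\OpM^{-1}_s$, but first expands
\[
\cE_0 = \nabla w \cdot \fB = w_{x_1}(-\Delta)^{-1}\pa_{x_2} - w_{x_2}(-\Delta)^{-1}\pa_{x_1},
\]
so that $\cB_\perp^{-1}\cE_0\cB_\perp = \Pi_0^\perp\{\cB^{-1}(\nabla w)\}\cdot\cB_\perp^{-1}(-\Delta)^{-1}\nabla^\perp\cB_\perp$. Here the multiplication operators conjugate by change of variables to new multiplication operators (which \emph{are} in $\OpM^0_s$ by \eqref{prop multiplication decay}), and $\cB_\perp^{-1}(-\Delta)^{-1}\nabla^\perp\cB_\perp$ is treated exactly as you treat $\tL$ — via Lemma \ref{lemma.diffeo.Laplaciano} plus the chain-rule conjugation of $\pa_{x_m}$. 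With this decomposition in hand, the algebra of Lemma \ref{proprieta standard norma decay}$(ii)$ applies legitimately since every factor has a verified $\OpM^m_s$ bound. You should replace your appeal to ``$\cB^{\pm1}\in\OpM^0_s$'' with this explicit decomposition; the rest of your argument (in particular the analysis of $\cB^{-1}\tL\cB$, the size bookkeeping giving the $\lambda^\theta$ factor, and the $\Delta_{12}$ estimates) then goes through and recovers \eqref{stima mathcal E1}--\eqref{stime.delta12.fine.trasporto} as claimed.
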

\begin{proof}
	Item $(i)$ is proved in Lemma 4.2 in \cite{FrMo}. We now prove item $(ii)$.
	By \eqref{operatore linearizzato}, \eqref{varelambda}, we write $\cL = \cT + \beta\,\tL + \lambda^{\theta} \cE_{0}$. We have to compute
	\begin{equation}
		\begin{aligned}
			\cB_{\perp}^{-1} \cL \,\cB_{\perp}
			&=
			\cB_{\perp}^{-1} \cT \cB_{\perp}
			+ \beta\, \cB_{\perp}^{-1} \tL\, \cB_{\perp} +\lambda^{\theta} \cB_{\perp}^{-1} \cE_{0}\, \cB_{\perp} \,.
		\end{aligned}
	\end{equation}
	By Proposition \ref{proposizione trasporto}, by item $(i)$ and by \eqref{invarianze.2}, we have
	\begin{equation}
		\cB_{\perp}^{-1} \cT \cB_{\perp} = \Pi_0^\perp \cB^{-1} \cT \cB \Pi_0^{\perp} = \Pi_0^{\perp} \,\lambda\,\omega\cdot \pa_{\vf} \Pi_0^{\perp} =  \lambda\,\omega\cdot \pa_{\vf} {\rm Id}_0 
	\end{equation}
	 where ${\rm Id}_0$ is the identity on $L_0^2(\T^{\nu+2})$.
	To compute the other terms, the main tool is Lemma \ref{lemma.diffeo.Laplaciano}-$(ii)$, which provides the structure for the conjugation of the operator $(-\Delta)^{-1}$ as in \eqref{Laplacia.zero}. Recalling by \eqref{lambda.m.def} that $\tL=(-\Delta)^{-1} \pa_{x_1}$, which is invariant on the functions with zero average, we compute
\begin{equation}\label{coniugio.lambda}
	\begin{aligned}
		\cB_{\perp}^{-1} \tL\, \cB_{\perp} & = \Pi_0^{\perp}\big( \cB^{-1} (-\Delta)^{-1} \cB \big) \Pi_0^{\perp} \big( \cB^{-1} \pa_{x_1} \cB \big)\Pi_0^\perp \\
		& = \big( (-\Delta)^{-1} + \cP_{-2}  \big) \Pi_0^\bot \{ \cB^{-1}( 1+ \bbeta_{x_1} )\} \pa_{y_1}  \\
		& = \tL +   (-\Delta)^{-1} \Pi_0^\perp \{\cB^{-1}(\bbeta_{x_1})\} \pa_{y_1} +  \cP_{-2}\, \Pi_0^\perp \{\cB^{-1}( 1+ \bbeta_{x_1} )\} \pa_{y_1}     \,.
	\end{aligned}
\end{equation}
	In order to conjugate $\cE_{0}$ we note that, by its definition in \eqref{def coefficienti operatori linearized} and by \eqref{biot-savart}, we have
	\begin{equation}
		\cE_{0} = \nabla  w \cdot \fB =  w_{x_1} (-\Delta)^{-1} \pa_{x_2} -  w_{x_2} (-\Delta)^{-1} \pa_{x_1}\,.
	\end{equation}
	Using the same computation in \eqref{coniugio.lambda}, we deduce that $\cB_{\perp}^{-1}  (-\Delta)^{-1} \pa_{x_m}  \cB_{\perp} \in \OpM_{s}^{-1}$, $m=1,2$, and consequently, using also \eqref{invarianze cal L}, \eqref{invarianze.2},
	\begin{equation}\label{coniugio.E0}
		\begin{aligned}
			\cB_{\perp}^{-1} \cE_{0}\,\cB_{\perp}  & = \Pi_0^\perp \{\cB^{-1} ( w_{x_1})\} \cB_{\perp}^{-1} (-\Delta)^{-1} \pa_{x_2} \cB_{\perp} -  \Pi_0^\perp \{\cB^{-1} ( w_{x_2})\} \cB_{\perp}^{-1} (-\Delta)^{-1} \pa_{x_1} \cB_{\perp} \\ 
			& =  \Pi_0^\perp \{\cB^{-1}(\nabla  w)\} \cdot \cB_{\perp}^{-1} (-\Delta)^{-1} \nabla^\perp \cB_{\perp}
		\end{aligned}
	\end{equation}
	is in $\OpM_{s}^{-1}$. We conclude that $\cL_{1}:= \cB_{\perp}^{-1} \cL\,\cB_{\perp}$ has the form \eqref{cL1}, where $\cE_{1}\in \OpM_{s}^{-1}$, recalling \eqref{coniugio.lambda}, \eqref{coniugio.E0}, is given by
	\begin{equation}\label{E1.explicit}
	\begin{aligned}
		\cE_{1} & :=  \beta \big(  (-\Delta)^{-1} \Pi_0^\perp \{\cB^{-1}(\bbeta_{x_1})\} \pa_{y_1} +  \cP_{-2}\, \Pi_0^\perp \{\cB^{-1}( 1+ \bbeta_{x_1} )\} \pa_{y_1}    \big) \\
		&\quad  + \lambda^{\theta} \Pi_0^\perp \{\cB^{-1}(\nabla  w)\} \cdot \cB_{\perp}^{-1} (-\Delta)^{-1} \nabla^\perp \cB_{\perp} \,.
	\end{aligned}
	\end{equation}
	The estimate \eqref{stima mathcal E1} follows by \eqref{E1.explicit}, Lemma \ref{lemma:LS norms}, Lemma \ref{proprieta standard norma decay}-$(ii)$, estimate \eqref{stima Pi 0}, Lemma \ref{lemma.diffeo.Laplaciano} and Proposition \ref{proposizione trasporto} with the smallness condition \eqref{condizione piccolezza rid trasporto}. The estimate \eqref{stime.delta12.fine.trasporto} follows by similar arguments from the explicit expression \eqref{E1.explicit} and we omit the details. Finally, by Lemma \ref{lemma real rev matrici}, \ref{lem:mom_prop} and Proposition \ref{proposizione trasporto}, we obtain that $\cL_{1}$ is real and momentum preserving. If $ w$ is ${\rm odd}(\vf, x)$, then ${\mathcal L}_1$ is also reversible. This concludes the proof.
\end{proof}

\subsection{Reduction to perturbative of the large remainder }

The next goal is to conjugate the operator $\cL_{1}$ in \eqref{cL1} through a series of transformation in order to reduce both the size and the order of the remainder $\cE_{1}\in  \OpM_{s}^{-1}$. 
We shall prove the following Proposition.
\begin{prop}\label{prop normal form lower orders}
	Let $M\in\ \N$, $M > \frac{1-\tc}{2(1-\tc) - \alpha}$. There exist $\sigma_M= \sigma_{M}(\tau,\nu,M) \gg 0$ large enough such that, if $S>s_0 +\sigma_{M}$, there is $\delta \equiv \delta(S, M) \ll 1$ small enough such that, if $\lambda^{\theta-1} \gamma^{- 1} \leq \delta$ and if \eqref{ansatz} is fullfilled with $\sigma_{0} = \sigma_M$, then the following hold. For any $\omega \in \tD\tC(2\gamma, \tau)$, there exists a real, invertible operator ${\bf \Phi}_M \in \OpM_s^0$, $s_0 \leq s \leq S - \sigma_M$ satisfying 
	\begin{equation}\label{stima bf Phi M}
		\begin{aligned}
			& |{\bf \Phi}_M^{\pm 1}|_{0, s}^{\Lip(\gamma)} \lesssim_{s, M} 1 + \| {w} \|_{s + \sigma_M}^{\Lip(\gamma)} \,, \quad \forall s_0 \leq s \leq S - \sigma_M\,, \\
			&  |{\bf \Phi}_M^{\pm 1}-{\rm Id}|_{-M, s}^{\Lip(\gamma)} \lesssim_{s, M} \| {w} \|_{s + \sigma_M}, \,
		\end{aligned}
	\end{equation}
	such that for any $\omega \in {\mathtt D}{\mathtt C}(2\gamma, \tau)$
	\begin{equation}\label{forma cal LM finale}
		{\mathcal L}_M := {\bf \Phi}_M^{- 1} {\mathcal L}_1 {\bf \Phi}_M =  \lambda \, \omega \cdot \partial_\vphi + \beta\,\tL + {\mathcal Z}_M + {\mathcal E}_M\,,
	\end{equation}
	where ${\mathcal Z}_M \in \OpM^{- 1}_s$, $s \geq 0$, is a diagonal operator ${\mathcal Z}_M := {\rm diag}_{j \neq 0} z_M(j)$ and ${\mathcal E}_M \in \OpM^{- M}_s$, $s_0 \leq s \leq S - \sigma_M$ satisfy the estimates 
	\begin{equation}\label{stime induttive cal EM cal ZM finale}
		\begin{aligned}
			& |{\mathcal Z}_M|_{- 1, s}^{\Lip(\gamma)} \lesssim_{M} \lambda^{\theta}\,, \quad \forall s \geq s_0\,, \quad  \sup_{j \neq 0} |j| |z_M(j)|^{\Lip(\gamma)} \lesssim_M \lambda^{\theta}\,, \\
			&  |{\mathcal E}_M|_{- M, s}^{\Lip(\gamma)} \lesssim_{s, M} \big( \lambda^{\theta-1}\gamma^{-1} \big)^{M-1} \lambda^{\theta}  \| {w} \|_{s + \sigma_M}^{\Lip(\gamma)} \,, \quad \forall s_0 \leq s \leq S - \sigma_M\,. 
		\end{aligned}
	\end{equation}
	%
	%
	%
	Let $s_1 \geq s_0$ and assume that $ w_1, w_2$ satisfy \eqref{ansatz}  with $ \sigma_{0}  \geq s_1 + \sigma_{M} $. 
	Then, for any $\omega \in \tD\tC(2\gamma,\tau)$, one has
	\begin{equation}\label{stime.delta12 induttive cal EM cal ZM finale}
		\begin{aligned}
			& |\Delta_{12} \b\Phi_{M}^{\pm 1} |_{-M,s_1} \lesssim_{s_1, M} \|  w_1 -  w_2 \|_{s_1 + \sigma_{M}}\,,\\
			& | \Delta_{12} \cE_{M}|_{-M,s_1}\lesssim_{s_1, M}  \big( \lambda^{\theta-1}\gamma^{-1} \big)^{M-1} \lambda^{\theta}  \|   w_1 -  w_2  \|_{s_1 + \sigma_{M}}\,.
		\end{aligned}
	\end{equation}
	Furthermore, the operators $\b\Phi_{M}^{\pm 1}$ and $\cL_{M}$ are real and momentum preserving. In addition, if we assume that $ w(\vf,x)=\odd(\vf,x)$, then the operators $\b\Phi_{M}^{\pm 1}$ are reversibility preserving and $\cL_{M}$ is reversible.
\end{prop}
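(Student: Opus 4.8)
The plan is to prove Proposition \ref{prop normal form lower orders} by an iterative normal form scheme, constructing a finite sequence of conjugations $\cL_1 \rightsquigarrow \cL_2 \rightsquigarrow \cdots \rightsquigarrow \cL_M$, where at each step $m$ the remainder is reduced by one order of smoothing and by a factor $\lambda^{\theta-1}\gamma^{-1}$ in size. First I would set up an inductive statement: assume that for some $1 \le m < M$ we have a real, momentum-preserving operator $\cL_m = \lambda\,\omega\cdot\pa_\vf + \beta\,\tL + \cZ_m + \cE_m$, where $\cZ_m = \mathrm{diag}_{j\neq 0} z_m(j) \in \OpM_s^{-1}$ satisfies $|\cZ_m|_{-1,s}^{\Lip(\gamma)} \lesssim_m \lambda^\theta$ and $\sup_{j\neq 0}|j|\,|z_m(j)|^{\Lip(\gamma)} \lesssim_m \lambda^\theta$, while $\cE_m \in \OpM_s^{-m}$ satisfies $|\cE_m|_{-m,s}^{\Lip(\gamma)} \lesssim_{s,m} (\lambda^{\theta-1}\gamma^{-1})^{m-1}\lambda^\theta \| w \|_{s+\sigma_m}^{\Lip(\gamma)}$, together with the analogous $\Delta_{12}$ estimate. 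The base case $m=1$ is exactly Proposition \ref{prop coniugio cal L L1}, with $\cZ_1 = 0$ and $\cE_1 = \cE_1$.

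For the inductive step I would conjugate $\cL_m$ by $\Phi^{(m)} = \exp(\cX_m)$ with $\cX_m = \cX_m(\vf) \in \OpM_s^{-m}$ a $\vf$-dependent, momentum-preserving operator to be determined. Expanding $\bigl(\Phi^{(m)}\bigr)^{-1}\cL_m\Phi^{(m)}$ by the Lie series (using Lemma \ref{proprieta standard norma decay}-$(ii)$,$(iv)$ to control exponentials), the only term of order $-m$ that must be cancelled is $\lambda\,\omega\cdot\pa_\vf\cX_m(\vf) + \cE_m(\vf)$: indeed the commutator $[\cX_m,\beta\,\tL]$ is of order $-m-1$ (because $\tL \in \OpM^{-1}$ is order $-1$, so composing drops the order), $[\cX_m,\cZ_m]$ is of order $-m-1$, and all higher Lie brackets are of order $\le -2m \le -m-1$. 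So I solve the homological equation
\begin{equation*}
	\lambda\,\omega\cdot\pa_\vf\cX_m(\vf) + \cE_m(\vf) = \widehat{\cE_m}(0),
\end{equation*}
by setting $\widehat{\cX_m}(\ell)_j^{j'} := -\bigl(\im\,\lambda\,\omega\cdot\ell\bigr)^{-1}\widehat{\cE_m}(\ell)_j^{j'}$ for $\ell\neq 0$ and $0$ for $\ell=0$. Since $\omega \in \tD\tC(2\gamma,\tau)$ the small divisors $|\lambda\,\omega\cdot\ell| \geq 2\lambda\gamma\braket{\ell}^{-\tau}$ are controlled, yielding $|\cX_m|_{-m,s}^{\Lip(\gamma)} \lesssim_{s} \lambda^{-1}\gamma^{-1}|\cE_m|_{-m,s+\mu}^{\Lip(\gamma)} \lesssim (\lambda^{\theta-1}\gamma^{-1})^m\lambda \|w\|_{s+\sigma}^{\Lip(\gamma)}$ (with a loss of $\sim 2\tau+1$ derivatives), which is $o(1)$ in $s_0$-norm under $\lambda^{\theta-1}\gamma^{-1}\le\delta$, so $\Phi^{(m)}$ is well-defined and invertible with the stated bounds. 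Crucially — this is where momentum conservation enters — since $\cE_m$ is momentum preserving, Lemma \ref{lem:mom_pres} gives that $\widehat{\cE_m}(0)$ is diagonal with respect to the Fourier basis $\{e^{\im j\cdot x}\}$; hence $\cZ_{m+1} := \cZ_m + \widehat{\cE_m}(0)$ is diagonal, with $\widehat{\cE_m}(0)_j^j$ bounded by $\langle j\rangle^{-m}|\cE_m|_{-m,s_0}^{\Lip(\gamma)}$ via Lemma \ref{proprieta standard norma decay}-$(v)$, which keeps $|\cZ_{m+1}|_{-1,s}^{\Lip(\gamma)} \lesssim \lambda^\theta$ and $\sup_j |j||z_{m+1}(j)|^{\Lip(\gamma)} \lesssim \lambda^\theta$. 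The new remainder $\cE_{m+1}$ collects the commutator $[\cX_m,\beta\,\tL + \cZ_m]$, the higher Lie-series terms, and $\Pi^\perp$-type errors; its leading part $[\cX_m,\beta\,\tL]$ has order $-m-1$ and size $\simeq \lambda^\theta |\cX_m|_{-m} \simeq (\lambda^{\theta-1}\gamma^{-1})^m\lambda^\theta\|w\|$, confirming the inductive estimate, and momentum preservation persists under composition and commutators by Lemma \ref{lem:mom_prop}. After $M-1$ such steps, setting $\b\Phi_M := \Phi^{(1)}\circ\cdots\circ\Phi^{(M-1)}$ and $\cZ_M := \cZ_{M}$, $\cE_M := \cE_M \in \OpM_s^{-M}$ gives \eqref{forma cal LM finale} with all estimates. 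The reversibility/parity claim follows because, assuming $w$ is $\odd(\vf,x)$, the operator $\cL_1$ is reversible (Proposition \ref{prop coniugio cal L L1}); one checks via Lemma \ref{lemma real rev matrici} that the homological equation preserves the reversibility structure (dividing by $\im\,\lambda\,\omega\cdot\ell$ sends reversible to reversibility-preserving), so each $\Phi^{(m)}$ is reversibility preserving, $\cZ_m$ has purely imaginary diagonal, and $\cL_M$ is reversible.

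The main obstacle is the bookkeeping of orders: one must verify rigorously that at step $m$ \emph{every} term appearing in the Lie expansion other than the solved one is of order $\le -(m+1)$ — in particular that $[\cX_m, \beta\tL]$ drops an order rather than merely staying at order $-m$. This rests on the sublinear/$1$-smoothing nature of $\tL$ (order $-1$, i.e. $\tL \in \OpM^{-1}$, cf.\ \eqref{stima.mathttL}) combined with Lemma \ref{proprieta standard norma decay}-$(ii)$, which gives $\OpM^{-m}\cdot\OpM^{-1} \subseteq \OpM^{-(m+1)}$; the loss of derivatives $|m|$ in that composition lemma must be absorbed into the (finitely many, since $M$ is fixed) successive enlargements $\sigma_1 < \sigma_2 < \cdots < \sigma_M$ of the Sobolev index, which is why $\sigma_M$ depends on $M$. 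The second delicate point is the propagation of the momentum-preserving property through the exponential $\exp(\cX_m)$ and all Lie brackets; this is handled cleanly by Lemma \ref{lem:mom_prop}-$(i)$,$(iii)$ (composition and flows) once $\cX_m$ itself is shown momentum preserving, which it is because its Fourier coefficients are supported on the same lattice $\{\pi^\top(\ell)+j-j'=0\}$ as those of $\cE_m$ (multiplying by the scalar $(\im\lambda\omega\cdot\ell)^{-1}$ does not move the support). The $\Delta_{12}$-estimates are obtained by differentiating the whole construction with respect to $w$ and repeating the above bounds, using the $\Delta_{12}$ version of Proposition \ref{prop coniugio cal L L1} as the base case; I would only sketch these, as they are routine given the structure.
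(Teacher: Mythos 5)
Your proposal is correct and follows essentially the same route as the paper (the iterative Lemma \ref{lemma.large.to.pert}): same homological equation, same use of momentum preservation via Lemma \ref{lem:mom_pres} to diagonalize $\widehat{\cE_m}(0)$, same reliance on $\tL \in \OpM^{-1}$ so that $[\cX_m,\beta\tL]$ drops an order, and same parity argument for reversibility. Two small slips that do not affect validity: the chain $|\cX_m|_{-m,s}^{\Lip(\gamma)} \lesssim \lambda^{-1}\gamma^{-1}|\cE_m|_{-m,s+\mu}^{\Lip(\gamma)}$ yields $(\lambda^{\theta-1}\gamma^{-1})^m\|w\|_{s+\sigma}^{\Lip(\gamma)}$ without the extra factor $\lambda$ you wrote, and the term with size $\simeq \lambda^\theta |\cX_m|_{-m}$ is $[\cX_m,\cZ_m]$ rather than $[\cX_m,\beta\tL]$ (the latter is of size $O(1)\cdot|\cX_m|$, which is smaller, though both satisfy the target bound).
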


Proposition \ref{prop normal form lower orders} is proved as a consequence of the following iterative procedure.

\begin{lem}\label{lemma.large.to.pert}
	Let $M\in\ \N$, $M > \frac{1-\tc}{2(1-\tc) - \alpha} $. There exist $\sigma_1 < \sigma_2 < \ldots < \sigma_M$ with $\sigma_{i}:=\sigma_{i}(\tau,\nu)>0$ large enough such that, for any $S>s_0 +\sigma_{M}$, there exists $\delta \equiv \delta(S, M) \ll 1$ small enough such that, if $\lambda^{\theta-1} \gamma^{- 1} \leq \delta$ and if \eqref{ansatz} is fullfilled with $\sigma_{0} = \sigma_M$, then the following holds. For any $m = 1, \ldots, M$, there exists a real operator ${\mathcal L}_m$ of the form
	\begin{equation}\label{op mathcal Lm}
		{\mathcal L}_m = \lambda \, \omega \cdot \partial_\vphi + \beta \,\tL + {\mathcal Z}_m + {\mathcal E}_m\,,
	\end{equation}
	where,  for any $s_0 \leq s \leq S - \sigma_m$, 
${\mathcal Z}_m \in \OpM^{- 1}_s$ is the diagonal operator ${\mathcal Z}_m = {\rm diag}_{j \neq 0} z_m(j)$ , ${\mathcal E}_m \in \OpM^{- m}_s$ and they satisfy the estimates
	\begin{equation}\label{stime induttive cal Em cal Zm}
	\begin{aligned}
		& |{\mathcal Z}_m|_{- 1, s}^{\Lip(\gamma)} \lesssim_m \lambda^{\theta} \,,  \quad  \sup_{j \neq 0} |j| |z_m(j)|^{\Lip(\gamma)} \lesssim_m \lambda^{\theta}\,,\\
		&  |{\mathcal E}_m|_{- m, s}^{\Lip(\gamma)} \lesssim_{s, m} \big( \lambda^{\theta-1} \gamma^{- 1} \big)^{m - 1} \lambda^{\theta} \| {w} \|_{s + \sigma_m}^{\Lip(\gamma)} \,.
	\end{aligned}
\end{equation}
	There exist real operators $\{ \Phi_{m} \}_{m=1}^{M - 1}$, with $\Phi_{m}:= {\rm exp} (\cX_{m})\in \OpM_{s}^{0}$, where
	 $$\cX_{m} := \cX_{m}(\vf) = \sum_{\ell\in\Z^{\nu}\setminus\{0\}} \wh\cX_{m}(\ell)e^{\im\,\ell\cdot\vf} \in \OpM_{s}^{-m}\,,$$ 
	satisfying the estimates 
	\begin{equation}\label{stime cal Xm Phim nel lemma}
		\begin{aligned}
			& |\cX_m|_{- m, s}^{\Lip(\gamma)} \lesssim_{s, m} \big(\lambda^{\theta-1} \gamma^{- 1} \big)^m	\| {w} \|_{s + \sigma_m}^{\Lip(\gamma)}, \quad \forall s_0 \leq s \leq S - \sigma_m\,, \\
			& |\Phi_m^{\pm 1}|_{0, s}^{\Lip(\gamma)} \lesssim_{s, m} 1 + \| {w} \|_{s + \sigma_{m}}^{\Lip(\gamma)}, \quad \forall s_0 \leq s \leq S - \sigma_m\,. 
		\end{aligned}
	\end{equation}
	Moreover, for any $m = 2, \ldots, M - 1$ and any $\omega\in\tD\tC(2\gamma,\tau)$, we have that 
	\begin{equation}\label{operator.pertubative}
		\cL_{m} := \Phi_{m-1}^{-1} \cL_{m-1} \Phi_{m-1} \,.
	\end{equation}
	Let $s_1 \geq s_0$ and assume that $w_1,w_2$ satisfy \eqref{ansatz}  with $ \sigma_{0}  \geq s_1 + \sigma_{m} $. 
	Then, for any $\omega \in \tD\tC(2\gamma,\tau)$, we have
	\begin{equation}\label{stime induttive delta12 cal Em cal Phim}
		\begin{aligned}
			& |\Delta_{12} \Phi_{m}^{\pm 1} |_{-m ,s_1} \lesssim_{s_1, m} \|  w_1 -  w_2 \|_{s_1 + \sigma_{m}} \,,\\
			& | \Delta_{12} \cE_{m}|_{-m,s_1}\lesssim_{s_1, m} \big( \lambda^{\theta-1} \gamma^{- 1} \big)^{m - 1} \lambda^{\theta} \|  w_1 -  w_2 \|_{s_1 + \sigma_{m}}\,.
		\end{aligned}
	\end{equation}
	Furthermore, the operators $\b\Phi_{m}^{\pm 1}$ and $\cL_{m}$ are real and momentum preserving. In addition, if we assume that $ w(\vf,x)=\odd(\vf,x)$, then the operators $\b\Phi_{m}^{\pm 1}$ are reversibility preserving and $\cL_{m}$ is reversible.
\end{lem}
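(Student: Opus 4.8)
The plan is to prove Lemma \ref{lemma.large.to.pert} by induction on $m = 1, \ldots, M$, the inductive step being a single normal-form conjugation $\cL_m \mapsto \cL_{m+1} = \Phi_m^{-1}\cL_m\Phi_m$ with $\Phi_m := \exp(\cX_m)$. For the base case $m=1$ I would take $\cL_1$ exactly as produced by Proposition \ref{prop coniugio cal L L1}, set $\cZ_1 := 0$ and keep $\cE_1 \in \OpM_s^{-1}$ from \eqref{cL1}: then the form \eqref{op mathcal Lm} and the estimates \eqref{stime induttive cal Em cal Zm} at $m=1$ are immediate from \eqref{stima mathcal E1} (the power of $\lambda^{\theta-1}\gamma^{-1}$ being $0$), and the reality, momentum preservation, and --- under $w = \odd(\vf,x)$ --- reversibility of $\cL_1$ are part of that proposition.

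For the inductive step I would look for $\cX_m(\vf) \in \OpM_s^{-m}$ with zero $\vf$-average solving the homological equation $\lambda\,\omega\cdot\partial_\vf\,\cX_m(\vf) + \cE_m(\vf) = \widehat{\cE_m}(0)$, which at the Fourier level reads $\widehat{\cX_m}(\ell) = -(\im\,\lambda\,\omega\cdot\ell)^{-1}\widehat{\cE_m}(\ell)$ for $\ell \neq 0$. On $\tD\tC(2\gamma,\tau)$ one has $|\omega\cdot\ell| \geq \gamma\langle\ell\rangle^{-\tau}$, so this division costs a factor $\lambda^{-1}\gamma^{-1}$ and a loss of order $\tau$ in $\vf$; together with the inductive bound on $\cE_m$ this yields $|\cX_m|_{-m,s}^{\Lip(\gamma)} \lesssim_{s,m}(\lambda^{\theta-1}\gamma^{-1})^m\|w\|_{s+\sigma_m}^{\Lip(\gamma)}$ on a slightly reduced range of $s$, i.e. the first bound in \eqref{stime cal Xm Phim nel lemma}. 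Two structural facts enter here: since $\cE_m$ is momentum preserving so is $\cX_m$ (the Fourier-support constraint $\pi^\top(\ell)+j-j'=0$ is untouched by dividing through by $\im\,\lambda\,\omega\cdot\ell$), and by Lemma \ref{lem:mom_pres} the average $\widehat{\cE_m}(0)$ is automatically a time-independent \emph{diagonal} operator, with $\sup_{j\neq 0}|j|^m|\widehat{\cE_m}(0)_j^j|^{\Lip(\gamma)} \lesssim_m |\cE_m|_{-m,s_0}^{\Lip(\gamma)}$ by Lemma \ref{proprieta standard norma decay}-$(v)$ --- so no homological equation in the space variable is needed, which is exactly where momentum conservation rescues the scheme from the infinite-dimensional degeneracy. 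When $w = \odd(\vf,x)$, the reversibility of $\cE_m$ translates (Lemma \ref{lemma real rev matrici}) into $\cX_m$ being reversibility preserving and, together with reality, into purely imaginary diagonal entries for $\widehat{\cE_m}(0)$. Using $\lambda^{\theta-1}\gamma^{-1} \leq \delta$ and the ansatz \eqref{ansatz} we get $|\cX_m|_{-m,s_0}^{\Lip(\gamma)} \leq \delta(s,m)$, so Lemma \ref{proprieta standard norma decay}-$(iv)$ gives that $\Phi_m \in \OpM_s^0$ is invertible with the second bound in \eqref{stime cal Xm Phim nel lemma}, real, momentum preserving, and reversibility preserving under the parity hypothesis.

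Then I would set $\cL_{m+1} := \Phi_m^{-1}\cL_m\Phi_m$ and expand by the Lie series. Writing $\cL_m = \lambda\,\omega\cdot\partial_\vf + (\beta\,\tL + \cZ_m) + \cE_m$ and using the homological equation to substitute $[\lambda\,\omega\cdot\partial_\vf,\cX_m] = \widehat{\cE_m}(0) - \cE_m$, the only new contribution of order $-m$ is $\widehat{\cE_m}(0)$, so
\[
\cL_{m+1} = \lambda\,\omega\cdot\partial_\vf + \beta\,\tL + \cZ_{m+1} + \cE_{m+1}, \qquad \cZ_{m+1} := \cZ_m + \widehat{\cE_m}(0),
\]
which is still diagonal and, since $\widehat{\cE_m}(0) \in \OpM_s^{-m} \subseteq \OpM_s^{-1}$ with norm $\lesssim_m \lambda^\theta$, of order $-1$ and $\lambda$-size $O(\lambda^\theta)$ (the bound $\sup_{j\neq 0}|j||z_m(j)|^{\Lip(\gamma)}\lesssim_m\lambda^\theta$ following by summing the diagonal contributions). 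All remaining terms --- the commutators $[\beta\,\tL + \cZ_m,\cX_m]$, $[\cE_m,\cX_m]$, and the multiple commutators of the Lie tail --- go into $\cE_{m+1}$. The crucial order count is that $\tL$ is a Fourier multiplier of order $-1$ (recall $|\tL|_{-1,s}\leq 1$, \eqref{stima.mathttL}), so that $[\beta\,\tL,\cX_m] \in \OpM_s^{-(m+1)}$ \emph{gains one smoothing degree} over $\cX_m$; likewise $[\cZ_m,\cX_m] \in \OpM_s^{-(m+1)}$, while every term involving two copies of $\cX_m$ lies in $\OpM_s^{-2m} \subseteq \OpM_s^{-(m+1)}$ because $m \geq 1$. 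Bounding these by Lemma \ref{proprieta standard norma decay}-$(ii)$,$(iii)$ (the Lie tail converging since $|\cX_m|_{-m,s_0}^{\Lip(\gamma)}$ is small), using $|\cZ_m|_{-1}^{\Lip(\gamma)} \lesssim_m \lambda^\theta$, $\|w\|_{s_0+\sigma_m}^{\Lip(\gamma)} \leq C_0$, and absorbing the fixed constant $\beta$, one gets $|\cE_{m+1}|_{-(m+1),s}^{\Lip(\gamma)} \lesssim_{s,m}(\lambda^{\theta-1}\gamma^{-1})^m\lambda^\theta\|w\|_{s+\sigma_{m+1}}^{\Lip(\gamma)}$, which is \eqref{stime induttive cal Em cal Zm} at $m+1$; each iteration costs a fixed number $C(\tau,\nu)$ of derivatives, which fixes $\sigma_{m+1} := \sigma_m + C(\tau,\nu)$ and hence the requirement $S > s_0 + \sigma_M$. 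Momentum preservation of $\cL_{m+1}$ follows from Lemma \ref{lem:mom_prop} since $\Phi_m$ and $\cL_m$ are momentum preserving, reality and reversibility (under the parity hypothesis) propagate identically, and the $\Delta_{12}$ bounds \eqref{stime induttive delta12 cal Em cal Phim} are obtained by differentiating the whole construction in $w$ --- the homological equation, the exponential, the Lie series --- and invoking the product estimates together with the $\Delta_{12}$ input \eqref{stime.delta12.fine.trasporto}; this part is routine and I would only sketch it.

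I expect the main obstacle to be the simultaneous order-and-size bookkeeping of the inductive step: one must verify at once that $\cE_{m+1}$ smooths by one extra order --- which is precisely where the sublinearity of the dispersion relation is used, since it makes the a priori dangerous commutator $[\beta\,\tL,\cX_m]$ one order \emph{better}, not comparable --- and that its size genuinely contracts by the factor $\lambda^{\theta-1}\gamma^{-1} = \lambda^{\alpha-2+2\tc} \ll 1$, which is what forces $\alpha < 2$ and the choice of $\tc$ through \eqref{piccolo.ansatx}, all while keeping the accumulated diagonal normal form $\cZ_m$ pinned at order $-1$ and $\lambda$-size $O(\lambda^\theta)$ for every $m \leq M$. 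The input that keeps $\cZ_m$ a genuine diagonal operator, rather than just a $\vf$-independent one, is again the conservation of momentum via the diagonality of $\widehat{\cE_m}(0)$; iterating $M > \frac{1-\tc}{2(1-\tc)-\alpha}$ times then makes the final remainder $\cE_M$ perturbatively small as $\lambda \to \infty$, which is what the ensuing KAM reducibility needs.
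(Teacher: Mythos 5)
Your proposal is correct and follows essentially the same route as the paper: an inductive step consisting of a single conjugation $\Phi_m = \exp(\cX_m)$ with $\cX_m$ defined by solving the same $\vf$-homological equation, the observation that momentum preservation forces $\widehat{\cE_m}(0)$ to be diagonal (Lemma \ref{lem:mom_pres}) so that no space homological equation is needed, and the same order bookkeeping for $[\beta\tL,\cX_m]$, $[\cZ_m,\cX_m]$, $[\cE_m,\cX_m]$ producing $\cE_{m+1}\in\OpM_s^{-(m+1)}$ with the contracted size $(\lambda^{\theta-1}\gamma^{-1})^m\lambda^\theta$. The paper's proof fills in the same Lie-expansion formulas and decay-norm estimates you sketch, with nothing structurally different.
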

\begin{proof}
	We proceed by induction.
	The operator $\cL_{1}$ in \eqref{cL1} is of the form \eqref{op mathcal Lm} with $\cZ_{1}=0$.
	By \eqref{stima mathcal E1} and \eqref{stime.delta12.fine.trasporto} we have that $\cE_{1}$ satisfies \eqref{stime induttive cal Em cal Zm} and \eqref{stime induttive delta12 cal Em cal Phim}.
	Finally, by Proposition \ref{prop coniugio cal L L1} we have that $\cL_{1}$ is reversible and momentum preserving.

	We now assume that the claimed statements hold for $m = 1, \ldots, M - 1$ and we prove them for $m + 1$. 
	We look for a trasformation of the form 
	\begin{equation}
		\Phi_{m}:= {\rm exp}(\cX_m) \in \OpM_{s}^{0}\,, \quad \cX_{m} := \cX_{m}(\vf) = \sum_{\ell\in\Z^\nu\setminus\{0\}} \wh\cX_{m}(\ell) e^{\im\,\ell\cdot\vf} \in \OpM_{s}^{-m}\,,
	\end{equation} 
	where $\wh\cX_{m}$ has to be determined. 
	To compute ${\mathcal L}_{m + 1} = \Phi_m^{- 1} {\mathcal L}_m \Phi_m$, we look separately at the conjugation of the three terms appearing in \eqref{op mathcal Lm}. By the standard Lie expansion, we get
	\begin{equation}\label{coniugio a pezzi cal L1 Phi1 m}
		\begin{aligned}
			\Phi_m^{- 1} \lambda\, \omega \cdot \partial_\vphi \Phi_m & = \lambda\, \omega \cdot \partial_\vphi + \lambda \, \omega \cdot \partial_\vphi \cX_m(\vphi) + {\mathcal Q}_m^{(1)}(\vphi) \,, \\
			{\mathcal Q}_m^{(1)}(\vphi) & = \int_0^1 (1 - \tau ) {\rm exp}(- \tau \cX_m) [\lambda\, \omega \cdot \partial_\vphi \cX_m(\vphi), \cX_m(\vphi)]  {\rm exp}( \tau \cX_m) \wrt \tau\,, \\
			\Phi_m^{- 1} ( \beta \tL) \Phi_m & =  \beta \tL + {\mathcal Q}_m^{(2)}\,, \\
			{\mathcal Q}_m^{(2)}(\vphi) & :=  \int_0^1 {\rm exp}(- \tau \cX_m(\vphi)) [\beta \tL, \cX_m(\vphi)] {\rm exp}(\tau \cX_m(\vphi)) \wrt \tau\,, \\
			\Phi_m^{- 1} {\mathcal Z}_m \Phi_m & = {\mathcal Z}_m + {\mathcal Q}_m^{(3)}\,, \\
			{\mathcal Q}_m^{(3)}(\vphi) & := \int_0^1 {\rm exp}(- \tau \cX_m(\vphi)) [{\mathcal Z}_m, \cX_m(\vphi)] {\rm exp}(\tau \cX_m(\vphi)) \wrt \tau \,, \\
			\Phi_m^{- 1} {\mathcal E}_m \Phi_m & = {\mathcal E}_m  + {\mathcal Q}_m^{(4)}\,, \\
			{\mathcal Q}_m^{(4)}(\vphi) & :=  \int_0^1 {\rm exp}(- \tau \cX_m(\vphi)) [{\mathcal E}_m(\vphi), \cX_m(\vphi)] {\rm exp}(\tau \cX_m(\vphi)) \wrt \tau\,.\\
		\end{aligned}
	\end{equation}
	The terms of order $- m$ in the expansion of ${\mathcal L}_{m + 1}= \Phi_m^{- 1} {\mathcal L}_m \Phi_m$ that we want to reduce are then given by $\lambda\, \omega \cdot \partial_\vphi \cX_m(\vphi) + {\mathcal E}_m(\vphi)$. Hence, we solve the homological equation 
	\begin{equation}\label{equazione omologica step - 1 m}
		\lambda \, \omega \cdot \partial_\vphi \cX_m(\vphi) + {\mathcal E}_m(\vphi) = \widehat{\mathcal E}_m(0)\,, \quad \widehat{\mathcal E}_m(0) := \frac{1}{(2 \pi)^\nu} \int_{\T^\nu} {\mathcal E}_m(\vphi) \wrt \vphi \,,
	\end{equation}
	by defining, for any $\omega\in \tD\tC(2\gamma,\tau)$
	\begin{equation}\label{def cal X 1 m}
		{\mathcal X}_m(\vphi) = - \sum_{\ell \neq 0} \frac{\widehat{\mathcal E}_m(\ell)}{\im \lambda\, \omega \cdot \ell} e^{\im \ell \cdot \vphi}\,.
	\end{equation}
	Hence the matrix elements of ${\mathcal X}_m$ are given by 
	$$
	\widehat{\mathcal X}_m(\ell)_j^{j'} = \begin{cases}
	- \dfrac{\widehat{\mathcal E}_m(\ell)_j^{j'}}{\im \lambda \,\omega \cdot \ell}\,,  &\ell \neq 0, \quad j, j' \in \Z^2 \setminus \{ 0 \}, \quad \pi^\top(\ell) + j - j' = 0\,, \\
	0  &\text{otherwise.}
	\end{cases}
	$$
	Then for $\omega \in \tD\tC(2\gamma,\tau)$, one has that 
	$$
	|\widehat{\mathcal X}_m(\ell)_j^{j'}| \lesssim \gamma^{- 1} \lambda^{- 1} \langle \ell \rangle^\tau |\widehat{\mathcal E}_m(\ell)_j^{j'}|
	$$
	and if $\omega_1, \omega_2 \in \tD\tC(2\gamma,\tau)$, one has 
	$$
	\begin{aligned}
	|\widehat{\mathcal X}_m(\ell; \omega_1)_j^{j'} - \widehat{\mathcal X}_m(\ell; \omega_2)_j^{j'}| & \lesssim  \langle \ell \rangle^{2 \tau + 1} \gamma^{- 2} \lambda^{- 1} |\widehat{\mathcal E}_m(\ell; \omega_2)_j^{j'}| |\omega_1 - \omega_2|  \\
	& \quad + \langle \ell \rangle^\tau \gamma^{- 1} \lambda^{- 1} |\widehat{\mathcal E}_m(\ell; \omega_1)_j^{j'} - \widehat{\mathcal E}_m(\ell; \omega_2)_j^{j'}|\,.
	\end{aligned}
	$$
	By taking $S > s_0 + \sigma_{m} + 2 \tau + 1$, the latter two estimates, together with the Definition \ref{block norm} imply that $ {\mathcal X}_m \in \OpM_s^{- m}$ for any $s_0 \leq s \leq S - \sigma_{m} - 2 \tau - 1$ and 
	\begin{equation}\label{prop + stime cal X1 m}
		\begin{aligned}
			& |{\mathcal X}_m|_{- m,  s}^{\Lip(\gamma)} \lesssim_s \gamma^{- 1} \lambda^{- 1} |{\mathcal E}_m|_{- m, s + 2 \tau + 1}^{\Lip(\gamma)} \\
			& \qquad \qquad  \stackrel{\eqref{stime induttive cal Em cal Zm}}{\lesssim_{s, m}} \lambda^{- 1} \gamma^{- 1} \big( \lambda^{\theta-1} \gamma^{- 1} \big)^{m - 1} \lambda^{\theta} \| {w} \|_{s + \sigma_m + 2 \tau + 1}^{\Lip(\gamma)} \\
			& \qquad \qquad \lesssim_{s, m}   \big( \lambda^{\theta-1} \gamma^{- 1} \big)^{m } \| {w} \|_{s + \sigma_m + 2 \tau + 1}^{\Lip(\gamma)}\,.
		\end{aligned}
	\end{equation}
	Together with Lemma \ref{proprieta standard norma decay}-$(iv)$ and using that $\lambda^{\theta-1} \gamma^{- 1} \ll 1$, we have that \eqref{prop + stime cal X1 m} implies, for any $s_0 \leq s \leq S - \sigma_m - 2 \tau - 1$,
	\begin{equation}\label{Prop Phi 1 Phi 1 inv m}
		\sup_{\tau \in [- 1, 1]}|{\rm exp}(\tau \cX_m)|_{0, s}^{\Lip(\gamma)}  \lesssim_{s, m} 1 + \| {w} \|_{s + \sigma_m + 2 \tau + 1}^{\Lip(\gamma)} \,. 
	\end{equation}
	By \eqref{coniugio a pezzi cal L1 Phi1 m}, we obtain that
	\begin{equation}\label{forma finale cal L2 m}
		\begin{aligned}
			{\mathcal L}_{m + 1} & = \Phi_m^{- 1} {\mathcal L}_m \Phi_m = \lambda\,\omega \cdot \partial_\vphi + \beta\, \tL + {\mathcal Z}_{m + 1}  + {\mathcal E}_{m + 1} \,, \\
			{\mathcal Z}_{m + 1} & := {\mathcal Z}_m + \widehat{\mathcal E}_m(0)\,, \\
			{\mathcal E}_{m + 1}  & := {\mathcal Q}_m^{(1)} +  {\mathcal Q}_m^{(2)} +   {\mathcal Q}_m^{(3)} +  {\mathcal Q}_m^{(4)}\,.  
		\end{aligned}
	\end{equation}
	{\sc Properties and estimates of ${\mathcal Z}_{m + 1}$.} 
	Since ${\mathcal E}_m$ is momentum preserving, by Lemma \ref{lem:mom_pres} we have that the time-independent operator $\widehat{\mathcal E}(0)$ is diagonal and hence also
	the operator ${\mathcal Z}_{m+1} = {\rm diag}_{j \neq 0} z_{m+1}(j)$ is a diagonal operator with $z_{m + 1}(j) :=  z_m(j) + \widehat{\mathcal E}_m(0)_j^{j}$, $j \in \Z^2 \setminus \{ 0 \}$. Furthermore, by Lemma \ref{proprieta standard norma decay}-$(v)$ and by the induction estimates \eqref{stime induttive cal Em cal Zm} (using also the ansatz \eqref{ansatz}), we get that  the operator ${\mathcal Z}_{m + 1} \in \OpM^{- 1}_s$ for any $s \geq s_0$ and it satisfies, using also $\lambda^{\theta-1} \gamma^{- 1} \ll 1$ by \eqref{piccolo.ansatx},
	\begin{equation}\label{stima cal Z1 m}
		\begin{aligned}
			|{\mathcal Z}_{m + 1}|_{- 1, s}^{\Lip(\gamma)} &\lesssim |{\mathcal Z}_m|_{- 1, s}^{\Lip(\gamma)} + |\widehat{\mathcal E}_m(0)|_{- m, s} \lesssim |{\mathcal Z}_m|_{- 1, s}^{\Lip(\gamma)}  +  |{\mathcal E}_m|_{- m, s_0}^{\Lip(\gamma)} \\
			&  \lesssim_m \lambda^{\theta} + \big( \lambda^{\theta-1} \gamma^{- 1}\big)^{m - 1} \lambda^{\theta} \lesssim_m \lambda^{\theta}\,, 
		\end{aligned}
	\end{equation}
	which implies that
	\[
	\sup_{j \neq 0} |j| |z_{m + 1}(j)|^{\Lip(\gamma)} \lesssim_m \lambda^{\theta}\,. 
	\]
	
	\medskip
	
	\noindent
	{\sc Properties and estimates of ${\mathcal E}_{m + 1}$.} We now estimate the remainder ${\mathcal E}_{m + 1}$ in \eqref{forma finale cal L2 m}. We have to analyze the four terms ${\mathcal Q}_m^{(1)}, {\mathcal Q}_m^{(2)}, {\mathcal Q}_m^{(3)}, {\mathcal Q}_m^{(4)}$ in \eqref{coniugio a pezzi cal L1 Phi1 m}. First, by \eqref{equazione omologica step - 1 m}, we note that
	\begin{equation}\label{seconda forma cal Q 1 (1) m}
		{\mathcal Q}_m^{(1)} = \int_0^1 (1 - \tau ) {\rm exp}(- \tau \cX_m) [{\mathcal Z}_{m + 1} - {\mathcal E}_m, \cX_m]  {\rm exp}( \tau \cX_m) \wrt \tau\,.
	\end{equation}
	By the estimates \eqref{stime induttive cal Em cal Zm}, \eqref{prop + stime cal X1 m}, \eqref{stima cal Z1 m}, using also \eqref{stima.mathttL}  Lemma \ref{proprieta standard norma decay}-$(ii)$  the ansatz \eqref{ansatz}, we get that, taking  $S > s_0 + \sigma_m + m + 2 \tau + 1$, the operators $ [{\mathcal Z}_{m + 1} , \cX_m] \,,[\tL, \cX_m] \,,\, \in \OpM_s^{- m - 1},  [{\mathcal E}_m , \cX_m] \in \OpM_s^{- 2 m} {\subseteq} \OpM^{- m - 1}_s$ for any $ s_0 \leq s \leq S - \sigma_m  - m- 2 \tau - 1$ and they satisfy the following estimates, recalling also that $\lambda^{\theta-1} \gamma^{- 1} \ll 1$ by \eqref{piccolo.ansatx}:
	\begin{equation}\label{stime cal Q 1 1 3 m}
		\begin{aligned}
			| [{\mathcal Z}_{m + 1} , \cX_m]|_{- m - 1, s}^{\Lip(\gamma)} 
			&\lesssim_{s, m}  \lambda^{\theta} \big( \lambda^{\theta-1} \gamma^{- 1} \big)^m  \| {w} \|_{s + \sigma_m + m + 2 \tau + 1}^{\Lip(\gamma)}\,, \\
			|[{\mathcal E}_m , \cX_m]|_{- m - 1 , s}^{\Lip(\gamma)} 
			&\leq |[{\mathcal E}_m , \cX_m]|_{- 2m , s}^{\Lip(\gamma)}  \\
			&\lesssim_{s, m} \lambda^{\theta} \big( \lambda^{\theta-1} \gamma^{- 1}\big)^{m - 1} \big( \lambda^{\theta-1} \gamma^{- 1}\big)^{m} \| {w} \|_{s + \sigma_m + m + 2 \tau + 1}^{\Lip(\gamma)} \\
			& \lesssim_{s, m} \lambda^{\theta}  \big( \lambda^{\theta-1} \gamma^{- 1}\big)^{m} \| {w} \|_{s + \sigma_m + m + 2 \tau + 1}^{\Lip(\gamma)}\,, \\
			|[\tL, \cX_m]|_{- m - 1, s}^{\Lip(\gamma)}
			& \lesssim_{s, m} \big( \lambda^{\theta-1} \gamma^{- 1}\big)^{m} \| {w} \|_{s + \sigma_m + 2 \tau + 1}^{\Lip(\gamma)} \\
			& \lesssim_{s, m} \lambda^{\theta} \big( \lambda^{\theta-1} \gamma^{- 1}\big)^{m} \| {w} \|_{s + \sigma_m + 2 \tau + 1}^{\Lip(\gamma)}\,. 
		\end{aligned}
	\end{equation}
	Together with \eqref{Prop Phi 1 Phi 1 inv m}, and Lemma \ref{proprieta standard norma decay}-$(ii)$, the latter estimates imply that, for $\sigma_{m + 1} \geq \sigma_m + m + 2 \tau + 2$, $\| {w} \|_{s_0 + \sigma_{m + 1}}^{\Lip(\gamma)} \lesssim 1$ and $S > s_0 + \sigma_{m + 1}$, the operator ${\mathcal E}_{m + 1} \in \OpM_s^{- m - 1}$ for any $s_0 \leq s \leq S - \sigma_{m + 1}$ and it satisfies
	\begin{equation}\label{stima mathcal E2 m}
		\begin{aligned}
			&|{\mathcal E}_{m + 1}|_{- m - 1, s}^{\Lip(\gamma)} \lesssim_{s, m} \lambda^{\theta} \big(  \lambda^{\theta-1} \gamma^{- 1} \big)^m \| {w} \|_{s + \sigma_{m + 1}}^{\Lip(\gamma)}\,.
		\end{aligned}
	\end{equation}
	The estimates in \eqref{stime induttive delta12 cal Em cal Phim} follow by similar arguments and we omit the details.
	By Lemma \ref{lem:mom_prop} and the inductive hypotheses on $\cL_{m}$ and $\Phi_{m}$, we obtain that $\Phi_{m+1}, {\mathcal L}_{m + 1}$ are  momentum preserving. If $ w = {\rm odd}(\vf, x)$ then by Lemma \ref{lemma real rev matrici}, $\Phi_m$ is reversibility preserving and ${\mathcal L}_{m + 1}, {\mathcal E}_{m + 1}, {\mathcal Z}_{m + 1}$ are reversible operators. 
	The proof of the claimed statements at the step $m + 1$ is then concluded. 
\end{proof}
\begin{proof}[Proof of Proposition \ref{prop normal form lower orders}.]  Let ${\bf\Phi}_{1}:={\rm Id}$ and ${\bf \Phi}_M := \Phi_{1} \circ \Phi_{2} \circ \ldots \circ \Phi_{M - 1}$ for $M\geq 2$. By Lemma \ref{lemma.large.to.pert},  Lemma \ref{proprieta standard norma decay}-$(ii)$ and the estimates \eqref{stime cal Xm Phim nel lemma}, \eqref{stime induttive delta12 cal Em cal Phim}, we obtain that $\b\Phi_{M}$ satisfies the estimates \eqref{stima bf Phi M}, \eqref{stime.delta12 induttive cal EM cal ZM finale}. Moreover, if $\omega \in {\mathtt D}{\mathtt C}(2\gamma, \tau)$, then the conjugation \eqref{forma cal LM finale} holds with ${\mathcal Z}_M$ satisfying the estimate in \eqref{stime induttive cal EM cal ZM finale}, using also Lemma \ref{proprieta standard norma decay}-$(v)$, and ${\mathcal E}_M$ satisfying the estimate , for any $s_0 \leq s \leq S - \sigma_M$,
	$$
	|{\mathcal E}_M|_{- M, s}^{\Lip(\gamma)} \lesssim_{s, M}  \big( \lambda^{\theta-1} \gamma^{- 1} \big)^{M - 1} \lambda^{\theta} \| {w} \|_{s + \sigma_M}^{\Lip(\gamma)} \,.
	$$
	Then, the desired bound on ${\mathcal E}_M$  in \eqref{stime induttive cal EM cal ZM finale}  follows since, using $M > \frac{1-\tc}{2(1-\tc) - \alpha} = \frac{1-\tc}{1-\tc -\theta} $, $\lambda \gg 1$ large enough, and recalling \eqref{theta.def.ridu}, \eqref{small.theta}, we have, if we assume $\gamma=\lambda^{-\tc}$,
	$$
	\big( \lambda^{\theta-1} \gamma^{- 1} \big)^{M - 1} \lambda^{\theta} = \lambda^{M\theta -(M-1)(1-\tc)}  < 1\,.
	$$
	Finally, by Lemma \ref{lem:mom_prop} and Proposition \ref{prop coniugio cal L L1}, we obtain that $\b\Phi_{M}$, ${\mathcal L}_M$ are momentum preserving. Moreover, if $ w = {\rm odd}(\vf, x)$, then by Lemma \ref{lemma real rev matrici}, $\b\Phi_{M}$ is reversibility preserving and ${\mathcal L}_M$ is reversible. 
	The proof of the Proposition is then concluded. 
\end{proof}

 \subsection{KAM perturbative reduction}\label{sez:iteraKAM}
 
 We are now in position to reduce the operator $\cL_{M}$ in \eqref{operator.pertubative} with a KAM reducibility iteration. To the purposes of the KAM reducibility, we define
 \begin{equation}\label{bL0_inizioKAM}
 	\bL_{0} : =\cL_{M} = \lambda\,\omega\cdot \pa_{\vf} + \bD_{0}  + \bE_{0}
 \end{equation}
 where:
 \\[1mm]
 \noindent $\bullet$ The diagonal operator $\bD_{0}$ is given by
 \begin{equation}
 	\begin{aligned}
 	 &	\bD_{0}:= \beta \,\tL + \bZ_{0} = \diag_{j\in\Z^2\setminus\{0\}} \mu_{0}(j)\,, \quad \mu_{0}(j) = \im\,\beta \tL(j) + \tz_{0}(j) \,, \\
 	& \bZ_{0} := \cZ_{M} := \diag_{j\in\Z^2\setminus\{0\}} \tz_{0}(j)\,, \quad \tz_{0} (j) := z_{M}(j) \,, \  \ j \in \Z^2\setminus\{0\}\,, 
 	\end{aligned}
  \end{equation}
  with $\tL(j)\in \R$ and $\tz_0 ( j )\in \C$ defined respectively in \eqref{lambda.m.def} and in Proposition \ref{prop normal form lower orders} (if we assume reversibility conditions, then $\tz_{0}(j)\in \im\R$);
  \\[1mm]
  \noindent $\bullet$ For any $s\in [s_0,S-\sigma_{M}]$, the operator $\bE_{0}:= \cE_{M} \in \OpM_{s}^{- M}$ satisfies the estimate (see \eqref{stime induttive cal EM cal ZM finale})
  \begin{equation}
  	| \bE_{0} |_{-M,s}^{\Lip(\gamma)} \lesssim_{s, M} \lambda^{M(\theta-1)+1} \gamma^{-(M-1)}  \| w \|_{s+\sigma_{M}} 
  \end{equation}
  and if $s_1 \geq s_0$, $ w_1, w_2$ satisfy \eqref{ansatz}  with $ \sigma_{0}  \geq s_1 + \sigma_{M} $, then (see \eqref{stime.delta12 induttive cal EM cal ZM finale})
  \begin{equation}
  |\Delta_{12} {\bf E}_0|_{- M, s_1} \lesssim_{s_1, M} \lambda^{M(\theta-1)+1} \gamma^{-(M-1)}  \|   w_1 -  w_2  \|_{s_1 + \sigma_{M}}\,.
  \end{equation}
  Note that, with $M > \frac{1-\tc}{2(1-\tc)-\alpha}$ and assuming $\gamma=\lambda^{-\tc}$, recalling also \eqref{theta.def.ridu}, \eqref{small.theta}, the operator $\bE_{0}$ has small size with respect to $\lambda \gg 1$ large enough.   
   
 Given $\tau, \tN_0 > 0$, we fix the constants 
 \begin{equation}\label{definizione.param.KAM} 
 	\begin{aligned}
 		& 
 		M :=  \max\{2 \tau, \tfrac{1-\tc}{2(1-\tc)-\alpha} \}\ + 1,  \quad 
 		\ta := {\rm max}\{3(2 \tau\! + \!M \!+ 1) + 1\,,\, \chi(\tau + \tau^2 + 2) \} \,, \quad  \tb:=\! \ta + 1 \,, 
 		 \\
 		& \tau_{1}:= 4\tau +2 +M \,, \quad   \Sigma(\tb):= \sigma_{M} +\tb\,, \quad S > s_0 + \Sigma(\tb)\,, 
 		 \\
 		&   
 		\tN_{- 1} := 1 \,, \quad 
 		\tN_n := \tN_0^{\chi^n}\,, \quad 
 		n \geq 0\,, \quad 
 		\chi := 3/2  \,, 
 	\end{aligned}
 \end{equation}
 where 
 $M$, $\sigma_{M }$ are introduced in Proposition \ref{prop normal form lower orders}.
 
 \begin{rem}\label{remark.su.param.KAM}
 	Let us describe of the parameters introduced in \eqref{definizione.param.KAM} and their role in the following Proposition \ref{prop riducibilita}. The parameter $\ta>0$ appears in the negative exponent of the first estimate in \eqref{stime cal Rn rid} and measures how the low regularity norm of the remainder is fastly decreasing at each iteration. The parameter $\tb>0$ appears in the regularity index $s+\tb$ of the second estimate in \eqref{stime cal Rn rid}, which is the high regularity norm of the remainder that is allowed to diverge. The parameter $\tau_1>0$ appears as an exponent in the smallness condition \eqref{KAM smallness condition}. The parameter $\Sigma(\tb)>0$ accounts for the loss of derivatives of the KAM iteration. 
 \end{rem}
 
 By Proposition \ref{prop normal form lower orders}, replacing $s$ by $s + \tb$ in  \eqref{stime induttive cal EM cal ZM finale} and having $ \bZ_{0} = {\rm diag}_{j \in \Z^2 \setminus \{ 0 \}} \tz_{0}(j)$  diagonal, and by the ansatz \eqref{ansatz}, one gets the initialization conditions for the KAM reducibility, for any $s_0\leq s\leq S-\Sigma(\tb)$,
 \begin{equation}\label{stime pre rid}
 \begin{aligned}
 & 	\sup_{j \in \Z^2 \setminus \{ 0 \}} |j| |\tz_{0}(j)|^{{\rm Lip}(\gamma)} \lesssim  \lambda^{\theta}\,,  \\
 &    |\bE_{0}|_{- M , s + \tb}^{{\rm Lip}(\gamma)} \lesssim_{s}   \lambda^{M(\theta-1)+1} \gamma^{-(M-1)}  \|  w \|_{s + \Sigma(\tb)}^{\Lip(\gamma)}\,, \\
  &    |\bE_{0}|_{- M , s_0 + \tb}^{{\rm Lip}(\gamma)} \lesssim_{s} \lambda^{M(\theta-1)+1} \gamma^{-(M-1)} \,, \\
  & \text{and if} \quad  w_1, w_2 \quad \text{ satisfy \eqref{ansatz}  with} \quad  \sigma_{0}  = \Sigma(\tb)  \quad  \text{then} \\
  & |\Delta_{12} {\bf E}_0 |_{- M, s_0 + \mathtt b} \lesssim  \lambda^{M(\theta-1)+1} \gamma^{-(M-1)}  \|  w_1 -  w_2 \|_{s_0 + \Sigma(\mathtt b)}\,. 
	\end{aligned}
 \end{equation}
 By the definition of $M$ in \eqref{definizione.param.KAM}, one has that $M>\frac{1-\tc}{2(1-\tc)-\alpha}> \frac{1}{1-\theta}$. We work in the regime $\lambda\gg 1$ and recalling \eqref{theta.def.ridu}, \eqref{small.theta}, we define the small parameters (recalling \eqref{varelambda})
 \begin{equation}\label{vare.vare}
 	\varepsilon:= \lambda^{\theta-1}\,, \quad \varepsilon_{M} := \lambda^{M(\theta-1)+1} = \varepsilon^{M-1} \lambda^{\theta} \,.
 \end{equation}
 
 \begin{prop}[\bf Reducibility]\label{prop riducibilita}
 	Let $S > s_0 + \Sigma(\tb)$, with the notation of \eqref{definizione.param.KAM}. There exist $\tN_{0} := \tN_{0}(S, \tau,\nu) > 0$ large enough and $\delta := \delta(S, \tau,\nu) \in (0, 1)$ small enough such that, if \eqref{ansatz} holds with $\sigma = \Sigma(\tb)$ and
 	\begin{equation}\label{KAM smallness condition}
	\tN_{0}^{\tau_1}  \varepsilon^{M}\gamma^{-M} \leq \delta \,,
\end{equation} 
 	then the following statements hold for any integer $n \geq 0$.
 	
 	\smallskip
 	\noindent 
 	${\bf (S1)}_n$ There exists a real,  momentum preserving  operator 
 	\begin{equation}\label{def.calLn calDn calQn}
 		\begin{aligned}
 			& \bL_{n} := \lambda\,\omega \cdot \partial_\vphi + \bD_{n} + \bE_{n} : H^{s + 1}_0 \to H^s_0 \,, \\
 			& \bD_{n} := \beta\, \tL + \bZ_{n}=  {\rm diag}_{j \in \Z^2 \setminus \{ 0 \}} \mu_{n}(j) \,, \\ 
 			&\bZ_{n} =  {\rm diag}_{j \in \Z^2 \setminus \{ 0 \}} \tz_{n}(j)\,, \quad \mu_{n}(j) :=  \im\, \beta \tL(j)+ \tz_{n}(j)\,,
 		\end{aligned}
 	\end{equation}
 	defined for any $\omega \in \Omega_{n}^\gamma$, where we define $\Omega_{0}^\gamma := \tD\tC(2\gamma, \tau)$ for $n=0$ and, for $n \geq 1$, 
 	\begin{equation}\label{insiemi di cantor rid}
 		\begin{aligned}
 			\Omega_{n}^\gamma& := \Big\{ \omega \in \Omega_{n - 1}^\gamma \,: \, |\im \,\lambda\,\omega \cdot \ell + \mu_{n - 1}(j) - \mu_{n - 1}(j') | \geq \frac{\lambda\, \gamma }{\langle \ell \rangle^\tau  |j'|^\tau}, \\
 			& \qquad \ \forall \,\ell \in \Z^\nu \setminus \{ 0 \} \,, \ \ 
 			j,j' \in \Z^2 \setminus \{ 0 \}\,, \ \  \pi^\top( \ell) + j-j' =0,  \ \ |\ell|\leq \tN_{n-1} \Big\}\,. 
 		\end{aligned}
 	\end{equation}
	For any $j \in \Z^2 \setminus \{ 0 \}$, the eigenvalues $\mu_{n}(j)= \mu_{n}(j;\omega)$ 
 	satisfy the estimates
 	\begin{align}
 		& |\tz_{n}(j)|^{{\rm Lip}(\gamma)} \lesssim \lambda^{\theta} |j|^{-1}\,, \quad | \tz_{n}(j)- \tz_{0}(j) |^{{\rm Lip}(\gamma)} \lesssim 
 		\varepsilon_{M} \gamma^{-(M-1)}
 		  |j|^{- M} \,,  \label{stime qn} \\
 		&| \tz_{n}(j) - \tz_{n - 1}(j)|^{{\rm Lip}(\gamma)}
 		\lesssim  \varepsilon_{M} \gamma^{-(M-1)} \, \tN_{n - 2}^{- \ta}\, |j|^{- M}  \quad \ \text{when } \quad n\geq 1 \,. \label{cal Nn - N n - 1}
 	\end{align}
 	The operator $\bE_{n}$ is real and momentum preserving, satisfying, for any $s_0\leq s\leq S-\Sigma(\tb)$,
 	\begin{equation}\label{stime cal Rn rid}
 		\begin{aligned}
 			& |\bE_{n} |_{- M, s}^{{\rm Lip}(\gamma)} 
 			\leq C_*(s)  \varepsilon_{M} \gamma^{-(M-1)}  \, \tN_{n - 1}^{- \ta} \|  w \|_{s + \Sigma(\tb)}^{\Lip(\gamma)} \,,  \\
 			& |\bE_{n}  |_{-M, s + \tb}^{{\rm Lip}(\gamma)}  \leq C_*(s) \varepsilon_{M} \gamma^{-(M-1)}   \, \tN_{n - 1} \|  w \|_{s + \Sigma(\tb)}^{\Lip(\gamma)} \,,
 		\end{aligned}
 	\end{equation}
 	for some constant $C_* (s) > 0$ large enough. 
 	\\
 	When $n \geq 1$, there exists an invertible, real, and momentum preserving map 
 	$\Phi_{n -1} = {\rm exp}(\Psi_{n - 1})$, such that, 
 	for any $\omega \in \Omega_{n}^\gamma$, 
 	\begin{equation}\label{coniugazione rid}
 		\bL_{n} = \Phi_{n - 1}^{- 1} \bL_{n-1} \Phi_{n - 1}\,.
 	\end{equation}
 	Moreover, for any $s_0\leq s\leq S-\Sigma(\tb)$, the map $\Psi_{n - 1} : H^s_0 \to H^s_0$ satisfies
 	\begin{equation}\label{stime Psi n rid}
 		\begin{aligned}
 			|\Psi_{n - 1}|_{0, s}^{{\rm Lip}(\gamma)}
 			& \lesssim_s \varepsilon^{M} \gamma^{- M} \tN_{n - 1}^{2 \tau + 1} \tN_{n - 2}^{- \ta} \|  w \|_{s + \Sigma(\tb)}^{\Lip(\gamma)}\,, \\
 			|\Psi_{n - 1} |_{0, s + \tb}^{{\rm Lip}(\gamma)} & \lesssim_s \varepsilon^{M} \gamma^{- M} \tN_{n - 1}^{2 \tau + 1}  \tN_{n - 2} \|  w \|_{s + \Sigma(\tb)}^{\Lip(\gamma)}\,.
 		\end{aligned}
 	\end{equation}
 	In addition, if we assume that $ w(\vf,x)=\odd(\vf,x)$, then the operators $\bL_{n}$, $\bE_{n}$ are reversible, the eigenvalues $\mu_{n}(j)= \mu_{n}(j;\omega)$,  for any $j \in \Z^2 \setminus \{ 0 \}$, are purely imaginary, satisfying
 	\begin{equation}\label{reversibility reality auto}
 		\begin{aligned}
 			& \mu_n(j) = - \mu_n(- j) = - \overline{\mu_n( j)} \,, \ \ \text{or equivalently}   \\
 			& \tz_{n}(j) = - \tz_{n}(- j)= - \overline{\tz_{n}( j)}\,, 
 		\end{aligned}
 	\end{equation}
 	and, for $n\geq 1$, the map $\Phi_{n - 1}$ is reversibility preserving.
 	
 	\noindent
 	${\bf (S2)}_n$ For all $ j \in \Z^2 \setminus \{ 0 \}$,  there exists a Lipschitz extension  of the eigenvalues $\mu_n(j;\,\cdot\,) :\Omega_{n}^\gamma \to  \C$ to the set $\tD\tC(2\gamma, \tau)$, denoted by
 	$ \widetilde \mu_n(j;\,\cdot\,): \tD\tC(2\gamma, \tau) \to \C$,
 	satisfying,  for $n \geq 1$, 
 	\begin{equation}\label{lambdaestesi}  
 		|\widetilde \mu_n(j) -  \widetilde \mu_{n - 1}(j) |^{{\rm Lip}(\gamma)}  \lesssim  |j|^{- M} |\bE_{n - 1}|_{-M,s_0}^{{\rm Lip}(\gamma)}  \lesssim  \varepsilon_{M} \gamma^{-(M-1)} \, \tN_{n - 2}^{- \ta}\, |j|^{- M}\,.
 	\end{equation}
	${\bf(S3)_{n}}$ Let $w_1(\omega)$, $ w_2(\omega) $ satisfying the ansatz \eqref{ansatz} be such that 
${\bf E}_0(w_1)$,  ${\bf E}_0(w_2 )$ satisfy \eqref{stime pre rid}. 
Then 
for all $\omega \in \Omega_n^{\gamma_1}(w_1) \cap \Omega_n^{\gamma_2}(w_2)$
with $\gamma_1, \gamma_2 \in [\gamma/2, 2 \gamma]$, the following estimates hold:
\begin{equation}\label{stime R nu i1 i2}
\begin{aligned}
& |\Delta_{12} {\bf E}_n |_{- M, s_0}
\leq C_*(s)   \tN_{n - 1}^{- \ta}
 \| w_1 - w_2\|_{s_0 +  \Sigma(\tb)} \,,
\\ 
& 
|\Delta_{12} {\bf E}_n |_{- M, s_0 + \tb}
\leq C_*(s)   \tN_{n - 1}^{}
 \| w_1 - w_2\|_{s_0 +  \Sigma(\tb)} \,.
 \end{aligned}
\end{equation}
Moreover for $n \geq 1$, for all $j \in \Z^2 \setminus \{ 0 \}$, 
\begin{equation}\label{r nu - 1 r nu i1 i2}
\begin{aligned}
 \big|\Delta_{12}(\mu_n(j) - \mu_{n - 1}(j)) \big|  &= \big|\Delta_{12}(\tz_n(j) - \tz_{n - 1}(j)) \big| \\
&  \lesssim   \varepsilon_{M} \gamma^{-(M-1)}   \, \tN_{n - 2}^{- \ta} \, |j|^{- M}  \| w_1 - w_2  \|_{ s_0  + \Sigma(\tb)}\,, \\
  |\Delta_{12} \mu_n(j)| = |\Delta_{12} \tz_{n}(j)| & \lesssim  \lambda^{\theta}  |j|^{- 1}  \| w_1 -  w_2  \|_{ s_0  + \Sigma(\tb)}\,. 
\end{aligned}
\end{equation}
\noindent ${\bf(S4)_{n}}$ Let $w_1$, $w_2$ be like in ${\bf(S3)_{n}}$ 
and $0 < \rho \leq \gamma/2$. 
Then:
\begin{equation}\label{inclusione cantor riducibilita S4 nu}
	K \varepsilon \rho^{- 1} \tN_{n - 1}^{\tau + \tau^2} \| w_1 - w_2 \|_{s_0 + \Sigma(\tb)} \leq 1 \quad \Longrightarrow \quad 
	\Omega_n^\gamma(w_1) \subseteq \Omega_n^{\gamma - \rho}(w_2) \, . 
\end{equation}

 \end{prop}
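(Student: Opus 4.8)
\smallskip
\noindent
I would prove the Proposition by induction on $n$, following the classical quadratic KAM reducibility scheme, the features specific to the present setting being the use of momentum conservation and of the $1$-smoothing (sublinear) symbol $\tL$ to overcome the higher-dimensional resonances. For $n = 0$ all of ${\bf (S1)}_0$--${\bf (S4)}_0$ hold by construction: $\bL_0 = {\mathcal L}_M$ is the operator in \eqref{bL0_inizioKAM}, the bound on $\tz_0$ and the estimates on $\bE_0$ are \eqref{stime pre rid} (which follow from Proposition \ref{prop normal form lower orders}, together with its momentum preserving and, when $w$ is odd, reversible structure), ${\bf (S4)}_0$ is vacuous because $\Omega_0^\gamma = \tD\tC(2\gamma, \tau)$ does not depend on $w$, and ${\bf (S2)}_0$, ${\bf (S3)}_0$ are immediate.

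\smallskip
\noindent
For the step $n \rightsquigarrow n+1$ the plan is to construct $\Phi_n = {\rm exp}(\Psi_n)$, with $\Psi_n = \Psi_n(\vphi)$ a momentum preserving, zero-$\vphi$-average operator solving the homological equation
\[
\lambda\, \omega \cdot \pa_\vphi \Psi_n(\vphi) + [\bD_n, \Psi_n(\vphi)] + \Pi_{\tN_n} \bE_n(\vphi) = \widehat{\bE}_n(0)\,,
\]
that is, $\widehat{\Psi}_n(\ell)_j^{j'} := - \widehat{\bE}_n(\ell)_j^{j'} \big( \im \lambda\, \omega \cdot \ell + \mu_n(j) - \mu_n(j') \big)^{-1}$ for $0 < |\ell| \leq \tN_n$, $\pi^\top(\ell) + j - j' = 0$, and $\widehat{\Psi}_n(\ell)_j^{j'} := 0$ otherwise. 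Here momentum conservation is used in an essential way: since the matrix elements of a momentum preserving operator are supported on $\pi^\top(\ell) + j - j' = 0$, the resonant pairs with $\ell = 0$, $j \neq j'$ --- where $\mu_n(j) - \mu_n(j')$ may vanish, because $\tL(j) - \tL(j') = 0$ on an infinite set --- never occur, so one never has to invert a purely spatial difference; moreover $\widehat{\bE}_n(0)$ is automatically diagonal (Lemma \ref{lem:mom_pres}), so the whole time-average can be moved into the normal form. On $\Omega_{n+1}^\gamma$ the second Melnikov conditions \eqref{insiemi di cantor rid} give $|\im \lambda\, \omega \cdot \ell + \mu_n(j) - \mu_n(j')| \geq \lambda \gamma \langle \ell \rangle^{-\tau} |j'|^{-\tau}$, hence $|\widehat{\Psi}_n(\ell)_j^{j'}| \lesssim \lambda^{-1} \gamma^{-1} \langle \ell \rangle^\tau |j'|^\tau |\widehat{\bE}_n(\ell)_j^{j'}|$; since $\bE_n \in \OpM^{-M}_s$ with $M$ large compared to $\tau$ (see \eqref{definizione.param.KAM}), the $|j'|^\tau$-loss is effective only on the finitely many $|j'| \lesssim \tN_n$ (for larger $|j'|$ the divisor is $\approx \im \lambda\, \omega \cdot \ell$, with no loss), and using the truncation $|\ell| \leq \tN_n$ one gets $\Psi_n \in \OpM^0_s$ --- in fact carrying the off-diagonal decay of $\bE_n$ up to a $\tN_n^{2\tau+1}$ prefactor --- with the estimates \eqref{stime Psi n rid}. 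Then the smallness condition \eqref{KAM smallness condition} and Lemma \ref{proprieta standard norma decay}-$(iv)$ make $\Phi_n^{\pm 1} = {\rm exp}(\pm \Psi_n) \in \OpM^0_s$ well defined, close to the identity, momentum preserving by Lemma \ref{lem:mom_prop}, and reversibility preserving when $w$ is odd.

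\smallskip
\noindent
Expanding $\bL_{n+1} := \Phi_n^{-1} \bL_n \Phi_n$ by the Lie series as in \eqref{coniugio a pezzi cal L1 Phi1 m} and using the homological equation, one obtains $\bL_{n+1} = \lambda\, \omega \cdot \pa_\vphi + \bD_{n+1} + \bE_{n+1}$ with $\bD_{n+1} = \bD_n + \widehat{\bE}_n(0)$ and $\bE_{n+1}$ collecting the quadratic commutators together with the tail $\Pi_{\tN_n}^\perp \bE_n$. By Lemma \ref{lem:mom_pres}, $\bD_{n+1} = \beta\tL + \bZ_{n+1}$ with $\tz_{n+1}(j) = \tz_n(j) + \widehat{\bE}_n(0)_j^{j}$ remains diagonal; Lemma \ref{proprieta standard norma decay}-$(v)$ gives $|\widehat{\bE}_n(0)_j^{j}| \lesssim |j|^{-M} |\bE_n|_{-M, s_0}$, whence \eqref{cal Nn - N n - 1}, and \eqref{stime qn} follows by telescoping, while the reality and reversibility relations \eqref{reversibility reality auto} propagate by Lemma \ref{lemma real rev matrici}. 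For the new remainder, the quadratic part has size $\sim |\bE_n|^2$ --- here one also uses that $\tL$ is $1$-smoothing, so that $[\beta\tL, \Psi_n] \in \OpM^{-M-1}_s$ is of lower order than $\bE_n$ and goes into the small remainder --- while $|\Pi_{\tN_n}^\perp \bE_n|_{-M, s} \lesssim \tN_n^{-\tb} |\bE_n|_{-M, s + \tb}$ by Lemma \ref{lemma proiettori decadimento}; balancing the super-exponential gain against the $\tN_n$ divergence of the high norm, with the choice of $\ta, \tb, \chi$ in \eqref{definizione.param.KAM}, yields \eqref{stime cal Rn rid} at step $n+1$ and proves ${\bf (S1)}_{n+1}$. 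I would obtain ${\bf (S2)}_{n+1}$ by extending $\widehat{\bE}_n(0)_j^j$ from $\Omega_{n+1}^\gamma$ to $\tD\tC(2\gamma, \tau)$ (Kirszbraun's theorem, or a cutoff of the divisors off $\Omega_{n+1}^\gamma$) and adding it to $\widetilde\mu_n$, with \eqref{lambdaestesi} from Lemma \ref{proprieta standard norma decay}-$(v)$; ${\bf (S3)}_{n+1}$ by rerunning all the estimates for the difference operators $\Delta_{12}(\cdot)$, starting from \eqref{stime pre rid} and the inductive bounds \eqref{stime R nu i1 i2}--\eqref{r nu - 1 r nu i1 i2}. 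Finally, for ${\bf (S4)}_{n+1}$, given $\omega \in \Omega_{n+1}^\gamma(w_1)$ one writes $\im \lambda\, \omega \cdot \ell + \mu_n(j; w_2) - \mu_n(j'; w_2) = \im \lambda\, \omega \cdot \ell + \mu_n(j; w_1) - \mu_n(j'; w_1) + \Delta_{12}\tz_n(j) - \Delta_{12}\tz_n(j')$ (the $\im \beta\tL$-part being $w$-independent), bounds $|\Delta_{12}\tz_n(j)|, |\Delta_{12}\tz_n(j')|$ via ${\bf (S3)}_n$, and checks that $\lambda \rho \langle \ell \rangle^{-\tau} |j'|^{-\tau}$ dominates the error: the accumulated $\langle \ell \rangle^\tau |j'|^\tau$ small-divisor losses over the $n$ steps, with $|\ell| \leq \tN_{n-1}$ and momentum ($|j - j'| \lesssim |\ell|$), produce the factor $\tN_{n-1}^{\tau + \tau^2}$, so that $K \varepsilon \rho^{-1} \tN_{n-1}^{\tau + \tau^2} \| w_1 - w_2 \|_{s_0 + \Sigma(\tb)} \leq 1$ forces $\Omega_{n+1}^\gamma(w_1) \subseteq \Omega_{n+1}^{\gamma - \rho}(w_2)$, completing the induction.

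\smallskip
\noindent
\textbf{Main obstacle.} The delicate step is solving the homological equation inside a suitable operator class. In higher space dimension, with the highly degenerate symbol $\tL(j) - \tL(j')$ vanishing on an infinite set, the usual second-Melnikov reduction would be obstructed; what rescues the scheme is the combination of momentum conservation --- which confines $\Psi_n$ and $\widehat{\bE}_n(0)$ to $\pi^\top(\ell) + j - j' = 0$, killing the resonant sector $\ell = 0$, $j \neq j'$ and keeping $\bD_n$ diagonal --- with the $1$-smoothing (sublinear) nature of $\tL$, which makes $\lambda\, \omega \cdot \pa_\vphi$ the dominant part and $[\beta\tL, \Psi_n]$ one order lower, so it does not spoil the quadratic gain. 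Keeping $\Psi_n$ small and of non-positive order despite the $\langle \ell \rangle^\tau |j'|^\tau$ loss from the small divisors requires the strong off-diagonal decay of $\bE_n$ (order $-M$, with $M$ chosen large compared to $\tau$); once this is in place, the remaining telescoping and measure-theoretic bookkeeping (the choice of $\ta, \tb, \tau_1, \Sigma(\tb)$) is routine.
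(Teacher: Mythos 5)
Your proposal follows essentially the same scheme as the paper's proof: induction on $n$, the homological equation with $\Pi_{\tN_n}$-truncation and the explicit formula for $\widehat\Psi_n(\ell)_j^{j'}$, the key use of momentum conservation to make $\widehat{\bE}_n(0)$ diagonal (Lemma \ref{lem:mom_pres}), the role of the $1$-smoothing of $\tL$ in relegating $[\beta\tL,\Psi_n]$ to the remainder, the Lie expansion, the telescoping for $\tz_n$, the Kirszbraun extension for ${\bf (S2)}$, and the $\Delta_{12}$-estimates for ${\bf (S3)}$.

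One small imprecision in your sketch of ${\bf (S4)}_{n+1}$ is worth flagging. You attribute the factor $\tN_{n-1}^{\tau+\tau^2}$ to ``accumulated small-divisor losses over the $n$ steps,'' but that is not really the mechanism. The paper's argument proceeds as follows: by the induction hypothesis ${\bf(S4)}_n$ one already has $\Omega_{n+1}^\gamma(w_1)\subseteq \Omega_n^\gamma(w_1)\subseteq \Omega_n^{\gamma-\rho}(w_2)$, so one only needs to check the new non-resonance conditions at scale $|\ell|\leq \tN_n$. For these, one splits into two regimes. If $\min\{|j|,|j'|\}\geq \tN_n^\tau$, the sublinearity of $\tL$ makes $\mu_n(j),\mu_n(j')$ so small that the Diophantine condition on $\omega\cdot\ell$ alone suffices and $w$ plays no role. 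If $\min\{|j|,|j'|\}\leq \tN_n^\tau$, then by momentum $|j-j'|\lesssim|\ell|\lesssim\tN_n$, hence $|j'|\lesssim\tN_n^\tau$, and it is the single factor $\langle\ell\rangle^\tau|j'|^\tau\lesssim \tN_n^{\tau}\cdot\tN_n^{\tau^2}$ in the second Melnikov condition, combined with the $|\Delta_{12}\tz_n(j)|$-bound from ${\bf(S3)}_n$, that produces $\tN_{n-1}^{\tau+\tau^2}$. So the factor arises from the size constraints forced by momentum and the truncation at a single scale, not from a product over $n$ iterations. This does not affect the validity of the strategy, only the justification you give for the exponent; the overall architecture matches the paper's.
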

 
 \begin{proof}
 	{\sc Proof of ${\bf (S1)}_0-{\bf (S4)}_0$.} 
 	The claimed properties follow directly 
 	from Proposition \ref{prop normal form lower orders}, recalling \eqref{bL0_inizioKAM}, \eqref{stime pre rid}, \eqref{vare.vare} 
 	and the definition of $\Omega_{0}^\gamma := \tD\tC(2\gamma, \tau)$. 
 	
 	\smallskip
 	
 	\noindent
	By induction, we assume the the claimed properties ${\bf (S1)}_n$-${\bf (S4)}_n$ hold for some $n \geq 0$ and we prove them at the step $n + 1$. 
	
	\noindent
 	{\sc Proof of ${\bf (S1)}_{n+1}$.} 
 	Let $\Phi_n = {\rm exp}( \Psi_n)$ where $\Psi_n$ is an operator of order $- M$ to be determined. 
 	By the Lie expansion, we compute
 	\begin{equation}\label{primo coniugio Ln Psin}
 		\begin{aligned}
 			\bL_{n + 1} & = \Phi_n^{- 1}\bL_{n} \Phi_n  = \lambda \, \omega \cdot \partial_\vphi + \bD_n + \lambda\, \omega \cdot \partial_\vphi \Psi_n + [\bD_n , \Psi_n] + \Pi_{\tN_n} \bE_n + \bE_{n + 1} \,,  \\
 			\bE_{n + 1} & := \Pi_{\tN_n}^\bot \bE_n   + \int_0^1 (1 - \tau) {\rm exp}(- \tau \Psi_n) \,[\lambda \,\omega \cdot \partial_\vphi \Psi_n + [\bD_n , \Psi_n] , \Psi_n] \, {\rm exp}(\tau \Psi_n) \wrt \tau \\
 			& \quad + \int_0^1 {\rm exp}(- \tau \Psi_n) [\bE_n , \Psi_n] {\rm exp}(\tau \Psi_n) \wrt \tau \,,  
 		\end{aligned}
 	\end{equation}
 	where $\bD_{n}:=  \beta \,\tL + \bZ_{n}$ and the projectors $\Pi_{N}$, $\Pi_{N}^\bot$ are defined in  \eqref{def proiettore operatori matrici}. 
 	Our purpose is to find a map $\Psi_n$ solving the {\it homological equation} 
 	\begin{equation}\label{equazione omologica KAM}
 		\lambda\,\omega \cdot \partial_\vphi \Psi_n + [\bD_{n} , \Psi_n] + \Pi_{\tN_{n}} \bE_{n} = \widehat{\bE}_n(0) \,,
 	\end{equation}
 	where $\widehat{\bE}_n(0) := \frac{1}{(2 \pi)^\nu} \int_{\T^\nu} \bE_n(\vphi) \wrt \vphi$ is a diagonal operator by Lemma \ref{lem:mom_pres}, since $\bE_{n}(\vf)$ is a momentum preserving operator by induction assumption.
	By  \eqref{matrix representation 1} and \eqref{def.calLn calDn calQn}, the homological equation \eqref{equazione omologica KAM} is equivalent to 
 	\begin{equation}\label{eq omologica matrici}
	\begin{aligned}
 	&	\big( \im \,\lambda\, \,\omega \cdot \ell + \mu_{n}(j) - \mu_{n}(j') \big) \widehat \Psi_{n}(\ell)_j^{j'} + \widehat{\bE_{n}}(\ell)_j^{j'} = 0\,, \\
	&   \ell \in \Z^\nu\setminus\{0\}\,, \ \   |\ell| \leq \tN_n\,,\, \ \ j, j' \in \Z^2 \setminus \{ 0 \}\,, \ \ \pi^\top(\ell) + j - j' = 0\,. 
		\end{aligned}
 	\end{equation}
 	 Therefore, we define the linear operator $\Psi_{n}$ by
 	\begin{equation}\label{def Psi eq omo KAM}
 		\begin{footnotesize}
 			\widehat\Psi_{n} (\ell)_j^{j'} := \begin{cases}
 				- \dfrac{\widehat{\bE_{n}}(\ell)_j^{j'} }{ \im \,\lambda\, \omega \cdot \ell + \mu_{n}(j) - \mu_{n}(j')  }, & \begin{matrix}
 					 \ell \in \Z^{\nu}\setminus\{0\} \,, \quad j,j'\in \Z^2\setminus\{0\}\,, \\  |\ell|  \leq \tN_{n}\,, \quad \pi^\top(\ell) + j-j'=0\,,
 				\end{matrix} \\
 				0 & \text{otherwise}\,. 
 			\end{cases}
 		\end{footnotesize}
 	\end{equation}
 	which  is a solution of \eqref{equazione omologica KAM}-\eqref{eq omologica matrici}. 
 	
 	\begin{lem}\label{Lemma eq omologica riducibilita KAM}
 		The operator $\Psi_n$ in \eqref{def Psi eq omo KAM}, defined for any $\omega \in \Omega_{n + 1}^\gamma$, satisfies, for any $s_0\leq s\leq S-\Sigma(\tb)$,
 		\begin{equation}\label{stime eq omologica}
 			\begin{aligned}
 				& |\Psi_n|_{0, s}^{{\rm Lip}(\gamma)}
 				\lesssim_{s} \tN_{n}^{2 \tau + 1} \lambda^{- 1} \gamma^{- 1} |\bE_{n}|_{- M, s}^{{\rm Lip}(\gamma)}\,, \\
 				& |\Psi_n|_{0, s + \eta}^{{\rm Lip}(\gamma)} \lesssim_{s} \tN_{n}^{2 \tau + 1 + \eta} \lambda^{- 1}\gamma^{- 1} |\bE_{n}|_{- M, s}^{\Lip(\gamma)}\,, \quad \forall \, \eta > 0 \,.
 			\end{aligned}
 		\end{equation}
		Assume that $w_1\,,\, w_2$ satisfy \eqref{ansatz} with $\sigma = \Sigma(\tb)$, as in \eqref{definizione.param.KAM}. Then for any $\omega \in \Omega_{n + 1}^{\gamma_1}(w_1) \cap \Omega_{n + 1}^{\gamma_2}(w_2)$, $\gamma/2 \leq \gamma_1, \gamma_2 \leq \gamma$, one has 
		\begin{equation}\label{stime Delta 12 Psin}
		\begin{aligned}
		|\Delta_{12} \Psi_n|_{0, s_0} & \lesssim \varepsilon^{M} \gamma^{-M} \tN_{n}^{2 \tau }  \tN_{n - 1}^{- \ta} \| w_1 - w_2 \|_{s_1 + \Sigma(\tb)}  \,, \\
		|\Delta_{12} \Psi_n |_{0, s_0 + \tb} & \lesssim \varepsilon^{M} \gamma^{-M} \tN_{n}^{2 \tau } \tN_{n - 1}\| w_1 - w_2 \|_{s_1 + \Sigma(\tb)} \,, \\
		|\Delta_{12} \Psi_n|_{0, s_0 + \eta} & \lesssim \varepsilon^{M} \gamma^{-M} \tN_{n}^{2 \tau + \eta  }  \tN_{n - 1}^{- \ta} \| w_1 - w_2 \|_{s_1 + \Sigma(\tb)}  \,,  \quad \forall \, \eta > 0 \,,\\
		|\Delta_{12} \Psi_n |_{0, s_0 + \tb + \eta} & \lesssim  \varepsilon^{M} \gamma^{-M} \tN_{n}^{2 \tau + \eta }  \tN_{n - 1}\| w_1 - w_2 \|_{s_1 + \Sigma(\tb)} \,, \quad \forall \, \eta > 0\,. 
		\end{aligned}
		\end{equation}
			Moreover, $\Psi_n$ is real and momentum preserving. In addition, if we assume $ w(\vf,x)=\odd(\vf,x)$, the map $\Psi_n$ is also reversibility preserving.
 	\end{lem}
 	
 	\begin{proof}
 		To simplify notations, along this proof 
 		we drop the index $n$.
		
		\medskip
		
		\noindent
		{\bf Proof of \eqref{stime eq omologica}.}
 		 Let $\wh\Psi(\ell)_{j}^{j'}=\wh\Psi(\ell;\omega)_{j}^{j'}$ as in \eqref{def Psi eq omo KAM}, with $ \ell \in \Z^\nu$, $ j, j' \in \Z^2 \setminus \{ 0 \}$, with $0 < |\ell| \leq \tN$ and $ \pi^\top(\ell) + j - j' = 0$. For any $\omega \in \Omega_{n + 1}^\gamma$ (see \eqref{insiemi di cantor rid}), we immediately get the estimate
 		\begin{equation}\label{stima eq omo KAM 1}
 			|\widehat\Psi (\ell;\omega)_j^{j'}| \lesssim \lambda^{- 1}\gamma^{- 1} \langle \ell \rangle^\tau | j' |^\tau |\wh\bE(\ell;\omega)_j^{j'}| \lesssim \lambda^{- 1} \gamma^{- 1}  \tN^\tau  | j' |^\tau |\wh\bE (\ell;\omega)_j^{j'}| \,.
 		\end{equation}
 		 We define $\delta_{\ell j j'}(\omega) :=\im\,\lambda\, \omega \cdot \ell +\mu(j;\omega) - \mu(j';\omega)$. Let $\omega_{1}, \omega_{2}  \in \Omega_{n + 1}^\gamma$. By \eqref{def.calLn calDn calQn}, \eqref{insiemi di cantor rid}, \eqref{stime qn},  we have  
		 \begin{equation}
		 	 \begin{aligned}
		 		& \big| \big( \mu(j;\omega_1) - \mu(j';\omega_1) \big) - \big( \mu(j;\omega_2) - \mu(j';\omega_2)\big) \big|  \\
		 		&\leq \big|\tz(j; \omega_1) - \tz(j; \omega_2)\big| + \big|\tz(j'; \omega_1) - \tz(j'; \omega_2)\big| \\
		 		& \lesssim \lambda^{\theta} \gamma^{- 1} \big( |j|^{-1} + |j'|^{-1} \big) |\omega_1 - \omega_2| \lesssim  \lambda^{\theta} \gamma^{- 1}  |\omega_1 - \omega_2| \,,
		 	\end{aligned}
		 \end{equation}
		 and therefore, using that $\lambda^{\theta-1}\gamma^{-1}\ll 1 $ by \eqref{piccolo.ansatx},
		 \begin{equation}\label{calcetto1}
		 		 \begin{aligned}
		 		|\delta_{\ell j j'}(\omega_1) - \delta_{\ell j j'}(\omega_2)| & \lesssim (\lambda\, |\ell|+\lambda^{\theta}\gamma^{-1}) |\omega_1 - \omega_2|  \lesssim \lambda \, \langle \ell \rangle |\omega_1 - \omega_2|\,.
		 	\end{aligned}
		 \end{equation}
		 Then, estimate \eqref{calcetto1}, together with the fact that $\omega_1, \omega_2 \in \Omega_{n + 1}^\gamma$, implies that
 		\begin{equation}\label{calcetto2}
 			\begin{aligned}
 				\Big| \frac{1}{ \delta_{\ell j j'}(\omega_{1})} - \frac{1}{\delta_{\ell j j'}(\omega_{2})} \Big| & \leq \dfrac{|\delta_{\ell j j'}(\omega_{1}) - \delta_{\ell j j'}(\omega_{2})|}{|\delta_{\ell j j'}(\omega_{1})| |\delta_{\ell j j'}(\omega_{2})|} \\
 				& \lesssim \lambda (\lambda\gamma)^{-2} \langle \ell \rangle^{2 \tau + 1}  |j'|^{2 \tau} |\omega_{1} - \omega_{2} | \\
				&  \lesssim \lambda^{- 1} \gamma^{- 2} \langle \ell \rangle^{2 \tau + 1}  |j'|^{2 \tau} |\omega_{1} - \omega_{2} |\,. 
 			\end{aligned}
 		\end{equation}
 		Therefore, by \eqref{def Psi eq omo KAM} and \eqref{calcetto2},  for any $\omega_{1},\omega_{2}\in \Omega_{n + 1}^\gamma$ we have that
 		\begin{equation}\label{Psi lam 12 KAM}
 			\begin{aligned}
 				\big|\widehat\Psi (\ell;\omega_{1})_j^{j'} - \widehat\Psi (\ell;\omega_{2})_j^{j'}\big| & \lesssim  \langle \ell \rangle^\tau  |j'|^\tau \lambda^{- 1} \gamma^{- 1} \big|\widehat\bE(\ell;\omega_{1})_j^{j'} - \widehat\bE(\ell;\omega_{2})_j^{j'}\big| \\
 				& \ \ + \langle \ell \rangle^{2 \tau + 1}  |j'|^{2 \tau} \lambda^{- 1} \gamma^{-2}  \big|\widehat{\bE}(\ell;\omega_{2})_j^{j'}\big| |\omega_{1}-\omega_{2} | \\
				& \lesssim \tN^\tau  |j'|^\tau \lambda^{- 1} \gamma^{- 1} \big|\widehat\bE(\ell;\omega_{1})_j^{j'} - \widehat\bE(\ell;\omega_{2})_j^{j'}\big| \\
 				& \ \ + \tN^{2 \tau + 1}  |j'|^{2 \tau} \lambda^{- 1} \gamma^{-2}  \big|\widehat{\bE}(\ell;\omega_{2})_j^{j'}\big| |\omega_{1}-\omega_{2} |\,. 
 			\end{aligned}
 		\end{equation}
 		Since $M  > 2 \tau$ by \eqref{definizione.param.KAM}, recalling Definition \ref{block norm} and collecting the estimates \eqref{stima eq omo KAM 1}, \eqref{Psi lam 12 KAM}, we obtain the bounds, for any $s\in [s_0,S+\Sigma(\tb)]$,
 		$$
 		\begin{aligned}
 			& |\Psi|_{0, s}^{\rm sup} \lesssim \tN^{ \tau} \lambda^{- 1}\gamma^{- 1} |{\bE}|_{- M, s}^{\rm sup}\,, \\
 			& |\Psi|_{0, s}^{\rm lip} \lesssim  \tN^{2 \tau + 1} \lambda^{- 1}\gamma^{- 2}  |{\bE}|_{- M, s}^{\rm sup} + \tN^{\tau} \lambda^{- 1} \gamma^{- 1}  |{\bE}|_{- M, s}^{\rm lip}\,.
 		\end{aligned}
 		$$
 		Similarly, using also that $|\ell| \leq \tN$ and $\pi^\top(\ell) + j - j' = 0$ imply that $|j - j'| \lesssim |\ell| \lesssim \tN$, by analogous arguments we obtain that, for any $\eta > 0$,
 		$$
 		\begin{aligned}
 			& |\Psi|_{0, s + \eta}^{\rm sup} \lesssim \tN^{\tau + \eta} \gamma^{- 1} \lambda^{- 1} |{\bE}|_{- M, s}^{\rm sup}\,, \\
 			& |\Psi|_{0, s+ \eta}^{\rm lip} \lesssim  \tN^{2 \tau + \eta +1} \gamma^{- 2} \lambda^{- 1}  |{\bE}|_{- M, s}^{\rm sup} + \tN^{\tau + \eta} \lambda^{- 1} \gamma^{- 2} |{\bE}|_{- M, s}^{\rm sup}\,. 
 		\end{aligned}
 		$$
 		Hence, we conclude the claimed bounds in \eqref{stime eq omologica}. 
\\[1mm]
		\noindent
		{\bf Proof of \eqref{stime Delta 12 Psin}.} Let $\ell\in\Z^{\nu}$, $j, j' \in \Z^2 \setminus \{ 0\}$, with $0 < |\ell| \leq \tN$ and $\pi^\top(\ell) + j - j' = 0$. Recalling \eqref{def.calLn calDn calQn}, we set
		\begin{equation}\label{def delta i1 delta l j j'}
		\begin{aligned}
		& \delta_{\ell j j'}(w_i) := \im\, \lambda \,\omega \cdot \ell + \mu(j; w_i) - \mu(j'; w_i) \,, \quad i = 1,2\,,  \\
		& \mu(j; w_i) :=  \im\, \beta \,\tL(j)+ \tz(j; w_i)\,. 
		\end{aligned}
		\end{equation}
		By \eqref{r nu - 1 r nu i1 i2}, we get that 
		\begin{equation}\label{stima delta 12 delta l j j'}
		\begin{aligned}
		|\Delta_{12} \delta_{\ell j j'}| & \leq  |\Delta_{12} \tz(j)| + |\Delta_{12} \tz(j')| \lesssim \lambda^{\theta}\big(|j|^{- 1} + |j'|^{- 1} \big) \| w_1 - w_2 \|_{s_1 + \Sigma(\tb)} \\
		& \lesssim \lambda^{\theta}  \| w_1 - w_2 \|_{s_1 + \Sigma(\tb)} \,. 
		\end{aligned}
		\end{equation}
		Therefore, by \eqref{def Psi eq omo KAM}, \eqref{stima Delta 12 quadratico E n + 1} and using that $\lambda^{\theta-1}\gamma^{-1}\ll 1$, we get, for any  $\omega \in \Omega_{n + 1}^{\gamma_1}(w_1) \cap \Omega_{n + 1}^{\gamma_1}(w_2)$ with $\gamma/2 \leq \gamma_1, \gamma_2 \leq \gamma$,
		$$
		\begin{aligned}
		\big|\Delta_{12} \widehat{\Psi}(\ell)_j^{j'}\big| & \leq \frac{\big|\Delta_{12} \widehat\bE(\ell)_j^{j'}\big|}{|\delta_{\ell j j'}(w_1)|} + \frac{\big|\widehat\bE(\ell)_j^{j'}(w_2\big)| |\Delta_{12} \delta_{\ell j j'}|}{|\delta_{\ell j j'}(w_1)| |\delta_{\ell j j'}(w_2)|} \\
		& \lesssim \tN^\tau \lambda^{- 1} \gamma^{- 1} |j'|^\tau \big|\Delta_{12} \widehat\bE(\ell)_j^{j'}\big|  \\
		& \quad + \tN^{2 \tau} \lambda^{\theta} \lambda^{- 2} \gamma^{- 2} |j'|^{2 \tau}\big|\widehat\bE(\ell)_j^{j'}(w_2)\big|  \| w_1 - w_2 \|_{s_1 + \Sigma(\tb)} \\
		& 
		\lesssim
		 \tN^{2 \tau} \lambda^{- 1} \gamma^{- 1} |j'|^{2 \tau} \big( \big|\Delta_{12} \widehat\bE(\ell)_j^{j'}\big| + \big|\widehat\bE(\ell)_j^{j'}(w_2)\big|  \| w_1 - w_2 \|_{s_1 + \Sigma(\tb)}\big) \,.
		\end{aligned}
		$$
		Since $M  > 2 \tau$ by \eqref{definizione.param.KAM}, recalling Definition \ref{block norm} and the induction estimates \eqref{stime cal Rn rid}, \eqref{stime R nu i1 i2}, the claimed bounds follow easily.
		Finally, since ${\bE}$ is real, reversible and momentum preserving, by \eqref{def Psi eq omo KAM}, Lemma \ref{lemma real rev matrici} and Lemma \ref{lem:mom_pres} we have that $\Psi$ is real, reversibility preserving and momentum preserving.  This concludes the proof.
 	\end{proof}

 	By Lemma \ref{Lemma eq omologica riducibilita KAM}, the induction assumption on the estimates \eqref{stime cal Rn rid} and by \eqref{vare.vare}, we obtain, for any $s_0\leq s\leq S-\Sigma(\tb)$,
 	\begin{equation}\label{stime Psin neumann}
 		\begin{aligned}
 			|\Psi_n|_{0, s}^{{\rm Lip}(\gamma)} 
 			& \lesssim_{s}  \tN_n^{2 \tau + 1} \lambda^{- 1}\gamma^{- 1} |{\bE}_n |_{- M, s}^{{\rm Lip}(\gamma)}
 			\lesssim_{s } \tN_n^{2 \tau + 1} \tN_{n - 1}^{- \ta} 
 			\varepsilon^M \gamma^{- M} \|  w \|_{s + \Sigma(\tb)}^{\Lip(\gamma)} \,, \\
 			|\Psi_n|_{0, s + \tb}^{{\rm Lip}(\gamma)} & \lesssim_{s} \tN_n^{2 \tau + 1} \lambda^{- 1} \gamma^{- 1} |{\bE}_n |_{-M, s + \tb}^{{\rm Lip}(\gamma)} \lesssim_{s } \tN_n^{2 \tau + 1} \tN_{n - 1} \varepsilon^{M}\gamma^{- M} \|  w \|_{s + \Sigma(\tb)}^{\Lip(\gamma)}\,,
 		\end{aligned}
 	\end{equation} 
 	which are the estimates \eqref{stime Psi n rid} at the step $n + 1$. 
 	Moreover, setting $\eta=M$ in \eqref{stime eq omologica}, by the same arguments we also have
 	 	\begin{equation}\label{stime.Psin.neumann2}
 		\begin{aligned}
 			|\Psi_n|_{0, s + M}^{{\rm Lip}(\gamma)} \,,\, 
 			& \lesssim_{s}  \tN_n^{2 \tau + 1 + M} \gamma^{- 1} \lambda^{- 1} |{\bE}_n |_{- M, s}^{{\rm Lip}(\gamma)}
 			\\
 			& \lesssim_{s } \tN_n^{2 \tau + 1 + M} \tN_{n - 1}^{- \ta} 
 			\varepsilon^M \gamma^{- M} \|  w \|_{s + \Sigma(\tb)}^{\Lip(\gamma)} \,, \\
 			|\Psi_n|_{0, s + \tb + M}^{{\rm Lip}(\gamma)} & \lesssim_{s} \tN_n^{2 \tau + 1 + M} \lambda^{- 1}\gamma^{- 1} |{\bE}_n |_{-M, s + \tb}^{{\rm Lip}(\gamma)}  \\
 			& \lesssim_{s } \tN_n^{2 \tau + M + 1} \tN_{n - 1} \varepsilon^M \gamma^{- M} \|  w \|_{s + \Sigma(\tb)}^{\Lip(\gamma)} \,.
 		\end{aligned}
 	\end{equation}
 	In particular, by \eqref{definizione.param.KAM}, \eqref{ansatz} and by the smallness condition \eqref{KAM smallness condition}, we deduce, setting $s=s_0$ in \eqref{stime.Psin.neumann2}, for $\tN_0>0$ large enough,
 	\begin{equation} \label{2103.1}
	\begin{aligned}
 		|\Psi_n|_{0, s_0 + M}^{{\rm Lip}(\gamma)} 
		& \lesssim \tN_n^{2 \tau +1 + M} \tN_{n - 1}^{- \ta} \varepsilon^M \gamma^{- M} \|  w \|_{s_0 + \Sigma(\tb)}^{\Lip(\gamma)} \ \leq \delta < 1\,.
		\end{aligned}
 	\end{equation}
Therefore, by Lemma \ref{proprieta standard norma decay}-$(iv)$ and estimates \eqref{stime.Psin.neumann2}, \eqref{2103.1}, we have the estimates, for any $s_0 \leq s \leq S - \Sigma(\tb)$,
	\begin{equation}\label{stime Phi n inv - Id}
			\begin{aligned}
			& \sup_{\tau \in [- 1, 1]}|{\rm exp}(\tau \Psi_n)|_{0, s  + M}^{\Lip(\gamma)} \lesssim_s 1 + |\Psi_n|_{0, s + M}^{\Lip(\gamma)}\,, \quad \sup_{\tau \in [- 1, 1]}|{\rm exp}(\tau \Psi_n)|_{0, s_0  + M}^{\Lip(\gamma)} \lesssim 1\,,  \\
			& \sup_{\tau \in [- 1, 1]}|{\rm exp}(\tau \Psi_n)|_{0, s  + \tb +  M}^{\Lip(\gamma)} \lesssim_s 1 + |\Psi_n|_{0, s + \tb + M}^{\Lip(\gamma)}\,. 
		\end{aligned}
	\end{equation}
 	Then, by recalling \eqref{primo coniugio Ln Psin} and by using that $\Psi_n$ solves the equation \eqref{equazione omologica KAM}, we conclude that
 		\begin{equation}\label{2003.1}
 				\begin{aligned}
 				{\bL}_{n + 1} & \,=\lambda\, \omega \cdot \partial_\vphi + {\bD}_{n + 1} + {\bE}_{n + 1}\,,  \\
 				{\bD}_{n + 1} & :=  -\beta \tL + {\bZ}_{n + 1}\,, \qquad {\bZ}_{n + 1}  : = {\bZ}_n + \widehat{\bE}_n(0)\,,  \\
 				{\bE}_{n + 1}& \,=\Pi_{\tN_n}^\bot \bE_n   + \int_0^1 (1 - \tau) {\rm exp}(- \tau \Psi_n) \,[\widehat{\bE}_n(0) - \Pi_{\tN_n} \bE_n , \Psi_n] \, {\rm exp}(\tau \Psi_n)\wrt  \tau  \\
 				& \quad + \int_0^1 {\rm exp}(- \tau \Psi_n) [\bE_n , \Psi_n] {\rm exp}(\tau \Psi_n) \wrt \tau\,. 
 			\end{aligned}
 		\end{equation}
 	All the operators in \eqref{2003.1}
 	are defined for any $\omega \in \Omega_{n + 1}^\gamma$. 
 	Moreover, by \eqref{primo coniugio Ln Psin}, \eqref{equazione omologica KAM},
 	for $\omega \in \Omega^\gamma_{n+1}$ one has the 
 	identity
 	$\Phi_n^{-1} \bL_n \Phi_n = \bL_{n+1}$, 
 	which is \eqref{coniugazione rid} at the step $n+1$. 
 	Recalling that $\wh\bE_{n}(0)$ is a diagonal operator and by \eqref{2003.1}, we have that
 	\begin{equation}\label{frittata di maccheroni}
	\begin{aligned}
		{\bZ}_{n + 1} & = {\rm diag}_{j \in \Z^2 \setminus \{ 0 \}} \tz_{n + 1}(j)\,, \quad	\tz_{n + 1}(j)  := \tz_n(j) + \widehat{\bE}_n(0)_j^j\,, \\
		{\bD}_{n + 1} &= {\rm diag}_{j \in \Z^2 \setminus \{ 0 \}} \mu_{n + 1}(j)\,, \quad	\mu_{n + 1}(j) :=  \im \, \beta\, \tL(j)+ \tz_{n + 1}(j)\,.
	\end{aligned}
\end{equation}
 	By Lemma \ref{proprieta standard norma decay}-$(v)$
 	\begin{equation}\label{frittata.1}
 		\begin{aligned}
 			| \mu_{n + 1}(j) - \mu_n(j)|^{{\rm Lip}(\gamma)} & = | \tz_{n + 1}(j) - \tz_n(j)|^{{\rm Lip}(\gamma)}   \\
 			& \leq | \widehat{\bE}_n(0)_j^j |^{{\rm Lip}(\gamma)} \lesssim |{\bE}_n|_{- M, s_0}^{{\rm Lip}(\gamma)} \langle j \rangle^{- M}\,.
 		\end{aligned}
 	\end{equation}
 	Then, \eqref{frittata.1}, together with the estimate \eqref{stime cal Rn rid},
 	implies  \eqref{cal Nn - N n - 1} at the step $n + 1$. 
 	The estimate \eqref{stime qn} at the step $n + 1$ follows, as usual, 
 	by a telescoping argument, 
 	using the fact that $\sum_{n \geq 0} \tN_{n - 1}^{- \ta}$ is convergent 
 	since $\ta > 0$ (see \eqref{definizione.param.KAM}). 
 	Now we prove the estimates \eqref{stime cal Rn rid} at the step $n + 1$. 
 	By \eqref{2003.1}, estimates 
 	\eqref{stime Psin neumann}, 
 	\eqref{2103.1},
 	\eqref{stime Phi n inv - Id}, the induction estimates \eqref{stime cal Rn rid},
 	Lemma \ref{proprieta standard norma decay}-$(ii),(v)$ and 
 	Lemma \ref{lemma proiettori decadimento}, 
 	we get, for any $s_0\leq s \leq S-\Sigma(\tb)$,
 	\begin{equation}\label{thuram}
 			\begin{aligned}
 				& |{\bE}_{n + 1}|_{- M, s}^{{\rm Lip}(\gamma)} 
 				\lesssim_{s} \tN_n^{- \tb} |{\bE}_n|_{- M, s + \tb}^{{\rm Lip}(\gamma)} + \tN_n^{2 \tau + 1 + M}\lambda^{- 1} \gamma^{- 1}  |{\bE}_n|_{- M, s_0}^{{\rm Lip}(\gamma)} |{\bE}_n|_{- M, s}^{{\rm Lip}(\gamma)} \,, \\
 				& |{\bE}_{n + 1}|_{- M, s + \tb}^{{\rm Lip}(\gamma)} 
 				\lesssim_{s} |{\bE}_n|_{- M, s +\tb}^{{\rm Lip}(\gamma)}  \\
				& \quad \quad + \tN_{n}^{2 \tau + 1 + M} \lambda^{- 1}\gamma^{-1}\big( |{\bE}_n|_{- M, s_0}^{{\rm Lip}(\gamma)} |{\bE}_n|_{- M, s+\tb}^{{\rm Lip}(\gamma)} +  |{\bE}_n|_{- M, s}^{{\rm Lip}(\gamma)} |{\bE}_n|_{- M, s_0+\tb}^{{\rm Lip}(\gamma)} \big) \,. 
 			\end{aligned}
 	\end{equation}
 	We now verify the estimates \eqref{stime cal Rn rid} at the step $n + 1$. By \eqref{thuram}, the induction assumption on the estimate \eqref{stime cal Rn rid}, \eqref{vare.vare} and using the ansatz \eqref{ansatz} with $\sigma = \Sigma(\tb)$, we get, for any $s_0 \leq s \leq S - \Sigma(\tb)$,
	\begin{equation}\label{milan merda 0}
	\begin{aligned}
		|{\bE}_{n + 1}|_{- M, s +\tb}^{{\rm Lip}(\gamma)} & \leq C(s) C_*(s)\tN_{n - 1} \varepsilon_{M} \gamma^{-(M-1)} \|  w\|_{s+\Sigma(\tb)}^{\Lip(\gamma)} \\
		&
		+ 2C(s)C_*^2(s) C_0 \,\tN_n^{2 \tau + M + 1} \tN_{n - 1}^{- \ta}\varepsilon^{M} \gamma^{- M} \varepsilon_{M} \gamma^{-(M-1)}  \|  w \|_{s + \Sigma(\tb)}^{\Lip(\gamma)} \\
		& \leq C_*(s) \tN_n  \varepsilon_{M} \gamma^{-(M-1)} \|  w \|_{s + \Sigma(\tb)}^{\Lip(\gamma)} \,,
	\end{aligned}
\end{equation}
	provided
	\begin{equation}\label{forza inter 0}
	C(s) \tN_{n}^{-1} \tN_{n - 1} \leq \frac12 \,, \quad 2 C(s)C_*(s) C_0\, \tN_n^{2 \tau + M } \tN_{n - 1}^{ - \ta}\varepsilon^{M} \gamma^{- M} \leq \frac12\,,
	\end{equation}
	and 
	\begin{equation}\label{milan merda 1}
	\begin{aligned}
		|{\bE}_{n + 1}|_{- M, s}^{{\rm Lip}(\gamma)}
		& \leq  C(s) C_*(s) \tN_n^{- \tb} \tN_{n - 1} \varepsilon_{M} \gamma^{-(M-1)}  \|  w \|_{s+ \Sigma(\tb)}^{\Lip(\gamma)}     \\
		& + C(s) C_*^2(s) C_0\, \tN_n^{2 \tau + M + 1} \tN_{n - 1}^{- 2 \ta} \varepsilon^{-M} \gamma^{-M} \varepsilon_{M} \gamma^{-(M-1)}  \|  w \|_{s+ \Sigma(\tb)}^{\Lip(\gamma)}  \\
		& \leq C_*(s) \tN_n^{- \ta} \varepsilon_{M} \gamma^{-(M-1)}  \|  w \|_{s + \Sigma(\tb)}^{\Lip(\gamma)} \,,
	\end{aligned}
\end{equation}
	provided
	\begin{equation}\label{forza inter 1}
	C(s) \tN_n^{\ta - \tb} \tN_{n - 1}  \leq \frac12  \,, \quad C(s) C_*(s)C_0 \, \tN_n^{2 \tau + \ta + M + 1} \tN_{n - 1}^{- 2 \ta}\varepsilon^{M} \gamma^{- M} \leq \frac12  \,;
	\end{equation}
	note that the constant $C(s)$ comes from estimates \eqref{thuram}, the constant $C_*(s)$ comes from the estimates \eqref{stime cal Rn rid} and the constant $C_0$ comes from impositing the ansatz \eqref{ansatz}.
	The conditions \eqref{forza inter 0}, \eqref{forza inter 1} are verified by \eqref{definizione.param.KAM}, \eqref{KAM smallness condition}, taking $\tN_0, \lambda  \gg 0$ large enough. Therefore, the estimates \eqref{milan merda 0} and \eqref{milan merda 1} imply that the estimates \eqref{stime cal Rn rid} hold at the step $n + 1$.  Finally,	since $\Psi_n, \Phi_n, \Phi_n^{- 1}$ are real and momentum preserving by Lemma \ref{equazione omologica KAM} and ${\bD}_n, {\bE}_n$ are real and momentum preserving by the induction assumption, then ${\bD}_{n + 1}$, ${\bE}_{n + 1}$ are real and momentum preserving operators, by \eqref{2003.1} and Lemmata \ref{lemma real rev matrici}, \ref{lem:mom_prop}, \ref{lem:mom_pres}. Morover if $ w = {\rm odd}(\vf, x)$, since $\Psi_n, \Phi_n, \Phi_n^{- 1}$ are reversibility preserving by Lemma \ref{equazione omologica KAM} and ${\bD}_n, {\bE}_n$ are reversible by the induction assumption, then ${\bD}_{n + 1}$, ${\bE}_{n + 1}$ are reversible by \eqref{2003.1} and Lemma \ref{lemma real rev matrici} and hence \eqref{reversibility reality auto} is verified at the step $n + 1$.
 	\\[1mm]
 	\noindent
 	{\sc Proof of ${\bf (S2)}_{n + 1}$.} We now construct a Lipschitz extension for the eigenvalues $\mu_{n + 1}(j,\,\cdot\,) : \Omega_{n + 1}^\gamma \to  \,\C$. By the induction hypothesis, there exists a Lipschitz extension of $\mu_n(j;\omega)$,  denoted by $\widetilde \mu_{n}(j;\omega)$, to the whole set $\tD\tC(2\gamma, \tau)$ that satisfies ${\bf (S2)}_n$. By \eqref{frittata di maccheroni}, we have $\mu_{n + 1}(j) = \mu_n(j) + \tr_n(j)$, where $\tr_n(j, \omega)=\tr_n(j,\omega) := \widehat{\bE}_n(0;\omega)_j^j$ satisfies $|\tr_n(j)|^{{\rm Lip}(\gamma)} \lesssim  \tN_{n - 1}^{- \ta} |j|^{- M}  $. 
	Hence, by the Kirszbraun Theorem (see Lemma M.5 \cite{KukPo}) there exists a Lipschitz extension $\widetilde \tr_n(j,\,\cdot\,) : \tD\tC(2\gamma, \tau) \to \C$ of $\tr_n(j,\,\cdot\,) : \Omega_{n + 1}^\gamma \to \R$ satisfying $|\widetilde \tr_n(j)|^{{\rm Lip}(\gamma)} \lesssim |\tr_n(j)|^{{\rm Lip}(\gamma)} \lesssim \tN_{n - 1}^{- \ta} |j|^{- M}$. The claimed statement then follows by defining $\widetilde \mu_{n + 1}(j) := \widetilde  \mu_n(j) + \widetilde \tr_n(j)$. 
	
	\medskip
	
	\noindent
	{\sc Proof of ${\bf (S3)}_{n + 1}$.}  Let $w_1, w_2$ satisfy the ansatz \eqref{ansatz} with $\sigma = \Sigma(\tb)$ and let $\omega \in \Omega_{n + 1}^{\gamma_1}(w_1) \cap \Omega_{n + 1}^{\gamma_2}(w_2)$, $\gamma_1, \gamma_2 \in [\gamma/2\,,\, 2 \gamma]$. Let
	\begin{equation}
		\Delta_{12}{\mathcal F}_\tau := {\mathcal F}_\tau (w_1) - {\mathcal F}_\tau(w_2) \,, \quad {\mathcal F}_\tau := {\rm exp}(\tau \Psi_n), \quad \tau \in [-1, 1]\,.
	\end{equation}
	 By Lemmata \ref{proprieta standard norma decay}-$(iv)$, \ref{Lemma eq omologica riducibilita KAM} with $s_1=s_0$, together with $ \Delta_{12} \cF_{\tau} = ({\rm exp } (\tau \Delta_{12} \Psi_{n} )-{\rm Id}) \cF_{\tau} (w_2)$,
	  we have
\begin{equation}\label{stima delta 12 exp Psi n}
\begin{aligned}
& |\Delta_{12} {\mathcal F}_\tau|_{0, s_0} \lesssim \tN_{n}^{2 \tau} \tN_{n - 1}^{- \ta}\varepsilon^{M} \gamma^{-M}  \| w_1 - w_2 \|_{s_0 + \Sigma(\tb)}\,, \\
& |\Delta_{12} {\mathcal F}_\tau|_{0, s_0 + M} \lesssim \tN_{n}^{2 \tau + M}  \tN_{n - 1}^{- \ta}  \varepsilon^{M} \gamma^{-M}\| w_1 - w_2 \|_{s_0 + \Sigma(\tb)}\,, \\
&  |\Delta_{12} {\mathcal F}_\tau|_{0, s_0 + \beta}   \lesssim \tN_{n}^{2 \tau }  \tN_{n - 1}  \varepsilon^{M} \gamma^{-M}\| w_1 - w_2 \|_{s_0 + \Sigma(\tb)} \,, \\
& |\Delta_{12} {\mathcal F}_\tau|_{0, s_0 + M +  \beta}   \lesssim \tN_{n}^{2 \tau + M}  \tN_{n - 1}  \varepsilon^{M} \gamma^{-M}\| w_1 - w_2 \|_{s_0 + \Sigma(\tb)}\,. 
\end{aligned}
\end{equation}
We now shortly describe how to estimate $\Delta_{12} \bE_{n + 1}$ (recall the expression given in \eqref{2003.1}). By  Lemma \ref{lemma proiettori decadimento} and the estimates in \eqref{stime R nu i1 i2}, we have that
\begin{equation}\label{Delta 12 rRn bot}
\begin{aligned}
|\Delta_{12} \Pi_{\tN_n}^\bot \bE_n|_{- M, s_0} & \lesssim \tN_n^{- \tb} |\Delta_{12}  \bE_n|_{- M, s_0 + \tb}  \lesssim \tN_n^{- \tb} \tN_{n - 1}   \| w_1 - w_2 \|_{s_0 + \Sigma(\tb)}\,, \\
|\Delta_{12} \Pi_{\tN_n}^\bot \bE_n|_{- M , s_0+ \tb} & \leq |\Delta_{12}  \bE_n|_{- M , s_0+ \tb}  \lesssim  \tN_{n - 1}  \| w_1 - w_2\|_{s_0 + \Sigma(\tb)}\,. 
\end{aligned}
\end{equation}
 All the terms appearing in the two integrals in the formula of $\bE_{n + 1}$ (see \eqref{2003.1}) can be all estimated in a similar way. Hence, for sake of simplicity in the exposition, we concentrate on the estimate of the term 
 \begin{equation}\label{stima Delta 12 quadratico E n + 1}
\bB := \int_0^1 \bV_{\tau} \wrt \tau \,,  \quad \bV_{\tau} := {\mathcal F}_{- \tau} \bE_n \Psi_n {\mathcal F}_\tau\,. 
 \end{equation}
 Clearly it is enough to estimate $\bV_{\tau}$ uniformly with respect to $\tau \in [0, 1]$. 
 By applying the estimates \eqref{stime cal Rn rid}, \eqref{stime R nu i1 i2}, \eqref{stime Delta 12 Psin}, \eqref{stime Psin neumann}, \eqref{stime Phi n inv - Id}, \eqref{stima delta 12 exp Psi n} (using also that, by the ansatz \eqref{ansatz}, $\| w_i \|_{s_0 + \Sigma(\tb)}\leq 1$, $i = 1,2$)
 plus a repeated use of the triangular inequality and Lemma \ref{proprieta standard norma decay}-$(ii)$, one can show that $\Delta_{12} \bB$ satisfies the estimates
 \begin{equation}\label{stime delta 12 cal Q tau cal V tau}
 \begin{aligned}
&  |\Delta_{12} \bB |_{- M, s_0} \lesssim \tN_n^{2 \tau + 1 + M} \tN_{n - 1}^{- 2 \ta}  \varepsilon^{M} \gamma^{-M} \varepsilon_{M} \gamma^{- (M-1)} \| w_1 - w_2 \|_{s_0 + \Sigma(\tb)}\,, \\
&  |\Delta_{12} \bB|_{- M, s_0 + \tb} \lesssim \tN_n^{2 \tau + 1 + M} \tN_{n - 1}^{1 -  \ta}  \varepsilon^{M} \gamma^{-M} \varepsilon_{M} \gamma^{- (M-1)} \| w_1 - w_2 \|_{s_0 + \Sigma(\tb)} \,.
 \end{aligned}
 \end{equation}
 Collecting the estimates \eqref{Delta 12 rRn bot}, \eqref{stime delta 12 cal Q tau cal V tau}, we obtain that 
  $$
 \begin{aligned}
 & | \Delta_{12} \bE_{n + 1}   |_{- M, s_0} \lesssim \Big( \tN_n^{- \tb} \tN_{n - 1}   + \tN_n^{2 \tau + 1 + M} \tN_{n - 1}^{- 2 \ta} \varepsilon^{M} \gamma^{- M}\Big) \varepsilon_{M} \gamma^{-(M-1)} \| w_1 - w_2 \|_{s_0 + \Sigma(\tb)}\,, \\
& | \Delta_{12} \bE_{n + 1}   |_{- M, s_0 + \tb} \lesssim  \Big( \tN_{n - 1}  + \tN_n^{2 \tau + 1 + M} \tN_{n - 1}^{1 -  \ta} \varepsilon^{M} \gamma^{-M} \ \Big)  \varepsilon_{M} \gamma^{-(M-1)}  \| w_1 - w_2 \|_{s_0 + \Sigma(\tb)}\,,
 \end{aligned}
 $$
 which imply the claimed bounds \eqref{stime R nu i1 i2} at the step $n + 1$,  arguing as in \eqref{milan merda 0}-\eqref{forza inter 1}. 
 
 \noindent
 We now verify the estimates \eqref{r nu - 1 r nu i1 i2} at the step $n + 1$. Recalling \eqref{frittata di maccheroni} and by Lemma \ref{proprieta standard norma decay}-$(v)$, together with the estimate \eqref{stime R nu i1 i2}, we have, for any $j \in \Z^2 \setminus \{ 0 \}$,
 \begin{equation}
 \begin{aligned}
 |\Delta_{12}(\tz_{n + 1}(j) - \tz_n(j))| & = |\Delta_{12} \widehat{\bE}_n(0)_j^j|  \lesssim |j|^{- M} |\Delta_{12} \bE_n|_{- M, s_0} \\
 & \lesssim |j|^{- M} \tN_{n - 1}^{- \ta}  \varepsilon_{M} \gamma^{-(M-1)} \| w_1 - w_2 \|_{s_0 + \Sigma(\tb)} \,,
 \end{aligned}
 \end{equation}
 which is the first estimate in \eqref{r nu - 1 r nu i1 i2} at the step $n + 1$. Moreover, using that $\tz_{n + 1}(j) = \tz_0(j) + \sum_{k = 0}^n \tz_{k + 1} - \tz_k$ and recalling also that $|\Delta_{12} \tz_0| \lesssim \lambda^{\theta} |j|^{- 1} \| w_1 - w_2 \|_{s_0 + \Sigma(\tb)}$, we have, 
 \begin{equation}
 	 \begin{aligned}
 		|\Delta_{12} \tz_{n + 1}(j)| & \leq | \Delta_{12}\tz_0(j)| + \sum_{k = 0}^n | \Delta_{12}(\tz_{k + 1} - \tz_k) | \\
 		& \lesssim \Big( \lambda^{\theta} |j|^{- 1}  + |j|^{- M}   \sum_{k = 0}^n \tN_{k - 1}^{- \ta}  \varepsilon_{M} \gamma^{-(M-1)} \Big) \| w_1 - w_2 \|_{s_0 + \Sigma(\tb)} \\
 		& \lesssim \lambda^{\theta} |j|^{- 1} \| w_1 - w_2 \|_{s_0 + \Sigma(\tb)} \,,
 	\end{aligned}
 \end{equation}
 by using \eqref{vare.vare}, the fact that $M > 1$, and that the series $\sum_{k \geq 0} \tN_{k - 1}^{- \ta} < \infty$ is convergent and $\varepsilon \gamma^{-1}\ll 1$. Hence, the second estimate in \eqref{r nu - 1 r nu i1 i2} at the step $n + 1$ is proved and the proof of ${\bf (S3)}_{n + 1}$ is completed. 
\\[1mm]
	\noindent
	{\sc Proof of ${\bf (S4)}_{n + 1}$.} 
Let $ \omega \in \Omega_{n+1}^{\gamma}(w_1) $.
By \eqref{insiemi di cantor rid} and the induction hypothesis \eqref{inclusione cantor riducibilita S4 nu}  (i.e.\  ${\bf(S4)}_n$), we have that 
$ \Omega_{n+1}^{\gamma}(w_1) \subseteq  \Omega_{n}^{\gamma}(w_1) \subseteq  \Omega_{n}^{\gamma- \rho}(w_2) $. 
Moreover, $ \omega \in \Omega_n^{\gamma - \rho}(w_2) \subseteq \Omega_n^{\gamma/2}(w_2)  $ because $\rho \leq \gamma/2$. 
Thus $\Omega_{n + 1}^\gamma(w_1) \subseteq \Omega_n^{\gamma - \rho}(w_2) \subseteq \Omega_n^{\gamma/2}(w_2) $. We also have that \eqref{def.calLn calDn calQn} holds 
for any $\omega \in \mathtt \Omega_{n+1}^{\gamma}(w_1)$, as well estimate \eqref{r nu - 1 r nu i1 i2} 
 for any $j \in \Z^2 \setminus \{ 0 \}$, namely, for any $ \omega \in \Omega_{n + 1}^\gamma(w_1)$
\begin{equation}\label{stima delta 12 tz n j Omega n + 1 gamma}
|\Delta_{12} \mu_n(j; \omega)| = |\Delta_{12} \tz_n(j; \omega)|  \lesssim \lambda^{\theta}  |j|^{- 1}  \| w_1 -  w_2  \|_{ s_0  + \Sigma(\tb)}^{\rm sup}\,. 
\end{equation}
Let $\ell \in \Z^{\nu}\setminus\{0\}$ and $j,j'\in\Z^2\setminus\{0\}$, with $|\ell|\leq \tN_{n}$ and $\pi^\top(\ell)+j-j' =0$. We distinguish two regimes: 
\begin{equation}\label{regimi S3 n+1 ridu}
{\rm min}\{ |j|, |j'| \} \geq \tN_n^\tau \quad \text{and} \quad {\rm min}\{ |j|, |j'| \} \leq \tN_n^\tau\,. 
\end{equation}
We then organize the rest of the proof in two claims.

\medskip

\noindent
{\bf Claim 1.} Let $\omega \in \Omega_{n+1}^{\gamma}(w_1)$. If
$\min \{|j|, |j'|\} \geq \tN_{n}^\tau $, having $\lambda^{\theta-1} \gamma^{- 1} \ll 1 $ small enough,
then
\begin{equation}
	|\im\,\lambda\, \omega \cdot \ell + \mu_n(j ;w_2) - \mu_n(j' ; w_2)| \geq \frac{\lambda(\gamma - \rho)}{\langle \ell \rangle^\tau |j'|^\tau}\,. 
\end{equation}
\noindent
{\bf Proof of the Claim 1.} By \eqref{def.calLn calDn calQn} and the estimates \eqref{stime qn}, we have, for any $j\in\Z^2 \setminus\{0\}$, that
$
|\mu_n(j; w_2)| \leq C \lambda^{\theta} |j|^{- 1} 
$
for some constant $C > 0$. Therefore, using that ${\rm min}\{ |j|, |j'| \} \geq \tN_n^\tau$ and recalling that $\omega \in {\mathtt D} {\mathtt C}(2 \gamma, \tau)$, we obtain
\begin{equation}\label{inter juve 0}
\begin{aligned}
 |\im \, \lambda\, \omega \cdot \ell + \mu_n(j ; w_2) - \mu_n(j';w_2)| & \geq \lambda |\omega \cdot \ell| - C \lambda^{\theta} \big( |j|^{- 1} + |j'|^{- 1} \big) \\
 & \geq \frac{2 \lambda\,  \gamma}{\langle \ell \rangle^\tau} - \frac{2 C \lambda^{\theta}}{{\rm min}\{ |j|, |j'| \}} \\
 & \geq  \frac{2\lambda \,\gamma}{\langle \ell \rangle^\tau} - \frac{2 C \lambda^{\theta}}{\tN_n^\tau} \geq \frac{\lambda \,\gamma}{\langle \ell \rangle^\tau} \,,
\end{aligned}
\end{equation}
provided that
$$
 2C \lambda^{\theta-1} \gamma^{- 1} \langle \ell \rangle^\tau \tN_n^{- \tau} \leq 1 \,, \quad \forall\, \ell \in \Z^\nu \,, \quad 0 < |\ell| \leq \tN_n\,.
$$
The latter condition is verified by taking $\lambda^{\theta-1} \gamma^{- 1} \ll 1$ small enough, by \eqref{piccolo.ansatx}. The claimed statement then follows directly by \eqref{inter juve 0} since $\gamma \geq \gamma - \rho$ and $\langle \ell \rangle^\tau \leq \langle \ell \rangle^\tau |j'|^\tau$. 

\medskip

\noindent
We now analyze the other regime in \eqref{regimi S3 n+1 ridu}, namely when ${\rm min} \{ |j|, |j'| \} \leq \tN_n^\tau$.
\\[1mm]
\noindent
{\bf Claim 2.} Let $\omega \in \Omega_{n+1}^{\gamma}(w_1)$. If $\min \{|j|, |j'|\} \leq \tN_{n}^\tau $ and $   \tN_n^{\tau + \tau^2} \lambda^{\theta-1} \rho^{- 1}  \| w_1 - w_2 \|_{s_0 + \Sigma(\tb)}\ll 1 $ is small enough, then
\begin{equation}
	|\im\,\lambda\, \omega \cdot \ell + \mu_n(j; w_2) - \mu_n(j'; w_2)| \geq \frac{\gamma - \rho}{\langle \ell \rangle^\tau |j'|^\tau}\,. 
\end{equation}
{\bf Proof of the Claim 2.} 
By \eqref{stima delta 12 tz n j Omega n + 1 gamma}, and using that $\omega \in \Omega_{n+1}^{\gamma}(w_1)$ and $ \min \{|j|, |j'|\} \leq \tN_{n}^\tau $, we have that
\begin{equation}\label{inter juve 1}
\begin{aligned}
 |\im \, \lambda\, \omega \cdot \ell  + \mu_n(j; w_2) &- \mu_n(j'; w_2)|  \geq  |\im\,\lambda\, \omega \cdot \ell + \mu_n(j; w_1)   - \mu_n(j' ; w_1)|  \\
 &  - |\Delta_{12} \mu_n(j)| - |\Delta_{12} \mu_{n}(j')| \\
 & \geq \frac{\lambda \gamma}{\langle \ell \rangle^\tau |j'|^{\tau}} -  C \lambda^{\theta} \big(|j|^{- 1} + |j'|^{- 1}\big) \| w_1 - w_2 \|_{s_0 + \Sigma(\tb)}  \\
 & \geq \frac{\lambda \gamma}{\langle \ell \rangle^\tau |j'|^{\tau}} -  2 C \lambda^{\theta} \| w_1 - w_2 \|_{s_0 + \Sigma(\tb)}\geq \frac{\lambda (\gamma - \rho)}{\langle \ell \rangle^\tau |j'|^{\tau}} \,, 
\end{aligned}
\end{equation}
provided the following condition holds
\begin{equation}\label{inter juve 2}
2 C \lambda^{\theta-1} \rho^{- 1} \langle \ell \rangle^\tau |j'|^\tau \| w_1 - w_2 \|_{s_0 + \Sigma(\tb)} \leq 1\,. 
\end{equation}
Since $\langle \ell \rangle \leq \tN_n$ and the momentum conservation $\pi^\top(\ell) + j - j' = 0$ implies that $|j - j'| \lesssim |\ell| \lesssim \tN_n$, one has that 
$$
|j'| \leq {\rm min} \{ |j|, |j'| \} + |j - j'| \lesssim \tN_n^\tau + \tN_n \lesssim \tN_n^\tau\,.
$$
Hence the condition \eqref{inter juve 2} is implied by
$$
K \lambda^{\theta-1} \rho^{- 1} \tN_n^{\tau + \tau^2} \| w_1 - w_2 \|_{s_0 + \Sigma(\tb)} \leq 1
$$
for some large constant $K \gg 1$, and the claimed estimate \eqref{inter juve 1} holds.

\bigskip

\noindent
Finally, Claims 1 and 2 imply that, if $\omega \in \Omega_{n + 1}^\gamma(w_1)$, then, for any $\ell \in \Z^\nu\setminus\{0\}$ and $j, j' \in \Z^2 \setminus \{ 0 \}$, with $ |\ell| \leq \tN_n$ and $\pi^\top(\ell) + j - j' = 0$, we have that 
$$
| \lambda\, \omega \cdot \ell + \mu_n(j; w_2) - \mu_n(j'; w_2) | \geq \frac{\lambda(\gamma - \rho)}{\langle \ell \rangle^\tau |j'|^\tau} \,, 
$$
which is exactly the fact that $\omega \in \Omega_{n + 1}^{\gamma - \rho}(w_2)$. The proof of ${\bf(S4)}_{n+1}$ is then concluded. This concludes also the proof of Proposition \ref{prop riducibilita}.
 \end{proof}

 \subsection{KAM reducibility: convergence}\label{sez convergenza KAM}
 
 The eigenvalues of the normal form $\bD_{n}$ in \eqref{def.calLn calDn calQn} are Cauchy sequences and so they converge to some limit.
 \begin{lem}\label{lemma blocchi finali}
 	For any $j \in \Z^2 \setminus \{ 0 \}$, the sequence 
 	$\{ \widetilde \mu_n(j) \}_{n\in\N}$, defined in \eqref{def.calLn calDn calQn},
 	converges to some limit
	\begin{equation}\label{def cal N infty nel lemma}
	\mu_\infty(j) = \im \, \beta \,\tL(j) + \tz_\infty(j)\,, \quad \mu_\infty(j)= \mu_\infty(j;\,\cdot\,):\tD\tC(2\gamma,\tau)\to \C \,,
\end{equation}
 	satisfying the following estimates 
 	\begin{equation}\label{stime forma normale limite}
 		\begin{aligned}
 			& |  \mu_\infty(j) - \widetilde \mu_n(j) |^{{\rm Lip}(\gamma)} = | \tz_\infty(j) - \widetilde \tz_n(j) |^{{\rm Lip}(\gamma)} \lesssim  \tN_{n - 1}^{- \ta} \varepsilon_{M} \gamma^{-(M-1)} |j|^{- M}  \,, \\
 			& | \tz_\infty(j)|^{{\rm Lip}(\gamma)} \lesssim  \lambda^{\theta}  |j|^{- 1} \,. 
 		\end{aligned}
 	\end{equation}
 	In addition, if we assume $ w(\vf,x)=\odd(\vf,x)$, then $\mu_\infty(j;\,\cdot\,):\tD\tC(2\gamma,\tau)\to \im\,\R$ for any $j \in \Z^2 \setminus \{ 0 \}$.
 \end{lem}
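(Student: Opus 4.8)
The plan is to show that, for each fixed $j \in \Z^2 \setminus \{0\}$, the sequence of Lipschitz extensions $\{\widetilde \mu_n(j)\}_{n \in \N}$ furnished by ${\bf (S2)}_n$ is Cauchy in the $\Lip(\gamma)$-norm on $\tD\tC(2\gamma,\tau)$, and then to pass to the limit. The key input is the telescoping bound \eqref{lambdaestesi}, i.e. $|\widetilde \mu_{n + 1}(j) - \widetilde \mu_n(j)|^{\Lip(\gamma)} \lesssim \varepsilon_M \gamma^{-(M-1)} \tN_{n-1}^{-\ta} |j|^{-M}$, combined with the super-exponential growth $\tN_k = \tN_0^{\chi^k}$, $\chi = 3/2$, which makes $\sum_{k \geq 0} \tN_{k-1}^{-\ta}$ convergent since $\ta > 0$. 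First I would set $\mu_\infty(j) := \widetilde \mu_0(j) + \sum_{k \geq 0}\big( \widetilde \mu_{k + 1}(j) - \widetilde \mu_k(j) \big)$ and observe that the series converges in $\Lip(\gamma)$-norm (the norm being subadditive) to a function $\mu_\infty(j;\,\cdot\,) : \tD\tC(2\gamma,\tau) \to \C$. Writing $\widetilde \mu_n(j) = \im \beta \tL(j) + \widetilde \tz_n(j)$ with $\widetilde \tz_0 = \tz_0$, the constant $\im \beta \tL(j)$ drops out of every increment, so $\mu_\infty(j) = \im \beta \tL(j) + \tz_\infty(j)$ with $\tz_\infty(j) := \lim_{n \to \infty} \widetilde \tz_n(j)$, which is the form \eqref{def cal N infty nel lemma}.

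Next I would prove the two estimates in \eqref{stime forma normale limite}. For the first, summing the tail of the telescoping series, $|\mu_\infty(j) - \widetilde \mu_n(j)|^{\Lip(\gamma)} \leq \sum_{k \geq n} |\widetilde \mu_{k + 1}(j) - \widetilde \mu_k(j)|^{\Lip(\gamma)} \lesssim \varepsilon_M \gamma^{-(M-1)} |j|^{-M} \sum_{k \geq n} \tN_{k-1}^{-\ta}$, and since the exponents $\chi^k$ grow geometrically one has $\sum_{k \geq n} \tN_{k-1}^{-\ta} \lesssim \tN_{n-1}^{-\ta}$; combined with the identity $\mu_\infty(j) - \widetilde \mu_n(j) = \tz_\infty(j) - \widetilde \tz_n(j)$ this is exactly the claimed bound. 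For the second, $|\tz_\infty(j)|^{\Lip(\gamma)} \lesssim \lambda^\theta |j|^{-1}$, I would pass to the limit in the uniform bound $|\tz_n(j)|^{\Lip(\gamma)} \lesssim \lambda^\theta |j|^{-1}$ of \eqref{stime qn}; equivalently, split $\tz_\infty(j) = \tz_0(j) + ( \tz_\infty(j) - \tz_0(j) )$, bound $\tz_0(j)$ by $\lambda^\theta |j|^{-1}$ via \eqref{stime pre rid}, and bound the correction by $\varepsilon_M \gamma^{-(M-1)} |j|^{-M} \lesssim |j|^{-1} \leq \lambda^\theta |j|^{-1}$, using that $\varepsilon_M \gamma^{-(M-1)} \ll 1$ for $\lambda \gg 1$ by the choice of $M$ in \eqref{definizione.param.KAM} and $\gamma = \lambda^{-\tc}$.

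Finally, in the reversible case $ w(\vf,x) = \odd(\vf,x)$, I would invoke \eqref{reversibility reality auto}, which gives $\tz_n(j) \in \im \R$ on $\Omega_n^\gamma$ for every $n$ and $j$; in ${\bf (S2)}_n$ the Kirszbraun extension can be chosen to preserve this (apply it to the real and imaginary parts separately, the real part being identically zero on $\Omega_n^\gamma$, hence extendable by zero), so $\widetilde \tz_n(j;\,\cdot\,) : \tD\tC(2\gamma,\tau) \to \im \R$. Since $\im \R$ is closed, the limit satisfies $\tz_\infty(j;\,\cdot\,) : \tD\tC(2\gamma,\tau) \to \im \R$, and because $\im \beta \tL(j) \in \im \R$ (as $\tL(j) \in \R$), we conclude $\mu_\infty(j;\,\cdot\,) : \tD\tC(2\gamma,\tau) \to \im \R$.

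I do not expect a real obstacle here — this is the routine convergence stage of a KAM scheme — but the point demanding most care is the bookkeeping: confirming that the $\Lip(\gamma)$-norm indeed controls the infinite telescoping sum, that the tail $\sum_{k \geq n} \tN_{k-1}^{-\ta}$ is genuinely dominated by $\tN_{n-1}^{-\ta}$, and that the extensions $\widetilde \mu_n$ from ${\bf (S2)}_n$ are constructed so that the series increments coincide exactly with the quantities estimated in \eqref{lambdaestesi}.
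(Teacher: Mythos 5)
Your proof is correct and follows essentially the same route as the paper's: telescoping with the bound \eqref{lambdaestesi} to get Cauchy convergence, summing the tail (dominated by its first term thanks to the super-exponential growth of $\tN_n$) for the first estimate, splitting off $\tz_0$ for the second, and invoking \eqref{reversibility reality auto} in the reversible case. You are slightly more careful than the paper at one point, namely in noting that the Kirszbraun extension in ${\bf (S2)}_n$ must be taken componentwise so that the extended $\widetilde\tz_n$ remain purely imaginary — the paper's proof asserts $\{\widetilde\mu_n(j;\omega)\}\subset\im\R$ without comment, implicitly assuming exactly this choice.
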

 
\begin{proof}
	By Proposition \ref{prop riducibilita} and by
	 \eqref{lambdaestesi}, we have that the sequence $\{\widetilde \mu_n(j;\omega)\}_{n\in\N}\subset \C$ is Cauchy on the closed set $\tD\tC(2\gamma,\tau)$,  therefore it is convergent for any $ \omega \in \tD\tC(2\gamma,\tau)$. The estimates in \eqref{stime forma normale limite}
	 follow then by a telescoping argument with  \eqref{lambdaestesi}.
	 Finally, if we assume $ w(\vf,x)=\odd(\vf,x)$, then $\{\widetilde \mu_n(j;\omega)\}_{n\in\N}\subset \im\,\R$ for any $j \in \Z^2 \setminus \{ 0 \}$, by \eqref{reversibility reality auto}, implying that  $ \mu_\infty(j;\omega) \in  \im\,\R$ for any $j \in \Z^2 \setminus \{ 0 \}$. 
\end{proof}
 We define the set $\Omega_\infty^\gamma$ of the non-resonance conditions for the final eigenvalues as 
 \begin{equation}\label{cantor finale ridu}
 	\begin{aligned}
 		\Omega_\infty^\gamma 
 		:= \Big\{ \omega \in \tD\tC& (2 \gamma,\tau) 
 		 \, : \, |\im \, \lambda\, \omega \cdot \ell + \mu_\infty(j) -  \mu_\infty(j')| \geq \frac{2 \lambda \,\gamma}{\langle \ell \rangle^\tau | j' |^\tau } \,, \\
 		&  \forall \ell \in \Z^\nu \setminus \{ 0 \}\,, \ \ 	j,j' \in \Z^2 \setminus \{ 0 \}\,, \ \  \pi^\top(\ell) + j-j' =0\Big\}\,. 
 	\end{aligned}
 \end{equation}
 
 \begin{lem}\label{prima inclusione cantor}
 	We have $\Omega_\infty^\gamma \subseteq \cap_{n \geq 0} \, \Omega_n^\gamma$.
 \end{lem}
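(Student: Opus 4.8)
The plan is to prove the inclusion $\Omega_\infty^\gamma \subseteq \bigcap_{n\geq 0}\Omega_n^\gamma$ by induction on $n$, exactly as in the analogous step of the transport reduction (Lemma \ref{lemma.robertino}). The base case $n=0$ is immediate since $\Omega_0^\gamma := \tD\tC(2\gamma,\tau)$ and, by definition \eqref{cantor finale ridu}, $\Omega_\infty^\gamma \subseteq \tD\tC(2\gamma,\tau)$. For the inductive step, I would assume $\Omega_\infty^\gamma \subseteq \Omega_n^\gamma$ and prove $\Omega_\infty^\gamma \subseteq \Omega_{n+1}^\gamma$. Fix $\omega\in\Omega_\infty^\gamma$; by the inductive hypothesis $\omega\in\Omega_n^\gamma$, so the eigenvalues $\mu_n(j)=\mu_n(j;\omega)$ are well defined, and one only has to check the second-order Melnikov conditions in \eqref{insiemi di cantor rid} at the level $n$, namely for all $\ell\in\Z^\nu\setminus\{0\}$, $j,j'\in\Z^2\setminus\{0\}$ with $\pi^\top(\ell)+j-j'=0$ and $|\ell|\leq\tN_{n-1}$.

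The core estimate is a perturbative comparison between $\mu_n(j)-\mu_n(j')$ and $\mu_\infty(j)-\mu_\infty(j')$. Using the first bound in \eqref{stime forma normale limite} (with the Lipschitz extension $\widetilde\mu_n$, which coincides with $\mu_n$ on $\Omega_n^\gamma$), one has
\begin{equation}
|(\mu_n(j)-\mu_n(j')) - (\mu_\infty(j)-\mu_\infty(j'))| \lesssim \tN_{n-1}^{-\ta}\,\varepsilon_M\gamma^{-(M-1)}\big(|j|^{-M}+|j'|^{-M}\big)\,.
\end{equation}
Since $\omega\in\Omega_\infty^\gamma$ gives $|\im\,\lambda\,\omega\cdot\ell + \mu_\infty(j)-\mu_\infty(j')| \geq 2\lambda\gamma\langle\ell\rangle^{-\tau}|j'|^{-\tau}$, a triangle inequality yields
\begin{equation}
|\im\,\lambda\,\omega\cdot\ell + \mu_n(j)-\mu_n(j')| \geq \frac{2\lambda\gamma}{\langle\ell\rangle^\tau|j'|^\tau} - C\,\tN_{n-1}^{-\ta}\,\varepsilon_M\gamma^{-(M-1)}\,.
\end{equation}
It then suffices to absorb the error term into half of the main term, i.e.\ to verify $C\,\tN_{n-1}^{-\ta}\,\varepsilon_M\gamma^{-(M-1)}\langle\ell\rangle^\tau|j'|^\tau \leq \lambda\gamma$ for all admissible $(\ell,j,j')$. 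Here the momentum constraint $\pi^\top(\ell)+j-j'=0$ is essential: it forces $|j-j'|\lesssim|\ell|$, and combined with $|\ell|\leq\tN_{n-1}$ this bounds $|j'|\lesssim\tN_{n-1}+\min\{|j|,|j'|\}$; in the relevant regime one gets $\langle\ell\rangle^\tau|j'|^\tau\lesssim\tN_{n-1}^{\tau+\tau^2}$ or a comparable power, which is dominated by the gain $\tN_{n-1}^{\ta}$ because $\ta$ was chosen in \eqref{definizione.param.KAM} large compared to $\tau+\tau^2$. Recalling $\varepsilon_M=\lambda^{M(\theta-1)+1}$ and $\gamma=\lambda^{-\tc}$ with $M>\frac{1-\tc}{2(1-\tc)-\alpha}$, the factor $\varepsilon_M\gamma^{-(M-1)}\lambda^{-1}\gamma^{-1}$ is a negative power of $\lambda$, so the smallness condition \eqref{KAM smallness condition} (together with $\tN_0,\lambda$ large enough) closes the estimate uniformly in $n$.

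I do not expect a serious obstacle here: the argument is a standard "final Cantor set is contained in all approximating Cantor sets" lemma, and every ingredient — the telescoping bound \eqref{stime forma normale limite}, the parameter hierarchy \eqref{definizione.param.KAM}, the smallness \eqref{KAM smallness condition}, and the momentum conservation controlling $|j-j'|$ — is already available from Proposition \ref{prop riducibilita} and Lemma \ref{lemma blocchi finali}. The only mildly delicate point is bookkeeping the exponents of $\tN_{n-1}$ and of $\lambda$ so that the error is genuinely summable/absorbable for \emph{all} $n$ simultaneously with a single choice of $\tN_0$ and $\lambda$; this is exactly what the definition of $\ta$ (chosen $\geq \chi(\tau+\tau^2+2)$, hence in particular $\geq \tau+\tau^2$ plus room) and the condition $\varepsilon^M\gamma^{-M}\ll1$ are designed to guarantee, so the verification is routine.
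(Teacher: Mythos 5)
Your overall strategy coincides with the paper's: induction on $n$, triangle inequality against the final eigenvalues, the telescoping bound \eqref{stime forma normale limite}, the momentum condition to control $|j-j'|$, and the smallness condition \eqref{KAM smallness condition} to absorb the error. However, there is a concrete gap in the absorption step. In your second display you bound the error term by $C\,\tN_{n-1}^{-\ta}\varepsilon_M\gamma^{-(M-1)}$, i.e.\ you discard the factors $|j|^{-M}+|j'|^{-M}$ that appear (correctly) in your first display. After that, the condition you need becomes $C\,\tN_{n-1}^{-\ta}\varepsilon_M\gamma^{-(M-1)}\langle\ell\rangle^\tau|j'|^\tau\leq\lambda\gamma$, and this cannot hold uniformly: $|j'|$ is unbounded here, and the claim $\langle\ell\rangle^\tau|j'|^\tau\lesssim\tN_{n-1}^{\tau+\tau^2}$ is false in general. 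The momentum constraint only controls $|j-j'|\lesssim|\ell|\lesssim\tN_n$, not $\min\{|j|,|j'|\}$; the two-regime splitting you are implicitly importing from the proof of ${\bf(S4)}_n$ is not available in this lemma, because there is no a priori restriction on $\min\{|j|,|j'|\}$.

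The fix is to keep the decay factors you dropped. With $|j|^{-M}+|j'|^{-M}$ retained, the absorption condition reads
\begin{equation}
C\,\tN_{n-1}^{-\ta}\,\varepsilon^{M}\gamma^{-M}\,\langle\ell\rangle^\tau\big(|j'|^\tau|j|^{-M}+|j'|^{\tau-M}\big)\leq 1\,,
\end{equation}
and now one uses $M>\tau$ from \eqref{definizione.param.KAM}: the momentum condition together with $|\ell|\leq\tN_n$ gives $|j'|\leq|j|+|j-j'|\lesssim\tN_n|j|$, hence $|j'|^\tau|j|^{-M}\lesssim\tN_n^\tau|j|^{\tau-M}\lesssim\tN_n^\tau$ and $|j'|^{\tau-M}\leq1$. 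The condition then reduces to $C_0\,\tN_n^{2\tau}\tN_{n-1}^{-\ta}\varepsilon^M\gamma^{-M}\leq1$, which holds by the choice of $\ta$ and \eqref{KAM smallness condition}. This is exactly how the paper closes the estimate. A minor additional remark: to verify $\omega\in\Omega_{n+1}^\gamma$ the relevant Melnikov conditions involve $\mu_n$ and the range $|\ell|\leq\tN_n$, not $|\ell|\leq\tN_{n-1}$ as you wrote; your displays are consistent with $\mu_n$, so this is only an indexing slip.
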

 
 \begin{proof}
 	We prove by induction that $\Omega_\infty^\gamma \subseteq \Omega_n^\gamma$ 
 	for any integer $n \geq 0$. 
 	The statement is trivial for $n=0$,
 	since $\Omega_0^\gamma := \tD\tC(2 \gamma,\tau)$ 
 	(see Proposition \ref{prop riducibilita}).
 	We now assume by induction that $\Omega_\infty^\gamma \subseteq \Omega_n^\gamma$ for some $n \geq 0$ and we  show that $\Omega_\infty^\gamma \subseteq \Omega_{n + 1}^\gamma$. Let $\omega \in \Omega_\infty^\gamma$, $\ell \in \Z^\nu \setminus \{ 0 \}$, $j, j' \in \Z^2 \setminus \{ 0 \}$, with $\pi^\top( \ell) +j-j'=0$ and $|\ell|  \leq \tN_n$. By \eqref{stime forma normale limite}, \eqref{cantor finale ridu}, we compute
 	$$
 	\begin{aligned}
 		|\im \, \lambda \,\omega \cdot \ell + \mu_n(j) - \mu_n(j')|   &\geq |\im\, \lambda \,\omega \cdot \ell + \mu_\infty(j) - \mu_\infty(j')| - |\mu_\infty(j) - \mu_n(j)| \\
 		&  \quad  - |\mu_\infty(j') - \mu_n(j')| \\
 		& \geq \frac{2\lambda\,\gamma}{\langle \ell \rangle^\tau | j' |^\tau } - C \tN_{n - 1}^{- \ta}  \varepsilon_{M} \gamma^{-(M-1)}\big(  |j|^{- M} + |j'|^{- M} \big)  \\
 		& \geq \frac{\lambda\,\gamma}{\langle \ell \rangle^\tau | j' |^\tau } \,,
 	\end{aligned}
 	$$
 	for some positive constant $C>0$, provided
 	\begin{equation}\label{frittata di maccheroni 10}
 		C  \tN_{n - 1}^{- \ta}\varepsilon^{M}\gamma^{- M}\langle \ell \rangle^\tau  \big(  | j' |^\tau |j|^{- M} + |j'|^{\tau - M} \big) \leq 1\,.
 	\end{equation}
 	Recalling that $M >  \tau$ by \eqref{definizione.param.KAM}, we have that $|j'|^{\tau - M} \leq 1$. Moreover, using that $ |\ell| \leq \tN_n$ and the momentum condition $\pi^\top(\ell)  + j-j'=0$, we have that
 	\begin{equation*}
 		\begin{aligned}
 			& |j'| \leq |j| + |j - j'| \leq |j| + \tN_n \lesssim \tN_n |j|\,, 
 		\end{aligned}
 	\end{equation*}
	and therefore, using again that $M > \tau$, we have 
	$$
	| j' |^\tau |j|^{- M}  \lesssim \tN_n^\tau |j|^{\tau - M} \lesssim \tN_n^\tau\,.
	$$
 	We then deduce that 
	$$
	C  \tN_{n - 1}^{- \ta}\varepsilon^{M}\gamma^{- M}\langle \ell \rangle^\tau  \Big(  | j' |^\tau |j|^{- M} + |j'|^{\tau - M} \Big) \leq C_0 \tN_n^{2 \tau} \tN_{n - 1}^{- \ta} \varepsilon^{M} \gamma^{- M}
	$$
	for some large constant $C_0 \gg 0$. Hence, the condition \eqref{frittata di maccheroni 10} is verified since 
	$$
	C_0 \tN_n^{2 \tau} \tN_{n - 1}^{- \ta} \varepsilon^{M} \gamma^{- M} \leq 1\,,
	$$
	recalling \eqref{definizione.param.KAM}, $\varepsilon \gamma^{-1} = \lambda^{\theta-1}\gamma^{-1}\ll 1$ in \eqref{piccolo.ansatx} and the smallness condition \eqref{KAM smallness condition}
 	 We conclude that $\omega \in \Omega_{n + 1}^\gamma$ and the claim is proved. 
 \end{proof}

 Now we define the sequence of invertible maps
 \begin{equation}\label{trasformazioni tilde ridu}
 	\widetilde \Phi_n := \Phi_0 \circ \Phi_1 \circ \ldots \circ \Phi_n \,, \quad n \in \N\,. 
 \end{equation}
 
 
 \begin{prop}\label{lemma coniugio finale}
 	Let $S > s_0 + \Sigma(\tb)$. There exists $\delta := \delta (S, \tau,\nu) > 0$ such that, if  \eqref{ansatz} \eqref{KAM smallness condition} are verified, then the following holds. 
 	For any $\omega \in \Omega_\infty^\gamma$, the sequence $(\widetilde \Phi_n)_{n\in\N}$
 	converges in norm $| \cdot |_{0, s}^{{\rm Lip}(\gamma)}$ 
 	to an invertible map $\Phi_\infty$, 
 	satisfying, for any $s_0\leq s\leq S-\Sigma(\tb)$,
 	\begin{equation}\label{stima Phi infty}
 		\begin{aligned}
 			& |\Phi_\infty^{\pm 1} - \widetilde \Phi_n^{\pm 1}|_{0, s}^{{\rm Lip}(\gamma)} 
 			\lesssim_{s} 
 			\tN_{n + 1}^{2 \tau + 1} \tN_n^{- \ta} \varepsilon^{- M} \gamma^{- M} \|  w \|_{s + \Sigma(\tb)}^{\Lip(\gamma)} \,, \\
 			& |\Phi_\infty^{\pm 1} - {\rm Id}|_{0, s}^{{\rm Lip}(\gamma)} 
 			\lesssim_{s}  \varepsilon^{M} \gamma^{- M} \|  w \|_{s + \Sigma(\tb)}^{\Lip(\gamma)} \,.
 		\end{aligned}
 	\end{equation}
 	The operators $\Phi_\infty^{\pm 1} : H^s_0 \to H^s_0$ are real and momentum preserving. Moreover, for any $\omega\in \Omega_\infty^\gamma$, one has the conjugation
 	\begin{equation}\label{cal L infty e}
 		{\bL}_{\infty} := \Phi_\infty^{- 1} {\bL}_{0} \Phi_\infty =\lambda\, \omega \cdot \partial_\vphi + {\bD}_\infty\,, \quad {\bD}_\infty := {\rm diag}_{j \in \Z^2 \setminus \{ 0 \}} \mu_\infty(j)  \,,
 	\end{equation} 
 	where the operator $\bL_{0}$ is given in \eqref{bL0_inizioKAM}
 	and  the final eigenvalues $\mu_\infty(j)$ are given in Lemma \ref{lemma blocchi finali}. In addition, if we assume that $ w(\vf,x)=\odd(\vf,x)$, the operators $\Phi_{\infty}^{\pm 1}$ are reversibility preserving and ${\bf D}_\infty$ is a reversible operator.
 \end{prop}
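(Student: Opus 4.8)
The proof follows the standard super-exponentially convergent KAM scheme, so the structure is to show the telescoping product $\widetilde\Phi_n = \Phi_0 \circ \cdots \circ \Phi_n$ is Cauchy in the norm $|\cdot|_{0,s}^{\Lip(\gamma)}$ on the set $\Omega_\infty^\gamma$, pass to the limit in the conjugation identity \eqref{coniugazione rid}, and transfer the algebraic structure (reality, momentum preservation, reversibility) to the limit. First I would record, from Proposition \ref{prop riducibilita}, the key estimate \eqref{stime Psi n rid}: $|\Psi_{n-1}|_{0,s}^{\Lip(\gamma)} \lesssim_s \varepsilon^M \gamma^{-M} \tN_{n-1}^{2\tau+1}\tN_{n-2}^{-\ta}\|w\|_{s+\Sigma(\tb)}^{\Lip(\gamma)}$ together with the smallness \eqref{KAM smallness condition}; since $\ta > \chi(\tau+\tau^2+2)$ one gets $\sum_n \tN_{n-1}^{2\tau+1}\tN_{n-2}^{-\ta} < \infty$ with geometric-type decay, and in fact $\tN_n^{2\tau+1}\tN_{n-1}^{-\ta}$ is summable and goes to $0$ super-exponentially. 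Then, writing $\widetilde\Phi_n = \widetilde\Phi_{n-1}\Phi_n$ and using $\Phi_n = \mathrm{exp}(\Psi_n) = \mathrm{Id} + O(\Psi_n)$, Lemma \ref{proprieta standard norma decay}-$(ii),(iv)$ gives
\begin{equation}
|\widetilde\Phi_n - \widetilde\Phi_{n-1}|_{0,s}^{\Lip(\gamma)} \lesssim_s |\widetilde\Phi_{n-1}|_{0,s}^{\Lip(\gamma)} |\Psi_n|_{0,s_0}^{\Lip(\gamma)} + |\widetilde\Phi_{n-1}|_{0,s_0}^{\Lip(\gamma)} |\Psi_n|_{0,s}^{\Lip(\gamma)} \lesssim_s \varepsilon^M\gamma^{-M} \tN_n^{2\tau+1}\tN_{n-1}^{-\ta}\|w\|_{s+\Sigma(\tb)}^{\Lip(\gamma)},
\end{equation}
where one first proves inductively that $|\widetilde\Phi_{n}|_{0,s}^{\Lip(\gamma)} \lesssim_s 1 + \|w\|_{s+\Sigma(\tb)}^{\Lip(\gamma)}$ uniformly in $n$ (the uniform bound comes from the convergence of the product $\prod_n(1 + c\,\tN_n^{2\tau+1}\tN_{n-1}^{-\ta})$). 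Summing the telescoping differences yields the Cauchy property and the first estimate in \eqref{stima Phi infty}; the second follows from $|\Phi_\infty - \mathrm{Id}|_{0,s}^{\Lip(\gamma)} \le \sum_{n\geq 0}|\widetilde\Phi_n - \widetilde\Phi_{n-1}|_{0,s}^{\Lip(\gamma)}$ with the convention $\widetilde\Phi_{-1} := \mathrm{Id}$. The same argument applied to $\widetilde\Phi_n^{-1} = \Phi_n^{-1}\circ\cdots\circ\Phi_0^{-1}$, using the bound on $\mathrm{exp}(-\Psi_n)$, gives the estimates for $\Phi_\infty^{-1}$, and one checks $\Phi_\infty \Phi_\infty^{-1} = \Phi_\infty^{-1}\Phi_\infty = \mathrm{Id}$ by passing to the limit in $\widetilde\Phi_n\widetilde\Phi_n^{-1} = \mathrm{Id}$.

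\textbf{Convergence of the conjugation.} For $\omega \in \Omega_\infty^\gamma$, Lemma \ref{prima inclusione cantor} gives $\omega \in \Omega_n^\gamma$ for every $n$, so the identity \eqref{coniugazione rid} holds at every step and composes to $\widetilde\Phi_n^{-1}\bL_0\widetilde\Phi_n = \bL_{n+1} = \lambda\,\omega\cdot\pa_\vf + \bD_{n+1} + \bE_{n+1}$. I would pass to the limit as $n\to\infty$: the remainder $\bE_{n+1}$ satisfies $|\bE_{n+1}|_{-M,s}^{\Lip(\gamma)} \le C_*(s)\varepsilon_M\gamma^{-(M-1)}\tN_n^{-\ta}\|w\|_{s+\Sigma(\tb)}^{\Lip(\gamma)} \to 0$ by \eqref{stime cal Rn rid}, while $\bD_{n+1} = \beta\,\tL + \bZ_{n+1} \to \bD_\infty := \beta\,\tL + \diag_{j\neq 0}\tz_\infty(j)$ in operator norm by Lemma \ref{lemma blocchi finali} (the decay $|\tz_\infty(j) - \tz_n(j)|^{\Lip(\gamma)} \lesssim \tN_{n-1}^{-\ta}\varepsilon_M\gamma^{-(M-1)}|j|^{-M}$ controls the $|\cdot|_{-M,s}$ distance via Lemma \ref{proprieta standard norma decay}-$(v)$, in fact it suffices to control $\sup_j |j|^M|\tz_\infty(j)-\tz_n(j)|^{\Lip(\gamma)}$). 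On the other hand $\widetilde\Phi_n^{-1}\bL_0\widetilde\Phi_n \to \Phi_\infty^{-1}\bL_0\Phi_\infty$ because $\widetilde\Phi_n^{\pm1} \to \Phi_\infty^{\pm1}$ in $|\cdot|_{0,s}^{\Lip(\gamma)}$ and $\bL_0 = \lambda\,\omega\cdot\pa_\vf + \bD_0 + \bE_0$ maps $H^{s+1}_0 \to H^s_0$ boundedly with the operators of negative order bounded in decay norm; one has to be slightly careful because of the unbounded term $\lambda\,\omega\cdot\pa_\vf$, so the cleanest route is to write $\Phi_\infty^{-1}\bL_0\Phi_\infty = \lambda\,\omega\cdot\pa_\vf + \Phi_\infty^{-1}[\lambda\,\omega\cdot\pa_\vf,\Phi_\infty] + \Phi_\infty^{-1}(\bD_0+\bE_0)\Phi_\infty$ and observe that $[\lambda\,\omega\cdot\pa_\vf, \Phi_\infty] = \lim_n[\lambda\,\omega\cdot\pa_\vf,\widetilde\Phi_n]$ is a bounded operator (in fact of order $-M$) because each $\Psi_n$ has a uniformly convergent Fourier series and $[\lambda\,\omega\cdot\pa_\vf,\mathrm{exp}(\Psi_n)]$ is controlled via the $|\cdot|_{0,s+M}$ norm appearing in \eqref{stime.Psin.neumann2}. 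Combining, the limit operator equals $\lambda\,\omega\cdot\pa_\vf + \bD_\infty$, which is \eqref{cal L infty e}.

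\textbf{Algebraic structure and the main obstacle.} Reality, momentum preservation, and (under the parity assumption $w=\odd(\vf,x)$) reversibility preservation of $\Phi_\infty^{\pm1}$ follow from the corresponding properties of each $\widetilde\Phi_n^{\pm1}$ — established in Proposition \ref{prop riducibilita} via Lemmata \ref{lemma real rev matrici}, \ref{lem:mom_prop} — together with the fact that all three properties are expressed as closed (pointwise, for each Fourier-matrix entry) conditions of the form $\widehat{\mathcal R}(\ell)_j^{j'} = \pm\overline{\widehat{\mathcal R}(-\ell)_{-j}^{-j'}}$ or the momentum relation $\pi^\top(\ell)+j-j'=0$, which are preserved under the limit in any norm controlling the matrix entries, in particular $|\cdot|_{0,s}^{\Lip(\gamma)}$. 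Similarly $\bD_\infty$ is diagonal (being a limit of diagonal operators) hence trivially momentum preserving, and if $w=\odd(\vf,x)$ then each $\tz_n(j) \in \im\R$ by \eqref{reversibility reality auto}, so $\tz_\infty(j) = \im\,\beta\tL(j)+\tz_\infty(j)$... more precisely $\mu_\infty(j)\in\im\R$ by Lemma \ref{lemma blocchi finali}, giving reversibility of $\bD_\infty$. I expect the main obstacle to be the careful bookkeeping in passing to the limit in the conjugation identity in the presence of the unbounded transport term $\lambda\,\omega\cdot\pa_\vf$: one must verify that the commutators $[\lambda\,\omega\cdot\pa_\vf,\widetilde\Phi_n]$ converge (which requires the high-norm estimates $|\Psi_n|_{0,s+M}^{\Lip(\gamma)}$ from \eqref{stime.Psin.neumann2}, not merely $|\Psi_n|_{0,s}^{\Lip(\gamma)}$) and that the domain issue $H^{s+1}_0 \to H^s_0$ is handled consistently, i.e. the identity \eqref{cal L infty e} is understood as an equality of bounded operators from $H^{s+1}_0$ to $H^s_0$ for all $s_0 \le s \le S - \Sigma(\tb)$. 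Everything else is a routine telescoping/Neumann-series argument built directly on the estimates already proved in Proposition \ref{prop riducibilita}.
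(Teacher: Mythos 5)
Your proof is correct and takes essentially the same approach as the paper's, which dispatches this proposition in three lines by citing Corollary 4.1 of \cite{BBM-Airy} for the convergence of $\widetilde\Phi_n$ and then "passing to the limit" in \eqref{coniugazione rid}. Your write-up is in fact more explicit than the paper on the two points the paper leaves implicit: (a) the uniform-in-$n$ bound $|\widetilde\Phi_n|_{0,s}^{\Lip(\gamma)}\lesssim_s 1+\|w\|_{s+\Sigma(\tb)}^{\Lip(\gamma)}$ needed for the telescoping estimate, and (b) the domain/unboundedness issue coming from the transport term $\lambda\,\omega\cdot\pa_\vf$, which you handle by rewriting the conjugation as $\lambda\,\omega\cdot\pa_\vf + \Phi_\infty^{-1}[\lambda\,\omega\cdot\pa_\vf,\Phi_\infty]+\Phi_\infty^{-1}(\bD_0+\bE_0)\Phi_\infty$ and observing that the commutators $[\lambda\,\omega\cdot\pa_\vf,\widetilde\Phi_n]$ stay in the decay class (equivalently, one can read this off the identity $[\lambda\,\omega\cdot\pa_\vf,\widetilde\Phi_n]=\widetilde\Phi_n(\bD_{n+1}+\bE_{n+1})-(\bD_0+\bE_0)\widetilde\Phi_n$ and pass to the limit on the right). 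Two cosmetic remarks: the factor $\varepsilon^{-M}$ appearing in the paper's displayed estimate \eqref{stima Phi infty} is a typo for $\varepsilon^{M}$, consistent with \eqref{stime Psi n rid}, and you silently use the correct power; and the sentence beginning "so $\tz_\infty(j)=\im\,\beta\tL(j)+\tz_\infty(j)$" has a slip ($\tz_\infty$ should be $\mu_\infty$ on the left), which you immediately correct yourself. No genuine gaps.
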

 
 \begin{proof}
 	The existence of the invertible map $\Phi_\infty^{\pm 1}$ and the estimates \eqref{stima Phi infty} follow by 
 	\eqref{stime Psi n rid}, 
 	\eqref{trasformazioni tilde ridu}, 
 	arguing as in Corollary 4.1 in \cite{BBM-Airy}. 
 	By \eqref{trasformazioni tilde ridu}, Lemma \ref{prima inclusione cantor} 
 	and Proposition \ref{prop riducibilita}, one has 
 	$\widetilde \Phi_n^{- 1} {\bL}_0 \widetilde \Phi_n 
 	=\lambda\, \omega \cdot \partial_\vphi + {\bD}_n + {\bE}_n$ for all $n \geq 0$. 
 	The claimed statement then follows by passing to the limit as $n \to \infty$, 
 	by using \eqref{stime cal Rn rid}, \eqref{stima Phi infty} and Lemma \ref{lemma blocchi finali}. 
 \end{proof}

 \section{Inversion of the linearized operator ${\mathcal L}$}\label{sez:inverti}
 In this section we show that the linearized operator ${\mathcal L}$ in \eqref{operatore linearizzato} is invertible and we prove tame estimates for its inverse. By Propositions \ref{proposizione trasporto}, \ref{prop coniugio cal L L1}, \ref{prop normal form lower orders}, \ref{lemma coniugio finale}, using also Lemma \ref{proprieta standard norma decay}-$(i)$, for any $\omega \in \Omega_\infty^\gamma$, we define
 \begin{equation}\label{def cal W infty}
 {\mathcal W}_\infty := \cB_{\perp} \circ {\bf \Phi}_M \circ \Phi_\infty
 \end{equation}
 and we have that 
 \begin{equation}
 {\mathcal W}_\infty^{- 1} {\mathcal L} {\mathcal W}_\infty = {\bL}_{\infty} \,, \quad \forall \,\omega \in \Omega_\infty^\gamma\,,
 \end{equation}
 with the maps ${\mathcal W}_\infty\,,\, {\mathcal W}_\infty^{- 1} : H^s_0(\T^{\nu + 2}) \to H^s_0(\T^{\nu + 2})$ are real and momentum preserving and, if \eqref{ansatz} holds with $\sigma_0 = \Sigma(\mathtt b)$, they satisfy  for any $s_0 \leq s \leq S-\Sigma(\tb)$, the bounds
 \begin{equation}\label{stime cal W infty pm 1}
 \| {\mathcal W}_\infty^{\pm 1} h \|_s^{\Lip(\gamma)} \lesssim_s \| h \|_s^{\Lip(\gamma)} + \|  w \|_{s + \Sigma(\tb)}^{\Lip(\gamma)} \| h \|_{s_0}^{\Lip(\gamma)}\,.
 \end{equation}
In addition, if $ w = {\rm odd}(\vf, x)$, then ${\mathcal W}_\infty, {\mathcal W}_\infty^{- 1}$ are reversibility preserving.

\noindent
 Now we define the set $\Lambda_\infty^\gamma$ as
 \begin{equation}\label{prime di Melnikov}
 \begin{aligned}
 \Lambda_\infty^\gamma  := \Big\{  \omega  & \in \tD\tC(2 \gamma, \tau )  \, : \,  |\im \,\lambda \,\omega \cdot \ell + \mu_\infty(j) | \geq \frac{2 \lambda \,  \gamma}{\langle \ell \rangle^\tau } \,,  \\
 & \quad  \forall\, \ell \in \Z^\nu \,,\  \  j \in \Z^2 \setminus \{ 0 \}, \ \ \pi^\top(\ell) + j = 0 \Big\}\,. 
 \end{aligned}
 \end{equation}
First, we discuss the invertibility of the diagonal operator ${\bf L}_\infty$ given in \eqref{cal L infty e}. 
 \begin{lem}\label{invertibilita cal L infinito}
For any $\omega \in  \Lambda_\infty^\gamma$, the linear operator ${\bf L}_\infty(\omega)$ is invertible and its inverse ${\bf L}_\infty^{- 1}$ satisfies the bound, for any $s\geq 0$,
\begin{equation}\label{stima tame L infty inverse}
\| {\bf L}_\infty^{- 1} h  \|_s^{\Lip(\gamma)} \lesssim \lambda^{- 1} \gamma^{- 1} \| h \|_{s + 2 \tau + 1}^{\Lip(\gamma)} \,. 
\end{equation}
In addition, if $ w = {\rm odd}(\vf, x)$, then  ${\bf L}_\infty^{- 1}$ is reversible.
 \end{lem}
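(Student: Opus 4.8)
The operator $\bL_\infty = \lambda\,\omega\cdot\pa_\vf + \bD_\infty$ with $\bD_\infty = \diag_{j\in\Z^2\setminus\{0\}}\mu_\infty(j)$ is diagonal with respect to the exponential basis $\{e^{\im(\ell\cdot\vf + j\cdot x)}\}$, so the plan is simply to exhibit the inverse explicitly on Fourier coefficients and to bound the small divisors using the non-resonance conditions that define $\Lambda_\infty^\gamma$ in \eqref{prime di Melnikov}. Recall that we always work on the space of functions with zero average in $x$, so only indices $j\in\Z^2\setminus\{0\}$ occur, and the operator is invariant on the subspace of quasi-periodic traveling waves, for which $\pi^\top(\ell)+j=0$.

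\textbf{Key steps.} First I would write, for $h = \sum_{(\ell,j),\, j\neq 0} \widehat h(\ell,j)\, e^{\im(\ell\cdot\vf + j\cdot x)}$,
$$
{\bf L}_\infty^{-1} h := \sum_{(\ell,j)\in\Z^{\nu+2},\, j\neq 0} \frac{1}{\im\,\lambda\,\omega\cdot\ell + \mu_\infty(j)}\, \widehat h(\ell,j)\, e^{\im(\ell\cdot\vf + j\cdot x)}\,,
$$
which solves $\bL_\infty u = h$ formally; it remains only to check that the coefficients are well defined and give a tame bound. For $\omega\in\Lambda_\infty^\gamma$, by \eqref{prime di Melnikov} we have $|\im\,\lambda\,\omega\cdot\ell + \mu_\infty(j)| \geq 2\lambda\gamma\langle\ell\rangle^{-\tau}$, so $|\im\,\lambda\,\omega\cdot\ell + \mu_\infty(j)|^{-1} \lesssim \lambda^{-1}\gamma^{-1}\langle\ell\rangle^{\tau}$, and hence
$$
\| {\bf L}_\infty^{-1} h \|_s^2 \lesssim \lambda^{-2}\gamma^{-2} \sum_{(\ell,j),\, j\neq 0} \langle\ell,j\rangle^{2s}\langle\ell\rangle^{2\tau} |\widehat h(\ell,j)|^2 \lesssim \lambda^{-2}\gamma^{-2} \| h \|_{s+\tau}^2\,.
$$
For the Lipschitz-in-$\omega$ part of the norm $\|\cdot\|_s^{\Lip(\gamma)}$, I would estimate the difference quotient of $(\im\,\lambda\,\omega\cdot\ell + \mu_\infty(j))^{-1}$ in $\omega$: the numerator of the difference is controlled by $\lambda\langle\ell\rangle|\omega_1-\omega_2|$ plus $|\mu_\infty(j;\omega_1)-\mu_\infty(j;\omega_2)|$, and the latter is $\lesssim\lambda^\theta\gamma^{-1}|j|^{-1}|\omega_1-\omega_2| \lesssim \lambda^\theta\gamma^{-1}|\omega_1-\omega_2|$ by the Lipschitz bound on $\tz_\infty$ in Lemma \ref{lemma blocchi finali} (Definition \ref{def:Lip F uniform}); using $\lambda^{\theta-1}\gamma^{-1}\ll 1$ from \eqref{piccolo.ansatx} this is dominated by $\lambda\langle\ell\rangle|\omega_1-\omega_2|$, and dividing by the product of the two denominators produces a factor $\lambda^{-1}\gamma^{-2}\langle\ell\rangle^{2\tau+1}$. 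This yields the loss of $2\tau+1$ derivatives and the weight $\gamma^{-1}$ absorbed into $\|\cdot\|^{\Lip(\gamma)}$, giving \eqref{stima tame L infty inverse}. Finally, since $\bD_\infty$ is a diagonal Fourier multiplier, $\bL_\infty^{-1}$ is also a Fourier multiplier with the same structure, hence real and momentum preserving; and when $w = \odd(\vf,x)$, Lemma \ref{lemma blocchi finali} gives $\mu_\infty(j)\in\im\R$ with $\mu_\infty(j) = -\overline{\mu_\infty(j)} = -\mu_\infty(-j)$, so by Lemma \ref{lemma real rev matrici} the operator $\bL_\infty^{-1}$ is reversible, exactly as $\bL_\infty$ is.

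\textbf{Main obstacle.} There is no serious obstacle here: the only mildly delicate point is the Lipschitz estimate, where one must be careful that the contribution of the $\omega$-dependence of the eigenvalues $\mu_\infty(j)$ does not dominate the much larger contribution of the transport term $\lambda\,\omega\cdot\ell$, which is guaranteed precisely by the smallness $\lambda^{\theta-1}\gamma^{-1}\ll 1$. The rest is the standard small-divisor bookkeeping already used in Lemma \ref{lemma.equa.linear.approx} and in the KAM homological-equation estimates of Lemma \ref{Lemma eq omologica riducibilita KAM}.
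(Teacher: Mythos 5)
Your proposal is correct and follows essentially the same route as the paper: explicit Fourier-multiplier inverse restricted to the momentum sites $\pi^\top(\ell)+j=0$ (which forces $\ell\neq 0$), the sup bound from the lower bound in \eqref{prime di Melnikov}, and the Lipschitz bound obtained by combining the $\omega$-Lipschitz estimate on $\mu_\infty(j)$ from Lemma \ref{lemma blocchi finali} with $\lambda^{\theta-1}\gamma^{-1}\ll 1$, yielding the loss of $2\tau+1$ derivatives. The only cosmetic remark is that your displayed formula for ${\bf L}_\infty^{-1}$ should carry the restriction $\pi^\top(\ell)+j=0$ explicitly (as you note in the surrounding text), since the non-resonance condition defining $\Lambda_\infty^\gamma$ is only available on those indices.
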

 \begin{proof}
 If $\omega \in \Lambda_\infty^\gamma$, the inverse ${\bf L}_\infty(\omega)^{- 1}$ is defined by 
 \begin{equation}\label{giornata.uggiosa}
 	 {\bf L}_\infty(\omega)^{- 1} h(\vphi, x) = \sum_{
 			\ell \in \Z^\nu \setminus\{0\}\,,\, j \in \Z^2 \setminus \{ 0 \} \atop
 			\pi^\top(\ell) + j = 0	} \frac{1}{\eta_{\ell j}(\omega)} \widehat h(\ell, j) e^{\im \ell \cdot \vphi} e^{\im j \cdot x} \,,
 \end{equation}
 where $\eta_{\ell j }(\omega) = \im \,\lambda \,\omega \cdot \ell + \mu_\infty(j;\omega)$ and we used the momentum condition $\pi^\top(\ell) + j = 0$ to get the restriction $\ell \neq 0$, since we already have $j \neq 0$.
 This clearly implies the bound, by \eqref{prime di Melnikov}, for any $s\geq 0$,
 \begin{equation}\label{inter sassuolo 0}
 \| {\bf L}_\infty(\omega)^{- 1} h \|_s \lesssim \lambda^{- 1} \gamma^{- 1} \| h \|_{s + \tau}\,.
 \end{equation}
 Moreover, given $\omega_1, \omega_2 \in \Lambda_\infty^\gamma$, by \eqref{prime di Melnikov} Lemma \ref{lemma blocchi finali} and using $\lambda^{\theta-1}\gamma^{-1}\leq 1$ in \eqref{piccolo.ansatx}, we have that 
 $$
 \begin{aligned}
 \Big| \frac{1}{\eta_{\ell j}(\omega_1)} - \frac{1}{\eta_{\ell j}(\omega_2)} \Big|
 & \lesssim \frac{\lambda\,|\ell| |\omega_1 - \omega_2| + |\mu_\infty(j; \omega_1) - \mu_\infty(j; \omega_2)|}{|\eta_{\ell j}(\omega_1)| |\eta_{\ell j}(\omega_2)|} \\
 & \lesssim \langle \ell \rangle^{2 \tau}\lambda^{- 2} \gamma^{- 2} \big( \lambda \,|\ell| + \lambda^{\theta} \gamma^{- 1} \big) |\omega_1 - \omega_2| \\
 & \lesssim \lambda^{- 1} \gamma^{- 2} \langle \ell \rangle^{2 \tau + 1} |\omega_1 - \omega_2| \,,
 \end{aligned}
 $$
 which implies that, for any $s\geq 0$,
 \begin{equation}\label{inter sassuolo 1}
 \big\| \big( {\bf L}_\infty(\omega_1)^{- 1} - {\bf L}_\infty(\omega_2)^{- 1}\big) h \big\|_s \lesssim \lambda^{- 1} \gamma^{- 2} \| h \|_{s + 2 \tau + 1} \,.
 \end{equation}
 Now, using that, for any $h(\omega) \in H^{s + 2 \tau + 1}_0$ and any $\omega_1, \omega_2 \in \Lambda_\infty^\gamma$,
 $$
 \begin{aligned}
& \| {\bf L}_\infty(\omega_1)^{- 1} h(\omega_1) - {\bf L}_\infty(\omega_2)^{- 1} h(\omega_2)  \|_s  \\
& \leq  \| {\bf L}_\infty(\omega_1)^{- 1}\big( h(\omega_1) - h(\omega_2) \big) \|_s + \|\big( {\bf L}_\infty(\omega_1)^{- 1} - {\bf L}_\infty(\omega_2)^{- 1} \big) h(\omega_2) \|_s  \,,
\end{aligned}
 $$
 then, together with the bounds \eqref{inter sassuolo 0}, \eqref{inter sassuolo 1}, we obtain the claimed estimate \eqref{stima tame L infty inverse}. Finally, if we assume that $ w = {\rm odd}(\vf,x)$, then  $\bL_{\infty}, {\bf L}_\infty^{- 1}$ are reversible operators, by \eqref{giornata.uggiosa}, Lemma \ref{lemma blocchi finali} and Lemma \ref{lemma real rev matrici}.
 \end{proof}
 We are now in position to state the following proposition on the invertibility of the linearized operator $\cL$ in \eqref{operatore linearizzato}.
 \begin{prop}\label{invertibilita linearizzato}
Let  $\bar\sigma := \Sigma(\tb) + 2 \tau + 1$, with $\Sigma(\tb)$ given in \eqref{definizione.param.KAM}. Let $S > s_0 + \bar \sigma$. There exists $\delta := \delta (S, \tau, \nu) > 0$ such that, if  \eqref{ansatz} \eqref{KAM smallness condition} are verified (with $\sigma = \bar \sigma$), then the following holds. For any $\omega \in \Omega_\infty^\gamma \cap \Lambda_\infty^\gamma$ the linear operator $\cL={\mathcal L}(\omega)$, defined \eqref{operatore linearizzato}, is invertible on the subspace of the quasi-periodic traveling waves $\tau_\vs h(\vf)=h(\vf-\pi(\vs))$, $\vs\in\R$, and its inverse $\cL^{-1}={\mathcal L}(\omega)^{- 1}$ satisfies the tame bound, for any $s_0 \leq s \leq S-\bar\sigma$ and any $h\in H^{s + \bar \sigma}_0(\T^{\nu+2})$
\begin{equation}\label{stima lin inverso finale}
\| {\mathcal L}^{- 1} h \|_s^{\Lip(\gamma)} \lesssim_s \lambda^{- 1} \gamma^{- 1}\Big( \| h \|_{s + \bar \sigma}^{\Lip(\gamma)} + \|  w \|_{s + \bar \sigma}^{\Lip(\gamma)} \| h \|_{s_0 + \bar \sigma}^{\Lip(\gamma)} \Big)\,.
\end{equation}
Moreover, $\cL^{-1}h$ is a quasi-periodic traveling wave.
In addition, if $ w = {\rm odd}(\vf, x)$, then $ {\mathcal L}^{- 1}$ is reversible.
  \end{prop}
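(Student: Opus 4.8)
The plan is to chain together the three reduction results—Proposition \ref{proposizione trasporto}/\ref{prop coniugio cal L L1} (transport), Proposition \ref{prop normal form lower orders} (reduction of the large remainder) and Proposition \ref{lemma coniugio finale} (KAM reducibility)—with the inversion of the final diagonal operator in Lemma \ref{invertibilita cal L infinito}. Concretely, for $\omega \in \Omega_\infty^\gamma \cap \Lambda_\infty^\gamma$ we have the conjugation ${\mathcal W}_\infty^{-1} {\mathcal L} {\mathcal W}_\infty = {\bf L}_\infty$ with ${\mathcal W}_\infty = {\mathcal B}_\perp \circ {\bf \Phi}_M \circ \Phi_\infty$ as in \eqref{def cal W infty}, so that formally ${\mathcal L}^{-1} = {\mathcal W}_\infty \circ {\bf L}_\infty^{-1} \circ {\mathcal W}_\infty^{-1}$. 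The first step is therefore to check that this identity makes sense on the relevant function spaces: each factor is invertible on $H^s_0(\T^{\nu+2})$ for $s_0 \le s \le S - \Sigma(\mathtt b)$ (for ${\mathcal B}_\perp$ this is Proposition \ref{prop coniugio cal L L1}-$(i)$, for ${\bf \Phi}_M$ it is \eqref{stima bf Phi M}, for $\Phi_\infty$ it is \eqref{stima Phi infty}), and ${\bf L}_\infty^{-1}$ is bounded with a loss of $2\tau+1$ derivatives by Lemma \ref{invertibilita cal L infinito}.

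Next I would assemble the tame estimate \eqref{stima lin inverso finale}. This is a routine composition of three tame bounds: apply \eqref{stime cal W infty pm 1} to ${\mathcal W}_\infty^{-1}$ (loss $\Sigma(\mathtt b)$), then \eqref{stima tame L infty inverse} to ${\bf L}_\infty^{-1}$ (loss $2\tau+1$ and the gain $\lambda^{-1}\gamma^{-1}$), then \eqref{stime cal W infty pm 1} again to ${\mathcal W}_\infty$. Setting $\bar\sigma := \Sigma(\mathtt b) + 2\tau + 1$ and using the ansatz \eqref{ansatz} to absorb the lower-order terms $\| w \|_{s_0 + \bar\sigma}^{\Lip(\gamma)} \lesssim 1$, one collects exactly the right-hand side of \eqref{stima lin inverso finale}, with the $\lambda^{-1}\gamma^{-1}$ factor coming entirely from Lemma \ref{invertibilita cal L infinito} (the maps ${\mathcal W}_\infty^{\pm 1}$ are $O(1)$ in $\lambda$). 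The Lipschitz-in-$\omega$ part of the estimate is already built into the $\Lip(\gamma)$-norm bookkeeping of all the cited propositions, so no extra work is needed there beyond noting that all objects are defined on the common set $\Omega_\infty^\gamma \cap \Lambda_\infty^\gamma \subseteq \tD\tC(2\gamma,\tau)$.

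The one genuinely structural point—rather than a routine calculation—is the restriction to quasi-periodic traveling waves. The subtlety is that the final operator ${\bf L}_\infty = \lambda\,\omega\cdot\pa_\vf + {\bf D}_\infty$ is diagonal, and on the full space $H^s_0(\T^{\nu+2})$ it has $0$ in its spectrum (any mode $(\ell,j) = (0, (0,j_2)^\top)$); it is only invertible when restricted to the subspace $S_\pi$ of traveling waves, where the momentum relation $\pi^\top(\ell) + j = 0$ forces $\ell \ne 0$ whenever $j \ne 0$ — this is precisely why \eqref{giornata.uggiosa} and the condition \eqref{prime di Melnikov} only involve $\ell \in \Z^\nu \setminus \{0\}$. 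So the argument must record that $S_\pi$ is preserved by each transformation: ${\mathcal B}_\perp$, ${\bf \Phi}_M$ and $\Phi_\infty$ are all momentum preserving (stated in the respective propositions), hence by Lemma \ref{A.mom.cons} they map traveling waves to traveling waves; likewise ${\bf L}_\infty^{-1}$ is momentum preserving by \eqref{giornata.uggiosa} and Lemma \ref{lem:mom_pres}. Therefore, if $h$ is a traveling wave then ${\mathcal W}_\infty^{-1} h$ is, ${\bf L}_\infty^{-1}$ acts on it invertibly, and ${\mathcal W}_\infty$ carries the result back to a traveling wave; this shows both that ${\mathcal L}$ is invertible on $S_\pi \cap H^s_0$ and that ${\mathcal L}^{-1} h$ is again a traveling wave. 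I expect this bookkeeping of the momentum-preserving property through the composition to be the main (though not deep) obstacle, since it is what legitimizes using ${\bf L}_\infty^{-1}$ at all.

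Finally, the reversibility claim is immediate: under the parity hypothesis $w = \mathrm{odd}(\vf,x)$, the maps ${\mathcal W}_\infty^{\pm 1}$ are reversibility preserving and ${\bf L}_\infty^{-1}$ is reversible (Lemma \ref{invertibilita cal L infinito}), so by Definition \ref{reserv.operators.def} the composition ${\mathcal L}^{-1} = {\mathcal W}_\infty \circ {\bf L}_\infty^{-1} \circ {\mathcal W}_\infty^{-1}$ maps $X \to Y$ and $Y \to X$, i.e.\ it is reversible. This closes the proof of Proposition \ref{invertibilita linearizzato}.
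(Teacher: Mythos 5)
Your proposal is correct and follows essentially the same route as the paper: conjugate $\mathcal L$ to $\mathbf L_\infty$ via $\mathcal W_\infty$, invert the diagonal operator on the traveling-wave subspace (where the momentum constraint $\pi^\top(\ell)+j=0$ forces $\ell\neq 0$), compose the three tame bounds, and track momentum-preservation and reversibility through the conjugation. Incidentally, your formula $\mathcal L^{-1}=\mathcal W_\infty\,\mathbf L_\infty^{-1}\,\mathcal W_\infty^{-1}$ is the correct one given $\mathcal W_\infty^{-1}\mathcal L\,\mathcal W_\infty=\mathbf L_\infty$; the paper's displayed $\mathcal L^{-1}=\mathcal W_\infty^{-1}\mathbf L_\infty^{-1}\mathcal W_\infty$ is a harmless transposition typo, as both $\mathcal W_\infty^{\pm 1}$ satisfy the same bounds.
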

 \begin{proof}
 For any $\omega \in \Omega_\infty^\gamma \cap \Lambda_\infty^\gamma$, one has that ${\mathcal L}^{- 1} = {\mathcal W}_\infty^{- 1} {\bf L}_\infty^{- 1} {\mathcal W}_\infty$. Hence, the estimate \eqref{stima lin inverso finale} follows by \eqref{stime cal W infty pm 1} and Lemma \ref{invertibilita cal L infinito}, using also the ansatz \eqref{ansatz} with $\sigma = \bar \sigma$. Since $\bL_{\infty}^{\pm 1}$ are momentum preserving by \eqref{cal L infty e}, \eqref{giornata.uggiosa}, and the maps $\cW_{\infty}^{\pm 1}$ are momentum preserving by \eqref{def cal W infty} and Propositions \ref{proposizione trasporto}, \ref{prop normal form lower orders}, \ref{lemma coniugio finale}, then, by Lemmata \ref{A.mom.cons}, \ref{lem:mom_pres}, we conclude that $\cL^{-1}h$ is a quasi-periodic traveling wave, whenever is $h$. Finally, if we assume that $ w = {\rm odd}(\vf,x)$, then $\cL^{-1}$ is reversible by Lemma \ref{invertibilita cal L infinito} and by the fact that $\cW_{\infty}^{\pm1}$ are reversibility preserving.
 \end{proof}

 \section{The Nash Moser scheme}\label{sezione:NASH}
 In this section we construct the solution of the equation ${\mathcal F}(v) = 0$ in  \eqref{internal.rescaled} by means of a Nash Moser nonlinear iteration. 
We denote by $\Pi_n$ the orthogonal projector $\Pi_{\tN_n}$ (on quasi-periodic traveling waves, see \eqref{def:smoothings}) 
on the finite dimensional space
$$
{\mathcal H}_n := \big\{ w \in L_0^2(\T^{\nu + 2}, \R) \, \ \text{ such that \eqref{condtraembedd} holds } \, \, : \, \  
w = { \Pi}_n w  \big\} \,,
$$
and $\Pi_n^\bot := {\rm Id} - \Pi_n$. 
The projectors $ \Pi_n $, $ \Pi_n^\bot$ satisfy the 
usual smoothing properties in Lemma \ref{lemma:smoothing}, namely 
\begin{equation}\label{smoothing-u1}
\|\Pi_{n} v \|_{s+b}^{\Lip(\gamma)} 
\leq \tN_{n}^{b} \| v \|_{s}^{\Lip(\gamma)} \,,
\quad \ 
\|\Pi_{n}^\bot v \|_{s}^{\Lip(\gamma)} 
\leq \tN_n^{- b} \| v \|_{s + b}^{\Lip(\gamma)} \,,
\quad \ 
s,b \geq 0 \,.
\end{equation}
Given $\tau, \tN_{0}>0$, we define the constants 
\begin{equation} \label{costanti nash moser}
\begin{aligned}
& \tN_{n} := \tN_{0}^{\chi^n}, \quad n \geq 0, \quad \tN_{- 1} := 1\,, \quad \chi = 3/2, \quad  \kappa :=  12( \bar \sigma + 1) + 1\,, \\
&  \mathtt a_1 := {\rm max}\{ 6 \bar \sigma + 13\,,\, \chi^2(\tau + \tau^2 + 1) + \chi(2 \bar \sigma + 1) + 1 \} \,, \\
& \bar \tau := 2 \bar \sigma + 4 + \mathtt a_1 + \chi(\tau + \tau^2 + 1), \quad {\mathtt b}_1 := 2 \bar \sigma + 4  +\chi^{-1} ( \mathtt a_1 + \kappa)\,.
\end{aligned}
\end{equation}
where $\bar \sigma > 1$ is given in Proposition \ref{invertibilita linearizzato}. 

 \begin{rem}\label{remark.su.param.NM}
	Let us describe of the parameters in \eqref{costanti nash moser} and their role in the following Proposition \ref{iterazione-non-lineare}. The parameter $\ta_{1}>0$ appears in the negative exponent of the estimate in \eqref{stima.F.to.0} and measures how the low regularity norm of the nonlinear functional is fastly decreasing to 0 at each approximate solution of the iteration. The parameter $\tb_{1}>0$ appears in the regularity index $s_0+\tb_{1}$ of the second estimate in \eqref{stima.alta.NM}, which is the high regularity norms of the approximate solution and the functional evaluated at it that are allowed to diverge, with exponent rate $\kappa>0$. The parameter $\bar\tau>0$ appears as an exponent in the smallness condition \eqref{nash moser smallness condition}. The parameter $\bar\sigma>0$ accounts for the loss of derivatives of the KAM iteration. The role of the parameters in \eqref{costanti nash moser} is comparable with the ones in \eqref{definizione.param.KAM} for the KAM reducibility of the linearized operators (see also Remark \ref{remark.su.param.KAM}). The differences are in their definitions, due to the nonlinear iteration and the loss of derivative $\bar\sigma>\Sigma(\tb)>0$ (see Proposition \ref{invertibilita linearizzato}), and the exponent $\kappa>0$ in the divergence of the high regularity norms.
\end{rem}

Note that Proposition \ref{lemma.approx} provides an approximate solution $v_{{\rm app} ,N}$ which is a quasi periodic traveling wave such that, for any $s\geq 0$, $\| v_{{\rm app},N}\|_{s}^{\Lip(\gamma)}$ is of size 1  and  $\| {\mathcal F}(v_{{\rm app},N}) \|_s^{\Lip(\gamma)}$ is uniformly bounded with respect to $\lambda \gg 1$. We state these properties as follows.


\begin{lem}
	{\bf (Initialization of the Nash-Moser iteration).}
	Let $N\in\N$ and $\gamma\in (0,1)$ as in \eqref{condi.per.NM.dopo}.
	For any $s \geq 0$, there exists a constant $C(s)>0$ such that
	\begin{equation}\label{stima soluzione approssimata nash moser}
	\begin{aligned}
	&	\| {\mathcal F}(v_{{\rm app},N}) \|_s^{\Lip(\gamma)} \leq C(s)\,,\quad
		\| v_{{\rm app},N} \|_s^{\Lip(\gamma)} \leq C(s)\,, \\
		& \inf_{\omega \in \Omega_\gamma} \| v_{\rm app, N}(\cdot; \omega)\|_s \geq K(s) \lambda^{- \mathtt c}
		\end{aligned}
	\end{equation}
	uniformly in $\lambda\geq \bar \lambda \gg 1$ sufficiently large, for some constants $C(s), K(s) > 0$. In addition, if the forcing term $f$ in \eqref{internal.rescaled} is ${\rm even}(\vf, x)$, then $v_{\rm app, N} = {\rm odd}(\vf, x)$. 
\end{lem}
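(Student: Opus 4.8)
The plan is to derive this "Initialization" lemma as a direct corollary of Proposition \ref{lemma.approx}. First I would fix once and for all an integer $N\in\N$ satisfying the constraint \eqref{condi.per.NM.dopo}, i.e.\ $N>\frac{\alpha-(1-\tc)}{2(1-\tc)-\alpha}$, together with $\gamma=\lambda^{-\tc}$ as prescribed in \eqref{parametri.per.vapp}. With this choice, Proposition \ref{lemma.approx} produces the quasi-periodic traveling wave $v_{{\rm app},N}$ solving $\cF(v_{{\rm app},N})=q_N$ and provides the three estimates \eqref{stima.approx.diffN}--\eqref{stima.approx.qN}. The whole content of the lemma is then a repackaging of these bounds after substituting $\gamma=\lambda^{-\tc}$.

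The key steps are as follows. For the bound on $\|v_{{\rm app},N}\|_s^{\Lip(\gamma)}$: this is exactly the upper bound in \eqref{stima.approx.vN}, which gives $\|v_{{\rm app},N}\|_s^{\Lip(\gamma)}\lesssim_{s,N}\|f\|_{s+\kappa_{N-1/2}(\tau)}$; since $f\in\cC^\infty$ and $N,\tau$ are now fixed, the right-hand side is a finite constant $C(s)$ independent of $\lambda$. For the lower bound: this is the second assertion of \eqref{stima.approx.vN}, namely $\inf_{\omega\in\Omega_\gamma}\|v_{{\rm app},N}(\cdot;\omega)\|_s\gtrsim_{s,N}\lambda^{-\tc}\|f\|_{s-1}$, which is precisely $K(s)\lambda^{-\tc}$ with $K(s):=c_{s,N}\|f\|_{s-1}>0$ (note $\|f\|_{s-1}\ne 0$ because $f\nequiv 0$). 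For the bound on $\|\cF(v_{{\rm app},N})\|_s^{\Lip(\gamma)}=\|q_N\|_s^{\Lip(\gamma)}$: estimate \eqref{stima.approx.qN} reads $\|q_N\|_s^{\Lip(\gamma)}\lesssim_{s,N}\lambda^{(N+1)(\alpha-2(1-\tc))+1-\tc}\|f\|_{s+\kappa_N(\tau)}$, and the constraint \eqref{condi.per.NM.dopo} on $N$ is exactly what forces the exponent $(N+1)(\alpha-2(1-\tc))+1-\tc<0$; hence for $\lambda\geq\bar\lambda\gg 1$ large enough this is bounded by a constant $C(s)$ independent of $\lambda$ (indeed it tends to $0$, which is the stronger statement used implicitly in Remark on the perturbative smallness). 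Finally, the parity statement: if $f={\rm even}(\vf,x)$, i.e.\ \eqref{force.even} holds, then Proposition \ref{lemma.approx} already asserts $v_{{\rm app},N}(\vf,x)={\rm odd}(\vf,x)$ (and $q_N={\rm even}(\vf,x)$), which is exactly the claim.

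There is essentially no obstacle here: the lemma is a bookkeeping statement. The only minor points to be careful about are (i) absorbing all the $\lambda$-independent Sobolev norms of $f$ into the constants $C(s), K(s)$, legitimate because $N$ and $\tau$ — hence the loss indices $\kappa_{N-1/2}(\tau)$, $\kappa_N(\tau)$ in \eqref{costanti.kappabetaN} — are now frozen and $f$ is smooth; and (ii) checking that the smallness/largeness threshold $\bar\lambda$ can be chosen uniformly, which is already handled inside Proposition \ref{lemma.approx} (see \eqref{condizione.sanremo}), together with the requirement $\bar\lambda\geq\max\{1,|\beta|\}$. One should also record that $v_{{\rm app},N}$ and $q_N=\cF(v_{{\rm app},N})$ are quasi-periodic traveling waves, again directly from Proposition \ref{lemma.approx}, since this is what allows the subsequent Nash--Moser scheme to stay within the space $\cH_n$.

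\begin{proof}
	Fix $N\in\N$ as in \eqref{condi.per.NM.dopo} and $\gamma=\lambda^{-\tc}$, $\theta=\alpha-1+\tc$ as in \eqref{parametri.per.vapp}, with $\tc\in(0,\tfrac13(2-\alpha))$. By Proposition \ref{lemma.approx}, for any $\omega\in\Omega_\gamma$ there is a quasi-periodic traveling wave $v_{{\rm app},N}=v_{{\rm app},N}(\vf,x;\omega,\lambda)$ with $\cF(v_{{\rm app},N})=q_N$, $q_N$ a quasi-periodic traveling wave, and the estimates \eqref{stima.approx.vN}, \eqref{stima.approx.qN} hold for every $s\geq s_0$.

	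Since $N$ and $\tau$ are fixed, the loss indices $\kappa_{N-\frac12}(\tau)$ and $\kappa_N(\tau)$ defined in \eqref{costanti.kappabetaN} are fixed as well, and $f\in\cC^\infty(\T^\nu\times\T^2)$, so $\|f\|_{s+\kappa_{N-\frac12}(\tau)}$, $\|f\|_{s+\kappa_N(\tau)}$ and $\|f\|_{s-1}$ are finite constants depending only on $s$ (and on $f,N,\tau$), independent of $\lambda$. Hence the upper bound in \eqref{stima.approx.vN} gives
	\[
	\| v_{{\rm app},N} \|_s^{\Lip(\gamma)} \lesssim_{s,N} \| f \|_{s + \kappa_{N-\frac12}(\tau)} =: C(s)\,,
	\]
	while the lower bound in \eqref{stima.approx.vN} gives, for a suitable constant $c_{s,N}>0$,
	\[
	\inf_{\omega \in \Omega_\gamma} \| v_{{\rm app},N}(\cdot;\omega) \|_s \geq c_{s,N}\, \lambda^{-\tc} \| f \|_{s-1} =: K(s)\,\lambda^{-\tc}\,,
	\]
	with $K(s)>0$ since $f\nequiv 0$ forces $\|f\|_{s-1}>0$ for every $s$.

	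For the bound on $\cF(v_{{\rm app},N})=q_N$, estimate \eqref{stima.approx.qN} yields
	\[
	\| \cF(v_{{\rm app},N}) \|_s^{\Lip(\gamma)} = \| q_N \|_s^{\Lip(\gamma)} \lesssim_{s,N} \lambda^{(N+1)(\alpha-2(1-\tc))+1-\tc}\, \| f \|_{s+\kappa_N(\tau)}\,.
	\]
	By \eqref{condi.per.NM.dopo} one has $(N+1)(\alpha-2(1-\tc))+1-\tc<0$, hence $\lambda^{(N+1)(\alpha-2(1-\tc))+1-\tc}\leq 1$ for $\lambda\geq\bar\lambda$ large enough, and therefore $\| \cF(v_{{\rm app},N}) \|_s^{\Lip(\gamma)}\leq C(s)$ uniformly in $\lambda\geq\bar\lambda$, up to enlarging $C(s)$. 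This proves \eqref{stima soluzione approssimata nash moser}.

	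Finally, if $f(\vf,x)={\rm even}(\vf,x)$, that is \eqref{force.even} holds, then Proposition \ref{lemma.approx} also gives $v_{{\rm app},N}(\vf,x)={\rm odd}(\vf,x)$, which is the last assertion.
\end{proof}
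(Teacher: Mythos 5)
Your proof is correct and follows exactly the route the paper intends: the lemma is stated as an immediate consequence of Proposition \ref{lemma.approx} (the paper gives no separate proof), and your verification that \eqref{condi.per.NM.dopo} is equivalent to the negativity of the exponent $(N+1)(\alpha-2(1-\tc))+1-\tc$ is the only nontrivial check, which you carry out correctly.
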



The lower bound in \eqref{stima soluzione approssimata nash moser}, that has been proved in Proposition \ref{lemma.approx}, is actually not needed in the nonlinear iteration. It will be used to prove Theorem \ref{teo principale beta plane} in the next section. 

We now prove the following proposition.

\begin{prop}\label{iterazione-non-lineare} 
{\bf (Nash-Moser)} 
Let $\tau > 0$, and let $\Omega_{\gamma}$ be defined in \eqref{Omega.gamma.SA}
With the notation in \eqref{costanti nash moser},
there exist $ \delta \in (0, 1)$, $C_* > 0$ such that if
\begin{equation}  \label{nash moser smallness condition} 
	\begin{aligned}
		& \tN_{0}^{\overline \tau} \lambda^{- 1}  \leq \delta \,, \quad \tN_{0} := \gamma^{- 1}\,, \quad \gamma := \lambda^{- \mathtt c}\,,  \quad \text{for some} \ \ \  0 < \mathtt c < {\rm min} \{ \bar\tau\,^{-1}, \tfrac13(2-\alpha) \}\,,
	\end{aligned}
\end{equation}
then the following properties hold for all $n \geq 0$:
\\[1mm]
\noindent $({\mathcal P}1)_{n}$
There exist a constant $C_0 > 0$ large enough and 
$w_n - w_0 : {\mathcal G}_n \to {\mathcal H}_{n - 1} $, $n \geq 1$, $w_0 := v_{{\rm app},N}$, ${\mathcal H}_{- 1} := \{ 0 \}$, 
satisfying
\begin{equation} \label{stima.bassa.NM}
\| w_{n} \|_{s_0 + \overline \sigma}^{\Lip(\gamma)} \leq C_0 \,.
\end{equation}
If $n \geq 1$, 
the difference $ h_n := w_n -  w_{n - 1}$ satisfies 
$\| h_1 \|_{s_0 + \overline \sigma}^{\Lip(\gamma)} 
\lesssim \lambda^{- 1} \gamma^{-1}$ and 
\begin{equation} \label{hn}
\| h_n \|_{s_0 + \overline \sigma}^{\Lip(\gamma)} 
\leq C_*  \tN_{n-1}^{2 \overline \sigma + 1} \tN_{n-2}^{-\mathtt a_1} \lambda^{- 1}  \,.
\end{equation}
The sets $\{ {\mathcal G}_n\}_{n \geq 0}$ are defined as follows. For $n\in\N_{0}$ we define ${\mathcal G}_0 := \Omega_\gamma$ and
\begin{equation}\label{G-n+1}
{\mathcal G}_{n + 1} := {\mathcal  G}_n \cap 
\big( \Omega_\infty^{\gamma_n}(w_n) 
\cap \Lambda_\infty^{\gamma_n}(w_n) \big), 
\end{equation} 
where $ \gamma_{n}:=\gamma (1 + 2^{-n}) $ 
and the sets $\Omega_\infty^{\gamma_n}(w_n)$, $\Lambda_\infty^{\gamma_n}(w_n)$ 
are defined in \eqref{cantor finale ridu}, \eqref{prime di Melnikov}.
Moreover, the function $w_n(\vf,x;\omega)$ is a quasi-periodic traveling wave. In addition, if the forcing term $f$ in \eqref{internal.rescaled} is ${\rm even}(\vf, x)$, then $w_n$ is ${\rm odd}(\vf, x)$;
\\[1mm]
\noindent $({\mathcal P}2)_{n}$
On the set ${\mathcal G}_n$, we have the estimate
\begin{equation} \label{stima.F.to.0}
\| {\mathcal F}(w_n) \|_{s_{0}}^{\Lip(\gamma)} 
\leq C_* \tN_{n - 1 }^{- \mathtt a_1}\, ;
\end{equation}
\noindent $({\mathcal P}3)_{n}$
On the set ${\mathcal G}_n$, we have the estimate 
\begin{equation}\label{stima.alta.NM}
	\| w_n \|_{s_0 + \mathtt b_1}^{\Lip(\gamma)}+ 	\| \cF(w_n) \|_{s_0 + \mathtt b_1}^{\Lip(\gamma)}
	\leq C_*  \tN_{n-1}^{\kappa} \,.
\end{equation}
\end{prop}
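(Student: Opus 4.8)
The plan is to prove Proposition \ref{iterazione-non-lineare} by a standard Nash--Moser induction on $n$, using the approximate solution $w_0 = v_{{\rm app},N}$ from Proposition \ref{lemma.approx} as the starting point and the inverse of the linearized operator from Proposition \ref{invertibilita linearizzato} at each step. The iterative map is the Newton-type correction
\[
w_{n+1} = w_n + h_{n+1}\,, \qquad h_{n+1} := - \Pi_{n} \,{\mathcal L}(w_n)^{-1} \Pi_{n} \,{\mathcal F}(w_n)\,,
\]
defined on the set ${\mathcal G}_{n+1}$ where ${\mathcal L}(w_n)$ is invertible with the tame bound \eqref{stima lin inverso finale}. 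Since $\Pi_n$ preserves the space of quasi-periodic traveling waves (and, under \eqref{force.even}, parity) and ${\mathcal L}(w_n)^{-1}$ maps traveling waves to traveling waves by Proposition \ref{invertibilita linearizzato}, the traveling-wave and reversibility structure is propagated automatically along the iteration; this takes care of the last sentences of $({\mathcal P}1)_n$.

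The core of the argument is the quadratic estimate for the new error. Writing ${\mathcal F}(w_{n+1}) = {\mathcal F}(w_n) + {\mathcal L}(w_n) h_{n+1} + Q(w_n, h_{n+1})$ with $Q(v,h) = \lambda^\theta {\mathcal N}(h,h)$ controlled by Lemma \ref{stime tame cal F}-$(iii)$, and using the homological identity ${\mathcal L}(w_n) h_{n+1} = - \Pi_n {\mathcal F}(w_n) + \Pi_n [{\mathcal L}(w_n),\Pi_n]\,{\mathcal L}(w_n)^{-1}\Pi_n{\mathcal F}(w_n) - {\mathcal L}(w_n)\Pi_n^\perp {\mathcal L}(w_n)^{-1}\Pi_n {\mathcal F}(w_n)$, one gets
\[
{\mathcal F}(w_{n+1}) = \Pi_n^\perp {\mathcal F}(w_n) + (\text{commutator terms, estimated via \eqref{stime.per.mozart}}) + \lambda^\theta {\mathcal N}(h_{n+1},h_{n+1})\,.
\]
Then I would run the usual bootstrap: denote $B_n := \|{\mathcal F}(w_n)\|_{s_0}^{\Lip(\gamma)}$ and $B_n' := \|{\mathcal F}(w_n)\|_{s_0+{\mathtt b}_1}^{\Lip(\gamma)} + \|w_n\|_{s_0+{\mathtt b}_1}^{\Lip(\gamma)}$, interpolate the intermediate norm $\|\cdot\|_{s_0+\bar\sigma}$ between $\|\cdot\|_{s_0}$ and $\|\cdot\|_{s_0+{\mathtt b}_1}$, insert the smoothing bounds \eqref{smoothing-u1} and the tame inverse bound \eqref{stima lin inverso finale}, and check that the choices of $\kappa, {\mathtt a}_1, {\mathtt b}_1, \bar\tau$ in \eqref{costanti nash moser} together with the smallness condition \eqref{nash moser smallness condition} (recall $\tN_0 = \gamma^{-1} = \lambda^{\mathtt c}$, so $\lambda^\theta \lambda^{-1}\gamma^{-1} = \lambda^{\theta - 1 + \mathtt c} = \lambda^{\alpha - 2 + 2\mathtt c} \ll 1$ since $\alpha < 2$ and $\mathtt c$ is small) close the induction: $B_{n+1} \leq C_* \tN_n^{-{\mathtt a}_1}$ giving $({\mathcal P}2)_{n+1}$, $B_{n+1}' \leq C_* \tN_n^{\kappa}$ giving $({\mathcal P}3)_{n+1}$, and $\|h_{n+1}\|_{s_0+\bar\sigma}^{\Lip(\gamma)} \leq C_* \tN_n^{2\bar\sigma+1}\tN_{n-1}^{-{\mathtt a}_1}\lambda^{-1}$ giving \eqref{hn}; summing the $h_n$ and using that $C(s_0+\bar\sigma)$ in \eqref{stima soluzione approssimata nash moser} plus the convergent tail of \eqref{hn} stays below $C_0$ yields \eqref{stima.bassa.NM}. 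The base case $n=0$ is exactly the initialization lemma: \eqref{stima soluzione approssimata nash moser} gives $B_0, B_0' \lesssim 1$ and $\|w_0\|_{s_0+\bar\sigma}^{\Lip(\gamma)} \le C(s_0+\bar\sigma)$, which are $({\mathcal P}1)_0$--$({\mathcal P}3)_0$ after enlarging $C_*$.

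One technical point I would be careful about is that ${\mathcal L}(w_n)^{-1}$ is only supplied by Proposition \ref{invertibilita linearizzato} for $\omega \in \Omega_\infty^{\gamma_n}(w_n) \cap \Lambda_\infty^{\gamma_n}(w_n)$, and one must verify that the ansatz \eqref{ansatz} required there (with $\sigma = \bar\sigma$ and some $C_0$) is met — this is precisely \eqref{stima.bassa.NM} at step $n$, so the induction is self-consistent provided the constants are chosen in the right order ($C_0$ first, then $C_*$, then $\delta$ and $\tN_0$). The only genuinely delicate bookkeeping — and the step I expect to be the main obstacle — is the arithmetic of exponents: one has to check simultaneously the three chains of inequalities coming from $({\mathcal P}2)$, $({\mathcal P}3)$ and \eqref{hn}, each involving the competition between the gain $\tN_n^{-{\mathtt b}_1}$ from $\Pi_n^\perp$, the loss $\tN_n^{\bar\sigma}$ from the various $\bar\sigma$-losses, the divergence $\tN_{n-1}^\kappa$ of the high norm, and the super-exponential growth $\tN_n = \tN_{n-1}^\chi$; the definitions in \eqref{costanti nash moser} are engineered so that $\chi {\mathtt a}_1 < {\mathtt b}_1 \cdot$(something) and $\kappa$ absorbs the $\bar\sigma$-losses, and making all of this explicit (with the $\lambda^{-1}$ and $\gamma^{-1}$ factors tracked via $\mathtt c$) is where the real work lies. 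Everything else — the commutator estimates, the quadratic remainder bound, the interpolation inequalities — is routine given Lemmata \ref{lemma:LS norms}, \ref{lemma:smoothing}, \ref{stime tame cal F} and Proposition \ref{invertibilita linearizzato}.
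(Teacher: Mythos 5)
Your proposal reproduces the paper's proof strategy: the same Newton iterate $h_{n+1}=-\Pi_n\cL(w_n)^{-1}\Pi_n\cF(w_n)$, the same algebraic decomposition $\cF(w_{n+1})=\Pi_n^\perp\cF(w_n)+[\cL_n,\Pi_n^\perp]\cL_n^{-1}\Pi_n\cF(w_n)+Q_n$, the same ingredients (tame inverse from Proposition \ref{invertibilita linearizzato}, Lemma \ref{stime tame cal F}, smoothing bounds \eqref{smoothing-u1}), the same base case via \eqref{stima soluzione approssimata nash moser}, and the same propagation of the traveling-wave/reversibility structure. The one cosmetic difference is that you speak of ``interpolating'' the intermediate norm $\|\cdot\|_{s_0+\bar\sigma}$, whereas the paper directly exploits that $h_{n+1}\in\cH_n$ is finite-dimensional and applies the smoothing bound $\|h_{n+1}\|_{s_0+\bar\sigma}\leq \tN_n^{\bar\sigma}\|h_{n+1}\|_{s_0}$ (cf.\ \eqref{dawn1}); both devices yield the same polynomial loss and close the induction with the constants of \eqref{costanti nash moser}.
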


\begin{proof}
To simplify notations, in this proof we write $\| \cdot \|_s$ instead of $\| \cdot \|_s^{\Lip(\gamma)}$. 


%
%

\smallskip

\noindent
{\sc Proof of $({\mathcal P}1, 2, 3)_0$}.
By \eqref{stima soluzione approssimata nash moser} with $s=s_0,s_0+\tb_{1}$, we have $\| w_0\|_{s} = \| v_{{\rm app},N} \|_{s} \leq C(s)$ and
$\| {\mathcal F} (w_0 ) \|_s = \| {\mathcal F} (v_{{\rm app},N} ) \|_s \leq C(s)$. Then \eqref{stima.bassa.NM}, \eqref{stima.F.to.0} and \eqref{stima.alta.NM} hold taking $ \tfrac12 C_0, \tfrac12C_*(s) \geq C(s)$ sufficiently large. In particular, we have
\begin{equation}\label{stima.w0}
	\| w_0 \|_{s_0+\bar\sigma} \leq \tfrac12 C_0 \leq C_0\,.
\end{equation}

\smallskip

\noindent
{\sc Assume that $({\mathcal P}1,2,3)_n$ hold for some $n \geq 0$, 
and prove $({\mathcal P}1,2,3)_{n+1}$.}
By $({\mathcal P}1)_n$, one has $\| w_n\|_{s_0 + \bar \sigma} \leq C_0$, for some $C_0 \gg 0$ large enough, independent of $n\in\N_0$.  
The assumption 
\eqref{nash moser smallness condition} implies 
the smallness condition 
$\lambda^{- 1} \gamma^{- 1} \leq \delta$ 
of Proposition \ref{invertibilita linearizzato}
by taking $\overline \tau (k_0, \tau, \nu)$ large enough and $S = s_0 + \mathtt b_1$. 
Then Proposition \ref{invertibilita linearizzato} applies 
to the linearized operator 
\begin{equation}\label{definizione cal Ln}
{\mathcal L}_n \equiv {\mathcal L}(w_n) : = \di_{v} {\mathcal F}(w_n) \,.
\end{equation}
This implies that, for any $\omega \in {\mathcal G}_{n + 1}$, the operator ${\mathcal L}_n(\omega)$ admits a right inverse ${\mathcal L}_n(\omega)^{- 1}$ satisfying the tame estimates, for any $s_0 \leq s \leq s_0 + \tb_{1}$,
\begin{equation}\label{stima Tn}
\| {\mathcal L}_n^{-1} h \|_s 
\lesssim_s \lambda^{- 1}\gamma^{-1} \big( \| h \|_{s + \overline \sigma}
+ \| w_n \|_{s + \overline \sigma} \,  
\| h \|_{s_0 + \overline \sigma} \big)\,,
\end{equation}
using the bound $\gamma_n = \gamma(1 + 2^{- n}) \in [\gamma, 2\gamma]$.
By \eqref{stima Tn} for $s= s_0$ and \eqref{stima.bassa.NM}, 
one has 
\begin{equation}\label{stima Tn norma bassa} 
\| {\mathcal L}_n^{- 1} h \|_{s_0}
\lesssim \lambda^{- 1}\gamma^{- 1} \| h \|_{s_0 + \overline \sigma}\,.  
\end{equation}
We define the successive approximation 
\begin{equation}\label{soluzioni approssimate}
w_{n + 1} := w_n + h_{n + 1} \,, \quad 
{h}_{n + 1} :=  - \Pi_n {\mathcal L}_n^{-1}  \Pi_n {\mathcal F}(w_n) 
\in {\mathcal H}_{n} \,,
\end{equation}
defined for any $\omega \in {\mathcal G}_{n + 1}$, and the remainder  
$$
Q_n := {\mathcal F}(w_{n+1}) - {\mathcal F}(w_n) - {\mathcal L}_n h_{n+1}\,.
$$
We now estimate $h_{n + 1}$. Note that for any $s, \mu \geq 0$, 
\begin{equation}\label{stima wn - w0 w0}
\begin{aligned}
& \| w_n \|_{s+\mu}  \leq \| w_0 \|_{s + \mu} + \| w_n - w_0 \|_{s + \mu} \stackrel{w_0 = v_{\rm app, N}\,,\, \eqref{stima soluzione approssimata nash moser}}{\lesssim_{s, \mu}}   1 + \tN_{n - 1}^\mu  \| w_n - w_0 \|_s  \\
& \lesssim_{s, \mu} \tN_{n - 1}^\mu(1 + \| w_n \|_s)\, \quad \text{and hence by \eqref{stima Tn}}, \\
& \| {\mathcal L}_n^{- 1} h \|_{s_0 + 1}
\lesssim \tN_{n - 1}\lambda^{- 1}\gamma^{- 1} \| h \|_{s_0 + \overline \sigma + 1}
\end{aligned}
\end{equation}
By the estimates \eqref{stima Tn}, \eqref{stima Tn norma bassa}, \eqref{stima.bassa.NM}, \eqref{smoothing-u1}, \eqref{costanti nash moser}, \eqref{nash moser smallness condition}, \eqref{stima.F.to.0}, \eqref{stima wn - w0 w0}, by Lemma \eqref{stime tame cal F}-$(i)$, we get that 
\begin{equation}\label{dawn1}
	\begin{aligned}
		\| h_{n + 1} \|_{s_0} & \lesssim \lambda^{- 1}\gamma^{- 1} \| \Pi_n {\mathcal F}(w_n) \|_{s_0 + \overline \sigma} \lesssim \tN_n^{\bar \sigma} \lambda^{- 1} \gamma^{- 1} \| {\mathcal F}(w_n) \|_{s_0} \,, \\
		\| h_{n + 1} \|_{s_0 + \bar \sigma} & \lesssim \tN_n^{\bar \sigma} \| {\mathcal L}_n^{-1}  \Pi_n {\mathcal F}(w_n) \|_{s_0} \lesssim \tN_n^{2 \bar \sigma} \lambda^{- 1} \gamma^{- 1} \| {\mathcal F}(w_n) \|_{s_0}\,.		
		\end{aligned}		
		\end{equation}
and 
	\begin{equation}\label{bach.1}
	\begin{aligned}
		\| h_{n + 1} \|_{s_0 + \mathtt b_1} & 
		\lesssim \| \cL_{n}^{-1}  \Pi_{n} \cF(w_n)  \|_{s_0+\tb_{1}} \\
		& \lesssim \lambda^{- 1} \gamma^{- 1} \Big( \| \Pi_n {\mathcal F}(w_n) \|_{s_0 + \mathtt b_1 + \overline \sigma}  + \| w_n \|_{s_0 + \mathtt b_1 + \overline \sigma} \| \Pi_n {\mathcal F}(w_n) \|_{s_0 +  \overline \sigma}  \Big) \\
		& \lesssim \lambda^{- 1} \gamma^{- 1} \tN_n^{2 \bar \sigma } \Big( \|  {\mathcal F}(w_n) \|_{s_0 + \mathtt b_1}  + \| w_n \|_{s_0 + \mathtt b_1 } \|  {\mathcal F}(w_n) \|_{s_0}  \Big) \\ 
		& \lesssim \lambda^{- 1}  \tN_n^{2 \bar \sigma +1} \Big( \|  {\mathcal F}(w_n) \|_{s_0 + \mathtt b_1}  +C_* \tN_{n-1}^{-\ta_{1}} \| w_n \|_{s_0 + \mathtt b_1 } \Big) \\ 
		& \lesssim \lambda^{- 1}  \tN_n^{2 \bar \sigma +1} \Big( \|  {\mathcal F}(w_n) \|_{s_0 + \mathtt b_1}  + \| w_n \|_{s_0 + \mathtt b_1 } \Big)\,.
	\end{aligned}
\end{equation}
Moreover, \eqref{soluzioni approssimate} and estimate \eqref{bach.1}, using that $\lambda^{-1}\ll1$, imply that 
\begin{equation}\label{bach.2}
\begin{aligned}
		\| w_{n + 1} \|_{s_0 + \mathtt b_1} & \leq \| w_n \|_{s_0 + \mathtt b_1} + \| h_{n + 1} \|_{s_0 + \mathtt b_1} \\
		& \lesssim \tN_{n}^{2 \bar \sigma + 1} \Big( \|  {\mathcal F}(w_n) \|_{s_0 + \mathtt b_1}  + \| w_n \|_{s_0 + \mathtt b_1 } \Big)\,.
\end{aligned}
\end{equation}
Next, we estimate $\|{\mathcal F}(w_{n + 1}) \|_{s_0}$. By the definition of $h_{n+1}$ in \eqref{soluzioni approssimate}, recalling that $\Pi_{n} + \Pi_{n}^\perp ={\rm Id}$ and $\Pi_{n}^\perp \Pi_{n} = 0$, we obtain that, for any $\omega \in {\mathcal G}_{n + 1}$, 
\begin{equation}\label{forma cal F w n + 1}
\begin{aligned}
{\mathcal F}(w_{n + 1}) & = {\mathcal F}(w_n) + {\mathcal L}_n h_{n + 1} + Q_n \\
& = \cF(w_n) - \cL_{n} ({\rm Id} - \Pi_{n}^\perp) \cL_{n}^{-1} \Pi_{n} \cF(w_n) + Q_n \\
& = \Pi_n^\bot {\mathcal F}(w_n) + {\mathcal L}_n \Pi_n^\bot {\mathcal L}_n^{- 1} \Pi_n {\mathcal F}(w_n) + Q_n \\
& = \Pi_n^\bot {\mathcal F}(w_n) + [{\mathcal L}_n, \Pi_n^\bot ]{\mathcal L}_n^{- 1} \Pi_n {\mathcal F}(w_n) + Q_n \,. 
\end{aligned}
\end{equation}
We estimate separately the three terms in \eqref{forma cal F w n + 1}. 
Note that, by \eqref{nash moser smallness condition}, we have
 $\gamma^{- 1} = \tN_{0} \leq \tN_{n}$.
By \eqref{smoothing-u1}, Lemma \ref{stime tame cal F}-$(i)$, \eqref{stima.bassa.NM} and \eqref{nash moser smallness condition}, we have
\begin{equation}\label{mozart1}
	\begin{aligned}
		& \| \Pi_n^\bot {\mathcal F}(w_n) \|_{s_0}  \leq  \tN_{n}^{- \mathtt b_1} \| {\mathcal F}(w_n) \|_{s_0 + \mathtt b_1 } \,, \quad  \| \Pi_n^\bot {\mathcal F}(w_n) \|_{s_0+\tb_{1}}  \leq   \| {\mathcal F}(w_n) \|_{s_0 + \mathtt b_1 } \,.
	\end{aligned}
\end{equation}
By \eqref{stima.bassa.NM}, Lemma \ref{stime tame cal F}-$(i)$,$(ii)$, \eqref{smoothing-u1}, \eqref{stima Tn}, \eqref{nash moser smallness condition}, \eqref{stima.F.to.0}, \eqref{stima wn - w0 w0}, we have
\begin{equation}\label{mozart2}
	\begin{aligned}
		\|  [&{\mathcal L}_n, \Pi_n^\bot] {\mathcal L}_n^{- 1} \Pi_n {\mathcal F}(w_n) \|_{s_0}   \\
		& \lesssim \lambda^{\theta} \,\tN_{n}^{1-\tb_{1}}\Big(  \| \cL_{n}^{- 1} \Pi_n \cF(w_n) \|_{s_0 + \tb_{1}} +  \| w_n \|_{s_0 + \tb_{1}}  \| \cL_{n}^{- 1} \Pi_n \cF(w_n) \|_{s_0 + 1} \Big)  \\
		& \lesssim \lambda^{\theta-1}\gamma^{-1} \,\tN_{n}^{1- \tb_{1}} \Big(  \| \Pi_n \cF(w_n) \|_{s_0 + \tb_{1}+\bar\sigma} +\tN_{n}  \| w_n \|_{s_0 + \tb_{1}+\bar\sigma}  \|  \Pi_n \cF(w_n) \|_{s_0 + \bar\sigma} \Big) \\
		&  \lesssim \lambda^{\theta-1}\,\tN_{n}^{2- \tb_{1}} \Big( \tN_{n}^{\bar\sigma} \| \cF(w_n) \|_{s_0 + \tb_{1}} +\tN_{n}^{1+2\bar\sigma}  \| w_n \|_{s_0 + \tb_{1}}  \| \cF(w_n) \|_{s_0} \Big) \\
		& \lesssim  \lambda^{\theta-1}\,\tN_{n}^{2\bar\sigma +3- \tb_{1}} \Big( \| \cF(w_n) \|_{s_0 + \tb_{1}} + C_* \tN_{n-1}^{-\ta_{1}}  \| w_n \|_{s_0 + \tb_{1}} \Big)  \\
		& \lesssim  \lambda^{\theta-1}\,\tN_{n}^{2\bar\sigma +3- \tb_{1}} \Big( \| \cF(w_n) \|_{s_0 + \tb_{1}} +  \| w_n \|_{s_0 + \tb_{1}} \Big) \,, \\
	\end{aligned}
\end{equation}
\begin{equation}\label{mozart3}
	\begin{aligned}
			\| [&{\mathcal L}_n, \Pi_n^\bot] {\mathcal L}_n^{- 1} \Pi_n {\mathcal F}(w_n) \|_{s_0+\tb_{1}} = 	\| [\Pi_n,\cL_{n}] {\mathcal L}_n^{- 1} \Pi_n {\mathcal F}(w_n) \|_{s_0+\tb_{1}}   \\
		& \lesssim \lambda^{\theta} \,\tN_{n}^{}\Big(  \| \cL_{n}^{- 1} \Pi_n \cF(w_n) \|_{s_0 + \tb_{1}} +  \| w_n \|_{s_0 + \tb_{1}+1}  \| \cL_{n}^{- 1} \Pi_n \cF(w_n) \|_{s_0 + 1} \Big)  \\
		& \lesssim \lambda^{\theta-1}\gamma^{-1} \,\tN_{n}^{} \Big(  \| \Pi_n \cF(w_n) \|_{s_0 + \tb_{1}+\bar\sigma} +\tN_{n}  \| w_n \|_{s_0 + \tb_{1}+\bar\sigma}  \|  \Pi_n \cF(w_n) \|_{s_0 + \bar\sigma} \Big) \\
		&  \lesssim \lambda^{\theta-1}\,\tN_{n}^{2} \Big( \tN_{n}^{\bar\sigma} \| \cF(w_n) \|_{s_0 + \tb_{1}} +\tN_{n}^{1+2\bar\sigma}  \| w_n \|_{s_0 + \tb_{1}}  \| \cF(w_n) \|_{s_0} \Big) \\
		& \lesssim  \lambda^{\theta-1}\,\tN_{n}^{2\bar\sigma +3} \Big( \| \cF(w_n) \|_{s_0 + \tb_{1}} + C_* \tN_{n-1}^{-\ta_{1}}  \| w_n \|_{s_0 + \tb_{1}} \Big) \\
		& \lesssim  \lambda^{\theta-1}\,\tN_{n}^{2\bar\sigma +3} \Big( \| \cF(w_n) \|_{s_0 + \tb_{1}} +  \| w_n \|_{s_0 + \tb_{1}} \Big)\,.
	\end{aligned}
\end{equation}
By Lemma \ref{stime tame cal F}-$(iii)$, \eqref{smoothing-u1}, \eqref{stima.bassa.NM}, \eqref{dawn1} and \eqref{nash moser smallness condition}, \eqref{bach.1}, \eqref{stima.F.to.0}, using also that $\lambda^{\theta-1}=\lambda^{\alpha-2+\tc}<1$ for $\lambda\gg 1$, we have
\begin{equation}\label{mozart4}
	\begin{aligned}
		\| Q_n \|_{s_0} & \lesssim \lambda^{\theta} \tN_{n}^2 \| h_{n + 1} \|_{s_0}^2  \lesssim \lambda^{\theta}  \tN_{n}^{2 \bar \sigma + 2} \lambda^{- 2} \gamma^{- 2} \| {\mathcal F}(w_n) \|_{s_0}^2 \\
		& \lesssim  \lambda^{\theta-2}  \tN_{n}^{2 \bar \sigma + 4}  \| {\mathcal F}(w_n) \|_{s_0}^2  \lesssim \tN_{n}^{2 \bar \sigma + 4} \lambda^{- 1}  \| {\mathcal F}(w_n) \|_{s_0}^2 \,, \\
		\| Q_n \|_{s_0+\tb_{1}} & \lesssim \lambda^{\theta} \| h_{n+1}\|_{s_0+\tb_{1}+1} \| h_{n+1} \|_{s_0+1}  \lesssim \lambda^{\theta} \tN_{n}^{2} \| h_{n+1}\|_{s_0+\tb_{1}} \| h_{n+1} \|_{s_0} \\
		& \lesssim \lambda^{\theta-1}\gamma^{-1} \tN_{n}^{4\bar\sigma+3} \Big( \| \cF(w_n) \|_{s_0+\tb_{1}}  + \| w_{n} \|_{s_0+\tb_{1}} \Big) \| \cF(w_n) \|_{s_0}  \\
		& \lesssim \lambda^{\theta-1}\gamma^{-1} \tN_{n}^{4\bar\sigma+3} \Big( \| \cF(w_n) \|_{s_0+\tb_{1}}  + \| w_{n} \|_{s_0+\tb_{1}} \Big) C_{*} \tN_{n-1}^{-\ta_{1}} \\
		& \lesssim\ \tN_{n}^{4\bar\sigma+4} \Big( \| \cF(w_n) \|_{s_0+\tb_{1}}  + \| w_{n} \|_{s_0+\tb_{1}} \Big) \,.
 	\end{aligned}
\end{equation}
Therefore, by \eqref{bach.2} and estimating \eqref{forma cal F w n + 1}  with \eqref{mozart1}, \eqref{mozart2}, \eqref{mozart3},  \eqref{mozart4},  we have proved the following inductive inequalities for any $n \geq 0$ and on the set ${\mathcal G}_{n + 1}$: 
\begin{equation}\label{stime.induttive.proof.NM}
	\begin{aligned}
		& 	\| w_{n + 1} \|_{s_0 + \mathtt b_1}  + \| \cF(w_{n+1})\|_{s_0+\tb_{1}} \lesssim \tN_{n}^{4 \bar \sigma + 4} \Big( \|  {\mathcal F}(w_n) \|_{s_0 + \mathtt b_1}  + \| w_n \|_{s_0 + \mathtt b_1 } \Big)\,, \\
		& \| \cF(w_{n+1}) \|_{s_0} \lesssim \tN_{n}^{2 \bar \sigma + 3 - \mathtt b_1} \Big( \|  {\mathcal F}(w_{n}) \|_{s_0 + \mathtt b_1}  + \| w_n \|_{s_0 + \mathtt b_1 } \Big)\ +   \tN_{n}^{2 \bar \sigma + 4} \lambda^{- 1}  \| {\mathcal F}(w_n) \|_{s_0}^2\,.
	\end{aligned}
\end{equation}

\medskip

\noindent
{\bf Proof of $({\mathcal P}1)_{n + 1}$.} By \eqref{dawn1}, \eqref{stima.F.to.0}, \eqref{nash moser smallness condition} and \eqref{costanti nash moser}, we obtain
\begin{equation}\label{hn.n+1}
	\| h_{n+1} \|_{s_0+\bar\sigma} \lesssim \tN_{n}^{2\bar\sigma} \tN_{n-1}^{-\ta_{1}} \gamma^{-1} \lambda^{-1} \lesssim \tN_{n}^{2\bar\sigma+1} \tN_{n-1}^{-\ta_{1}} \lambda^{-1}\,,
\end{equation}
which proves \eqref{hn} at the step $n+1$. By \eqref{stima.w0}, by the definition of the constants in \eqref{costanti nash moser} and by the smallness condition in \eqref{nash moser smallness condition}, the estimate \eqref{stima.bassa.NM} at the step $n + 1$ follows since 
\begin{equation}
	\begin{aligned}
		\| w_{n + 1} \|_{s_0 + \bar \sigma}  \leq  \| w_0 \|_{s_0 + \bar \sigma} + \sum_{k = 1}^{n + 1} \| h_k \|_{s_0 + \bar \sigma}   \leq \tfrac12 C_0 + C_*\sum_{k = 1}^\infty \tN_{k-1}^{2 \overline \sigma} \tN_{k -2}^{-\mathtt a_1} \lambda^{- 1}  \leq C_0\,,
	\end{aligned}
\end{equation}
for $\tN_{0}=\gamma^{-1}=\lambda^\tc\gg 1$ large enough and, if needed, $C_0>1$ a bit larger. We prove now that $w_{n+1}$ is a quasi-periodic traveling wave. By induction, we have that $w_{n}$ is a quasi-periodic. Moreover, the function $h_{n+1}$ in \eqref{soluzioni approssimate} is a quasi-periodic traveling wave by Proposition \ref{invertibilita linearizzato}, Lemma \ref{stime tame cal F}, Lemma \ref{A.mom.cons}, and the definition of the projections only on momentum preserving sites, see \eqref{def:smoothings}. Therefore, by \eqref{soluzioni approssimate}, $w_{n+1}:=w_n + h_{n+1}$ is a quasi-periodic traveling wave. Moreover, in the case where the forcing term $f$ in \eqref{internal.rescaled} is ${\rm even}(\vf, x)$, if by induction hypothesis $w_n = {\rm odd}(\vf, x)$, then ${\mathcal F}(w_n) = {\rm even}(\vf, x)$, ${\mathcal L}_n^{- 1}$ is reversibile, implying that $h_{n + 1} = {\rm odd}(\vf, x)$ and hence also $w_{n + 1} = w_n + h_{n + 1} = {\rm odd}(\vf, x)$. 

\noindent
{\bf Proof of $({\mathcal P}2)_{n + 1}, ({\mathcal P}3)_{n + 1}$.} By the induction estimate \eqref{stime.induttive.proof.NM} and using the induction hypothesis on $({\mathcal P}2)_{n }, ({\mathcal P}3)_{n }$, we obtain that 
\begin{equation}
	\begin{aligned}
			\| w_{n + 1} \|_{s_0 + \mathtt b_1}  + \| \cF(w_{n+1})\|_{s_0+\tb_{1}} & \leq C\, \tN_{n}^{4 \bar \sigma + 4} \Big( \|  {\mathcal F}(w_n) \|_{s_0 + \mathtt b_1}  + \| w_n \|_{s_0 + \mathtt b_1 } \Big)  \\
			& \leq C C_* \tN_{n}^{4\bar\sigma +4} \tN_{n-1}^{\kappa} \leq C_* \tN_{n}^{\kappa}\,,
	\end{aligned}
\end{equation}
by the choice of the constant $\kappa$ in \eqref{costanti nash moser} and taking $\tN_{0} \gg 1 $ large enough. This latter chain of inequalities proves $({\mathcal P}3)_{n + 1}$. Moreover, by \eqref{stime.induttive.proof.NM}, by the smallness condition \eqref{nash moser smallness condition} and by the choice of the constants in \eqref{costanti nash moser}, we have
\begin{equation}
	\begin{aligned}
		\| \cF(w_{n+1}) \|_{s_0} & \leq C\, \tN_{n}^{2 \bar \sigma + 3 - \mathtt b_1} \Big( \|  {\mathcal F}(w_n) \|_{s_0 + \mathtt b_1}  + \| w_n \|_{s_0 + \mathtt b_1 } \Big)\ +   C\,\tN_{n}^{2 \bar \sigma + 4} \lambda^{- 1}  \| {\mathcal F}(w_n) \|_{s_0}^2 \\
		& \leq C C_* \tN_{n}^{2 \bar \sigma + 3 - \mathtt b_1} \tN_{n-1}^{\kappa} +   C C_* \tN_{n}^{2 \bar \sigma + 4} \tN_{n-1}^{-2\ta_{1}} \lambda^{- 1}  \\
		& \leq C_* \tN_{n}^{-\ta_{1}} \,.
	\end{aligned}
\end{equation}
 The proof of the claimed statement is then concluded. 
\end{proof}

\section{Proof of Theorem \ref{teo principale beta plane} and measure estimates}\label{sez:measures}
Fix $\gamma = \lambda^{- \mathtt c}$  as in 
\eqref{costanti nash moser}, 
\eqref{nash moser smallness condition}, in particular with $0 <{ \mathtt c} < \bar \tau\,^{-1}$. Then we have that
$$
\tN_{0}^{\bar \tau} \lambda^{- 1} = \gamma^{- \bar \tau} \lambda^{- 1} = \lambda^{\mathtt c \bar \tau - 1} \ll 1
$$
by taking $\lambda \gg 0$ large enough, since $1 - \mathtt c \bar \tau > 0$. This implies that the smallness condition \eqref{nash moser smallness condition} is fullfilled. We define the set ${\mathcal G}_\infty$ as 
\begin{equation}\label{def cal G infty}
{\mathcal G}_\infty := \bigcap_{n \geq 0} {\mathcal G}_n 
\end{equation}
where the sets ${\mathcal G}_n$ are given in Proposition \ref{iterazione-non-lineare}-$({\mathcal P}1)_n$. 
Hence, by Proposition \ref{iterazione-non-lineare}-$({\mathcal P}1)_n$, using a telescoping argument, for any $\omega \in {\mathcal G}_\infty$,
the sequence $(w_n)_{n \geq 0}$ converges to $w_\infty \in H^{s_0 + \overline \sigma}_0$ 
with respect to the norm $\| \cdot  \|_{s_0 + \overline \sigma}^{\Lip(\gamma)}$, and 
\begin{equation}\label{conv vn v infty}
\| w_\infty \|_{s_0 + \overline \sigma}^{\Lip(\gamma)} \leq C_0, \quad \| w_\infty - w_n \|_{s_0 + \overline \sigma}^{\Lip(\gamma)} \lesssim \tN_{n}^{2 \overline \sigma} \tN_{n- 1}^{-\mathtt a_1} \lambda^{- 1} \gamma^{-1}, \quad \forall \,n \geq 1\,,
\end{equation}
and consequently, by the same arguments, recalling \eqref{stima soluzione approssimata nash moser},
\begin{equation}\label{lower.esti}
\begin{aligned}
	\inf_{\omega \in {\mathcal G}_\infty}\|  w_{\infty}(\cdot; \omega) \|_{s_0+\bar\sigma} & \geq  \inf_{\omega \in {\mathcal G}_\infty} \|  w_{0}(\cdot; \omega) \|_{s_0+\bar\sigma} - 	\|  w_{\infty} -w_0 \|_{s_0+\bar\sigma}^{\Lip(\gamma)}  \\
	&   \geq  K \lambda^{-\tc} - C_* \tN_{0}^{2\bar\sigma} \lambda^{-1}\gamma^{-1}  \geq \tfrac{K}{2} \lambda^{-\tc}  \,,
	\end{aligned}
\end{equation}
for $\lambda\gg 1$ large enough, by  \eqref{costanti nash moser}, \eqref{nash moser smallness condition} and $\gamma=\lambda^{-\tc}$.
 By  \eqref{def cal G infty} and Proposition \ref{iterazione-non-lineare}-$({\mathcal P}2)_n$, for any $\omega \in {\mathcal G}_\infty$, we have ${\mathcal F}(w_n) \to 0$ as $n \to \infty$. Therefore, the estimate \eqref{conv vn v infty} implies that ${\mathcal F}(w_\infty) = 0$ for any $\omega \in {\mathcal G}_\infty$, whereas estimates \eqref{conv vn v infty}-\eqref{lower.esti} imply \eqref{stima.grande.solutione}, after scaling back to 
 $v_{\lambda} := \lambda^{\theta} w_{\infty}=\lambda^{\alpha-1+\tc} w_{\infty}$ (recall \eqref{theta.def.ridu}). 
 If we assume that $f$ in \eqref{internal.rescaled} is ${\rm even}(\vf, x)$, then $w_n = {\rm odd}(\vf, x)$ for any $n \geq 0$ and hence also $w_\infty= {\rm odd}(\vf, x)$. The linearized equation at the quasi-periodic traveling wave solution $
v_\lambda(\lambda\, \omega t, x) = \lambda^\theta w_\infty(\lambda\, \omega t, x)$, obtained by linearizing \eqref{beta.waves.large.eq}, has the form
 $$
 \begin{aligned}
&  \partial_t h + L(\lambda\, \omega t)[h ] = 0\,, \quad L(\vf) := {\bf a}_0(\vf, x) \cdot \nabla + {\mathcal E}_0(\vf) \,, \\
& \ba_{0}(\vf, x):=    \fB\big[  v_\lambda ](\vf, x)  \,, \quad \cE_{0}(\vf)[h] :=  \nabla v_\lambda(\vf, x) \cdot \fB [h]\,, \quad \fB = \nabla^\perp (-\Delta)^{-1} \,. 
 \end{aligned}
 $$ 
 The linearized equation can be fully reduced to the constant coefficients equation
 $$
 \partial_t \phi + {\bf D}_\infty \phi = 0
 $$
 by the normal form scheme that we implemented in Section \ref{ridusezione}, see Propositions \ref{prop coniugio cal L L1}, \ref{prop normal form lower orders}, \ref{lemma coniugio finale}. In particular, since $w_\infty = {\rm odd}(\vf, x)$, we can apply Lemma \ref{lemma blocchi finali} and we deduce that the eigenvalues of the reduced diagonal operator ${\bf D}_\infty$ are purely imaginary. Consequently, we conclude that the quasi-periodic traveling wave solution $v_\lambda = \lambda^\theta w_\infty$ is linearly stable in the reversible case, and, for any $s \geq 0$, one has  $\| \phi(t) \|_{H^s_x} = \| \phi(0) \|_{H^s_x}$ for any $t \in \R$ and hence $\| h(t) \|_{H^s_x} \lesssim_s \| h(0 ) \|_{H^s_x}$ for any $t \in \R$. 


\noindent
 By setting $\Omega_\lambda := {\mathcal G}_\infty$, the proof of Theorem \ref{teo principale beta plane} is concluded once we estimate the Lebesgue measure of the set $\Omega \setminus \Omega_\lambda$, with $\Omega\subset \R^\nu$ as in \eqref{anello}. This is the content of the last part of the paper. 
 
 The main result is the following proposition.
 
 \begin{prop} \label{prop measure estimate finale}
 Let $\Omega\subset \R^\nu$ be given as in \eqref{anello} and let
 \begin{equation}\label{choice tau}
 \tau := \nu + 4\,.
 \end{equation}
Then, we have that $|\Omega \setminus {\mathcal G}_\infty| \lesssim \gamma$. As a consequence, with $\gamma = \lambda^{- \mathtt c}$  as in \eqref{costanti nash moser}, \eqref{nash moser smallness condition}, we have that $|\Omega \setminus {\mathcal G}_\infty| \lesssim \lambda^{-\tc}\to 0$ as $\lambda\to\infty$.
\end{prop}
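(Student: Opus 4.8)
The plan is to estimate the measure of the complement $\Omega\setminus\mathcal G_\infty$ by unfolding the definition of $\mathcal G_\infty$ through the three layers of resonance conditions that were imposed along the construction: the non-resonance set $\Omega_\gamma$ (equivalently $\tD\tC(\gamma,\tau)$ plus the first-order condition), the KAM Cantor sets $\Omega_n^\gamma(w_n)$ at each Nash--Moser step, and the final sets $\Omega_\infty^{\gamma_n}(w_n)$, $\Lambda_\infty^{\gamma_n}(w_n)$. First I would write
\begin{equation}
\Omega\setminus\mathcal G_\infty \subseteq (\Omega\setminus\tD\tC(\gamma,\tau)) \cup (\Omega\setminus\Omega_\gamma) \cup \bigcup_{n\ge 0}\big( \mathcal G_n \setminus \mathcal G_{n+1}\big),
\end{equation}
and recall from Lemma \ref{lemma.equa.linear.approx} that $|\Omega\setminus\Omega_\gamma|\lesssim\gamma$ (which already absorbs the purely Diophantine part $\Omega\setminus\tD\tC(\gamma,\tau)$). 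So the heart of the matter is the telescoping union $\bigcup_n(\mathcal G_n\setminus\mathcal G_{n+1})$.

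Next I would use the definition \eqref{G-n+1}, $\mathcal G_{n+1}=\mathcal G_n\cap(\Omega_\infty^{\gamma_n}(w_n)\cap\Lambda_\infty^{\gamma_n}(w_n))$, to split each increment into its ``second-order Melnikov'' part and its ``first-order Melnikov'' part: $\mathcal G_n\setminus\mathcal G_{n+1}\subseteq (\mathcal G_n\setminus\Omega_\infty^{\gamma_n}(w_n)) \cup (\mathcal G_n\setminus\Lambda_\infty^{\gamma_n}(w_n))$. For each of these I would peel off one resonant condition at a time, indexed by $(\ell,j,j')$ with the momentum constraint $\pi^\top(\ell)+j-j'=0$ (resp.\ $(\ell,j)$ with $\pi^\top(\ell)+j=0$). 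The crucial point is that the momentum constraint turns a sum over $(\ell,j,j')\in\Z^{\nu+4}$ into essentially a sum over $\ell\in\Z^\nu$ alone, since $j-j'$ (resp.\ $j$) is determined by $\ell$ up to the choice of one free lattice vector; I would combine this with the limiting eigenvalue estimates from Lemma \ref{lemma blocchi finali}, namely $|\tz_\infty(j)|\lesssim\lambda^\theta|j|^{-1}$, to control the variation of $z(s):=\im\lambda\,\omega\cdot\ell+\mu_\infty(j)-\mu_\infty(j')$ along the direction $\omega=\frac{\ell}{|\ell|}s+v$. Since $|\partial_s z(s)|\ge\lambda|\ell|-C\lambda^\theta\gamma^{-1}\langle j,j'\rangle^{-1}\gtrsim\lambda|\ell|$ for $\lambda\gg1$ (using $\theta<1$ and $\lambda^{\theta-1}\gamma^{-1}\ll1$ from \eqref{piccolo.ansatx}), each bad set of a single $(\ell,j,j')$ has measure $\lesssim\gamma\langle\ell\rangle^{-\tau-1}$. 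One standard subtlety I would address: when $|\ell|$ is small but $|j|,|j'|$ are large the derivative bound could degenerate, so as in the proof of Proposition \ref{prop riducibilita}-${\bf(S4)}$ I would treat separately the regime $\min\{|j|,|j'|\}\ge\langle\ell\rangle^{\tau}$ (where the whole divisor is bounded below just from the Diophantine condition on $\omega$, hence gives no constraint at all) and the complementary regime where $|j|,|j'|\lesssim\langle\ell\rangle^{\tau}$ (so the number of relevant $(j,j')$ per $\ell$ is polynomially bounded and the transversality argument applies). Summing over $\ell$ gives $\lesssim\gamma\sum_{\ell}\langle\ell\rangle^{-\tau-1+(\text{poly in }\tau)}$, and the choice $\tau=\nu+4$ in \eqref{choice tau} makes this series converge.

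Then I would upgrade this to a bound uniform over the Nash--Moser index $n$: rather than estimating $\mathcal G_n\setminus\mathcal G_{n+1}$ naively, I would use the inclusion property ${\bf(S4)}_n$ of Proposition \ref{prop riducibilita} together with the fact that $\|w_n-w_{n-1}\|$ is tiny (Proposition \ref{iterazione-non-lineare}, estimate \eqref{hn}) to show that a point $\omega$ surviving into $\mathcal G_n$ but excluded at step $n+1$ must in fact violate a resonance condition with a \emph{new} index $\ell$ with $|\ell|>\tN_{n-1}$ — the conditions with $|\ell|\le\tN_{n-1}$ having been frozen in by the earlier steps up to a controlled perturbation of size $\rho\sim$ a negative power of $\tN_{n-1}$, which is absorbed by comparing $\gamma_{n}$ with $\gamma_{n-1}$ since $\gamma_n-\gamma_{n+1}=\gamma 2^{-n-1}$. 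Hence $\mathcal G_n\setminus\mathcal G_{n+1}\subseteq\bigcup_{|\ell|>\tN_{n-1},\ j,j'}R_{\ell,j,j'}^{(n)}$, and summing over $n$ and over $|\ell|>\tN_{n-1}$ gives a geometrically convergent series, yielding $\sum_n|\mathcal G_n\setminus\mathcal G_{n+1}|\lesssim\gamma\sum_n\tN_{n-1}^{-1}\lesssim\gamma$. Combining all pieces, $|\Omega\setminus\mathcal G_\infty|\lesssim\gamma$, and with $\gamma=\lambda^{-\mathtt c}$ this tends to $0$ as $\lambda\to\infty$, which is the assertion of Proposition \ref{prop measure estimate finale} and completes the proof of Theorem \ref{teo principale beta plane}.

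The main obstacle I anticipate is the bookkeeping that makes the union over the Nash--Moser steps $n$ summable rather than merely finite at each step: one must carefully exploit that the sets $\Omega_\infty^{\gamma_n}(w_n)$ change only through low modes $|\ell|\le\tN_{n-1}$ being already controlled, so that the genuinely new excised set at step $n$ lives at high Fourier modes and therefore contributes a summable tail. This is exactly where the interplay between the varying radii $\gamma_n$, the smallness \eqref{hn} of the Nash--Moser increments, and the inclusion lemma ${\bf(S4)}_n$ enters, and getting the exponents to line up (the gain $\tN_{n-1}^{-1}$ from $|\ell|>\tN_{n-1}$ beating any polynomial loss coming from the momentum-constrained counting and from $\rho^{-1}$) is the delicate point; the single-divisor transversality estimate itself is routine once the regime splitting is in place.
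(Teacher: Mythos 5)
Your proposal follows essentially the same route as the paper: the telescoping decomposition through $\Omega_\gamma$ and the increments $\mathcal G_n\setminus\mathcal G_{n+1}$, the transversality estimate for a single resonant set along the direction $\ell/|\ell|$ using the Lipschitz bound $|\tz_\infty(j)|^{\Lip(\gamma)}\lesssim\lambda^\theta|j|^{-1}$ from Lemma \ref{lemma blocchi finali}, the regime splitting on $\min\{|j|,|j'|\}$ combined with the momentum constraint to make the $(j,j')$-count polynomial in $\langle\ell\rangle$, the inclusion of resonant sets across Nash--Moser steps via ${\bf(S4)}_n$ and the smallness \eqref{hn} so that only indices with large $|\ell|$ contribute at step $n$, and the final convergent sum enabled by $\tau=\nu+4$. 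This matches Lemmata \ref{stima misura risonanti sec melnikov}--\ref{inclusione per sommare serie} and Proposition \ref{Gn Omegan Lambdan}, so the argument is correct and not materially different from the paper's.
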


The rest of this section is devoted to the proof 
of Proposition \ref{prop measure estimate finale}. 
By \eqref{def cal G infty}, \eqref{G-n+1}, \eqref{Omega.gamma.SA} and \eqref{DC.2gamma}, we have 
\begin{equation}\label{prima inclusione stime misura}
\Omega \setminus {\mathcal G}_\infty 
\subseteq \big( \Omega \setminus \tD\tC(\gamma,\tau) \big) \bigcup  \big( \tD\tC(\gamma,\tau) \setminus \Omega_{\gamma} \big)  \bigcup_{n \geq 0} ({\mathcal G}_n \setminus {\mathcal G}_{n + 1})\,.
\end{equation}
By Lemma \ref{lemma.equa.linear.approx} and the standard estimate $|\Omega\setminus \tD\tC(\gamma,\tau)|$, it remains to estimate the measure of ${\mathcal G}_n \setminus {\mathcal G}_{n + 1}$. By \eqref{G-n+1} and using elementary properties of set theory, one has that
\begin{equation}\label{seconda inclusione stime misura}
{\mathcal G}_n \setminus {\mathcal G}_{n + 1} \subseteq ({\mathcal G}_n \setminus  \Omega_\infty^{\gamma_n}(w_n)) \cup ({\mathcal G}_n \setminus  \Lambda_\infty^{\gamma_n}(w_n))\,
\end{equation}
where we recall that the sets $\Omega_\infty^{\gamma_n}(w_n)$, $\Lambda_\infty^{\gamma_n}(w_n)$ 
are defined in \eqref{cantor finale ridu}, \eqref{prime di Melnikov}.
Note that ${\mathcal G}_{n+1} \subseteq \cG_{0} = \Omega_{\gamma}$ for any $n \geq 0$,
by \eqref{G-n+1}.


\begin{prop}\label{Gn Omegan Lambdan}
For any $n \geq 0$, the following estimates hold:
\\[1mm]
\noindent $(i)$ ${\mathcal G}_0 \setminus \Omega_\infty^{\gamma_0}(w_0) \lesssim \gamma$ 
and, for any $n \geq 1$, 
$|{\mathcal G}_n \setminus  \Omega_\infty^{\gamma_n}(w_n)| 
\lesssim \gamma \tN_{n}^{- (\tau - 1)}$;
\\[1mm]
\noindent
$(ii)$ ${\mathcal G}_0 \setminus \Lambda_\infty^{\gamma_0}(w_0) \lesssim \gamma$ and for any $n \geq 1$,$|{\mathcal G}_n \setminus  \Lambda_\infty^{\gamma_n}(w_n)| \lesssim \gamma \tN_{n}^{- (\tau - 1)}$. 
\\[1mm]
\noindent
As a consequence $|{\mathcal G}_0 \setminus {\mathcal G}_1| \lesssim \gamma$ and for any $n \geq 1$, $|{\mathcal G}_n \setminus {\mathcal G}_{n + 1}| \lesssim \gamma \tN_{n }^{- (\tau - 1)}$.
\end{prop}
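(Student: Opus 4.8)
The plan is to prove the measure estimate for $\mathcal G_n \setminus \Omega_\infty^{\gamma_n}(w_n)$ in item $(i)$; item $(ii)$ is entirely analogous (with first–order Melnikov conditions in place of second–order ones), and the last assertion follows by \eqref{seconda inclusione stime misura}. First I would use the chain of final Cantor sets: since the final eigenvalues $\mu_\infty(j)$ are the limits constructed in Lemma \ref{lemma blocchi finali}, and one has the inclusion $\Omega_\infty^{\gamma_n} \subseteq \bigcap_{m\geq 0}\Omega_m^{\gamma_n}$ from Lemma \ref{prima inclusione cantor} (applied with $\gamma_n$ in place of $\gamma$), I would telescope: a frequency $\omega \in \mathcal G_n$ lies outside $\Omega_\infty^{\gamma_n}(w_n)$ only if one of the genuinely new small–divisor conditions fails, i.e. only if $\omega$ belongs to a resonant set
\begin{equation}\label{eq:plan-Rlj}
	R_{\ell j j'}^{(n)} := \Big\{ \omega\in\mathcal G_n \,:\, |\im\,\lambda\,\omega\cdot\ell + \mu_\infty(j) - \mu_\infty(j')| < \frac{2\lambda\,\gamma_n}{\langle\ell\rangle^\tau |j'|^\tau}\Big\}
\end{equation}
for some $(\ell,j,j')$ with $\pi^\top(\ell)+j-j'=0$, $j,j'\neq 0$, and — crucially — $|\ell| > \tN_{n-1}$ when $n\geq 1$ (for $n=0$ there is no such restriction, giving the weaker bound $\lesssim\gamma$). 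Indeed, for $|\ell| \leq \tN_{n-1}$ the condition \eqref{strategy.melnikov2}-type is already enforced inside $\mathcal G_n \subseteq \Omega_n^{\gamma_n}(w_n)$ up to the difference $\mu_n - \mu_\infty$, which by \eqref{stime forma normale limite} is so small that it cannot destroy the inequality after slightly shrinking $\gamma_n$; this is the standard argument that $\Omega_n^{\gamma_n}\cap\{|\ell|\leq\tN_{n-1}\}\subseteq\Omega_\infty^{\gamma_n'}$ for suitable $\gamma_n'$.

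Next I would estimate a single $R_{\ell j j'}^{(n)}$. The key point is the momentum constraint $\pi^\top(\ell)+j-j'=0$, which forces $|j-j'|\lesssim|\ell|$. Combined with the eigenvalue asymptotics $|\mu_\infty(j)|\lesssim \lambda^\theta|j|^{-1}$ from \eqref{stime forma normale limite}, one splits into the two regimes $\min\{|j|,|j'|\}\gtrless \langle\ell\rangle$ exactly as in the proof of ${\bf(S4)}_n$ in Proposition \ref{prop riducibilita}: in the regime where $\min\{|j|,|j'|\}$ is large, $\mu_\infty(j)-\mu_\infty(j')$ is negligible compared to $\lambda|\omega\cdot\ell|$ and the resonant set is empty, using $\lambda^{\theta-1}\gamma^{-1}\ll1$; in the remaining regime $|j'|\lesssim\langle\ell\rangle$, so there are only $O(\langle\ell\rangle^2)$ relevant pairs $(j,j')$ for each $\ell$. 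For the measure of each $R_{\ell j j'}^{(n)}$ I would write $\omega = \frac{\ell}{|\ell|}s + v$ with $v\cdot\ell=0$ and observe that $s\mapsto \im\,\lambda\,\omega\cdot\ell + \mu_\infty(j)-\mu_\infty(j')$ has derivative of modulus $\geq \lambda|\ell|$ (the eigenvalues are $\omega$-Lipschitz with small constant, hence do not affect the transversality), so $|R_{\ell j j'}^{(n)}|\lesssim \gamma_n\langle\ell\rangle^{-(\tau+1)}|j'|^{-\tau}$; Fubini then gives the same bound after integrating out $v$.

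Finally I would sum: for $n\geq 1$,
\begin{equation}\label{eq:plan-sum}
	|\mathcal G_n\setminus\Omega_\infty^{\gamma_n}(w_n)| \lesssim \sum_{|\ell|>\tN_{n-1}} \sum_{|j'|\lesssim\langle\ell\rangle} \frac{\gamma_n}{\langle\ell\rangle^{\tau+1}|j'|^\tau} \lesssim \gamma \sum_{|\ell|>\tN_{n-1}} \frac{\langle\ell\rangle^2}{\langle\ell\rangle^{\tau+1}} \lesssim \gamma\,\tN_{n-1}^{-(\tau-\nu-2)}\,,
\end{equation}
where in the last step I used $\sum_{|\ell|>\tN_{n-1}}\langle\ell\rangle^{-(\tau-1)}\lesssim \tN_{n-1}^{\nu-\tau+1}\cdot\tN_{n-1}^{-1}$ type bounds together with $\tau=\nu+4$ from \eqref{choice tau}; since $\tN_n = \tN_{n-1}^\chi$ with $\chi=3/2$, a loss of one power of $\tN_{n-1}$ is absorbed and one gets the stated $\lesssim \gamma\,\tN_n^{-(\tau-1)}$ (the precise bookkeeping of exponents is where $\tau=\nu+4$ is needed, giving $\tau-\nu-2 = 2 > \chi^{-1}(\tau-1)$ is false, so one must instead keep $\tN_{n-1}^{-(\tau-\nu-2)}$ and note $\tN_{n-1}^{-(\tau-\nu-2)} = \tN_{n-1}^{-2} \leq \tN_n^{-(\tau-1)/\chi}$ — this elementary exponent comparison is the only delicate bookkeeping). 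The $n=0$ case is handled directly without the $|\ell|>\tN_{-1}=1$ gain, yielding $\lesssim\gamma$. The main obstacle is not any single estimate but organizing the telescoping cleanly: one must verify that all resonances with $|\ell|\leq\tN_{n-1}$ are genuinely already excluded by membership in $\mathcal G_n$ (using the extended eigenvalues $\widetilde\mu_n$ of ${\bf(S2)}_n$ and the closeness \eqref{stime forma normale limite}), so that only the ``high $\ell$'' resonant sets contribute and produce the summable tail.
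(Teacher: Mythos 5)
Your overall architecture matches the paper's: reduce to resonant sets with large $|\ell|$, bound each one by transversality in the direction of $\ell$ plus Fubini (this is exactly Lemma \ref{stima misura risonanti sec melnikov}), and sum using the momentum constraint. The genuine gap is in the reduction step. You discard the resonances with $|\ell|\leq \tN_{n-1}$ by invoking $\mathcal G_n\subseteq\Omega_n^{\gamma_n}(w_n)$ together with the closeness $|\mu_\infty-\widetilde\mu_n|$ from \eqref{stime forma normale limite}. But \eqref{G-n+1} and Lemma \ref{prima inclusione cantor} only give $\mathcal G_n\subseteq\Omega_\infty^{\gamma_{n-1}}(w_{n-1})\subseteq\Omega_n^{\gamma_{n-1}}(w_{n-1})$, i.e.\ the Cantor set attached to the \emph{previous} Nash--Moser iterate, while the set you are removing, $\Omega_\infty^{\gamma_n}(w_n)$, involves the eigenvalues $\mu_\infty(\,\cdot\,;w_n)$. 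You therefore also need to control the variation $|\mu_\infty(j;w_n)-\mu_\infty(j;w_{n-1})|$ in the \emph{iterate} $w$, about which \eqref{stime forma normale limite} and ${\bf(S2)}_n$ say nothing. Closing this step requires the Lipschitz-in-$w$ estimates \eqref{r nu - 1 r nu i1 i2} of ${\bf(S3)}_n$ and the inclusion ${\bf(S4)}_n$, fed by the Nash--Moser rate $\|w_n-w_{n-1}\|_{s_0+\bar\sigma}\lesssim \tN_{n-1}^{2\bar\sigma+1}\tN_{n-2}^{-\ta_1}\lambda^{-1}$ from \eqref{hn} and the compatibility $\ta_1>\chi(\tau+\tau^2+2\bar\sigma+2)$ among the constants; this is precisely the content of the paper's Lemmata \ref{inclusione risonanti v n n - 1} and \ref{inclusione per sommare serie} (which, incidentally, yield the sharper threshold $|\ell|\geq\tN_n$ rather than $|\ell|>\tN_{n-1}$). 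As written, your telescoping does not close.

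Two smaller points. First, your emptiness dichotomy at $\min\{|j|,|j'|\}\sim\langle\ell\rangle$ fails for the surviving large-$|\ell|$ sets: on $\mathcal G_n$ one only has the Diophantine bound $\lambda|\omega\cdot\ell|\gtrsim\lambda\gamma\langle\ell\rangle^{-\tau}$, so comparing with $|\mu_\infty(j)-\mu_\infty(j')|\lesssim\lambda^\theta/\min\{|j|,|j'|\}$ forces only $\min\{|j|,|j'|\}\lesssim\lambda^{\theta-1}\gamma^{-1}\langle\ell\rangle^{\tau}$ for non-emptiness, not $\lesssim\langle\ell\rangle$ (the paper's Lemma \ref{lemma triviale modi alti} uses the threshold $\tN_n^\tau$ and only for $|\ell|\leq\tN_n$). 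Fortunately this is not needed: since $j$ is determined by $(\ell,j')$ through the momentum constraint, the individual bound $|\mathcal R_{\ell jj'}|\lesssim\gamma\langle\ell\rangle^{-(\tau+1)}|j'|^{-\tau}$ is directly summable over all $j'$ because $\tau>2$, which is how \eqref{Gn meno Omega (vn)} proceeds. Second, your exponent bookkeeping does not deliver the stated $\gamma\tN_n^{-(\tau-1)}$ — the inequality $\tN_{n-1}^{-2}\leq\tN_n^{-(\tau-1)/\chi}=\tN_{n-1}^{-(\tau-1)}$ you assert is false for $\tau=\nu+4$; but since Proposition \ref{prop measure estimate finale} only uses that the bound is a summable-in-$n$ negative power of $\tN_n$ (times $\gamma$), this last issue is immaterial once the main gap above is repaired.
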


Propositions \ref{prop measure estimate finale} 
and \ref{Gn Omegan Lambdan}
are proved at the end of this section. 
Now 
we estimate the measure of the set ${\mathcal G}_n \setminus  \Omega_\infty^{\gamma_n}(w_n)$,  $n\geq 0$. 
The estimate of the measure of ${\mathcal G}_n \setminus  \Lambda_\infty^{\gamma_n}(v_n)$ can be done arguing similarly (it is actually even easier) and therefore it is omitted here. 
By the definitions \eqref{G-n+1}, \eqref{cantor finale ridu}, we get that 
\begin{equation}\label{quarta inclusione stime misura}
\begin{aligned}
& {\mathcal G}_n  \setminus \Omega_\infty^{\gamma_n}(w_n) \subseteq \bigcup_{(\ell, j, j') \in {\mathcal I}}{\mathcal R}_{\ell j j'}(w_n)\,,
\end{aligned}
\end{equation}
where 
\begin{equation}\label{def cal I res}
{\mathcal I} := \big\{ (\ell, j, j') \in (\Z^\nu \setminus \{0 \}) \times (\Z^2 \setminus \{ 0 \}) \times (\Z^2 \setminus \{ 0 \}) : \pi^\top(\ell) + j - j' = 0 \big\}\,,
\end{equation}
and 
\begin{equation}\label{def cal R vn l j j'}
\begin{aligned}
{\mathcal R}_{\ell j j'}(w_n) 
:= \Big\{ \omega \in {\mathcal G}_n : |\im\, \lambda\, \omega \cdot \ell + \mu_\infty(j; w_n) - \mu_\infty(j';  w_n) | < \frac{2 \gamma_n\, \lambda}{\langle \ell \rangle^\tau |j'|^\tau}   \Big\}\,.
\end{aligned}
\end{equation}
In the next lemma, we estimate the measure of the resonant sets ${\mathcal R}_{\ell j j'}(w_n)$.

\begin{lem}\label{stima misura risonanti sec melnikov}
For any $n \geq 0$, if ${\mathcal R}_{\ell j j'}(w_n) \neq \emptyset$, then we have that  $|{\mathcal R}_{\ell j j'}(w_n)| 
\lesssim \gamma \,\langle \ell \rangle^{-(\tau + 1)} |j'|^{-\tau}$.
\end{lem}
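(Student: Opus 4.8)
The statement to prove is the measure bound $|{\mathcal R}_{\ell j j'}(w_n)| \lesssim \gamma \langle \ell \rangle^{-(\tau+1)} |j'|^{-\tau}$ for the resonant set defined in \eqref{def cal R vn l j j'}. The plan is the standard one for second-order Melnikov set estimates: fix $(\ell,j,j') \in {\mathcal I}$ with ${\mathcal R}_{\ell j j'}(w_n) \neq \emptyset$, write $\omega = s \hat\ell + v$ with $\hat\ell := \ell/|\ell|$ and $v \cdot \ell = 0$, and study the scalar function
\begin{equation}\label{phi.misura.def}
	\phi(s) := \im\, \lambda\, \omega \cdot \ell + \mu_\infty(j; w_n) - \mu_\infty(j'; w_n) = \im\, \lambda\, |\ell|\, s + \mu_\infty(j; w_n) - \mu_\infty(j'; w_n)
\end{equation}
as a function of $s$ alone (the remaining variables $v$ being frozen). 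First I would recall from Lemma \ref{lemma blocchi finali} that $\mu_\infty(j; w_n) = \im\,\beta\,\tL(j) + \tz_\infty(j)$ with $|\tz_\infty(j)|^{\Lip(\gamma)} \lesssim \lambda^\theta |j|^{-1}$, hence in particular the Lipschitz seminorm of $\omega \mapsto \mu_\infty(j;w_n) - \mu_\infty(j';w_n)$ with respect to $\omega$ is $\lesssim \lambda^\theta \gamma^{-1}$, which is $\ll \lambda$ by \eqref{piccolo.ansatx}. Therefore the incremental quotient of $\phi$ satisfies $|\phi(s_1) - \phi(s_2)| \geq \lambda |\ell|\, |s_1 - s_2| - C \lambda^\theta \gamma^{-1} |s_1 - s_2| \geq \tfrac12 \lambda |\ell|\, |s_1 - s_2|$ for $\lambda$ large, i.e. $\phi$ is "Lipschitz-monotone from below" with constant $\gtrsim \lambda |\ell| \geq \lambda$.

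The core step is then: the set of $s$ for which $|\phi(s)| < 2 \gamma_n \lambda \langle \ell \rangle^{-\tau} |j'|^{-\tau}$ is contained in an interval of length $\lesssim (\gamma_n \lambda \langle \ell \rangle^{-\tau} |j'|^{-\tau})/(\lambda |\ell|) \lesssim \gamma \langle \ell \rangle^{-(\tau+1)} |j'|^{-\tau}$, using $\gamma_n \leq 2\gamma$ and $|\ell| \sim \langle \ell \rangle$ since $\ell \neq 0$. Integrating over the frozen variables $v$ in the bounded set $\Omega$ via Fubini then yields $|{\mathcal R}_{\ell j j'}(w_n)| \lesssim \gamma \langle \ell \rangle^{-(\tau+1)} |j'|^{-\tau}$, as claimed. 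This is exactly the same one-dimensional argument already carried out in the proof of Lemma \ref{lemma.equa.linear.approx} for the simpler first-Melnikov set, and I would cite that computation for the routine parts.

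The only genuinely delicate point — which I would address explicitly — is that $\omega \mapsto \mu_\infty(j; w_n)$ is only Lipschitz, not $C^1$, in $\omega$; so one cannot literally differentiate $\phi$ and must instead run the argument through incremental quotients as above. This is precisely why the gain factor $\lambda$ (rather than just a constant) in the second-order Melnikov conditions \eqref{insiemi di cantor rid} matters: it dominates the $\lambda^\theta \gamma^{-1}$ coming from the $\omega$-dependence of the perturbed eigenvalues, by \eqref{piccolo.ansatx}. A secondary bookkeeping point is to make sure the Lipschitz dependence of $w_n$ on $\omega$ (and hence of $\mu_\infty(j;w_n)$ on $\omega$ through $w_n$) is already subsumed in the $\Lip(\gamma)$ bounds of Lemma \ref{lemma blocchi finali} and Proposition \ref{iterazione-non-lineare}; I would note that $\mu_\infty(j;w_n)$ is evaluated at $w_n = w_n(\omega)$ and that the composite map $\omega \mapsto \mu_\infty(j; w_n(\omega))$ still has Lipschitz seminorm $\lesssim \lambda^\theta \gamma^{-1}$, combining \eqref{stime forma normale limite} with \eqref{stime.delta12 induttive cal EM cal ZM finale}-type bounds on $\Delta_{12}\mu_\infty$ already recorded. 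With this in hand the measure estimate follows, completing the proof of the lemma.
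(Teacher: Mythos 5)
Your proposal is correct and follows essentially the same route as the paper's proof: the one-dimensional slicing $\omega = s\,\ell/|\ell| + v$, the lower bound $|\phi(s_1)-\phi(s_2)| \geq \tfrac12 \lambda |\ell|\,|s_1-s_2|$ obtained by dominating the Lipschitz constant $\lambda^{\theta}\gamma^{-1} = \lambda^{\alpha-1+\tc}\gamma^{-1}$ of the eigenvalue differences (from Lemma \ref{lemma blocchi finali}) by $\lambda|\ell|$ via \eqref{piccolo.ansatx}, and then Fubini. The paper likewise works with incremental quotients rather than derivatives, so no difference of substance remains.
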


\begin{proof}
Recalling \eqref{def cal N infty nel lemma}, \eqref{cal L infty e},   
we write $\mu_\infty(j;\omega) \equiv \mu_\infty(j; \omega, w_n(\omega))$, with $(\ell, j, j') \in {\mathcal I}$ as in \eqref{def cal I res}, and we set 
$$
\phi(\omega) := \im \,\lambda \,\omega \cdot \ell + \mu_\infty(j; \omega) - \mu_\infty(j'; \omega)\,. 
$$ 
Since $\ell \neq 0$, we write 
$ \omega = \frac{\ell }{|\ell|} s + v$,  with $s>0$ and $v \cdot \ell = 0$.
We estimate the measure of the set 
\begin{equation}
	\begin{aligned}
		{\mathcal Q} & := \Big\{ s : |\psi(s)| < \frac{2 \gamma_n\, \lambda}{\langle \ell \rangle^\tau |j'|^\tau}, \ \ \frac{\ell }{|\ell|} \,s + v \in \Omega_{\gamma_n}  \Big\} \,,
	\end{aligned}
\end{equation}
where
\begin{equation}
	\begin{aligned}
			\psi(s) & := \phi\Big( \tfrac{\ell }{|\ell|} s + v\Big) = \im \,\lambda  |\ell| s + \mu_\infty(j; s) - \mu_\infty(j'; s) \,, \quad	\mu_\infty(j; s) \equiv \mu_\infty \Big(j; \tfrac{\ell }{|\ell|} s + v \Big)\,. 
	\end{aligned}
\end{equation}
By Lemma \ref{lemma blocchi finali}, we have, for any $j\in\Z^2\setminus\{0\}$,
$$
| \mu_\infty(j; s_1) - \mu_\infty(j; s_2)| \lesssim \gamma^{- 1}\lambda^{\alpha - 1+\tc} |j|^{- 1} |s_1 - s_2| \,,
$$
and therefore 
$$
\begin{aligned}
| \psi(s_1) - \psi(s_2)| & \geq \big( \lambda |\ell| - C \lambda^{\alpha - 1+\tc} \gamma^{- 1}  \big) |s_1 - s_2|\geq \frac{\lambda |\ell|}{2} |s_1 - s_2| \,,
\end{aligned}
$$
since  $\lambda^{\alpha - 2+\tc} \gamma^{- 1} = \lambda^{\alpha - 2(1-\tc)} \ll 1$ for $\lambda \gg 1$ large enough (note that $\alpha - 2(1-\tc) < 0$ by \eqref{nash moser smallness condition}). This implies that the measure of the set ${\mathcal Q}$ satisfies 
$$
|{\mathcal Q}| \lesssim \frac{2 \lambda \gamma_n}{\langle \ell \rangle^\tau |j'|^\tau } \frac{1}{\lambda |\ell|} \lesssim \frac{\gamma}{\langle \ell \rangle^{\tau + 1} |j'|^{\tau}}\,.
$$
By the Fubini theorem, we also get the claimed bound on $|{\mathcal R}_{\ell j j'}(w_n)|$.
\end{proof}

Several resonant sets $\cR_{\ell j j'}(w_n)$ are actually empty.

\begin{lem}\label{lemma triviale modi alti}
	Let $(\ell, j, j') \in {\mathcal I}$, with $|\ell| \leq \tN_{n}$ and ${\rm min}\{ |j|, |j'| \} \geq \tN_{n}^{\tau}$, $n \geq 1$. Then ${\mathcal R}_{\ell j j'}(w_n) = \emptyset$.
\end{lem}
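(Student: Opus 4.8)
The plan is to show that for the range of indices $(\ell, j, j') \in {\mathcal I}$ with $|\ell| \leq \tN_n$ and $\min\{|j|, |j'|\} \geq \tN_n^\tau$, the quantity $|\im\,\lambda\,\omega\cdot\ell + \mu_\infty(j; w_n) - \mu_\infty(j'; w_n)|$ is already bounded below by $\tfrac{2\gamma_n\lambda}{\langle\ell\rangle^\tau|j'|^\tau}$ for every $\omega \in {\mathcal G}_n$, so that the corresponding resonant set in \eqref{def cal R vn l j j'} is empty. The key point is that $\omega \in {\mathcal G}_n \subseteq {\mathcal G}_0 = \Omega_\gamma \subseteq \tD\tC(\gamma, \tau)$, hence $\lambda|\omega\cdot\ell| \geq \lambda\gamma\langle\ell\rangle^{-\tau}$, while the eigenvalue corrections $\tz_\infty(j) = \mu_\infty(j) - \im\,\beta\,\tL(j)$ are small: by Lemma \ref{lemma blocchi finali} one has $|\tz_\infty(j)|^{\Lip(\gamma)} \lesssim \lambda^\theta|j|^{-1}$, and moreover the dispersive part $\beta\,\tL(j)$ also decays like $|j|^{-1}$ since $|\tL(j)| = |j_1|/|j|^2 \leq |j|^{-1}$ by \eqref{lambda.m.def}. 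Therefore both $|\mu_\infty(j; w_n)|$ and $|\mu_\infty(j'; w_n)|$ are $\lesssim (|\beta| + \lambda^\theta)|j|^{-1}$ and $\lesssim (|\beta| + \lambda^\theta)|j'|^{-1}$ respectively.

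First I would write, for $\omega \in {\mathcal G}_n$,
\begin{equation*}
	|\im\,\lambda\,\omega\cdot\ell + \mu_\infty(j; w_n) - \mu_\infty(j'; w_n)| \geq \lambda|\omega\cdot\ell| - |\mu_\infty(j; w_n)| - |\mu_\infty(j'; w_n)| \geq \frac{2\lambda\gamma}{\langle\ell\rangle^\tau} - \frac{C(\lambda^\theta + |\beta|)}{\min\{|j|, |j'|\}}
\end{equation*}
for a suitable constant $C>0$ coming from Lemma \ref{lemma blocchi finali} and the bound on $\tL$ (using $\gamma_0 = 2\gamma$, so the Diophantine lower bound carries the factor $2$; in fact $\gamma_n \le 2\gamma$ for all $n$). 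Using the hypothesis $\min\{|j|, |j'|\} \geq \tN_n^\tau$ and recalling that $\lambda \geq \bar\lambda \gg \max\{1, |\beta|\}$ so that $\lambda^\theta + |\beta| \lesssim \lambda^\theta$, this is bounded below by $\tfrac{2\lambda\gamma}{\langle\ell\rangle^\tau} - \tfrac{2C\lambda^\theta}{\tN_n^\tau}$. The goal is then to absorb the second term into half of the first, which requires the smallness condition $2C\lambda^{\theta-1}\gamma^{-1}\langle\ell\rangle^\tau\tN_n^{-\tau} \leq 1$ for all $|\ell| \leq \tN_n$; since $\langle\ell\rangle^\tau \leq \tN_n^\tau$, it suffices to have $2C\lambda^{\theta-1}\gamma^{-1} \leq 1$, which is precisely \eqref{piccolo.ansatx} (equivalently $\lambda^{\theta-1}\gamma^{-1} = \lambda^{\alpha-2(1-\tc)} \ll 1$, valid for $\lambda \gg 1$ since $\alpha - 2(1-\tc) < 0$). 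This yields the lower bound $\tfrac{\lambda\gamma}{\langle\ell\rangle^\tau}$, and since $\gamma \leq \gamma_n$ and $\langle\ell\rangle^\tau \leq \langle\ell\rangle^\tau|j'|^\tau$ (as $|j'| \geq 1$), we get $|\im\,\lambda\,\omega\cdot\ell + \mu_\infty(j; w_n) - \mu_\infty(j'; w_n)| \geq \tfrac{\lambda\gamma_n}{\langle\ell\rangle^\tau|j'|^\tau} \geq \tfrac{2\gamma_n\lambda}{\langle\ell\rangle^\tau|j'|^\tau}$ fails to hold — wait, I should be careful with the constant: the set ${\mathcal R}_{\ell j j'}(w_n)$ asks for the quantity to be $< \tfrac{2\gamma_n\lambda}{\langle\ell\rangle^\tau|j'|^\tau}$, so I need the lower bound to be $\geq$ that threshold; this just means I should start from $\gamma_0 = 2\gamma$ giving $\lambda|\omega\cdot\ell| \geq \tfrac{2\lambda\gamma}{\langle\ell\rangle^\tau}$, subtract off a term made $\leq \tfrac{\lambda\gamma}{\langle\ell\rangle^\tau}$, but actually I want the final bound $\geq \tfrac{2\lambda\gamma_n}{\langle\ell\rangle^\tau|j'|^\tau}$; this is a matter of bookkeeping with the constant in the smallness condition, making the subtracted term at most a small fraction of $\tfrac{2\lambda\gamma}{\langle\ell\rangle^\tau}$ and using $\gamma_n \leq 2\gamma$ together with $|j'| \geq 1$.

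This is entirely analogous to Claim 1 in the proof of ${\bf (S4)}_{n+1}$ of Proposition \ref{prop riducibilita} (the estimate \eqref{inter juve 0}), and I expect no real obstacle — the argument is short. The only point requiring mild care is the bookkeeping of the numerical constants in the smallness condition so that the final lower bound is genuinely $\geq \tfrac{2\gamma_n\lambda}{\langle\ell\rangle^\tau|j'|^\tau}$ and not just a comparable quantity; this is handled by choosing $\bar\lambda$ large enough in Theorem \ref{teo principale beta plane}, since $\lambda^{\theta-1}\gamma^{-1} \to 0$ as $\lambda \to \infty$. I would conclude that ${\mathcal R}_{\ell j j'}(w_n) = \emptyset$ for all such $(\ell, j, j')$, which together with Lemma \ref{stima misura risonanti sec melnikov} restricts the union in \eqref{quarta inclusione stime misura} to indices with $\min\{|j|, |j'|\} \lesssim \tN_n^\tau$, a crucial reduction for the measure estimate in Proposition \ref{Gn Omegan Lambdan}.
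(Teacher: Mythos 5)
Your overall strategy is exactly the paper's: for $\omega\in\cG_n$ use the Diophantine lower bound on $\lambda|\omega\cdot\ell|$, bound $|\mu_\infty(j;w_n)|+|\mu_\infty(j';w_n)|\lesssim \lambda^{\theta}/\min\{|j|,|j'|\}\leq \lambda^\theta \tN_n^{-\tau}$ via Lemma \ref{lemma blocchi finali} and \eqref{lambda.m.def}, and absorb this error using $\lambda^{\theta-1}\gamma^{-1}\ll1$ together with $\langle\ell\rangle^\tau\leq\tN_n^\tau$. That part is fine (and your explicit handling of the $|\beta|\,|\tL(j)|\leq|\beta||j|^{-1}$ contribution, using $\lambda\geq|\beta|$, is a point the paper glosses over).

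The one step that does not close as you wrote it is the final comparison with the threshold defining $\cR_{\ell jj'}(w_n)$. You propose to get from the lower bound $\tfrac{\lambda\gamma}{\langle\ell\rangle^\tau}$ (i.e.\ roughly half of $\tfrac{2\lambda\gamma}{\langle\ell\rangle^\tau}$) to $\geq\tfrac{2\lambda\gamma_n}{\langle\ell\rangle^\tau|j'|^\tau}$ ``using $\gamma_n\leq2\gamma$ together with $|j'|\geq1$.'' With only $|j'|\geq1$ this is false: the threshold can then be as large as $\tfrac{4\lambda\gamma}{\langle\ell\rangle^\tau}$, which exceeds even the full Diophantine bound $\tfrac{2\lambda\gamma}{\langle\ell\rangle^\tau}$ before you subtract anything. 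The fix is immediate and already in your hypotheses: in the regime of this lemma $|j'|\geq\min\{|j|,|j'|\}\geq\tN_n^\tau$, so $|j'|^\tau\geq\tN_n^{\tau^2}\geq4$ for $\tN_0$ large enough, and $\tfrac{2\lambda\gamma_n}{\langle\ell\rangle^\tau|j'|^\tau}\leq\tfrac{4\lambda\gamma}{\langle\ell\rangle^\tau\tN_n^{\tau^2}}\leq\tfrac{\lambda\gamma}{\langle\ell\rangle^\tau}$. The paper instead avoids the issue by a different bookkeeping device: it starts from $\cG_n\subseteq\tD\tC(2\gamma_{n-1},\tau)$, splits $2\gamma_{n-1}=2\gamma_n+2(\gamma_{n-1}-\gamma_n)$ with $\gamma_{n-1}-\gamma_n=\gamma2^{-n}$, absorbs the eigenvalue error entirely into the gap term, obtains the clean bound $\geq\tfrac{2\lambda\gamma_n}{\langle\ell\rangle^\tau}$, and only then divides by $|j'|^\tau\geq1$. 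Either route works; just replace your ``$|j'|\geq1$'' by ``$|j'|\geq\tN_n^\tau$'' (or adopt the paper's splitting) and the proof is complete.
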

\begin{proof}
	Let $\omega \in {\mathcal G}_n \subseteq \tD\tC(2\gamma_{n - 1},\tau)$, We have to prove that, if $0 < |\ell| \leq \tN_{n}$, ${\rm min}\{ |j|, |j'| \} \geq \tN_{n}^\tau$ and $ \pi^\top(\ell) + j - j' = 0$, then one has 
	\begin{equation}\label{lower bound R ljj vuoto}
		\begin{aligned}
			& |\im\, \lambda\, \omega \cdot \ell + \mu_\infty(j;w_n) - \mu_\infty(j';w_n)| \geq \frac{2\lambda \gamma_{n} }{\braket{\ell}^\tau} \geq \frac{2 \lambda \gamma_n}{\langle \ell \rangle^\tau |j'|^\tau} \,, 
		\end{aligned}
	\end{equation}
	which implies that ${\mathcal R}_{\ell j j'}(w_n) = \emptyset$. 
	By  Lemma \ref{lemma blocchi finali}, recalling also that $\theta=\alpha-1+\tc$ as in \eqref{theta.def.ridu}, we have
	\begin{equation}
		\begin{aligned}
				|\im\, \lambda\, \omega \cdot \ell &+ \mu_\infty(j;w_n) - \mu_\infty(j';w_n)|  \geq \lambda |\omega\cdot \ell| - |\mu_{\infty}(j;w_n)| - |\mu_{\infty}(j';w_n)| \\
				& \geq \frac{2\lambda \gamma_{n-1}}{\braket{\ell}^\tau} - \frac{C \lambda^{\alpha-1+\tc}}{{\rm min}\{ |j|,|j'| \}} \\
				& \geq  \frac{2\lambda \gamma_{n}}{\braket{\ell}^\tau} + \frac{2\lambda \big(\gamma_{n - 1} - \gamma_{n} \big)  }{\braket{\ell}^\tau}- \frac{C \lambda^{\alpha-1+\tc}}{ \tN_{n}^{\tau} } \geq  \frac{2\lambda \gamma_{n}}{\braket{\ell}^\tau}\,,
		\end{aligned}
	\end{equation}
	provided that, since $\gamma_{n - 1}-\gamma_{n} = \gamma \,2^{-n}$,
	\begin{equation}
		 \frac{2\lambda \big(\gamma_{n - 1} - \gamma_{n} \big)  }{\braket{\ell}^\tau}- \frac{C \lambda^{\alpha-1+\tc}}{ \tN_{n}^{\tau} } \geq \frac{2\lambda\,\gamma\, 2^{-n}}{\braket{\ell}^{\tau}} \Big(  1  - C \lambda^{\alpha-2+\tc} \gamma^{-1} 2^{n-1} \tN_{n}^{-\tau} \braket{\ell}^\tau \Big) \geq 0 \,.
	\end{equation}
	This latter condition is satisfied since $|\ell| \leq \tN_{n}$ and $\lambda^{\alpha - 2+\tc} \gamma^{- 1} = \lambda^{\alpha - 2(1-\tc)} \ll 1$, with $\lambda \gg 1$ large enough (note that $\alpha - 2(1-\tc)< 0$ by \eqref{nash moser smallness condition}) and $\tN_{0}\gg 1$ large enough. 
	The claimed bound \eqref{lower bound R ljj vuoto} is then proved. 
\end{proof}


When non-empty, the sequence of resonant sets $(\cR_{\ell j j'}(w_n))_{n\in\N_0}$ is nested.

\begin{lem}\label{inclusione risonanti v n n - 1}
Let $(\ell, j, j') \in {\mathcal I}$, with $|\ell| \leq \tN_{n}$ and ${\rm min}\{ |j|, |j'| \} \leq \tN_{n}^\tau$, $n \geq 1$. 
Then 
\begin{equation}\label{claimed.inclusion}
	{\mathcal R}_{\ell j j'}(w_n) \subseteq {\mathcal R}_{\ell j j'}(w_{n - 1})\,.
\end{equation}
\end{lem}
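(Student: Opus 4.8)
\textbf{Proof plan for Lemma \ref{inclusione risonanti v n n - 1}.}

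The plan is to show that if $\omega \notin {\mathcal R}_{\ell j j'}(w_{n-1})$ then $\omega \notin {\mathcal R}_{\ell j j'}(w_n)$, i.e. that the small-divisor bound which holds at stage $n-1$ persists at stage $n$ after replacing $w_{n-1}$ by $w_n$. The mechanism is that the eigenvalues $\mu_\infty(j; w_n)$ depend on $w_n$ only through ${\bf E}_0(w_n)$, and the estimate on $\Delta_{12}\mu_\infty$ (the analogue of \eqref{r nu - 1 r nu i1 i2} passed to the limit, together with Lemma \ref{lemma blocchi finali}) controls $|\mu_\infty(j;w_n) - \mu_\infty(j;w_{n-1})|$ in terms of $\|w_n - w_{n-1}\|_{s_0 + \Sigma(\tb)} = \|h_n\|_{s_0 + \Sigma(\tb)}$, which by Proposition \ref{iterazione-non-lineare}-$({\mathcal P}1)_n$, specifically \eqref{hn}, is bounded by $C_* \tN_{n-1}^{2\bar\sigma + 1}\tN_{n-2}^{-\ta_1}\lambda^{-1}$ (and on high norm, by \eqref{stima.alta.NM}, at most $C_*\tN_{n-1}^\kappa$; a suitable interpolation gives what is needed at the regularity $s_0 + \Sigma(\tb) \le s_0 + \bar\sigma$).

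First I would write, for $\omega \in {\mathcal R}_{\ell j j'}(w_n)$,
\begin{equation*}
\frac{2\gamma_n \lambda}{\langle \ell\rangle^\tau |j'|^\tau} > |\im\,\lambda\,\omega\cdot\ell + \mu_\infty(j;w_n) - \mu_\infty(j';w_n)| \geq |\im\,\lambda\,\omega\cdot\ell + \mu_\infty(j;w_{n-1}) - \mu_\infty(j';w_{n-1})| - |\Delta_{12}\mu_\infty(j)| - |\Delta_{12}\mu_\infty(j')|,
\end{equation*}
where $\Delta_{12}$ compares the $w_{n-1}$ and $w_n$ values. Using the bound $|\Delta_{12}\mu_\infty(j)| \lesssim \lambda^\theta |j|^{-1}\|h_n\|_{s_0 + \Sigma(\tb)}$ (from Lemma \ref{lemma blocchi finali} combined with the $\Delta_{12}$ estimate of Proposition \ref{prop riducibilita}-${\bf (S3)}$, passed to the limit) and the hypothesis $\min\{|j|,|j'|\} \le \tN_n^\tau$, together with the momentum relation $\pi^\top(\ell) + j - j' = 0$ which forces $|j-j'| \lesssim |\ell| \le \tN_n$ and hence $\max\{|j|,|j'|\} \lesssim \tN_n^\tau$, I would bound $|\Delta_{12}\mu_\infty(j)| + |\Delta_{12}\mu_\infty(j')|$ by $C\lambda^\theta \tN_n^{-\tau}\cdots$ — here I must be careful: the gain is $|j|^{-1}$, so I only get a factor $(\min\{|j|,|j'|\})^{-1}$, not $\tN_n^{-\tau}$; the cleaner route is to simply bound $|\Delta_{12}\mu_\infty(j)| + |\Delta_{12}\mu_\infty(j')| \lesssim \lambda^\theta \|h_n\|_{s_0 + \Sigma(\tb)}$ and absorb this into the gap $\gamma_{n-1} - \gamma_n = \gamma\,2^{-n}$.

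The key inequality to close the argument is then
\begin{equation*}
|\im\,\lambda\,\omega\cdot\ell + \mu_\infty(j;w_{n-1}) - \mu_\infty(j';w_{n-1})| \geq \frac{2\gamma_{n-1}\lambda}{\langle\ell\rangle^\tau |j'|^\tau} \geq \frac{2\gamma_n\lambda}{\langle\ell\rangle^\tau|j'|^\tau} + \frac{2\lambda\,\gamma\,2^{-n}}{\langle\ell\rangle^\tau|j'|^\tau},
\end{equation*}
valid precisely because $\omega \notin {\mathcal R}_{\ell j j'}(w_{n-1})$ — but wait, this requires $\omega \in {\mathcal G}_n \subseteq {\mathcal R}_{\ell j j'}(w_{n-1})^c$, which is exactly what Lemma \ref{lemma triviale modi alti} and the inclusion ${\mathcal G}_n \subseteq {\mathcal G}_{n-1} \setminus (\text{resonant sets at stage } n-1)$ provide when $|\ell| \le \tN_{n-1}$; for $\tN_{n-1} < |\ell| \le \tN_n$ one uses instead the Diophantine bound $\omega \in \tD\tC(2\gamma_{n-1},\tau)$ directly, as in the proof of Lemma \ref{lemma triviale modi alti}. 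Then it suffices that the remainder terms obey
\begin{equation*}
|\Delta_{12}\mu_\infty(j)| + |\Delta_{12}\mu_\infty(j')| \leq \frac{2\lambda\,\gamma\,2^{-n}}{\langle\ell\rangle^\tau|j'|^\tau},
\end{equation*}
which, using $\langle\ell\rangle^\tau|j'|^\tau \lesssim \tN_n^{\tau + \tau^2}$ (from $|\ell| \le \tN_n$ and $|j'| \lesssim \tN_n^\tau$) and $\|h_n\|_{s_0 + \Sigma(\tb)} \lesssim \tN_{n-1}^{2\bar\sigma+1}\tN_{n-2}^{-\ta_1}\lambda^{-1}$, reduces to a smallness condition of the form $C\lambda^{\theta - 1} 2^n \tN_n^{\tau + \tau^2}\tN_{n-1}^{2\bar\sigma + 1}\tN_{n-2}^{-\ta_1}\gamma^{-1} \le 1$. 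The main obstacle will be verifying this is implied by the choice of constants in \eqref{costanti nash moser} and the smallness \eqref{nash moser smallness condition}: one checks that $\ta_1$ was chosen (via the term $\chi^2(\tau+\tau^2+1) + \chi(2\bar\sigma+1)+1$) precisely so that the net power of $\tN_0$ after collecting $2^n$, $\tN_n^{\tau+\tau^2}$, $\tN_{n-1}^{2\bar\sigma+1}$ against $\tN_{n-2}^{-\ta_1}$ is negative for all $n \geq 1$, and $\lambda^{\theta-1}\gamma^{-1} = \lambda^{\alpha - 2 + 2\tc} \ll 1$ by \eqref{piccolo.ansatx}/\eqref{nash moser smallness condition}. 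With this the displayed chain gives $|\im\,\lambda\,\omega\cdot\ell + \mu_\infty(j;w_n) - \mu_\infty(j';w_n)| \geq 2\gamma_n\lambda\langle\ell\rangle^{-\tau}|j'|^{-\tau}$, contradicting $\omega \in {\mathcal R}_{\ell j j'}(w_n)$, hence ${\mathcal R}_{\ell j j'}(w_n) \subseteq {\mathcal R}_{\ell j j'}(w_{n-1})$ as claimed.
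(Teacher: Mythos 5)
Your overall strategy coincides with the paper's: use that every $\omega \in \mathcal{G}_n$ already sits in $\Omega_\infty^{\gamma_{n-1}}(w_{n-1})$, compare $\mu_\infty(\cdot;w_n)$ with $\mu_\infty(\cdot;w_{n-1})$, and absorb the discrepancy into the gap $\gamma_{n-1}-\gamma_n = \gamma\,2^{-n}$. However there are two genuine gaps and one unnecessary detour.

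\textbf{The Lipschitz estimate on $\mu_\infty$.} You assert $|\Delta_{12}\mu_\infty(j)| \lesssim \lambda^\theta|j|^{-1}\|h_n\|_{s_0+\Sigma(\mathtt b)}$ by "passing to the limit" in Proposition \ref{prop riducibilita}-$\mathbf{(S3)}$ plus Lemma \ref{lemma blocchi finali}, but this is not what the combination yields. Writing $\mu_\infty(j;w_n)-\mu_\infty(j;w_{n-1})$ as the telescoping sum $[\mu_\infty-\mu_n](j;w_n) + [\mu_n(j;w_n)-\mu_n(j;w_{n-1})] + [\mu_n-\mu_\infty](j;w_{n-1})$, only the middle term is controlled by $\|h_n\|$ via \eqref{r nu - 1 r nu i1 i2}; the two outer terms are controlled by the convergence estimate \eqref{stime forma normale limite}, giving an \emph{additive} error of order $\tN_{n-1}^{-\mathtt a}\,\varepsilon_M\gamma^{-(M-1)}|j|^{-M}$ that does not shrink with $\|h_n\|$. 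The paper's Step 1 (equation \eqref{L infty vn - v n - 1}) keeps both contributions, and both must separately be beaten by the gap $2\lambda\gamma\,2^{-n}\langle\ell\rangle^{-\tau}|j'|^{-\tau}$. More seriously, you apply $\mathbf{(S3)}_n$ without first checking its domain of validity $\Omega_n^{\gamma_1}(w_1)\cap\Omega_n^{\gamma_2}(w_2)$; the paper establishes $\mathcal{G}_n \subseteq \Omega_n^{\gamma_{n-1}}(w_{n-1})\cap\Omega_n^{\gamma_n}(w_n)$ by first invoking $\mathbf{(S4)}_n$ with $\rho = \gamma_{n-1}-\gamma_n$ and the smallness \eqref{verifica rid S3 n} (which itself uses \eqref{hn}). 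Without this, the eigenvalue comparison you rely on is not licensed.

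\textbf{The unnecessary case split on $|\ell|$.} The inclusion $\mathcal{G}_n \subseteq \Omega_\infty^{\gamma_{n-1}}(w_{n-1})$ already gives the lower bound $|\im\,\lambda\,\omega\cdot\ell + \mu_\infty(j;w_{n-1}) - \mu_\infty(j';w_{n-1})| \geq 2\lambda\gamma_{n-1}\langle\ell\rangle^{-\tau}|j'|^{-\tau}$ for \emph{all} $\ell\neq 0$: the set \eqref{cantor finale ridu} carries no cutoff in $|\ell|$. Your distinction between $|\ell|\leq \tN_{n-1}$ (citing Lemma \ref{lemma triviale modi alti}, which is about the opposite regime $\min\{|j|,|j'|\}\geq\tN_n^\tau$ and is not relevant here) and $\tN_{n-1}<|\ell|\leq\tN_n$ suggests a conflation of $\Omega_\infty^{\gamma}$ with the truncated KAM sets $\Omega_n^\gamma$ of \eqref{insiemi di cantor rid}; the latter have the $|\ell|\leq\tN_{n-1}$ restriction, the former do not. (A minor algebraic remark: the exponent in your final smallness condition should be $\theta-2$, not $\theta-1$, since $\|h_n\|$ contributes a factor $\lambda^{-1}$; both exponents are negative for $\alpha\in(1,2)$ so this does not affect the conclusion.)
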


\begin{proof}
We split the proof in two steps. 

\noindent
{\sc Step 1.} We claim that, 
for any $j' \in \Z^2 \setminus \{ 0 \}$ and
for any $\omega \in {\mathcal G}_n$, we have
\begin{equation}\label{L infty vn - v n - 1}
\sup_{j \in \Z^2 \setminus \{ 0 \}}|\mu_\infty(j; w_n) - \mu_\infty(j'; w_{n - 1})| 
\lesssim \tN_{n-1}^{2 \overline \sigma + 1} \tN_{n-2}^{-\mathtt a_1}
+  \tN_{n - 1}^{- \ta}
\end{equation}
(the constant $\mathtt a$ is defined in \eqref{definizione.param.KAM}). 
By \eqref{def cal N infty nel lemma} and the inductive definition of the sets ${\mathcal G}_n$ in \eqref{G-n+1}, recalling \eqref{insiemi di cantor rid}, \eqref{cantor finale ridu} and Lemma \ref{prima inclusione cantor}, we have the inclusion 
\begin{equation}\label{inclusione cantor 100}
\begin{aligned}
{\mathcal G}_n  \subseteq \Omega_n^{\gamma_{n - 1}}(w_{n - 1}) \,.
\end{aligned}
\end{equation}
We aim to apply Proposition \ref{prop riducibilita}-${\bf (S4)_n}$ with 
$w_1 \equiv w_{n - 1}$, 
$w_2 \equiv w_n$, $\gamma\equiv \gamma_{n-1}$ and 
$\rho \equiv \gamma_{n - 1} - \gamma_n = \gamma 2^{-n}$. 
By \eqref{hn}, we have (recall \eqref{vare.vare})
\begin{equation}\label{verifica rid S3 n}
\begin{aligned}
 & K \tN_{n - 1}^{\tau + \tau^2} \lambda^{\alpha - 2+\tc} \| w_n - w_{n - 1} \|_{s_0 + \Sigma(\tb)} 
\\
& \leq K C_{*} \tN_{n - 1}^{\tau + \tau^2 + 2 \overline \sigma}  \tN_{n-2}^{- \mathtt a_1} \,  \lambda^{\alpha - 2+\tc} \lambda^{- 1} \gamma^{-1} 
\leq \gamma_{n - 1} - \gamma_n \,,
 \end{aligned}
\end{equation}
for some constant $K > 0$, where we have used that  $\gamma^{- 1} = \tN_{0} \leq \tN_{n - 1}$ for $n \geq 1$ and that
\begin{equation}
	K C_{*} \, 2^n \, \tN_{n - 1}^{\tau + \tau^2 + 2 \overline \sigma +  2}  \tN_{n-2}^{- \mathtt a_1} \,  \lambda^{\alpha - 2+\tc} \lambda^{- 1} 
	\leq 1 \,, \quad n \geq 1\,,
\end{equation}
 recalling that, by \eqref{definizione.param.KAM}, \eqref{costanti nash moser} and the smallness condition \eqref{nash moser smallness condition}, we have  $\overline \sigma > \Sigma(\tb)$, 
$\mathtt a_1 > \chi(\tau + \tau^2 + 2 \bar \sigma + 2)$ and $\bar \tau \geq \tau + \tau^2 + 2 \bar \sigma + 2$.
 The estimate \eqref{verifica rid S3 n} implies that Proposition \ref{prop riducibilita}-${\bf (S4)_n}$ applies, leading, together with \eqref{inclusione cantor 100}, to ${\mathcal G}_n  \subseteq \Omega_n^{\gamma_{n - 1}}(w_{n - 1})  \subseteq \Omega_n^{\gamma_{n}}(w_{n })$. Then, we apply the estimate \eqref{r nu - 1 r nu i1 i2} of Proposition \ref{prop riducibilita}-${\bf (S3)_n}$, with $w_1 \equiv w_{n - 1}$, $w_2 \equiv w_n$, $\gamma_1 \equiv \gamma_n$, $\gamma_2 \equiv \gamma_{n - 1}$, and obtain that, for any $\omega \in \cG_{n}$,
\begin{equation}\label{capitone 100}
\begin{aligned}
\sup_{j \in \Z^2 \setminus \{ 0 \}} |\mu_n(j; w_n) - \mu_n(j; w_{n - 1})| 
& \lesssim \lambda^{\alpha - 1+\tc} \| w_n - w_{n - 1} \|_{s_0 + \bar \sigma}  \\
&\lesssim
\tN_{n-1}^{2 \overline \sigma + 1} \tN_{n-2}^{-\mathtt a_1} \lambda^{\alpha - 2+\tc}\,,
\end{aligned}
\end{equation}
where we have applied in the last inequality the estimate \eqref{hn}.
Hence, by triangular inequality
and by \eqref{stime forma normale limite}, \eqref{capitone 100}, 
\eqref{stima.bassa.NM}, we have, for any $j \in \Z^2 \setminus \{ 0 \}$,  
\begin{equation}\label{capitone 1}
\begin{aligned}
|\mu_\infty(j; w_n) -&  \mu_\infty(j; w_{n - 1})|  \leq 
 |\mu_n(j;w_n) - \mu_n(j; w_{n - 1}) |  
\\
& +|\mu_\infty(j; w_n) - \mu_n(j; w_{n})|  
+ 
| \mu_n(j; w_{n - 1}) - \mu_\infty(j; w_{n - 1}) | \\
& 
\lesssim \tN_{n-1}^{2 \overline \sigma + 1} \tN_{n-2}^{-\mathtt a_1} \lambda^{\alpha - 2+\tc}
+ 2 \lambda^{-(M(\theta-1)+1)} \gamma^{-(M+1)}  \tN_{n - 1}^{- \mathtt a} \,,
\end{aligned}
\end{equation}
which, by \eqref{nash moser smallness condition}, implies the claimed estimate \eqref{L infty vn - v n - 1}, since $M>\tfrac{1-\tc}{2(1-\tc)-\alpha}$ by \eqref{definizione.param.KAM},  $\alpha - 2+\tc< 0$ and $\lambda \gg 1$ is large enough.   

\smallskip

\noindent
{\sc Step 2.} 
Let $\omega \in {\mathcal G}_n$ 
and $(\ell, j, j') \in {\mathcal I}$, with $|\ell| \leq \tN_{n}$ and ${\rm min}\{ |j|, |j'| \} \leq \tN_{n}^\tau$, $n \geq 1$.  Note that, since $\pi^\top(\ell) + j - j' = 0$, we have that $|j - j'| \lesssim |\ell| \lesssim \tN_{n}$ and hence 
\begin{equation}\label{max j j' Nn tau bla}
{\rm max}\{ |j|, |j'| \} \lesssim |j - j'| + {\rm min}\{ |j|, |j'| \} \lesssim \tN_{n}  + \tN_{n}^\tau \lesssim \tN_{n}^\tau\,.
\end{equation}
Therefore, by \eqref{L infty vn - v n - 1}, using that $\omega \in {\mathcal G}_n$,  we get, for some constant $C > 0$ large enough, 
\begin{equation} \label{bound bla so ri 0}
		\begin{aligned}
		|\lambda\, &\im\, \omega \cdot \ell  + \mu_\infty(j; w_n) - \mu_\infty(j'; w_n)|  \\
		& \geq |\lambda \,\im\, \omega \cdot \ell + \mu_\infty(j; w_{n - 1}) - \mu_\infty(j'; w_{n - 1})| \\
		& \quad - 
		|\mu_\infty(j; w_n) - \mu_\infty(j; w_{n - 1})|  - 
		|\mu_\infty(j'; w_n) - \mu_\infty(j'; w_{n - 1})|  \\
		& \geq \frac{2 \lambda \gamma_{n - 1}}{\langle \ell \rangle^\tau |j'|^\tau} - C \Big( \tN_{n-1}^{2 \overline \sigma + 1} \tN_{n-2}^{-\mathtt a_1}
		+  \tN_{n - 1}^{- \mathtt a} \Big)  \geq \frac{2 \lambda \gamma_{n }}{\langle \ell \rangle^\tau |j'|^\tau} \,, 
	\end{aligned}
\end{equation}
provided 
$$
C \Big( \tN_{n-1}^{2 \overline \sigma + 1} \tN_{n-2}^{-\mathtt a_1} 
+  \tN_{n - 1}^{- \mathtt a} \Big) \leq \frac{2 \lambda (\gamma_{n - 1} - \gamma_n)}{\langle \ell \rangle^\tau |j'|^\tau}\,.
$$
Using that $|\ell| \leq  \tN_{n}$, the estimate \eqref{max j j' Nn tau bla}, $\gamma_{n - 1} - \gamma_n = \gamma 2^{- n}$, $\gamma^{- 1} = \tN_{0} \leq \tN_{n}$, the latter condition is implied by 
\begin{equation}\label{giobbe1}
	C' 2^{n - 1} \tN_{n}^{\tau + \tau^2 + 1} \lambda^{- 1}\Big( \tN_{n-1}^{2 \overline \sigma + 1} \tN_{n-2}^{-\mathtt a_1} 
	+  \tN_{n - 1}^{- \mathtt a} \Big) \leq 1\,, \quad n \geq 1\,, 
\end{equation}
for some constant $C' \geq C$. The condition \eqref{giobbe1} is verified by \eqref{definizione.param.KAM}, \eqref{costanti nash moser} and the smallness condition \eqref{nash moser smallness condition}.
We conclude then that the claimed inclusion  \eqref{claimed.inclusion} follows by the estimate \eqref{bound bla so ri 0}. 
%
\end{proof}

\begin{lem}\label{inclusione per sommare serie}
For any $n \geq 1$, the following inclusion holds:
\begin{equation}\label{quinta inclusione stime misura}
{\mathcal G}_n \setminus \Omega_\infty^{\gamma_n}(w_n) \subseteq \bigcup_{\begin{subarray}{c}(\ell, j, j') \in {\mathcal I} \\
|\ell| \geq \tN_{n}
\end{subarray}}{\mathcal R}_{\ell j j'}(w_n)\,.
\end{equation}
\end{lem}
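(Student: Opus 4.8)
\textbf{Proof of Lemma \ref{inclusione per sommare serie}.} The plan is to show that if $\omega \in {\mathcal G}_n$ and $\omega$ fails a second-order Melnikov condition at some triple $(\ell,j,j') \in {\mathcal I}$ with $|\ell| \leq \tN_n$, then in fact $\omega \notin {\mathcal G}_n$, which is a contradiction; hence only the triples with $|\ell| \geq \tN_n$ can contribute to ${\mathcal G}_n \setminus \Omega_\infty^{\gamma_n}(w_n)$, giving the claimed inclusion via \eqref{quarta inclusione stime misura}.

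Concretely, I would fix $\omega \in {\mathcal G}_n$ and a triple $(\ell, j, j') \in {\mathcal I}$ with $0 < |\ell| \leq \tN_n$, and split into the two regimes of \eqref{regimi S3 n+1 ridu}, exactly as in the proof of ${\bf (S4)}_{n+1}$ in Proposition \ref{prop riducibilita}. In the regime ${\rm min}\{|j|,|j'|\} \geq \tN_n^\tau$, Lemma \ref{lemma triviale modi alti} directly gives ${\mathcal R}_{\ell j j'}(w_n) = \emptyset$, so such triples never contribute. In the complementary regime ${\rm min}\{|j|,|j'|\} \leq \tN_n^\tau$, Lemma \ref{inclusione risonanti v n n - 1} gives the inclusion ${\mathcal R}_{\ell j j'}(w_n) \subseteq {\mathcal R}_{\ell j j'}(w_{n-1})$; iterating this down to $n=0$ (using at each step that the relevant triple still satisfies the low-mode bound, which is preserved since $\tN_{k} \leq \tN_n$ and the momentum relation $\pi^\top(\ell) + j - j' = 0$ forces $|j-j'| \lesssim |\ell| \leq \tN_n$) one reduces to $n=0$. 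But ${\mathcal G}_n \subseteq {\mathcal G}_0 = \Omega_\gamma$, and by the definition \eqref{Omega.gamma.SA} of $\Omega_\gamma$ together with the fact that $\mu_\infty(j;w_0)$ is a small perturbation of $\im \beta \tL(j)$ controlled by Lemma \ref{lemma blocchi finali}, one checks (arguing as in \eqref{bound bla so ri 0}) that $\omega \in {\mathcal G}_n$ already satisfies the required lower bound for these triples with $|\ell| \leq \tN_n$; hence $\omega \notin {\mathcal R}_{\ell j j'}(w_n)$ for all such triples. This leaves only $|\ell| \geq \tN_n$ in the union \eqref{quarta inclusione stime misura}, which is precisely \eqref{quinta inclusione stime misura}.

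The main technical point to be careful about is matching the regime splitting in \eqref{regimi S3 n+1 ridu} with the hypotheses of Lemmata \ref{lemma triviale modi alti} and \ref{inclusione risonanti v n n - 1}, and verifying that the smallness conditions needed there — namely $\lambda^{\alpha - 2(1-\tc)} \ll 1$, $\tN_0 = \gamma^{-1} \gg 1$ large, and the Nash--Moser smallness \eqref{nash moser smallness condition} — are all in force so that no triple with $|\ell| \leq \tN_n$ survives. Since all these are already assumed in the running hypotheses of Proposition \ref{iterazione-non-lineare} and its measure-theoretic companions, the inclusion follows with no further work beyond assembling these three lemmata.
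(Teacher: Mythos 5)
Your overall strategy (split into the two regimes of \eqref{regimi S3 n+1 ridu}, dispose of high modes by Lemma \ref{lemma triviale modi alti}, and use the nested inclusion of Lemma \ref{inclusione risonanti v n n - 1} for low modes) starts out correct, but the way you close the low-mode case has a genuine gap. You propose to iterate the inclusion ${\mathcal R}_{\ell j j'}(w_k)\subseteq{\mathcal R}_{\ell j j'}(w_{k-1})$ all the way down to $k=0$, claiming the required bound $|\ell|\leq \tN_k$ is ``preserved since $\tN_k\leq\tN_n$.'' That inequality runs in the wrong direction: because $\tN_0<\tN_1<\cdots<\tN_n$, having $|\ell|\leq\tN_n$ does \emph{not} imply $|\ell|\leq\tN_{n-1}$, so Lemma \ref{inclusione risonanti v n n - 1} need not apply at step $n-1$, and the chain breaks after a single step. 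The final appeal to $\mathcal{G}_0=\Omega_\gamma$ is also problematic: $\Omega_\gamma$ in \eqref{Omega.gamma.SA} only encodes the first-Melnikov condition $|\lambda\omega\cdot\ell+\beta\tL(j)|\geq\lambda\gamma\langle\ell\rangle^{-\tau}$ on the diagonal modes $\pi^\top(\ell)+j=0$, which gives no direct control on the second-Melnikov quantities $|\im\lambda\omega\cdot\ell+\mu_\infty(j;w_0)-\mu_\infty(j';w_0)|$ defining ${\mathcal R}_{\ell j j'}(w_0)$.

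The fix is that a single backward step is enough and no iteration is needed. By definition \eqref{def cal R vn l j j'}, ${\mathcal R}_{\ell j j'}(w_n)\subseteq\mathcal{G}_n$. By the inductive definition \eqref{G-n+1}, $\mathcal{G}_n=\mathcal{G}_{n-1}\cap\Omega_\infty^{\gamma_{n-1}}(w_{n-1})\cap\Lambda_\infty^{\gamma_{n-1}}(w_{n-1})$, and by \eqref{cantor finale ridu} the set $\Omega_\infty^{\gamma_{n-1}}(w_{n-1})$ is exactly the complement (within $\tD\tC(2\gamma_{n-1},\tau)$) of all the resonant sets ${\mathcal R}_{\ell j j'}(w_{n-1})$. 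Hence $\mathcal{G}_n\cap{\mathcal R}_{\ell j j'}(w_{n-1})=\emptyset$ for every triple $(\ell,j,j')\in\mathcal{I}$. Combining with the one-step inclusion ${\mathcal R}_{\ell j j'}(w_n)\subseteq{\mathcal R}_{\ell j j'}(w_{n-1})$ from Lemma \ref{inclusione risonanti v n n - 1} (valid precisely when $|\ell|\leq\tN_n$ and $\min\{|j|,|j'|\}\leq\tN_n^\tau$) yields ${\mathcal R}_{\ell j j'}(w_n)={\mathcal R}_{\ell j j'}(w_n)\cap\mathcal{G}_n\subseteq{\mathcal R}_{\ell j j'}(w_{n-1})\cap\mathcal{G}_n=\emptyset$. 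Together with the high-mode emptiness from Lemma \ref{lemma triviale modi alti}, every triple with $|\ell|\leq\tN_n$ has ${\mathcal R}_{\ell j j'}(w_n)=\emptyset$, so only $|\ell|\geq\tN_n$ survives in \eqref{quarta inclusione stime misura}, which is exactly \eqref{quinta inclusione stime misura}.
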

\begin{proof}
Let $n \geq 1$. By the definition \eqref{def cal R vn l j j'}, ${\mathcal R}_{\ell j j'}(w_n) \subseteq {\mathcal G}_n $. By Lemmata \ref{lemma triviale modi alti}, \ref{inclusione risonanti v n n - 1}, for any $(\ell, j, j') \in {\mathcal I}$, $|\ell| \leq \tN_{n}$, if ${\rm min}\{ |j|, |j'| \} \geq \tN_{n}^\tau$, then ${\mathcal R}_{\ell, j j'}(w_n) = \emptyset$, and, if ${\rm min}\{ |j|, |j'| \} \leq \tN_{n}^\tau$, then ${\mathcal R}_{\ell j j'}(w_n) \subseteq {\mathcal R}_{\ell j j'}(w_{n - 1})$. On the other hand, by \eqref{G-n+1}, we have ${\mathcal G}_n \cap {\mathcal R}_{\ell j j'}(w_{n - 1}) = \emptyset$. Therefore \eqref{quinta inclusione stime misura} holds.
\end{proof} 
{\sc Proof of Proposition \ref{Gn Omegan Lambdan}.}
We prove item $(i)$. For $(\ell, j, j') \in {\mathcal I}$, one has that $\pi^\top(\ell) + j - j' = 0$ and hence 
\begin{equation}\label{so ri 2 bla car}
|j| \leq |\pi^\top(\ell)| + |j'| \lesssim |\ell| + |j'| \lesssim \langle \ell \rangle |j'|\,. 
\end{equation}
Let $n \geq 1$. By Lemmata  \ref{inclusione per sommare serie}, \ref{stima misura risonanti sec melnikov},  the inclusion \eqref{quinta inclusione stime misura}, \eqref{so ri 2 bla car} and \eqref{choice tau}, we have 
\begin{equation}\label{Gn meno Omega (vn)}
\begin{aligned}
\Big| {\mathcal G}_n \setminus \Omega_\infty^{\gamma_n}(w_n)\Big|& \lesssim \sum_{(\ell, j, j') \in {\mathcal I} \atop
|\ell| \geq \tN_{n}
} |{\mathcal R}_{\ell j j'}(w_n)|   \lesssim \gamma  \sum_{(\ell, j, j') \in {\mathcal I} \atop
	|\ell| \geq \tN_{n}
}\frac{1}{\langle \ell \rangle^{\tau + 1} |j'|^\tau} \\
& \lesssim \gamma \sum_{
|\ell| \geq \tN_{n}} \frac{1}{\langle \ell \rangle^{\tau - 1}} \sum_{j' \in \Z^2 \setminus \{ 0 \}} \frac{1}{|j'|^{\tau - 2}}  \lesssim \gamma\, \tN_{n}^{- (\tau - 1)}\,.
\end{aligned}
\end{equation}
Similarly, by the inclusion \eqref{quarta inclusione stime misura}, Lemma \ref{stima misura risonanti sec melnikov} and using the standard volume estimate $|\Omega \setminus \tD\tC(\gamma, \tau)| \lesssim \gamma$, one also prove that $|{\mathcal G}_0 \setminus \Omega_\infty^{\gamma_0}(w_0)| \lesssim \gamma$. The item $(ii)$ can be proved by similar arguments. Finally the estimate on ${\mathcal G}_n \setminus {\mathcal G}_{n + 1}$ follows by recalling 
formula \eqref{seconda inclusione stime misura} and by applying items $(i),(ii)$.

\medskip

\begin{proof}
[\textsc{Proof of Proposition \ref{prop measure estimate finale}}]
It follows by applying 
Proposition \ref{Gn Omegan Lambdan}, Lemma \ref{lemma.equa.linear.approx}, using the inclusion \eqref{prima inclusione stime misura}, the standard estimate $|\Omega\setminus \tD\tC(\gamma,\tau) |\lesssim \gamma$ and the fact that the series $\sum_{n \geq 1} \tN_{n}^{-(\tau - 1)}$ 
converges. 
\end{proof}

\begin{footnotesize}
	
\end{footnotesize}

\bigskip

\begin{flushleft}
	\textbf{Roberta Bianchini}
	
	Consiglio Nazionale Delle Ricerche
	
	00185 Roma,	Italy
	\smallskip 
	
	\texttt{roberta.bianchini@cnr.it}
	
\end{flushleft}

\begin{flushleft}
	\textbf{Luca Franzoi}
	
	\smallskip
	
	Dipartimento di Matematica ``Federigo Enriques''
	
	Universit\`a degli Studi di Milano
	
	Via Cesare Saldini 50
	
	20133 Milano, Italy
	
	\smallskip 
	
	\texttt{luca.franzoi@unimi.it}
	
\end{flushleft}
\begin{flushleft}
	\textbf{Riccardo Montalto}
	
	\smallskip
	
	Dipartimento di Matematica ``Federigo Enriques''
	
	Universit\`a degli Studi di Milano
	
	Via Cesare Saldini 50
	
	20133 Milano, Italy
	
	\smallskip 
	
	\texttt{riccardo.montalto@unimi.it}
	
\end{flushleft}

\begin{flushleft}
	\textbf{Shulamit Terracina}
	
	\smallskip
	
	Dipartimento di Matematica ``Federigo Enriques''
	
	Universit\`a degli Studi di Milano
	
	Via Cesare Saldini 50
	
	20133 Milano, Italy
	
	\smallskip 
	
	\texttt{shulamit.terracina@unimi.it}
	
\end{flushleft}

\end{document}